\title{Existence and classification of overtwisted contact structures    in  all dimensions} 
\author{Matthew Strom  Borman\thanks{Partially supported by NSF grant   DMS-1304252} \\ Stanford University\and Yakov Eliashberg\thanks{Partially supported by  NSF grant DMS-1205349} \\ Stanford University  \and   Emmy Murphy
\\ MIT} 
\date{}  
\let\oldmarginpar\marginpar
\renewcommand\marginpar[1]{\-\oldmarginpar[\raggedleft\footnotesize #1]%
{\raggedright\footnotesize #1}}
\theoremstyle{plain}
\newtheorem{theorem}{Theorem}[section]
\newtheorem{corollary}[theorem]{Corollary}
\newtheorem{cor}[theorem]{Corollary}
\newtheorem{proposition}[theorem]{Proposition}
\newtheorem{prop}[theorem]{Proposition}
\newtheorem{lemma}[theorem]{Lemma}
\newtheorem{ex}[theorem]{Example}
\newtheorem{remark}[theorem]{Remark}
\newtheorem*{remark*}{Remark}
\theoremstyle{definition}
\newtheorem{definition}[theorem]{Definition}
\newcommand{\id}{{{\mathchoice {\rm 1\mskip-4mu l} {\rm 1\mskip-4mu l}
{\rm 1\mskip-4.5mu l} {\rm 1\mskip-5mu l}}}}
\newcommand{\wt}{\widetilde}
\newcommand{\wh}{\widehat}
\newcommand{\ol}{\overline}
\newcommand{\p}{\partial}
\newcommand{\eps}{\varepsilon}
\newcommand{\abs}[1]{\left| #1 \right|}
\newcommand{\Z}{{\mathbb{Z}}}
\newcommand{\R}{{\mathbb{R}}}
\newcommand{\A}{{\bf A}}
\newcommand{\B}{{\bf B}}
\newcommand{\st}{{\rm st}}
\newcommand{\cyl}{{\rm cyl}}
\newcommand{\Int}{{\rm Int\,}} 
\renewcommand{\min}{{\rm min}}
\renewcommand{\max}{{\rm max}}
\newcommand{\std}{{\rm std}}
\newcommand{\supp}{{\rm supp}}
\newcommand{\Id}{\mathrm {Id}}
\newcommand{\univ}{\mathrm{univ}}
\newcommand{\Diff}{\mathrm{Diff}}
\newcommand{\Supp}{\mathrm{Support}}
\newcommand{\stab}{\mathrm{stab}}
\newcommand{\Reeb}{\frak{R}}
\newcommand{\Cont}{\mathfrak{Cont}}
\newcommand{\cont}{\mathfrak{cont}}
\newcommand{\ot}{\mathrm{ot}}
\newcommand{\CCont}{\mathrm{Cont}}
\newcommand{\cD}{{\frak D}}
\newcommand{\CC}{\mathcal{C}}
\newcommand{\FF}{\mathcal{F}}
\newcommand{\TT}{\mathcal{T}}
\def\Op{{\mathcal O}{\it p}\,}
\numberwithin{figure}{section}
\begin{document}
\maketitle
\centerline{\small{\em To Misha Gromov with admiration}}

\begin{abstract}
We establish a parametric extension $h$-principle for overtwisted contact structures on manifolds of all dimensions,  which is the direct generalization of the $3$-dimensional result from \cite{Eli89}. It implies, in particular,  that any closed   manifold admits a contact structure in any given homotopy class of almost contact structures. 
\end{abstract}
\bigskip

\section{Introduction}
A {\em contact structure} on a  $(2n+1)$-dimensional manifold $M$  is a completely non-integrable hyperplane field $\xi\subset TM$. Defining $\xi$ by a Pfaffian equation $\{\alpha=0\}$ where $\alpha $ is a 1-form, possibly with coefficients in a local system for a non-coorientable $\xi$, then the complete non-integrability is equivalent to $\alpha\wedge d\alpha^n$ being non-vanishing on $M$. An equivalent definition of the contact condition is that the complement of the $0$-section of the total space of the  conormal bundle $L_\xi\subset T^*M$ is a symplectic submanifold of $T^*M$ with its canonical symplectic structure $d(pdq)$.

The corresponding formal homotopy counterpart of a contact structure is an {\it almost contact structure}, which  is   a defined up to a scalar factor  pair  $(\alpha,\omega)$ where $\alpha$ is non-vanishing 1-form on $M$, possibly with local coefficients in a non-trivial 1-bundle, and $\omega$ is a non-degenerate two-form on the hyperplane field $\xi=\{\alpha=0\}$  with coefficients in the same local system.  Thus, both  in the contact and almost contact cases, the hyperplane field is endowed with a conformal class of symplectic structures.  In the co-orientable case, i.e.\ when  $TM/\xi$ is trivialized by $\alpha$, the existence of an almost contact structure is equivalent to the existence of a {\it stable almost complex structure} on $M$, i.e.\ a complex structure on the bundle $TM\oplus\eps^1$ where $\eps^1$ is the trivial line bundle over $M$.

The current paper concerns with basic topological questions about contact structures: existence, extension and homotopy. 
This problem  has a long history. It was first explicitly formulated, probably, in S.S.~Chern's paper \cite{Chern66}.
In 1969 M.~Gromov \cite{Gro69} proved a parametric $h$-principle for contact structures on an {\em open} manifold $M$: {\it any almost contact structure is homotopic to a genuine one, and two contact structures are homotopic if they are homotopic as almost contact structures, }  see
Theorem \ref{thm:Gromov-open} below for a more precise formulation of Gromov's theorem.

For closed manifolds a  lot of progress  was achieved in the $3$-dimensional case beginning from the work of J. Martinet \cite{Martinet} and  R. Lutz \cite{Lutz} who  solved the non-parametric existence  problem for $3$-manifolds. 
D. Bennequin \cite{Bennequin} showed that the 1-parametric $h$-principle fails for contact structures on $S^3$ and    Y.~Eliashberg  in \cite{Eli89} introduced a dichotomy of $3$-dimensional contact manifolds into {\it tight} and  {\it overtwisted} 
and established a parametric $h$-principle for overtwisted ones:
{\em any almost contact homotopy class on a closed $3$-manifold contains a unique up to isotopy overtwisted contact structure.} Tight contact structures were also classified on several classes of 3--manifolds, see e.g.\ \cite{Eli92,Gi00, Ho00,Ho00-2}. V.~Colin, E.~Giroux and K.~Honda proved in   \cite{CoGiHo}  that  any atoroidal contact 3-manifold admits at most finitely many non-isotopic tight contact structures.

Significant progress in the problem of construction of contact structures on closed manifolds was achieved in the $5$-dimensional  case beginning from the work of H.~Geiges \cite{Ge91, Ge97} and H.~Geiges and  C.B.~Thomas  \cite{GeTh98, GeTh01},  and followed by the work of 
R.~Casals, D.M.~Pancholi and F.~Presas \cite{CaPaPr13} and J.~Etnyre \cite{Etn13}, where there was established existence of  contact structures on any $5$-manifolds  in any homotopy class of almost contact structures.
For manifolds of dimension $>5$ the results are more scarce.
The work \cite{Eli90} implied existence of contact structures on all closed
$(2n+1)$-dimensional manifolds that bound almost complex manifolds with the homotopy type of $(n+1)$-dimensional cell complexes, provided $n \geq 2$.
F.~Bourgeois \cite{Bourg02} proved that for any closed contact manifold $M$ and any surface $\Sigma$ with genus at least one, the product $M\times\Sigma$ admits a contact structure, using work of E.~Giroux~\cite{Gir02G}.
This positively answered a long standing problem about existence of contact structures on tori of dimension $2n+1>5$ (a contact structure on $T^5$ was first constructed by R. Lutz in \cite{Lutz-torus}).

Non-homotopic, but  formally homotopic contact structures were constructed   on higher dimensional manifolds as well, see e.g. \cite{Ust}. As far as we know,  before the current paper  there were no known  general results concerning extension of contact structures in dimension greater than three.  

    \begin{theorem}\label{thm:main-existence} Let $M$ be a $(2n+1)$-manifold, $A\subset M$ be a closed set, and
     $\xi$ be an almost contact structure on $M$.
    If $\xi$ is genuine on $\Op A\subset M$ then $\xi$ is homotopic relative to $A$ to a genuine contact structure.
    In particular, any almost contact structure on a closed manifold is homotopic to a genuine contact structure.
    \end{theorem}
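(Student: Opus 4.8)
The plan is to deduce Theorem~\ref{thm:main-existence} from the parametric extension $h$-principle for overtwisted contact structures — the main result of this paper — together with a short, purely formal ``seeding'' step. In its relative form the $h$-principle takes an almost contact structure that is genuine and overtwisted near a closed set $A'$ and homotopes it, rel $A'$, to a genuine contact structure. So it suffices to homotope the given $\xi$, through almost contact structures and without disturbing it near $A$, until it contains one embedded copy of the standard overtwisted disk inside $M\setminus A$; the $h$-principle, applied with $A'$ equal to $A$ together with that disk, then finishes the proof. The ``in particular'' assertion is the case $A=\varnothing$, where the hypothesis on $\Op A$ is vacuous.

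\textit{Reductions.} If $A=M$ there is nothing to prove. The construction below is local, so we may treat each connected component of $M$ separately and discard those contained in $A$; thus assume $M$ connected and $A\subsetneq M$. Choose a smoothly embedded closed ball $B\subset M\setminus A$ and a concentric smaller closed ball $B_{\ot}\Subset B$ into which an embedded copy of the standard overtwisted model fits, so that on $B_{\ot}$ the model is a genuine overtwisted contact structure $\zeta_{\ot}$ agreeing, near $\partial B_{\ot}$, with a fixed contact germ. On $B$ everything is coorientable, so no local system intervenes.

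\textit{Seeding.} We claim $\xi$ is homotopic, through almost contact structures and supported in $\Int B$, to a structure equal to $\zeta_{\ot}$ on $B_{\ot}$; since the homotopy is fixed near $A$, the structure stays genuine there. Let $\mathcal F$ be the fibre of the bundle over $M$ whose sections are the almost contact structures; over $B$ this is coorientable and $\mathcal F\simeq SO(2n+2)/U(n+1)$ up to homotopy, in particular connected and simply connected. Because $\xi$ extends over the ball $B$, its restriction to $\partial B$ is null-homotopic as a map $\partial B\to\mathcal F$; the same is true of the restriction of $\zeta_{\ot}$ to $\partial B_{\ot}$. Hence these two boundary data are homotopic and can be joined by a section over the annulus $B\setminus\Int B_{\ot}$, producing an almost contact structure on $B$ that equals $\zeta_{\ot}$ on $B_{\ot}$ and equals $\xi$ on $\partial B$. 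Any two such extensions of $\xi|_{\partial B}$ over $B$ differ by an element of $\pi_{2n+1}(\mathcal F)$, and this discrepancy is killed by modifying the chosen section on the annulus $B\setminus\Int B_{\ot}$, rel its two ends — such modifications are classified by $\pi_{2n+1}(\mathcal F)$ and induce an isomorphism on the global extension class over $B$. After this correction the resulting structure is homotopic to $\xi$ rel $\partial B$, and extending the homotopy by the identity over $M\setminus B$ yields the homotopy claimed. In particular the seeding step is entirely unobstructed.

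\textit{Conclusion.} Renaming the resulting structure $\xi$, the set $A':=A\cup B_{\ot}$ is closed, $\xi$ is genuine near $A$ (it was not changed there) and genuine and overtwisted near $B_{\ot}$, hence genuine and overtwisted near $A'$. The parametric $h$-principle applied rel $A'$ homotopes $\xi$, rel $A'\supset A$, to a genuine contact structure on $M$; concatenating with the seeding homotopy gives a homotopy rel $A$ from the original almost contact structure to a genuine contact structure. The real difficulty, naturally, is the $h$-principle itself, which occupies the body of the paper; within this deduction the only point of substance is that planting the first overtwisted disk costs nothing formally — it is precisely this that makes the $h$-principle applicable.
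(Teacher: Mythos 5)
Your route is legitimate and, importantly, not circular: the parametric machinery behind Theorem~\ref{thm:main} (i.e.\ Theorem~\ref{thm:main-existence-T}, via Propositions~\ref{prop:unique-model-param}, \ref{prop:neighb-ot}, \ref{p:ConnectSumDisc-param} and Lemma~\ref{lm:choosing-paths}) nowhere uses Theorem~\ref{thm:main-existence}, so deducing the existence theorem from the relative $h$-principle plus a ``seeding'' step is logically sound. In fact it is close in spirit to what the paper does: the paper's own proof of Theorem~\ref{thm:main-existence} also begins by deforming $\xi$ on a ball $B\subset M\setminus A$ so that an overtwisted disc sits on $\p B$ (and does this even more cheaply than your obstruction-theoretic argument, simply because the homotopy on the ball is not required to be fixed on $\p B$, so connectedness of the space of almost contact structures on a slightly larger ball suffices), and then runs the non-parametric chain Proposition~\ref{prop:unique-model} $+$ Proposition~\ref{prop:neighb-ot} $+$ ambient connected sums $+$ Proposition~\ref{p:ConnectSumDisc} --- which is exactly the $T=\mathrm{pt}$ case of the theorem you are quoting. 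So your proof buys brevity by invoking the heavier parametric statement, while the paper keeps Theorem~\ref{thm:main-existence} independent of (and simpler than) the parametric results; also note that the precise statement you need is Theorem~\ref{thm:main-existence-T} with $T=D^0$, closed set $A$ and the seeded disc as the fixed overtwisted disc (the homotopy is then fixed on $A\cup D_\ot$), rather than Theorem~\ref{thm:main} applied with the closed set $A'=A\cup B_\ot$, since in the latter formulation the required overtwisted disc would have to lie in $M\setminus A'$, not inside $A'$.

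There is, however, one genuine gap in the deduction as written: the connectivity hypothesis. Theorem~\ref{thm:main} and Theorem~\ref{thm:main-existence-T} are stated only under the assumption that $M\setminus A$ is connected, whereas Theorem~\ref{thm:main-existence} makes no such assumption. Your reduction treats the connected components of $M$, which does not help: even for $M$ connected, $M\setminus A$ can be disconnected (e.g.\ $A$ a neighborhood of a separating hypersurface), and then the theorem you invoke simply does not apply --- its proof needs every universal hole to be joined by an embedded transverse path inside $M\setminus A$ to an overtwisted ball, and a single seeded disc cannot serve components it does not meet. The repair is straightforward but must be said: seed an overtwisted ball in \emph{each} connected component of $M\setminus A$ (these seedings are disjointly supported, so can be done simultaneously), and observe that the proof of Theorem~\ref{thm:main-existence-T} goes through verbatim when each component of $M\setminus A$ contains its own fixed overtwisted disc, the paths of Lemma~\ref{lm:choosing-paths} being chosen component by component; alternatively, run the paper's non-parametric argument separately on each component. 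Without this, your argument proves Theorem~\ref{thm:main-existence} only when $M\setminus A$ is connected.
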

    Here we are using Gromov's notation $\Op\,A$  for any unspecified open neighborhood of a closed subset $A\subset M$.

    In Section \ref{sec:main-existence} we will define the notion of an {\em overtwisted} contact structure
    for any odd dimensional manifold.   Deferring the definition until Section \ref{sec:ot-disc}, we will say here that a contact manifold $(M^{2n+1},\xi)$ is called overtwisted if it admits a contact embedding of a piecewise smooth $2n$-disc $D_\ot$ with a certain model germ $\zeta_\ot$ of a contact structure.
      In the $3$-dimensional case this notion is equivalent  to the standard notion introduced in \cite{Eli89}. 
See Section \ref{sec:discussion} for further discussion of the overtwisting property.

Given a $(2n+1)$-dimensional manifold $M$, let $A$ be a closed subset such that $M \setminus A$ is connected, and let $\xi_0$ be an almost contact structure $M$
that is a genuine contact structure on $\Op A$.  Define
$\Cont_\ot(M;A,\xi_0)$ to be the space of contact structures on $M$ that are overtwisted on $M\setminus A$ and coincide with 
$\xi_0$ on $\Op A$.
The notation   $\cont(M;A,\xi_0)$ stands for the space of {\em almost} contact structures
that agree with $\xi_0$ on $\Op A$. Let 
$$j:\Cont_\ot(M;A,\xi_0)\to\cont(M;A,\xi_0)$$  be  the inclusion map.
For an embedding $\phi:D_\ot\to M\setminus A$, let 
$\Cont_\ot(M;A,\xi_0,\phi)$ and $\cont_\ot(M;A,\xi_0,\phi)$ be the subspaces
   of  $\Cont_\ot(M;A,\xi_0)$ and $\cont_{\ot}(M;A,\xi_0)$ of contact and almost contact structures for which 
   $\phi:(D_\ot,\zeta_\ot)\to(M,\xi)$ is a contact embedding.
   
\begin{theorem}\label{thm:main}
 The inclusion map induces an isomorphism
 $$j_*:\pi_0(\Cont_\ot(M;A,\xi_0))\to\pi_0(\cont(M;A,\xi_0))$$
 and moreover the map
 $$j:\Cont_\ot(M;A,\xi_0,\phi)\to\cont_\ot(M;A,\xi_0,\phi)$$ is a (weak) homotopy  equivalence. 
 \end{theorem}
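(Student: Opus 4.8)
The plan is to reduce the parametric statement to a relative existence-and-uniqueness statement for contact structures that contain a fixed overtwisted disc, and then prove that relative statement directly by an inductive microextension argument. First I would observe that, once we fix the embedding $\phi:D_\ot\to M\setminus A$, any almost contact structure in $\cont_\ot(M;A,\xi_0,\phi)$ restricts to the genuine model germ $\zeta_\ot$ on $\Op(\phi(D_\ot))$, so we may enlarge the ``already genuine'' set from $A$ to $A' := A\cup \phi(D_\ot)$. The second map in the theorem is then the inclusion $\Cont(M;A',\xi_0')\into\cont(M;A',\xi_0')$ where $\xi_0'$ is the genuine structure on $\Op A'$; the claim that this is a weak homotopy equivalence is exactly a parametric $h$-principle \emph{rel} $A'$, and the $\pi_0$-surjectivity and injectivity of $j_*$ for the first map follow from the weak homotopy equivalence once one shows every class and every path of almost contact structures can be pushed to contain \emph{some} overtwisted disc in $M\setminus A$ (using connectedness of $M\setminus A$ to move discs around, and the fact that having an overtwisted disc is an open condition one can always create in a small ball by a local modification that does not change the almost contact homotopy class). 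So the heart of the matter is: a parametric, relative $h$-principle for genuine contact structures that agree with a fixed germ on a neighborhood of a closed set containing a piecewise-smooth overtwisted disc.

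To prove that $h$-principle I would set up the standard scheme for $h$-principles of ``flexible'' differential relations: given a family of almost contact structures $\xi_z$, $z\in D^k$, genuine near $A'$ and near $\partial D^k$, one wants to homotope it rel those sets to a family of genuine contact structures. The inductive step is to extend a genuine family over a neighborhood of a closed set $B$ to a genuine family over $B\cup \Delta$ for a single handle/simplex $\Delta$ of a fine triangulation of $M$, keeping it fixed near $B$ and staying in the same formal homotopy class. Away from the top cells this is exactly Gromov's $h$-principle for contact structures on \emph{open} manifolds (Theorem~\ref{thm:Gromov-open}): over the $(2n)$-skeleton the complement of $A'$ is engulfable into an open subset, so Gromov applies and there is no flexibility needed beyond the open-manifold result. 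The genuinely new content is the top-dimensional handle attachment, where one must fill in a contact structure on a $(2n+1)$-ball given a prescribed contact germ on its boundary sphere together with a formal extension across the ball. This is where overtwistedness enters: because $B$ already contains an overtwisted disc $D_\ot$, one first isotopes (contact-isotopes the already-constructed genuine part, using the flexibility statements about $\zeta_\ot$ proved in the disc section) so that a copy of the overtwisted model is visible right next to the ball to be filled, and then one uses a relative filling lemma — the key technical proposition of the paper — which says that any formal contact structure on $(D^{2n+1},\partial)$ that is genuine near the boundary and such that some collar of the boundary contains an overtwisted disc can be homotoped rel boundary to a genuine contact structure.

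The main obstacle, and the step I expect to absorb most of the work, is this relative filling lemma for the top cell — i.e., establishing that the presence of the model overtwisted disc $D_\ot$ makes the contact filling problem on a ball solvable and, parametrically, contractible. This should be handled by explicit models: one builds, by hand, a one-parameter (and then $k$-parameter) family of contact structures on $D^{2n+1}$ interpolating between the given formal data and a standard genuine filling, using the $\zeta_\ot$ germ as a ``reservoir'' into which one can absorb the defect, much as the overtwisted disc in dimension three is used in \cite{Eli89} to absorb Lutz-type modifications. Concretely I would (a) prove a normal-form/uniqueness statement for neighborhoods of $D_\ot$ (any two coincide after contact isotopy — this should be stated and proved in the overtwisted-disc section), (b) prove that $\zeta_\ot$ admits a self-connected-sum or ``doubling'' absorbing any almost-contact twisting supported in a ball, and (c) run the parametric version of (a)+(b) to get the weak homotopy equivalence. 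The bookkeeping to make all isotopies relative to $A'$, compatible across the handle decomposition, and parametric over $D^k$ rel $\partial D^k$ is routine in spirit but long; the conceptual crux is the explicit overtwisted filling in (b).
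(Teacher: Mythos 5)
Your top-level reduction (enlarge $A$ to $A'=A\cup\phi(D_\ot)$, run a handle-by-handle induction, use Gromov's Theorem~\ref{thm:Gromov-open} away from the top cells, and isolate the difficulty in a parametric filling statement for top-dimensional balls) does match the coarse architecture of the paper, which likewise reduces everything to filling $(2n+1)$-dimensional holes next to an overtwisted disc. But there is a genuine gap at exactly the point you flag as the ``conceptual crux.'' Your relative filling lemma asserts that \emph{any} formal contact structure on a ball, genuine near the boundary, can be filled rel boundary once an overtwisted disc sits in a collar, and you propose to prove it ``by explicit models,'' via a doubling of $\zeta_\ot$ that ``absorbs any almost-contact twisting supported in a ball.'' In higher dimensions this statement is not accessible to a direct hands-on construction, because the boundary germ of a hole produced by the open-manifold $h$-principle is essentially arbitrary, and the overtwisted model can only absorb germs it dominates. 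The paper's actual route is to first \emph{standardize} all holes: reduce them to regular semi-contact saucers, then to circle models $(B_K,\eta_K)$ governed by contact Hamiltonians, and then---using the conjugation/disorderability results (Propositions~\ref{prop:disorder} and \ref{prop:shallow}) and the equivariant covering argument of Section~\ref{sec:equi-cov}---to a \emph{single universal} model $(B_{K_\univ},\eta_{K_\univ})$ per dimension (Propositions~\ref{prop:unique-model} and \ref{prop:unique-model-param}). Only after this standardization does the absorption step work, and it works precisely because the overtwisted disc is \emph{defined} (Definition~\ref{def:OTD}) by a special Hamiltonian $K<K_\univ$, so that the quantitative inequality \eqref{e:subscale} for special functions lets one contact-embed a conjugated copy of the hole below $K\# K$ (Lemma~\ref{p:ContEmb}) and fill the connected sum (Propositions~\ref{p:ConnectSumDisc} and \ref{p:ConnectSumDisc-param}).

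Your step (b) is therefore not a lemma one can prove in isolation; as stated it is equivalent to the theorem itself, and the analogy with \cite{Eli89} is misleading: in dimension three the standardization is nearly free (Lemma~\ref{lm:3-domin} shows any somewhere-negative Hamiltonian on $[-1,1]$ is minimal up to conjugation, which is why an explicit disc suffices there), whereas the paper points out that this minimality is expected to fail for $\dim\Delta_\cyl\geq 3$, and the whole of Sections~\ref{sec:conjugation}--\ref{sec:universal} exists to substitute for it. Your step (a) (normal forms and foliating neighborhoods of $D_\ot$ by overtwisted discs) is fine and corresponds to Proposition~\ref{prop:neighb-ot}, and your $\pi_0$-bookkeeping in the first paragraph is in the right spirit, but without the universal-hole reduction the inductive top-cell step has no proof, so the argument as proposed does not go through.
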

 
 As an immediate corollary we have the following
  
  \begin{corollary}\label{cor:main}
  On any closed manifold $M$ any almost contact structure is homotopic to an overtwisted contact structure which is unique up to isotopy. 
  \end{corollary}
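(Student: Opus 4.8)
The plan is to read off the Corollary from Theorem~\ref{thm:main} by taking $A=\emptyset$. If $M$ is disconnected one argues componentwise, so that the standing hypothesis that $M\setminus A$ be connected is satisfied; assume then that $M$ is connected. With $A=\emptyset$ the space $\Cont_\ot(M;\emptyset)$ is precisely the space of all overtwisted contact structures on $M$, while $\cont(M;\emptyset)$ is the space of all almost contact structures on $M$, whose path components are the homotopy classes of almost contact structures. Theorem~\ref{thm:main} then says that
$$j_*:\pi_0\bigl(\Cont_\ot(M;\emptyset)\bigr)\longrightarrow\pi_0\bigl(\cont(M;\emptyset)\bigr)$$
is a bijection.

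\emph{Existence.} Let $\eta$ be an almost contact structure on $M$. Surjectivity of $j_*$ provides an overtwisted contact structure $\xi$ with $j_*[\xi]=[\eta]$, that is, $\xi$ is homotopic to $\eta$ through almost contact structures. (That \emph{some} genuine contact structure lies in the class of $\eta$ is of course already Theorem~\ref{thm:main-existence}; the added content here is that it may be chosen overtwisted.)

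\emph{Uniqueness.} Let $\xi_0,\xi_1$ be overtwisted contact structures on $M$ lying in the same homotopy class of almost contact structures, so that $j_*[\xi_0]=j_*[\xi_1]$. Injectivity of $j_*$ gives $[\xi_0]=[\xi_1]$ in $\pi_0(\Cont_\ot(M;\emptyset))$, hence a path $t\mapsto\xi_t$, $t\in[0,1]$, of overtwisted contact structures joining $\xi_0$ to $\xi_1$; perturbing if necessary we may take this path smooth in $t$, the contact (and even the overtwisting) condition being open. Since $M$ is closed, Gray's stability theorem applied to $\{\xi_t\}_{t\in[0,1]}$ produces an ambient isotopy $\psi_t:M\to M$ with $\psi_0=\Id$ and $(\psi_t)_*\xi_0=\xi_t$ for all $t$; in particular $(\psi_1)_*\xi_0=\xi_1$, so $\xi_0$ and $\xi_1$ are isotopic. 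Combining the two parts, every homotopy class of almost contact structures on a closed $M$ contains an overtwisted contact structure, unique up to isotopy.

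\emph{Main difficulty.} At this level there is essentially nothing hard: the Corollary is a formal consequence of Theorem~\ref{thm:main} together with the classical Gray stability theorem, the only points requiring (routine) care being the reduction to connected $M$, so that Theorem~\ref{thm:main} applies with empty $A$, and the smoothing of a continuous path of contact structures so that Gray's theorem is available. The genuine work---the surjectivity and injectivity of $j_*$, i.e.\ the $h$-principle itself---is exactly what Theorem~\ref{thm:main} supplies and is carried out in the body of the paper.
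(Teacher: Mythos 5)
Your argument is correct and is exactly the one the paper intends: the corollary is stated there as an immediate consequence of Theorem~\ref{thm:main} with $A=\emptyset$, with surjectivity of $j_*$ on $\pi_0$ giving existence and injectivity plus Gray's stability theorem (used elsewhere in the paper, e.g.\ in the proof of Corollary~\ref{cor:isocontact}) upgrading a path of contact structures on the closed manifold $M$ to an isotopy. Your added care about connectedness of $M$ and smoothing the path before applying Gray is the routine filling-in of details the paper leaves implicit.
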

  
  We also have the following corollary (see Section \ref{sec:proof-main} for the proof) concerning isocontact embeddings into an overtwisted contact manifold.
\begin{cor}\label{cor:isocontact}
  Let  $(M^{2n+1},\xi)$ be a connected overtwisted contact manifold and 
  let $(N^{2n+1},\zeta)$ be an open contact manifold of the same dimension.
 Let $f:N\to M$ be a smooth embedding covered by a   contact bundle homomorphism $\Phi:TN\to TM$, that is $\Phi(\zeta_x) = \xi|_{f(x)}$ and $\Phi$ preserves the conformal symplectic structures on $\zeta$ and $\xi$.  If $df$ and $\Phi$ are homotopic as injective bundle homomorphisms $TN\to TM$, then $f$ is isotopic to a contact embedding $\wt f:(N,\zeta)\to (M,\xi)$.  In particular, an open ball with any contact structure embeds into any overtwisted contact manifold of the same dimension.
\end{cor}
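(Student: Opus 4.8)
The plan is to reduce the existence of $\wt f$ to Theorem \ref{thm:main}. First observe that producing a contact embedding $\wt f:(N,\zeta)\to(M,\xi)$ isotopic to $f$ is equivalent to producing an embedding $g:N\to M$ isotopic to $f$ together with a contact structure $\eta$ on $M$ with $g^\ast\eta=\zeta$ that lies in the same path component as $\xi$ in the space of contact structures on $M$: indeed, Gray's stability theorem applied to a path $\xi\rightsquigarrow\eta$ (which may be taken constant outside a compact set, since our $\eta$ will equal $\xi$ there) turns that path into an ambient isotopy $\psi_t$ of $M$ with $\psi_1^\ast\xi=\eta$, and then $\wt f:=\psi_1\circ g$ works, being isotopic to $f$ via $\psi_t\circ g$. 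So I would aim to build a pair $(g,\eta)$ in which, moreover, $\eta$ is overtwisted and homotopic to $\xi$ as an almost contact structure; since $\xi$ itself is overtwisted, Theorem \ref{thm:main} applied with $A=\emptyset$ — where $j_\ast$ is an isomorphism on $\pi_0$ — then forces $\eta$ and $\xi$ into the same component.

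Next, the formal hypothesis unpacks directly. That $df$ and $\Phi$ are homotopic through injective bundle homomorphisms $TN\to TM$ says precisely that $\zeta$ and $f^\ast\xi$ — each carrying its conformal symplectic structure, the latter transported by $df$ — are homotopic as almost contact structures on $N$. Since $N$ is open, Gromov's $h$-principle (Theorem \ref{thm:Gromov-open}) promotes this to a homotopy through \emph{genuine} contact structures $\zeta_s$ on $N$, $s\in[0,1]$, with $\zeta_0=f^\ast\xi$ and $\zeta_1=\zeta$; fix such a path once and for all.

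Now build $(g,\eta)$ by induction over an exhaustion of $N$ by relatively compact open sets $N_1\subset N_2\subset\cdots$ with $\overline{N_j}\subset N_{j+1}$. At stage $j$ one wants a genuine contact structure $\eta_j$ on $M$ — overtwisted, homotopic to $\xi$, equal to $\xi$ outside a compact set — together with an embedding $g_j:N\to M$ isotopic to $f$ whose restriction to $N_{j-1}$ is a contact embedding into $(M,\eta_j)$, everything having stabilized on the region treated at earlier stages. To pass from $j$ to $j+1$: extend $g_j$ (keeping it isotopic to $f$ by the isotopy extension theorem) so that $g_{j+1}(\overline{N_{j+1}}\setminus N_j)$ lands in a part of $M$ not yet used; on a neighborhood of $g_{j+1}(\overline{N_j})$ inside $g_{j+1}(N_{j+1})$ replace $\eta_j$ by $(g_{j+1})_\ast\zeta$ (which agrees with $\eta_j$ near $g_{j+1}(\overline{N_{j-1}})$ by the inductive hypothesis); and across the next shell interpolate between $(g_{j+1})_\ast\zeta$ and $\xi$, the pulled-back path $\zeta_s$ of the previous paragraph supplying the required homotopy of almost contact structures there. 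This yields an almost contact structure on $M$ already genuine, overtwisted, and in the correct homotopy class away from the interpolation shell; applying Theorem \ref{thm:main} relative to the genuine part (arranging the fixed sets to have connected complement) straightens it to the desired $\eta_{j+1}$. The overtwisted disc $D_\ot$ with germ $\zeta_\ot$ is planted once, at stage $1$, in a region afterwards never disturbed — the only place where overtwistedness of $M$, equivalently of $\xi$, enters (and unnecessary if $\zeta$ is already overtwisted). Since the modifications are supported in shells marching outward along $g(N)$, the limits $g:=\lim g_j$ and $\eta:=\lim \eta_j$ exist and enjoy all the required properties, by the first paragraph finishing the proof of the general statement.

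The delicate point — the main obstacle — is making that last sentence honest: one must arrange the inductive modifications so that near the frontier of $g(N)$ in $M$ they do not accumulate, so that $\eta$ is a genuinely well-defined smooth contact structure on all of $M$. In effect one has to reconcile the open-manifold flexibility of $(N,\zeta)$ at its ends with a honest contact structure on the (possibly closed) manifold $M$; the staggering — making only $N_{j-1}$, not $N_j$, contact at stage $j$ — together with a preliminary use of Gromov's $h$-principle near the ends of $N$, arranging that $g^\ast\xi$ may be taken to equal $\zeta$ there so that $(g)_\ast\zeta$ extends across $\partial g(N)$ simply as $\xi$, is what makes the diagonal argument converge. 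Finally, the \emph{in particular}: an open ball is contractible, so for the inclusion $f$ of an open ball into a Darboux chart of $(M,\xi)$ any contact monomorphism $\Phi$ covering $f$ is automatically homotopic to $df$, and the general statement applies.
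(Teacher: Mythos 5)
Your overall strategy---build a pair $(g,\eta)$ with $g^\ast\eta=\zeta$, with $\eta$ overtwisted and formally homotopic to $\xi$, and then invoke the $\pi_0$-statement of Theorem~\ref{thm:main} plus one global application of Gray stability---is genuinely different from the paper's proof, which never changes the contact structure on $M$ at all: there one isotopes $f$ so that $M\setminus \overline{f(N)}$ is overtwisted, fixes an exhaustion $C_1\Subset C_2\Subset\cdots$ of $N$, and at each stage uses the parametric part of Theorem~\ref{thm:main} to produce a \emph{compactly supported} homotopy of genuine contact structures on $M$ realizing the formal data over $f^{j-1}(C_j)$, converts it by Gray's theorem into an ambient contact isotopy of $(M,\xi)$ fixed on $f^{j-1}(C_{j-1})$, and replaces the embedding by its composition with the time-one map. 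Since each correction moves only the embedding and is the identity on the previously corrected compact, the sequence of embeddings stabilizes on every compact subset of $N$ and convergence is automatic.

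Your version has a genuine gap exactly at the point you flag. The modifications of the structure on $M$ are supported in shells marching outward along $g(N)$, and these accumulate on the frontier $\overline{g(N)}\setminus g(N)\subset M$; every neighborhood of a frontier point meets infinitely many shells, so the limit $\eta$ has no reason to exist as a smooth contact structure on all of $M$, and staggering the induction does not change this. The proposed repair---``a preliminary use of Gromov's $h$-principle near the ends of $N$, arranging that $g^\ast\xi$ may be taken to equal $\zeta$ there''---is not available: Theorem~\ref{thm:Gromov-open} gives a homotopy of contact structures \emph{on $N$} between $\zeta$ and $f^\ast\xi$, but it does not make the embedding itself contact near the ends; arranging $g^\ast\xi=\zeta$ there is essentially the conclusion of the corollary and is what the induction is supposed to produce, so the argument is circular at the one place where it needs to be sharpest. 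A secondary issue: the path from $\eta$ to $\xi$ furnished by $\pi_0$-injectivity with $A=\varnothing$ need not be constant outside a compact set, so for non-compact $M$ it cannot be fed directly to Gray; one would have to apply Theorem~\ref{thm:main} relative to the complement of a large compact, with the attendant overtwistedness and connectivity hypotheses (the paper secures these at the outset). The repair is precisely the paper's reorganization: keep $\xi$ fixed, and at each stage convert the compactly supported homotopy of contact structures supplied by Theorem~\ref{thm:main} into an isotopy of $M$ fixed on the part already corrected, so that only the embedding, not the structure, has to converge.
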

 
We note that there were many proposals for defining the overtwisting phenomenon in dimension greater than three.
We claim that our notion is stronger than any other possible notions, in the sense that any exotic phenomenon, e.g.\ a plastikstufe \cite{Ni06}, can be found in any overtwisted contact manifold.
Indeed suppose we are given some exotic model $(A, \zeta)$, which is an open  contact manifold, and assume it formally embeds into an equidimensional $(M, \xi_{ot})$, then by Corollary~\ref{cor:isocontact}  $(A, \zeta)$ admits a genuine   contact
embedding into $(M, \xi_{ot})$.

In particular, the known results about contact manifolds with a plastikstufe apply to overtwisted manifolds as well:
\begin{itemize}{\em
\item Overtwisted contact manifolds are not (semi-positively) symplectically fillable \cite{Ni06};
\item The Weinstein conjecture holds for any contact form defining an overtwisted contact structure on a closed manifold
\cite{AlHo09};
\item Any Legendrian submanifold whose complement is overtwisted is loose
 \cite{MuNiPlaSti12}. Conversely, any loose Legendrian in an overtwisted ambient manifold has  an overtwisted complement.}
\end{itemize}
   
     As it is customary in the $h$-principle type framework, a parametric $h$-principle yields results about leafwise structures on foliations,  see e.g.\ \cite{Gro69}. In  particular, in \cite{CPP14} the  parametric $h$-principle \cite{Eli89} for overtwisted contact structures on a $3$-manifold  was used for the construction of leafwise contact structures on  codimension one foliations on $4$-manifolds.

 Let $\FF$ be a  smooth $(2n+1)$-dimensional foliation  on a manifold $V$ of dimension $m=2n+1+q$. 
 \begin{theorem}\label{thm:existence-fol}
 Any leafwise almost contact structure on $\FF$ is homotopic to a genuine leafwise contact structure.
 \end{theorem}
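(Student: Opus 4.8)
The plan is to reduce the statement to a local, parametric, relative $h$-principle for contact structures on a ball and then to globalize it over the foliation $\FF$ by the standard leaf-chart induction of \cite{Gro69}. The local input needed is the following: for a finite cell complex $K$ and a closed set $A\subset\R^{2n+1}$, every family $\{\xi_k\}_{k\in K}$ of almost contact structures on $\R^{2n+1}$ that are genuine contact structures on $\Op A$ is homotopic, through such families and rel $A$, to a family of genuine contact structures, with the homotopy kept fixed over any closed subcomplex $K_0\subset K$ on which the $\xi_k$ are already genuine. Since $\R^{2n+1}$ is an open manifold, this is Gromov's parametric relative $h$-principle on open manifolds (Theorem~\ref{thm:Gromov-open}); it is equally an instance of the parametric form of Theorem~\ref{thm:main-existence}. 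If one feeds in Theorem~\ref{thm:main} instead of the open-manifold statement, the same scheme produces in addition a leafwise classification of overtwisted contact structures on $\FF$, refining the mere existence asserted here.

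For the globalization I would cover $V$ by a locally finite family $\{U_i\}_{i\ge1}$ of foliation charts $\Phi_i\colon U_i\xrightarrow{\ \sim\ } B^{2n+1}\times B^q$ taking the leaves of $\FF$ to the slices $B^{2n+1}\times\{z\}$, and shrink it to compacta $K_i\Subset U_i$ still covering $V$. The homotopy is built by induction on $i$, maintaining the property that the current leafwise almost contact structure is a genuine leafwise contact structure on $\Op(K_1\cup\dots\cup K_{i-1})$. At stage $i$, choose a closed neighborhood $A$ of $K_1\cup\dots\cup K_{i-1}$ contained in the open set where the current structure is already genuine; reading $U_i\cong B^{2n+1}\times B^q$ and applying the local input with $K=B^q$ the transverse parameter and closed set $A\cap U_i$, homotope the structure inside $U_i$, rel $A$, to one that is a genuine contact structure on every slice of $U_i$; finally damp this homotopy by a cutoff function on $U_i$ equal to $1$ near $K_i$ and vanishing near the frontier of $U_i$, so that it becomes the identity outside a compact subset of $U_i$ and extends by the identity to all of $V$. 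After stage $i$ the structure is genuine on $\Op(K_1\cup\dots\cup K_i)$, and performing the stage-$i$ homotopies successively on the intervals $[1-2^{-(i-1)},\,1-2^{-i}]$ yields, by local finiteness of the cover, a homotopy of leafwise almost contact structures from the given one to a genuine leafwise contact structure, since every point of $V$ lies in some $K_i$ and is left untouched from stage $i$ onward. Running the same argument while keeping everything fixed near a prescribed closed $\FF$-saturated set already carrying a genuine leafwise contact structure yields the obvious relative version.

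The step that requires genuine care is the bookkeeping of the inductive stage: the leaf-chart $h$-principle must be carried out with parameters in the transverse disc and relative to the a priori arbitrary, $z$-dependent region in which the structure is already a contact structure, and the resulting homotopy must be localized in the interior of the chart so as to patch with the identity. Both are possible precisely because Theorem~\ref{thm:main-existence} (and Gromov's open-manifold theorem) imposes no hypothesis on the ambient manifold or on the closed set and so applies to a ball relative to an arbitrary closed subset, and because a homotopy $\xi_t$ of almost contact structures can always be damped to $x\mapsto\xi_{\rho(x)t}(x)$ by a cutoff function $\rho$ without leaving the class of leafwise almost contact structures, being genuine wherever $\rho\equiv1$ and unchanged wherever $\rho\equiv0$. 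No input beyond Theorems~\ref{thm:main-existence} and~\ref{thm:main} (equivalently, Gromov's parametric $h$-principle on open manifolds) enters.
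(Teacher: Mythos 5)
There is a genuine gap, and it is precisely the point the whole paper is built to address: your ``local input'' is not available in the generality you need. Gromov's Theorem~\ref{thm:Gromov-open} requires the pair $(M,A)$ to be \emph{relatively open}, i.e.\ every point of $M\setminus A$ must escape to $\partial M$ or to infinity through the complement of $A$. In your chart-by-chart induction the set $A$ (the region already made genuine) will, at some stage, surround a leafwise hole inside a slice $B^{2n+1}\times\{z\}$ --- this is unavoidable when $\FF$ has closed leaves (e.g.\ a fibration with closed fibers), and in general it happens at the terminal stages of any exhaustion --- and then the hypothesis of Theorem~\ref{thm:Gromov-open} fails. Nor can you substitute ``the parametric form of Theorem~\ref{thm:main-existence}'': Theorem~\ref{thm:main-existence} is non-parametric, and its parametric version, Theorem~\ref{thm:main-existence-T}, requires a \emph{fixed overtwisted disc} in $M\setminus A$ common to all structures in the family. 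In fact the relative parametric statement you formulate is false as stated already for $n=1$: taking $K=[0,1]$, $K_0=\partial K$, and $A$ the complement of a ball, it would imply that a tight and an overtwisted structure on $B^3$ with the same boundary germ and the same formal class are isotopic rel boundary, contradicting Bennequin/\cite{Eli89}. So the inductive step has no tool to fill the parametric leafwise holes, and the ``equivalence'' you assert between Theorems~\ref{thm:main-existence}, \ref{thm:main} and Gromov's open-manifold theorem does not hold --- the former are strictly stronger and carry overtwistedness hypotheses you never arrange.

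The paper's proof supplies exactly the missing ingredients. First, Theorem~\ref{thm:existence-fol} is reduced to Theorem~\ref{thm:classif-fol} by homotoping the leafwise almost contact structure so that it is genuine and overtwisted on a finite family of embedded fibered balls $h_i:T_i\times B\to V$ meeting every leaf (an overtwisted basis). Then, in the proof of Theorem~\ref{thm:classif-fol}, one triangulates transversely to $\FF$, applies Gromov's theorem only near the codimension-one skeleton (where relative openness does hold), reduces the top-dimensional simplices to fibered holes via Proposition~\ref{prop:to-saucers-param}, and --- this is the step your scheme has no analogue of --- connects each fibered hole by fiberwise embedded transverse paths (Lemma~\ref{lm:choosing-paths}, with disjointness and an injective assignment of holes to overtwisted balls, using Proposition~\ref{prop:neighb-ot} to multiply the latter) to the overtwisted basis, so that Theorem~\ref{thm:main-existence-T} can be applied to disjoint neighborhoods of hole $\cup$ path $\cup$ overtwisted ball. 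Your cutoff-damping and exhaustion bookkeeping are fine as far as they go, but without introducing overtwisted discs into the foliation and routing each hole to one, the construction cannot be completed.
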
 

A leafwise contact structure $\xi$ on a codimension $q$ foliation $\FF$ on a manifold $V$ of dimension $2n+1+q$ is called {\em overtwisted} if there exist disjoint embeddings 
$$
\mbox{$h_i:T_i\times B \to V$ for $i=1,\dots, N$,}
$$
where $(B, \zeta)$ is a $(2n+1)$-dimensional overtwisted contact ball and each $T_i$ is a compact $q$-dimensional manifold with boundary,
such that 
\begin{itemize}
\item each leaf of $\FF$ is intersected by one of these embeddings, and
\item for each $i=1,\dots, N$ and $\tau\in T_i$ the restriction $h_i|_{\tau \times B}$ is a contact embedding
of $(B, \zeta)$ into some leaf of $\FF$ with its contact structure.
\end{itemize}
The set of embeddings $h_1,\dots, h_N$ is called an {\em overtwisted  basis} of the overtwisted leafwise contact structure $\xi$ on $\FF$.

For a closed subset $A \subset V$, let $\xi_0$ be a leafwise contact structure on $\FF|_{\Op A}$, and let
$h_i:T_i\times B \to V\setminus A$ for $i=1,\dots, N$ be a collection of disjoint embeddings.
Define $$\Cont_\ot(\FF;A,\xi_0,h_1,\dots, h_N)$$ to be the space of leafwise contact structures   $\FF$ that coincide with 
$\xi_0$ over $\Op A$ and such that $\{h_i\}_{i\in\{1,\dots, N\}}$ is an overtwisted basis for $\FF_{V\setminus A}$.  Define 
$$\cont_\ot(\FF;A,\xi_0,h_1,\dots, h_n))$$ to be the analogous space of leafwise almost contact structures on $\FF$.
 
 \begin{theorem}\label{thm:classif-fol}
The inclusion map  $$\Cont_\ot(\FF;A,\xi_0,h_1,\dots, h_N)\to \cont_\ot(\FF;A,\xi_0,h_1,\dots, h_N)$$ is a (weak) homotopy  equivalence. 
 \end{theorem}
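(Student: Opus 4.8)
The plan is to derive Theorem~\ref{thm:classif-fol} from the parametric non-foliated classification Theorem~\ref{thm:main} by a localization argument over the transverse directions of $\FF$. To prove that the inclusion is a weak homotopy equivalence it suffices to show that for every $k\ge 0$ and every map $\Xi\colon D^k\to\cont_\ot(\FF;A,\xi_0,h_1,\dots,h_N)$ with $\Xi(\partial D^k)\subset\Cont_\ot(\FF;A,\xi_0,h_1,\dots,h_N)$ there is a homotopy of $\Xi$ inside $\cont_\ot(\FF;A,\xi_0,h_1,\dots,h_N)$, fixed on $\partial D^k$, ending at a map into $\Cont_\ot(\FF;A,\xi_0,h_1,\dots,h_N)$. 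Writing $\Xi(z)=\xi^z$, this amounts to deforming the family $\{\xi^z\}_{z\in D^k}$, relative to $\partial D^k$ and through leafwise almost contact structures which coincide with $\xi_0$ over $\Op A$ and for which $\{h_i\}$ stays a formal overtwisted basis of $\FF|_{V\setminus A}$, into a family of genuine leafwise contact structures.

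First I would normalize the family near the overtwisted basis. After subdividing the $T_i$ we may assume $\bigcup_i h_i(T_i\times B)$ has a neighborhood $U_0$ which is a union of foliated charts, thin enough that each plaque of $U_0$ contains an overtwisted ball $h_i(\{\tau\}\times B)$. Since each $h_i|_{\tau\times B}$ is, by hypothesis, a formal contact embedding of $(B,\zeta)$, the family $\{\xi^z\}$ is on these images homotopic, through leafwise almost contact structures, to the push-forwards $(h_i|_{\tau\times B})_*\zeta$; such a homotopy, being one of \emph{almost} contact structures, extends to a homotopy of the global leafwise structure supported in $U_0$ and is constant where the family is already genuine, in particular over $\partial D^k$ and over $\Op A$. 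After it, every $\xi^z$ equals $(h_i)_*\zeta$ on the basis region, hence is a genuine leafwise contact structure over $\Op U_0$ with the genuine overtwisted disc $h_i(\{\tau\}\times D_\ot)$ in each plaque of $U_0$.

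The core of the argument is the globalization. Because $\{h_i\}$ is an overtwisted basis, every leaf of $\FF$ meets $\bigcup_i h_i(T_i\times B)\subset U_0$; joining an arbitrary point of $V\setminus A$ to the corresponding ball by a path inside its leaf and taking a thin tubular neighborhood of that path, thickened in the leaf directions at the $U_0$-end, produces a foliated chart each of whose plaques contains a genuine overtwisted disc of the current leafwise structure. By compactness --- or, for non-compact $V$, by an exhaustion --- $V\setminus\Op A$ is covered by finitely many such adapted foliated charts $U_1,\dots,U_m$. Now induct on $j$: writing $U_j\cong M_0\times P$ with leaves $M_0\times\{c\}$, the family $\{\xi^z\}$ restricts to a family of almost contact structures on $M_0$ parametrized by $D^k\times P$ which, over $A_j:=\Op\!\bigl(A\cup U_0\cup U_1\cup\dots\cup U_{j-1}\bigr)\cap U_j$ together with the locus $\partial D^k$, is already a genuine overtwisted contact structure containing, plaque by plaque, a fixed overtwisted disc lying in $A_j$. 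Applying Theorem~\ref{thm:main} (its $\phi$-fixed, parametric form, fibered over $P$, with $A_j$ in the role of $A$) deforms this family, rel everything already fixed, into one that is genuine leafwise contact over $\Op(A\cup U_0\cup\dots\cup U_j)$. After $m$ steps the family lies in $\Cont_\ot$, and since every deformation was performed relative to $\Op A$, to $\partial D^k$, and to $U_0$, the collection $\{h_i\}$ remains an overtwisted basis throughout.

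I expect the main obstacle to be making the globalization precise: constructing the adapted cover via leaf-paths and holonomy so that at every stage $j$ each plaque of $U_j$ genuinely contains an overtwisted disc of the \emph{current} leafwise structure --- this is exactly the point where the overtwisted basis hypothesis takes over the role played by the single fixed disc in the non-foliated Theorem~\ref{thm:main} --- and checking that the plaquewise, $P$-fibered applications of Theorem~\ref{thm:main} assemble into a continuous homotopy of leafwise structures that is compatible along the chart overlaps. This also uses the variant of Theorem~\ref{thm:main} in which the prescribed overtwisted disc is allowed to lie inside the already-genuine set $A$; that variant is immediate from the proof of Theorem~\ref{thm:main}, but it should be isolated as a lemma before the induction is run.
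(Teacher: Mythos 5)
Your overall strategy --- normalize near the overtwisted basis, then reduce to the parametric extension result (Theorem~\ref{thm:main-existence-T}) applied fiberwise over the transverse directions, using leafwise paths from the holes to the basis --- is the same in spirit as the paper's. But the step you yourself flag as ``the main obstacle'' is exactly where the argument as written breaks, and the paper's proof contains a specific mechanism, absent from your outline, that is needed to get past it. In your induction over foliated charts $U_j\cong M_0\times P$ you want to invoke Theorem~\ref{thm:main-existence-T} with parameter space $D^k\times P$ and with $A_j$ in the role of $A$. That theorem requires the fibered structure to be genuine over $\p T\times M$, i.e.\ here over $D^k\times \p P\times M_0$ (and you also need the resulting homotopy to be fixed near $\p U_j$ in order to splice it into a deformation of the global leafwise structure). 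Neither condition holds: away from $A_j$ the structure near the transverse boundary $\p P$ and near the leafwise boundary of the plaques is only almost contact, so the hypotheses of the theorem fail, and a deformation that is not relative to $\Op\,\p U_j$ does not extend by the identity to $V$. Chart overlaps do not repair this, because at every stage the interface between the handled and unhandled regions is never made genuine.

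The paper resolves precisely this point by an extra step: it triangulates $T\times V$ by a triangulation transverse to the (thickened) foliation, with the basis neighborhoods and $\p T\times V$ as subcomplexes, and applies Gromov's $h$-principle (Theorem~\ref{thm:Gromov-open}) inductively over the skeleta to make the structure genuine near the codimension-one skeleton. After that, the remaining problem lives on \emph{disjoint} fibered balls whose boundary germs are already genuine contact, so Proposition~\ref{prop:to-saucers-param} and then Theorem~\ref{thm:main-existence-T} apply with their hypotheses actually satisfied, and the local deformations trivially assemble because their supports are disjoint. Two further ingredients of the paper's proof are also missing from your sketch and are not cosmetic: the fiberwise connecting paths from the holes to the basis must be chosen disjoint from each other and from all other holes, which is the content of Lemma~\ref{lm:choosing-paths} (together with Gromov's $h$-principle for transverse paths), and one must have enough pairwise disjoint overtwisted balls so that distinct holes are fed by distinct basis balls, which is arranged via Proposition~\ref{prop:neighb-ot}. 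Finally, your plaquewise application of Theorem~\ref{thm:main} uses an overtwisted disc that varies with the transverse parameter, which is not literally the statement of that theorem; this needs either a parameter-dependent-disc variant or, as in the paper, an application to a fibered neighborhood containing the family of basis balls, and should be justified rather than assumed. As it stands, the proposal identifies the right target but leaves the decisive localization step unproved.
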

  \begin{remark}\label{rem:foliations}{\rm 
   If $V$ is closed then an analog of Gray-Moser's theorem still holds even though the leaves could be non-compact. Indeed, the leafwise vector field produced by Moser's argument  is integrable because $V$ is compact, and hence it generates the flow realizing the prescribed deformation of the leafwise contact structure.  Therefore, a homotopical classification of leafwise contact structures coincides with their isotopical classification. 
}
\end{remark}
  
{\bf Plan of the paper.}  
 Because of Gromov's $h$-principle for contact structures on open manifolds, the entire problem can be reduced to a local extension problem of when a germ of a contact structure on the $2n$-sphere $\p B^{2n+1}$
can be extended to a contact structure on $B^{2n+1}$.
Our proof is based on the two main results: Proposition  \ref{prop:unique-model}, which reduces the extension problem to a unique model in every dimension, and Proposition \ref{p:ConnectSumDisc}, which provides an extension of the connected sum of this universal  model with a neighborhood of an overtwisted  $2n$-disc  $D_\ot$ defined in Section \ref{sec:ot-disc}. 
We formulate Propositions  \ref{prop:unique-model} and \ref{p:ConnectSumDisc}   in Section \ref{sec:main-existence}, and then deduce from them Theorem \ref{thm:main-existence}. We then continue   Section \ref{sec:main-existence} 
with Propositions \ref{prop:unique-model-param} and  \ref{p:ConnectSumDisc-param}   which are parametric analogs of the preceding propositions, and then prove Theorem \ref{thm:main} and Corollary \ref{cor:isocontact}. The proofs of Theorems \ref{thm:existence-fol} and
\ref{thm:classif-fol}, concerning leafwise contact structures on a foliation, are postponed till  Section \ref{sec:foliations-proofs}.

In Section \ref{sec:conjugation} we   study  the notion of domination between contact shells and prove Proposition \ref{prop:disorder} and its corollary  Proposition~\ref{prop:shallow}, which can be thought of as certain disorderability results for  the group of contactomorphisms of  a contact ball.   These results are used in an essential way   in the proof of Propositions \ref{prop:unique-model} and \ref{prop:unique-model-param} in Section \ref{sec:universal}.
We prove the main extension results,  Propositions \ref{p:ConnectSumDisc} and \ref{p:ConnectSumDisc-param},
 in Section~\ref{sec:tripling}.      
  
   Propositions \ref{prop:unique-model} and \ref{prop:unique-model-param} are proved in Section~\ref{sec:universal}. 
 This is done by gradually standardizing the extension problem in Sections~\ref{sec:holes}--\ref{sec:RedSaucers}. 
 First, in Section \ref{sec:holes} we reduce it to extension of   germs of contact structures induced by a certain family of immersions of $S^{2n}$ into the standard contact $\R^{2n+1}$. This part is fairly  standard, and  the proof uses the traditional  $h$-principle type techniques going back to Gromov's papers \cite{Gro69, Gro72}  and Eliashberg-Mishachev's  paper  \cite{EliMi97}.  
 In Section~\ref{sec:further} we show how the extension problem of Section~\ref{sec:holes} can be reduced to the extension of some special models determined by contact Hamiltonians.   Finally,  to complete the proof of 
 Propositions \ref{prop:unique-model} and \ref{prop:unique-model-param} we introduce in
 Section \ref{sec:universal}  equivariant coverings  and use them   to further reduce  the problem to just one   universal  extension model in any given dimension. 

The final Section \ref{sec:discussion} is devoted to further comments regarding the overtwisting property. We also provide an explicit classification of overtwisted contact structures on spheres.
 
\begin{displaymath}
\xymatrix{
\text{Thm } \ref{thm:main-existence}  && \text{Prop } \ref{prop:neighb-ot} \ar[ll] \ar[r] & \text{Thm }\ref{thm:main-existence-T} \ar[r] & \text{Thm } \ref{thm:main} \\ 
&\text{Prop } \ref{p:ConnectSumDisc} \ar[ul] & \text{Lemma } \ref{lem:Scale} \ar[u] & \text{Prop } \ref{p:ConnectSumDisc-param}\ar[u] & \text{Lemma } \ref{lm:choosing-paths} \ar[ul] \\
\text{Prop } \ref{prop:unique-model} \ar[uu] && \text{Prop } \ref{p:ContEmb} \ar[ul] \ar[r] & \text{Prop } \ref{p:LocalConnectSumDisc-param} \ar[u] & \text{Prop } \ref{prop:unique-model-param} \ar[uul]\\
&& \S \ref{sec:equi-cov} \ar[ull] \ar[r] & \S \ref{sec:standard-param} \ar[ur] \\
\text{Prop } \ref{prop:to saucers} \ar[uu] & \text{Prop } \ref{prop:saucer-to-circle} \ar@2{->}[uul] & \S \ref{ssec:disorder} \ar[u] \ar[ur] & \text{Prop } \ref{prop:saucer-to-circle-param} \ar[u] \ar[uur] & \text{Prop } \ref{prop:to-saucers-param} \ar[uu] \\
\text{Lemma }\ref{lm:annuli-to-saucers} \ar[u] && \text{Thm } \ref{thm:Gromov-open} \ar[ull] \ar[urr]
}
\end{displaymath}

 
The above diagram outlines the logical dependency of the major propositions in the paper. Notice that the left three columns together give the proof of Theorem~\ref{thm:main-existence}, whereas the right three columns together prove Theorem~\ref{thm:main}.  The double arrow between Propositions~\ref{prop:saucer-to-circle} and \ref{prop:unique-model} indicates that \ref{prop:saucer-to-circle} is used in the proof of \ref{prop:unique-model}  twice in an essential way. The diagram is symmetrical about the central column, in the sense that any two propositions which are opposite of each other are parametric/non-parametric versions of the same result.

 
 {\bf Acknowledgements.}
 After the first version of this paper was posted on arXiv many mathematicians send us their comments and corrections. We are very grateful to all of them, and  especially  to Kai Cieliebak, Urs Fuchs and Janko Latschev.
 
\section{Basic notions}\label{sec:basic}
 \subsection{Notation and conventions}\label{sec:notation}
Throughout the paper, we will often refer to discs of dimension $2n-1$, $2n$, and $2n+1$. For the sake of clarity, we will always use the convention $\dim B=2n+1$, $\dim D = 2n$, and $\dim \Delta = 2n-1$. When we occasionally refer to discs of other dimensions we will explicitly write their dimension as a superscript, e.\,g. $D^{m}$. All discs will be assumed      diffeomorphic to closed balls, \emph{with possibly piecewise smooth boundary}.  

Functions, contact structures, etc, on a subset $A$ of a manifold $M$ will always be assumed given on a neighborhood $\Op\,A\subset M$.
Throughout the paper, the notation $I$ stands for the  interval $I= [0,1]$ and $S^1$ for   the circle $S^1 = \R/\Z$. The notation $A \Subset B$ stands for compact inclusion, meaning $\ol A \subset \Int B$.

As the standard model  of contact structures in $\R^{2n-1}=\R\times(\R^2)^{n-1}$ we choose
$$\xi_\st:=\left\{\lambda_\st^{2n-1}:=dz+\sum_{i=1}^{n-1} u_id\varphi_i=0\right\}$$ where
$(r_i,\varphi_i)$ are polar coordinates in copies of $\R^2$ with $\varphi_i \in S^1$ and $u_i:=r_i^2$. 
We always use the contact form $\lambda_\st^{2n-1}$ throughout the paper. 
On $\R^{2n+1}$ we will use two equivalent  contact structures, both defined by 
$$
	\xi_\st:= \{\lambda_\st^{2n-1} + vdt = 0\}
$$  
where the coordinates $(v,t)$ have two possible meanings.  For $\R^{2n-1} \times \R^2$ we will take
$v := r^2$ and $t \in S^1$ where $(r,t)$ are polar coordinates on $\R^2$, while for
$\R^{2n-1} \times T^*\R$ we will take $v := -y_n$ and $t:= x_n$.
In each case it will be explicitly clarified which model contact structure is  considered.

A compact domain in $(\R^{2n-1}, \xi_\std)$ will be called {\em star-shaped} if its boundary is transverse to the contact vector field $Z
 = z\frac{\p}{\p z}+\sum_{i=1}^{n-1}u_i\frac{\p}{\p u_i}$. An abstract contact $(2n-1)$-dimensional closed ball will be called star-shaped if it is contactomorphic to a star-shaped domain in $(\R^{2n-1}, \xi_\st)$. 

 A hypersurface $\Sigma \subset (M, \xi = \ker \lambda)$ in a contact manifold
  has a singular $1$-dimensional
  {\em characteristic distribution} $\ell\subset T\Sigma\cap\xi$, defined to be the kernel of the $2$-form 
  $d\lambda|_{T\Sigma\cap\xi}$, with singularities where $\xi = T\Sigma$.
  The distribution $\ell$ integrates to a singular characteristic foliation $\FF$ with a transverse contact structure, that is contact structures on hypersufaces $Y \subset \Sigma$ transverse to 
   $\FF$,
   which is invariant with respect to monodromy along the leaves of $\FF$.
  The characteristic foliation $\FF$ and its transverse contact structure determines the germ of $\xi$ along $\Sigma$ up to a diffeomorphism fixed on $\Sigma$.

\subsection{Shells}\label{sec:shells}
We will need below some specific models for germs of contact structures along the boundary sphere  of a
$(2n+1)$-dimensional ball $B$ with {\em piecewise smooth} (i.e.\ stratified by smooth submanifolds) boundary, extended to $B$ as   {\em
 almost contact} structures.\footnote{We always view these balls as domains in a larger manifolds so the germs of contact structures along $\p B$  are assumed to be slightly  extended outside of $B$.} 
   
 A \emph{contact shell} will be an almost contact structure $\xi$ on a ball $B$ such that $\xi$ is genuine
near $\p B$.   A contact shell  $(B,\xi)$ is called {\em solid} if $\xi$ is  a genuine contact structure. 
An {\em equivalence} between two contact shells  $(B,\xi)$ and $(B',\xi')$ is a diffeomorphism $g:B\to B'$ such that $g_*\xi$ coincides with $\xi'$ on $\Op\p B'$ and $g_*\xi$ is homotopic   to $\xi'$ through almost contact structures fixed on $\Op\p B'$.  

 \begin{figure}\begin{center}
\includegraphics[scale=.5]{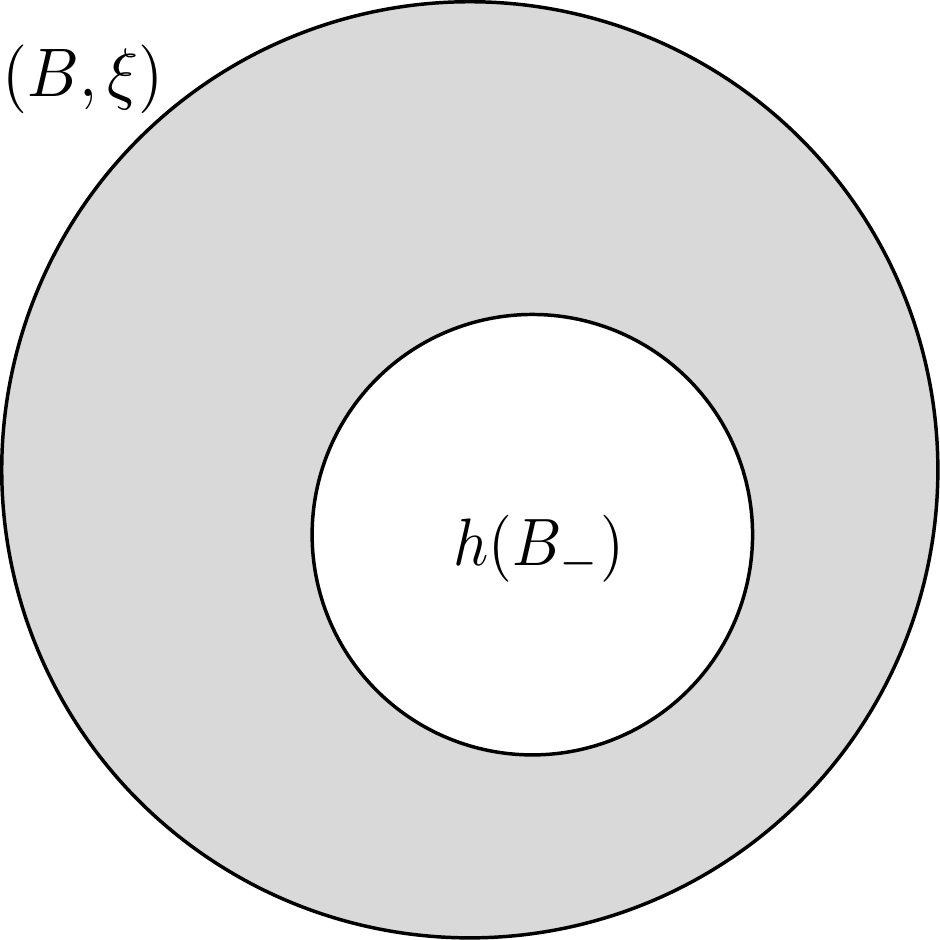}
\caption{Domination of contact shells, where $\xi$ is genuine in a neighborhood of the gray region and $\xi|_{h(B_-)} \cong \xi_-$
as almost contact structures.}\label{fig: domination}
\end{center}\end{figure}
  
Given two shells $\zeta_+=(B_+,\xi_+)$ and $\zeta_-=(B_-,\xi_-)$ we say that 
$\zeta_+$ {\em dominates} $\zeta_-$ if there exists both
\begin{itemize}
\item a shell $\wt\zeta=\left( B, \xi\right)$ with an equivalence $g:(B,\xi)\to (B_+,\xi_+)$ of contact shells,
\item an embedding $h:  B_- \to  B$ such that $h^*\xi=\xi_-$ and
   $ \xi$ is a genuine  contact structure on $ B \setminus \Int h(B_-).$
  \end{itemize}
   We will refer to the composition 
   $g\circ h:(B_-,\xi_-)\to (B_+,\xi_+)$ as a {\em subordination} map.
Notice that, if $ (B_+,\xi_+)$ dominates $(B_-,\xi_-)$ and $(B_-,\xi_-)$ is solid, then $ (B_+,\xi_+)$ is equivalent to a solid shell. If  both  shells $(B_-,\xi_-)$ and $ (B_+,\xi_+)$ are solid, then the subordination map is called {\em solid} if it is a contact embedding.

  
     A \emph{gluing place} on a contact shell $(B, \xi)$ is a smooth point $p \in \p B$ where  $T_{p}\p B = \xi|_{p}$.
     Given two gluing places $p_i \in (B_i, \xi_i)$ on contact shells,
     the standard topological boundary connected sum construction can be performed in a  straightforward way
     at the points $p_i$ to produce a contact shell $(B_0\#B_1,\xi_0\#\xi_1)$, which we will call the {\em boundary connected sum}
of the shells  $(B_i,\xi_i)$ at the boundary  points $p_i$.
%
%
We refer the reader to Section \ref{sec:BCS} for precise definitions, and only say here that we can make the shells 
$(B_i,\xi_i)$ isomorphic near $p_i$ via an orientation reversing diffeomorphism by a $C^1$-perturbation of the shells that fix the contact planes $\xi_i|_{p_i}$.

\subsection{Circular model shells}\label{sec:ContactHamShell}

Here we will describe a contact shell model associated to contact Hamiltonians, which will play a key role in this paper
for it is these models that we will  use to define overtwisted discs.   .

Let $\Delta \subset \R^{2n-1}$ be a compact star-shaped domain and consider a smooth function
\begin{equation}\label{e:KBoundary}
	K: \Delta \times S^1 \to \R \quad\mbox{with}\quad K|_{\partial \Delta \times S^1} > 0.
\end{equation}
Throughout the paper we will use the notation $(K, \Delta)$ to refer to a such a contact Hamiltonian 
on a star-shaped domain.
For a constant $C \in \R$, we can define a piecewise smooth $(2n+1)$-dimensional ball associated to $(K, \Delta)$ by
\begin{equation}\label{e:S1ball}
	B_{K,C} := \{(x,v,t) \in \Delta \times \R^2 : v \leq K(x,t)+C\} \subset \R^{2n-1} \times \R^2
\end{equation}
provided $C + \min(K) > 0$.
\begin{figure}[h]
   \centering
   \def\svgwidth{400pt}
   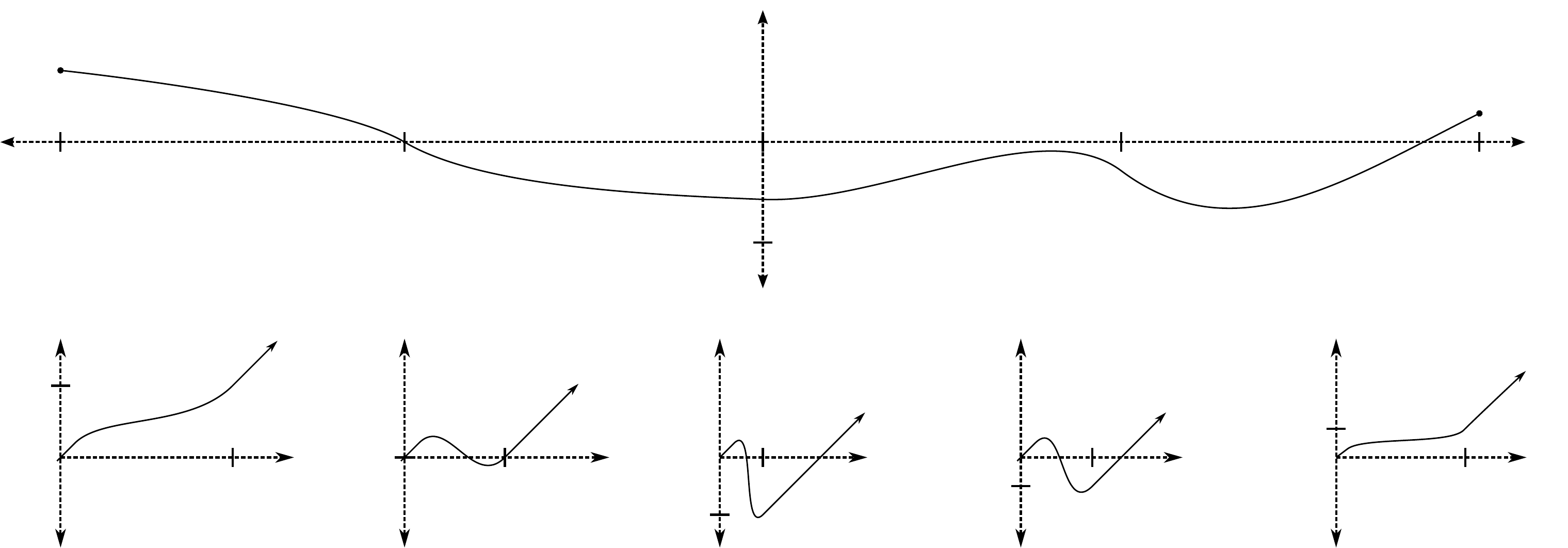
   \caption{A family of functions $\rho_z$ for the Hamiltonian $K: [-2,2] \to \R$.
   The hash mark on the vertical axis is at $\rho_z = K(z)$ and the hash mark on the horizontal axis is at $v = K(z) + C$.}
   \label{f:Rho}
  \label{f:embedHamiltonian2a}
\end{figure}
Now consider a smooth family of functions
\begin{equation}\label{e:FamilyRhoS1}
	\rho_{(x,t)}: \R_{\geq 0} \to \R \quad\mbox{for}\quad (x,t) \in \Delta \times S^1 \quad\mbox{such that}
\end{equation}
\begin{enumerate}
\item $\rho_{(x,t)}(0) = 0$ for all $(x,t) \in \Delta \times S^1$\,,
\item $\rho_{(x,t)}(v) = v - C$ when $(x,v,t) \in \Op\{v = K(x,t) +C\}$\,, and
\item $\p_v \rho_{(x,t)}(v) > 0$ when $(x,v,t) \in \Op\{v \leq K(x,t) + C, x \in \p\Delta\}$\,.
\end{enumerate}
See Figure~\ref{f:Rho} for a schematic picture of such a family of functions.
Such a family, which exists by \eqref{e:KBoundary}, defines an almost contact structure on $B_{K, C}$ given by
\begin{equation}\label{S1ContactForm}
	\eta_{K, \rho} := (\alpha_\rho, \omega) \quad\mbox{with}\quad \alpha_\rho := \lambda_{\st} + \rho\,dt
	\mbox{ and } \omega := d\lambda_{\st} + dv\wedge dt
\end{equation}
where smoothness and condition (i) ensures $\alpha_\rho$ is a well defined $1$-form.
\begin{figure}
   \centering
   \def\svgwidth{150pt}
   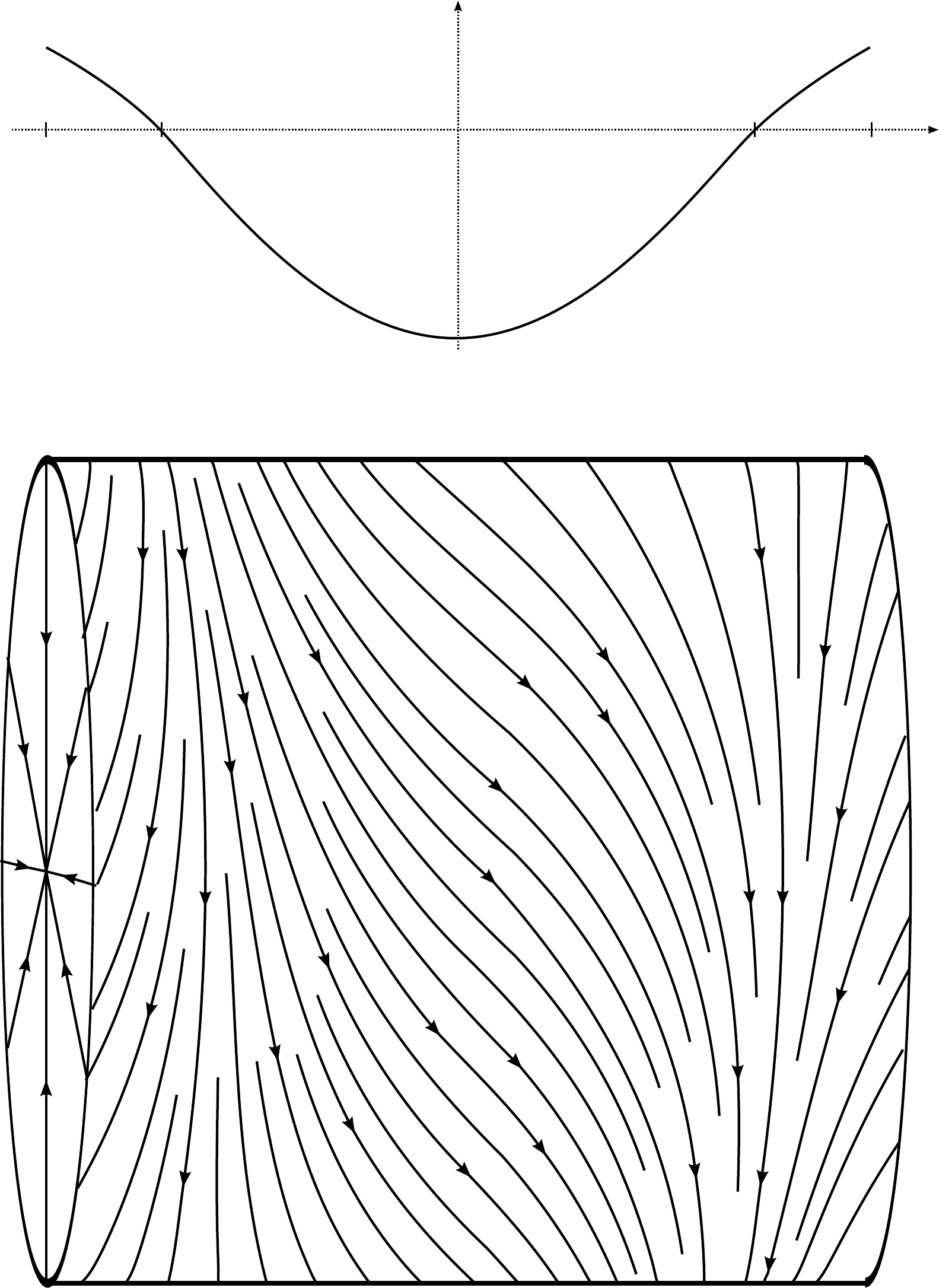
   \caption{The characteristic foliation on the piecewise smooth sphere $\partial B_K$.}
  \label{f:poly}
\end{figure} 
\begin{lemma}\label{l:CircleShell}
	The pair $(B_{K,C}, \eta_{K,\rho})$ is a contact shell, which up to 
	equivalence is independent of the choice of $\rho$ and $C$.
\end{lemma}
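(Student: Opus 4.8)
The plan is to verify the contact shell property directly from the definitions and then handle the two independence claims in turn. First I would check that $(B_{K,C},\eta_{K,\rho})$ is a contact shell, i.e.\ that $\eta_{K,\rho}$ is a genuine contact structure near $\partial B_{K,C}$. The boundary $\partial B_{K,C}$ decomposes into the ``vertical'' part over $\partial\Delta$, where $x\in\partial\Delta$, and the ``graph'' part $\{v=K(x,t)+C\}$. On a neighborhood of the graph part, condition (ii) gives $\rho_{(x,t)}(v)=v-C$, so $\alpha_\rho=\lambda_\st+(v-C)\,dt$; this is (a translate in $v$ of) one of the standard contact forms $\lambda_\st^{2n-1}+v\,dt$ on $\R^{2n-1}\times\R^2$ listed in Section~\ref{sec:notation}, hence genuine contact there, and one checks $\omega=d\alpha_\rho$ on this region. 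On a neighborhood of the vertical part, condition (iii) gives $\partial_v\rho_{(x,t)}>0$, and a direct computation of $\alpha_\rho\wedge(d\alpha_\rho)^n$ using $\alpha_\rho=\lambda_\st+\rho\,dt$ shows that the contact condition reduces to the positivity of $\partial_v\rho$ (the $dv\wedge dt$ term in $d\alpha_\rho$ is $\partial_v\rho\,dv\wedge dt$ plus terms involving $dx$, and pairing against $(d\lambda_\st)^{n-1}$, which is nondegenerate on the $\R^{2n-1}$ factor, isolates exactly this coefficient). So $\alpha_\rho$ is genuine contact near all of $\partial B_{K,C}$, and $\eta_{K,\rho}=(\alpha_\rho,\omega)$ is an almost contact structure on all of $B_{K,C}$ because condition (i) makes $\alpha_\rho$ globally well defined and $\omega=d\lambda_\st+dv\wedge dt$ is manifestly nondegenerate on $\xi=\ker\alpha_\rho$ everywhere.

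Next I would prove independence of the choice of $\rho$, for fixed $C$. Given two admissible families $\rho^0$ and $\rho^1$, the straight-line homotopy $\rho^s_{(x,t)}:=(1-s)\rho^0_{(x,t)}+s\rho^1_{(x,t)}$ again satisfies conditions (i)--(iii): (i) and (ii) are affine constraints and hence preserved under convex combination, and (iii) is preserved because the set of functions with everywhere-positive $v$-derivative on the relevant region is convex. Thus $\eta_{K,\rho^s}=(\lambda_\st+\rho^s\,dt,\ \omega)$ is a path of almost contact structures on $B_{K,C}$, genuine near $\partial B_{K,C}$ throughout, and equal to $\eta_{K,\rho^0}$, $\eta_{K,\rho^1}$ at the endpoints; moreover near $\partial B_{K,C}$ the forms agree with each other (by (ii) on the graph part) or at least stay contact (by (iii) on the vertical part). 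Taking the identity diffeomorphism $g=\id_{B_{K,C}}$, this exhibits an equivalence of contact shells in the sense of Section~\ref{sec:shells}, provided we are slightly careful that the homotopy is fixed on $\Op\,\partial B_{K,C}$: one arranges this by first isotoping $\rho^0$ and $\rho^1$ to agree near $\partial B_{K,C}$ (on the graph part both equal $v-C$ by (ii) already; on the vertical part interpolate within the convex class of functions with positive $v$-derivative, keeping the boundary values), which is harmless since such an isotopy is itself through admissible families.

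Finally, independence of $C$: given $C_0<C_1$ (both admissible), the vertical translation $\Psi(x,v,t)=(x,v+C_1-C_0,t)$ carries $B_{K,C_0}$ diffeomorphically onto $B_{K,C_1}$ and pulls $\lambda_\st+(v-C_1)\,dt$ back to $\lambda_\st+(v-C_0)\,dt$; transporting an admissible family $\rho$ for $C_0$ through $\Psi$ yields an admissible family for $C_1$, so $\Psi$ is an equivalence of shells between $(B_{K,C_0},\eta_{K,\rho})$ and $(B_{K,C_1},\eta_{K,\Psi_*\rho})$, and then the $\rho$-independence just proved lets us replace $\Psi_*\rho$ by any other admissible family for $C_1$. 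For general $C_0,C_1$ one connects them through admissible constants (the admissibility condition $C+\min K>0$ defines an interval) and composes. The main obstacle is the care needed in the $\rho$-independence step to keep the homotopy genuinely contact near the \emph{vertical} part of the boundary — on the graph part everything is standard by condition (ii), but on the vertical part one must stay inside the convex cone $\{\partial_v\rho>0\}$ throughout, and must check that the initial normalization making $\rho^0,\rho^1$ agree near $\partial B_{K,C}$ can be done within that cone; this is where the convexity observations above do the real work.
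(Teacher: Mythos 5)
Your verification that $(B_{K,C},\eta_{K,\rho})$ is a contact shell is fine, but the two independence arguments have genuine gaps. For $\rho$-independence you take $g=\id$ together with the straight-line homotopy $\rho^s=(1-s)\rho^0+s\rho^1$. An equivalence of shells, however, requires $g_*\xi$ to \emph{coincide} with $\xi'$ on $\Op\,\p B'$ and the homotopy to be \emph{fixed} there. Near the graph part $\{v=K(x,t)+C\}$ the two forms do agree, but near the vertical part $\{x\in\p\Delta\}$ the families $\rho^0_{(x,t)}$ and $\rho^1_{(x,t)}$ are merely both increasing in $v$, so $\alpha_{\rho^0}\neq\alpha_{\rho^1}$ (indeed $\ker\alpha_{\rho^0}\neq\ker\alpha_{\rho^1}$) there; convexity of the condition $\p_v\rho>0$ keeps your homotopy contact on that region, but not constant, which is what the definition of equivalence demands. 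Your proposed repair --- first isotoping $\rho^0,\rho^1$ to agree near $\p B_{K,C}$ ``within the admissible class'' --- is circular: changing $\rho$ near the vertical boundary changes the boundary germ of the shell, and the claim that the modified shell is equivalent to the original one is exactly the statement being proved. The missing idea is a fiberwise reparametrization in the $v$-direction: choose diffeomorphisms $\phi_{(x,t)}$ of $\R_{\geq 0}$ with $\phi_{(x,t)}=(\rho^1)^{-1}_{(x,t)}\circ\rho^0_{(x,t)}$ on $\Op\{v=K(x,t)+C\}\cup\Op\,\p\Delta$, so that the induced $\Phi(x,v,t)=(x,\phi_{(x,t)}(v),t)$ satisfies $\Phi^*\alpha_{\rho^1}=\alpha_{\rho^1\circ\phi}$ with $\rho^1\circ\phi=\rho^0$ near the \emph{entire} boundary; only then is the straight-line homotopy from $\alpha_{\rho^1\circ\phi}$ to $\alpha_{\rho^0}$ fixed on $\Op\,\p B_{K,C}$, and $\Phi$ is an equivalence. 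This is precisely how the paper argues.

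The $C$-independence step has a related flaw: the global vertical translation $\Psi(x,v,t)=(x,v+C_1-C_0,t)$ is not a diffeomorphism of $B_{K,C_0}$ onto $B_{K,C_1}$. Since $v=r^2$ is a radial coordinate on the $\R^2$ factor, translation in $v$ does not extend over the core $\{v=0\}$ and its image misses the solid torus $\{v\leq C_1-C_0\}$; moreover any admissible family must satisfy $\rho_{(x,t)}(0)=0$, which the transported family would violate, so ``transporting $\rho$ through $\Psi$'' does not produce an admissible family. The paper's fix is again fiberwise: take diffeomorphisms $\psi_{(x,t)}:\R_{\geq 0}\to\R_{\geq 0}$ equal to $v\mapsto v+(C_1-C_0)$ only on $\Op\{v=K(x,t)+C_0\}$, which give an isomorphism $(B_{K,C_0},\alpha_{\rho_1\circ\psi})\to(B_{K,C_1},\alpha_{\rho_1})$, and then reduce to the $\rho$-independence case. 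So your overall plan (check the shell condition, then compare $\rho$'s, then compare $C$'s) matches the paper, but both comparison steps need reparametrizing diffeomorphisms in $v$ in place of the identity map and the global translation.
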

\begin{proof}
	One checks $\alpha_\rho$ satisfies the contact condition
	\begin{equation}\label{e:CC}
	\mbox{$\alpha_\rho \wedge (d\alpha_\rho)^n > 0$ whenever $\p_v \rho > 0$}
	\end{equation}
	and hence $(B_{K,C}, \eta_{K,\rho})$ is a contact shell by conditions (ii) and (iii).

	Consider the special case of two choices $\rho_0$ and $\rho_1$
	as in \eqref{e:FamilyRhoS1} for the same $C$.  We can pick a family of diffeomorphisms
	$\phi_{(x,t)}: \R_{\geq 0} \to \R_{\geq 0}$ such that
	$$
		\phi_{(x,t)}(v) = (\rho_1^{-1}\circ\rho_0)_{(x,t)}(v) \mbox{ on }
		\Op \{v = K(x,t) + C\} \cup \Op \partial\Delta
	$$
	and this family induces a diffeomorphism $\Phi: B_{K,C} \to B_{K,C}$ such that
	$\Phi^*\alpha_{\rho_1} = \alpha_{\rho_1 \circ \phi}$ and $\alpha_{\rho_1 \circ \phi} = \alpha_{\rho_0}$
	on $\Op\partial B_{K,C}$.  It follows that $\Phi$ is an equivalence between $\eta_{K, \rho_0}$ and $\eta_{K, \rho_1}$
	since we have the straight line homotopy from $\alpha_{\rho_1 \circ \phi}$ to $\alpha_{\rho_0}$.
	
	
	Given two choices $(C_0, \rho_0)$ and $(C_1, \rho_1)$, we can 
	pick a  family of diffeomorphisms  $\psi_{(x,t)}: \R_{\geq 0} \to \R_{\geq 0}$
	such that
	$$
	\psi_{(x,t)}(v) = 
	 v + (C_1 - C_0) 
	 \quad \mbox{ on $\Op \{v = C_0+K(x,t)\}$}.
	 $$  
	 Now to see $(B_{K, C_0}, \eta_{K, \rho_0})$ and $(B_{K, C_1}, \eta_{K, \rho_1})$
	 are equivalent, just note that $(B_{K, C_0}, \eta_{K, \rho_0})$ and $(B_{K, C_0}, \eta_{K, \rho_1 \circ \psi})$ are 
	 equivalent via the previous special case and $\psi$ induces
	 an isomorphism $\Psi: (B_{K, C_0}, \alpha_{\rho_1\circ\psi}) \to (B_{K, C_1}, \alpha_{\rho_1})$.
\end{proof}
	
We will use the notation $(B_{K, C}, \eta_{K, \rho})$ throughout the paper for this specific construction, though we will usually  drop
$C$ and $\rho$ from the notation and write $(B_K, \eta_{K})$ when the particular choice will be irrelevant.  We will refer to this contact shell as the \emph{circle model associated to $(\Delta, K)$}.

\begin{remark} {\rm This construction can produce a genuine contact structure on $B_K$ if and only if $K > 0$.
If $K > 0$, then from \eqref{e:CC} it follows $(B_{K, 0}, \eta_{K, \rho})$ for $\rho(v) = v$ is a contact ball.
Conversely if $K(x_0,t_0) \leq 0$ for some $(x_0, t_0)$, then $(B_{K,C}, \eta_{K, \rho})$ is
never contact by \eqref{e:CC} since 
	since conditions (i) and (ii) on $\rho$ force
	$\p_v\rho_{(x_0,t_0)}(v_0) = 0$ for some $v_0$.}
\end{remark}


The contact germ $(\p B_K, \eta_K)$ without its almost contact extension can be described more directly in the following way.
Consider the contact germs on the hypersurfaces
\begin{align*}
	\tilde\Sigma_{1,K} &= \{(x,v,t): v = K(x,t)\} \subset (\Delta \times T^*S^1, \ker(\lambda_{\st} + v\,dt)) \quad\mbox{and}\\
	\tilde\Sigma_{2,K} &= \{(x,v,t): 0 \leq v \leq K(x, t)\,,\,\, x \in \p \Delta\} \subset (\Delta \times \R^2, \ker(\lambda_{\st} + v\,dt)).
\end{align*}
These germs can be glued together via the natural identification between neighborhoods of their boundaries,
to form a contact germ $\tilde{\eta}_K$ on $\tilde{\Sigma}_K := \tilde\Sigma_{1,K} \cup \tilde\Sigma_{2,K}$.
\begin{lemma}\label{l:EasyCircleGerm}
	The contact germs $(\p B_K, \eta_K)$ and $(\tilde{\Sigma}_K, \tilde{\eta}_K)$ are contactomorphic.
\end{lemma}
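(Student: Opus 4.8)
The plan is to compare the two descriptions of the circle model boundary germ directly, identifying $\partial B_K$ as the union of two pieces and matching each piece with the corresponding $\tilde\Sigma_{i,K}$. Recall that $B_{K,C} = \{(x,v,t) \in \Delta\times\R^2 : v \le K(x,t)+C\}$, where $(r,t)$ are polar coordinates on $\R^2$ and $v = r^2$, and the ambient contact structure on $\R^{2n-1}\times\R^2$ is $\ker(\lambda_{\st} + v\,dt)$. Its piecewise-smooth boundary decomposes into the "lid" $\Sigma_1 = \{(x,v,t) : v = K(x,t)+C,\ x\in\Delta\}$ and the "side wall" $\Sigma_2 = \{(x,v,t) : 0 \le v \le K(x,t)+C,\ x\in\p\Delta\}$, meeting along the corner locus $\{v = K(x,t)+C,\ x\in\p\Delta\}$. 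I would carry out the identification in three steps.

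First, I treat the lid. On $\Sigma_1$ we have $v = K(x,t)+C$, so the translation $(x,v,t)\mapsto(x,v-C,t)$ carries $\Sigma_1$ onto $\tilde\Sigma_{1,K} = \{v = K(x,t)\} \subset \Delta\times T^*S^1$ with contact form $\lambda_{\st}+v\,dt$. Here one must check that this translation is a contactomorphism of germs: since $d(v-C)\wedge dt = dv\wedge dt$, the form $\lambda_{\st}+v\,dt$ pulls back to $\lambda_{\st}+(v-C)\,dt$, but on $\Sigma_1$ we should instead observe that the \emph{germ} of the contact structure along the hypersurface is what matters, and the shift intertwines the two ambient contact forms up to the translation in the $v$-direction, which is a contactomorphism of $(\Delta\times T^*S^1,\ker(\lambda_{\st}+v\,dt))$ onto itself away from the corner. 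The key point is that the characteristic foliation and its transverse contact structure — which by the discussion in Section~\ref{sec:notation} determine the germ up to a diffeomorphism fixed on the hypersurface — agree under this identification.

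Second, I treat the side wall. On $\Sigma_2$ we have $x\in\p\Delta$ and $0\le v\le K(x,t)+C$, and here $v=r^2$ is the honest radial coordinate on $\R^2$, so $\Sigma_2$ is literally a subset of $\Delta\times\R^2$ with the polar model contact form $\lambda_{\st}+v\,dt$; rescaling the $v$-range from $[0,K+C]$ to $[0,K]$ by a fiberwise diffeomorphism of $\R_{\ge 0}$ (as in the proof of Lemma~\ref{l:CircleShell}, e.g.\ composing with the family $\rho_{(x,t)}$ used to define $\eta_{K,\rho}$) identifies it with $\tilde\Sigma_{2,K}$ up to a contactomorphism, since such fiberwise reparametrizations in $v$ fixing $v=0$ and preserving the endpoint behavior preserve the contact condition \eqref{e:CC}.

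Third, I check compatibility along the corner: both gluings restrict, near the corner locus, to the same identification of a neighborhood of $\{v=K(x,t)+C,\ x\in\p\Delta\}$ inside $\partial B_K$ with a neighborhood of the common boundary $\p\tilde\Sigma_{1,K} = \p\tilde\Sigma_{2,K}$, because in both models the ambient contact form is $\lambda_{\st}+v\,dt$ and $v=r^2$ there. Hence the two partial contactomorphisms patch to a single contactomorphism $(\partial B_K,\eta_K)\to(\tilde\Sigma_K,\tilde\eta_K)$. The main obstacle I anticipate is bookkeeping at the corner: one must be careful that the "corner" of $\partial B_K$ is a genuine piecewise-smooth (stratified) hypersurface and that the characteristic foliations of the two smooth faces glue continuously there, matching Figure~\ref{f:poly}; but this is forced because both faces sit inside the \emph{same} ambient contact manifold $(\Delta\times\R^2,\ker(\lambda_{\st}+v\,dt))$ near the corner, so no genuine gluing choice is involved — the identification is the identity near the corner after the translation $v\mapsto v-C$, which is the identity to first order at $v=K+C$.
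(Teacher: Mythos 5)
There is a genuine gap, and it sits in your very first identification: you take the contact germ $\eta_K$ along $\p B_{K,C}$ to be the restriction of the standard structure $\ker(\lambda_{\st}+v\,dt)$ on $\Delta\times\R^2$. By \eqref{S1ContactForm}, however, the circle model carries the structure $\eta_{K,\rho}$, whose contact germ near $\p B_{K,C}$ is $\ker\alpha_\rho$ with $\alpha_\rho=\lambda_{\st}+\rho\,dt$, and $\rho_{(x,t)}(v)\neq v$ is unavoidable: condition (ii) after \eqref{e:FamilyRhoS1} forces $\rho=v-C$ near the lid, with $C>0$ whenever $K$ is somewhere nonpositive, which is exactly the case of interest. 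With your identification the statement you are proving would in fact be false: the germ of $\ker(\lambda_{\st}+v\,dt)$ along $\{v=K+C\}$ is the graph germ of the \emph{positive} Hamiltonian $K+C$, not of $K$. Concretely, for $n=1$ and time-independent $K$ with a zero $z_0$, the characteristic foliation of a graph $\{v=H\}$ in $\bigl([-1,1]\times T^*S^1,\ker(dz+v\,dt)\bigr)$ is directed by $\p_t-H\,\p_z$; for $H=K$ it has the closed leaf $\{z=z_0\}\times S^1$, while for $H=K+C>0$ every leaf is strictly monotone in $z$, so these germs are not contactomorphic. This is also why the two patches you invoke do not hold as stated: the translation $v\mapsto v-C$ is \emph{not} a contactomorphism of $(\Delta\times T^*S^1,\ker(\lambda_{\st}+v\,dt))$ onto itself (it pulls the form back to $\lambda_{\st}+(v-C)\,dt$), and a fiberwise reparametrization $v\mapsto\psi_{(x,t)}(v)$ produces a form that is still contact by \eqref{e:CC} but is a \emph{different} contact structure, so "preserving the contact condition" does not make it a contactomorphism; likewise the characteristic foliations you claim agree on the lid do not.

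The repair is the paper's one-line proof, and it is where your two maps really come from. Once the germ is correctly taken to be $\ker\alpha_\rho$, the single map $\Phi(x,v,t)=(x,\rho_{(x,t)}(v),t)$, defined near $\p B_{K,C}$ where $\p_v\rho>0$, satisfies $\Phi^*(\lambda_{\st}+v\,dt)=\lambda_{\st}+\rho\,dt=\alpha_\rho$ because the form contains no $dv$. By condition (ii) it coincides with $v\mapsto v-C$ near the lid and so carries $\Sigma_{1,K,C}$ onto $\tilde\Sigma_{1,K}\subset\Delta\times T^*S^1$, and by (i), (iii), and (ii) at the top edge it carries $\Sigma_{2,K,C}$ onto $\tilde\Sigma_{2,K}\subset\Delta\times\R^2$; no separate corner matching is needed since this is one globally defined contactomorphism onto a neighborhood of $\tilde\Sigma_K$. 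So your translation on the lid and rescaling on the wall are the right maps, but they are contactomorphisms only after the source structure is identified as $\ker\alpha_\rho$ rather than $\ker(\lambda_{\st}+v\,dt)$; that identification is the missing, and essential, ingredient.
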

\begin{proof}
	We have that the boundary $\p B_{K, C} = \Sigma_{1,K, C} \cup \Sigma_{2,K, C}$ where
\begin{align*}
		\Sigma_{1,K,C} &:= \{(x,v,t) \in \Delta \times \R^2: v = K(x,t)+C\} \quad\mbox{and} \\
	         \Sigma_{2,K,C} &:= \{(x,v,t) \in \Delta \times \R^2: 0 \leq v \leq K(x, t)+C\,,\,\, x \in \p \Delta\}\,.
\end{align*}
	Recalling the $1$-form $\alpha_\rho = \lambda_{\st} + \rho\,dt$ is a contact form 
	near $\p B_{K,C} \subset \Delta \times \R^2$, just note that
	$\rho$ induces contactomorphisms of neighborhoods
	$$
		(\Op \Sigma_{j,K,C}\,,\,\, \ker \alpha_\rho) \to (\Op \tilde\Sigma_{j,K}\,,\,\, \ker(\lambda_{\st} + v\,dt))
	$$
	for $j=0,1$ by construction.
\end{proof}

\subsection{The cylindrical domain}

Throughout the paper we will often use the following star-shaped cylindrical domain
$$
	 \Delta_\cyl := D^{2n-2}\times[-1,1] = \{\abs{z} \leq 1\,,\,\, u \leq 1\} \subset (\R^{2n-1}, \xi_{\st}).
$$
where $D^{2n-2}:= \{u=\sum_i^{n-1}u_i\leq 1\}\subset\R^{2n-2}$ is the unit ball.  

Also observe for any contact Hamiltonian $(K, \Delta_{\cyl})$ the \emph{north pole} and \emph{south pole}
$$
	P_{\pm 1} := (0, \pm 1, 0) \in (\p B_K, \eta_K)
$$
in the coordinates $(u, z, v) \in \R^{2n-1} \times \R^{2}$ are gluing places in the sense of Section~\ref{sec:shells}.
When performing a boundary connected sum of such models $(B_{K}\# B_{K'}, \eta_{K}\#\eta_{K'})$ we will always use the north
pole of $B_K$ and the south pole of $B_{K'}$.  See Section \ref{sec:BCS} for more details on the gluing construction.

 \section{Proof of Theorems
  \ref{thm:main-existence} and \ref{thm:main}\label{sec:main-existence}}
\subsection{Construction of contact structures with universal holes}\label{sec:univ-holes}

Proposition \ref{prop:unique-model}, which we prove in  Section \ref{sec:equi-cov}, and which represents one half
of the proof of Theorem \ref{thm:main-existence}, constructs from an almost contact structure a contact structure in the complement  of a finite number of disjoint  $(2n+1)$-balls where the germ of the contact structure on the boundaries of the balls has a 
 {\em unique universal} form:
 \begin{proposition}\label{prop:unique-model}
 For fixed dimension $2n+1$ there exists a contact Hamiltonian $(K_{\univ}, \Delta_{\cyl})$,
 specified in Lemma \ref{lm:universal-eps}, such that the following holds. 
 For any almost contact manifold $(M, \xi)$ as in Theorem \ref{thm:main-existence}
  there exists an almost contact structure $\xi'$ on $M$, which is homotopic   to $\xi$ relative $A$ through almost contact structures,
and a finite collection of
 disjoint balls $B_i\subset M\setminus A$ for $i=1,\dots, L$, with piecewise smooth boundary such that
 \begin{itemize}
 \item $\xi'$ is a genuine contact structure on $M\setminus \bigcup_1^L\Int B_i$
 \item the contact shells $\xi'|_{B_i}$ are equivalent to
 $(B_{K_{\univ}},\eta_{K_\univ})$
 for $i=1, \dots, L$.
 \end{itemize}
 \end{proposition}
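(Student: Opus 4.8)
The plan is to reduce the problem, via Gromov's $h$-principle for open manifolds (Theorem~\ref{thm:Gromov-open}), to a purely local extension question, and then to standardize that local model in stages. First I would use the fact that the almost contact structure $\xi$ is already genuine on $\Op A$ and argue by a relative version of Gromov's theorem on the open manifold $M\setminus A$ that $\xi$ can be homotoped, rel $A$, to a structure that is genuinely contact away from a finite union of disjoint closed balls $B_i\subset M\setminus A$ with piecewise smooth boundary; this is the content of the reduction in Section~\ref{sec:holes}, where the ``holes'' arise as the places where the $h$-principle argument cannot be pushed through because of the closedness of $M$. The homotopy class of the almost contact structure on each $B_i$ rel boundary is the only obstruction left, so the task becomes: show that any contact shell occurring this way is equivalent to one fixed universal circle model $(B_{K_\univ},\eta_{K_\univ})$.

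The key steps, in order, are the successive standardizations carried out in Sections~\ref{sec:holes}--\ref{sec:RedSaucers} and assembled in Section~\ref{sec:universal}. Step one (Section~\ref{sec:holes}): realize the shells on the $B_i$ as germs of contact structures induced by a controlled family of immersions $S^{2n}\looparrowright(\R^{2n+1},\xi_\std)$, using the wrinkling/holonomic approximation machinery of Gromov \cite{Gro69,Gro72} and Eliashberg--Mishachev \cite{EliMi97}. Step two (Section~\ref{sec:further}): convert these immersion-germs into the ``circular model'' shells $(B_K,\eta_K)$ attached to contact Hamiltonians $(K,\Delta_\cyl)$, invoking Lemmas~\ref{l:CircleShell} and \ref{l:EasyCircleGerm} so that only the Hamiltonian $K$ (up to the allowed moves) matters. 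Step three (Section~\ref{sec:universal}): use the disorderability/domination results of Section~\ref{sec:conjugation} — Propositions~\ref{prop:disorder}, \ref{prop:shallow}, and especially \ref{prop:saucer-to-circle} — together with equivariant coverings to show that any such circle model is dominated by, hence (since we may take the target solid where needed and track the almost-contact homotopy) equivalent to, the single universal model $(B_{K_\univ},\eta_{K_\univ})$. The boundary connected sum construction of Section~\ref{sec:shells} is the glue that lets one absorb several shells into one and compare different Hamiltonians.

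The main obstacle, and the technical heart of the argument, is step three: showing that one universal Hamiltonian $K_\univ$ dominates \emph{all} the shells that can occur, independent of $(M,\xi)$ and of the number and shape of the holes. This is exactly where Proposition~\ref{prop:saucer-to-circle} is used twice in an essential way (as flagged by the double arrow in the dependency diagram), and where the disorderability of the contactomorphism group of the ball — the statement that a suitable contact Hamiltonian can be ``shrunk'' or conjugated into any prescribed one — does the real work. Everything before step three is, by the authors' own description, fairly standard $h$-principle technology; the genuinely new input is the contact-geometric rigidity/flexibility interplay packaged in Section~\ref{sec:conjugation}. I would therefore concentrate effort on verifying that the equivariant covering trick of Section~\ref{sec:equi-cov} genuinely collapses the parameter space of possible Hamiltonians down to one point up to domination, and that the almost-contact homotopy witnessing equivalence can be kept fixed near $\partial B_i$ throughout, so that the resulting $\xi'$ is honestly homotopic to $\xi$ rel $A$.
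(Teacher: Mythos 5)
Your plan follows the paper's own route: Gromov's $h$-principle (Theorem \ref{thm:Gromov-open}) reduces the problem to holes, Proposition \ref{prop:to saucers} turns the holes into regular semi-contact saucers, Proposition \ref{prop:saucer-to-circle} converts saucers into circle models of time-independent contact Hamiltonians, and the equivariant covering argument of Section \ref{sec:equi-cov} (Proposition \ref{prop:circular-to-universal-saucers}), powered by the disorderability results (Propositions \ref{prop:disorder} and \ref{prop:shallow}), reduces everything to a finite list of saucers depending only on the dimension; a second application of Proposition \ref{prop:saucer-to-circle} and the choice of a special Hamiltonian $K_\univ$ below the finitely many resulting Hamiltonians $K_p$ finishes the proof. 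So the architecture is the paper's, including your observation that Proposition \ref{prop:saucer-to-circle} enters twice.

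One direction is stated backwards, and it is the one place where your write-up, taken literally, would not work: you say each circle model ``is dominated by, hence equivalent to'' $(B_{K_\univ},\eta_{K_\univ})$, and later that ``$K_\univ$ dominates all the shells that can occur.'' The logic requires the opposite: choosing $K_\univ < K_p$ makes each $(B_{K_p},\eta_{K_p})$ dominate $(B_{K_\univ},\eta_{K_\univ})$ by Lemma \ref{l:HamiltonianShellsAnnulus}, i.e. the shells occurring in the holes dominate the universal model. Moreover domination does not give equivalence of the original hole to the universal model; it allows you to homotope the structure rel $\Op\p B_i$ so that it becomes genuine outside a smaller ball on which it is exactly $(B_{K_\univ},\eta_{K_\univ})$, and those smaller balls are the $B_i$ of the statement. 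If the universal model merely dominated the occurring shells, you could not standardize the holes at all. Finally, a small mislabeling: Proposition \ref{prop:saucer-to-circle} is not one of the disorderability results of Section \ref{sec:conjugation}; the disorderability input enters through Proposition \ref{prop:shallow} (via Lemma \ref{lm:positive-decomposition}) inside the equivariant covering step, which is precisely what makes the finite list, and hence the single $K_\univ$, independent of $(M,\xi)$ and of the number and shape of the holes.
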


 \begin{remark} {\rm  If $(B_K, \eta_K)$ is dominated by $(B_{K_{\univ}},\eta_{K_\univ})$, then we can $K$
 in the statement of Proposition~\ref{prop:unique-model} in lieu of $K_\univ$.
  In particular by Lemma~\ref{lm:3-domin}, in {\em the $3$-dimensional case}
  we can take $K_\univ: [-1, 1] \to \R$ to be {\em any somewhere negative function.}
  Our proof in higher dimension is not constructive, and   we do not know an effective criterion which would allow one to verify whether a particular function $K_\univ$  satisfies  \ref{prop:unique-model}. Of course, it is easy to construct a $1$-parameter family of Hamiltonians $K^\eps$ so that any Hamiltonian $K$ is less than $K^\eps$ for sufficiently small $\eps>0$ (see Example \ref{ex:special}). We can then take $K_\univ = K^\eps$ for sufficiently small $\eps$. It would be interesting to find such a general criterion for which Hamiltonians can be taken as $K_\univ$.}
\end{remark}

\subsection{Overtwisted discs and filling of universal holes}\label{sec:ot-disc}
Proposition \ref{p:ConnectSumDisc}, which we formulate in this section and prove in Section \ref{sec:filling},
will combine with Proposition \ref{prop:unique-model} to prove Theorem \ref{thm:main-existence} in Section \ref{sec:proof-existence}.
 
A smooth function $k: \R_{\geq 0} \to \R$  is called  \emph{special} if $k(1)>0$ and 
\begin{equation}\label{e:subscale}
	a\,k\left(\tfrac{u}{a}\right) < k(u) \quad\mbox{for all $a > 1$ and $u \geq 0$.}
\end{equation}
This implies that   $k(0)<0$, and hence $k(u)$ has a zero in $(0, 1)$.
By differentiating \eqref{e:subscale} with respect to $a$ we conclude that
\begin{equation}\label{e:specialCon}
	\quad k(u) - u\,k'(u) < 0 \quad\mbox{for all $u \geq 0$} 
\end{equation}
which means that  the $y$-intercept of all  tangent lines  to the graph of $k$ is negative.

We call  a function $K:\Delta_\cyl\to\R$ {\em spherically symmetric} if it depends only on the coordinates $(u, z)$ where
$u=\sum_{i=1}^{n-1}u_i$, in this case by a slight abuse of notation  will write $K(u,z)$, rather than   $\wt K(u,z)$ for some function 
$\wt K:[0,1]\times[-1,1]\to \R$.

\begin{definition}\label{def:SpecialHam}
 
A spherically symmetric piecewise smooth contact Hamiltonian $K: \Delta_\cyl \to \R$
is  called {\em special} if  
\begin{description}
\item{(i)}  $K > 0$ on $\p\Delta_{\cyl}$;
\item{(ii)} $K(u,-\delta)=K(u,\delta)$ for $\delta \in \Op \{1\}$ and $u \in [0,1]$;
\item{(iii)} If $\delta \in \Op\{1\}$, then $K(u,z)\leq K(u,\delta)$ for  $u\in[0,1]$ and $\abs{z} \leq \delta$;
\end{description} and
   there exist $z_D \in (-1, 1)$ and a  special function $k: \R_{\geq 0} \to \R$ such that
\begin{description}
 \item{(iv)} $K$ is non-increasing in the coordinate $z$ for $z\in\Op [-1,\,z_D]\subset[-1,1]$;
\item{(v)} $K(u,z) \geq k (u)$ {\em with equality} if $z \in \Op\{ z_D\}$. 
\end{description}
 
When $n=1$, where $\Delta_\cyl = [-1,1]$, condition (v) can be replaced with $K(z_D) < 0$.
\end{definition}
 
As the following example shows special contact Hamiltonians exist and furthermore for any particular contact Hamiltonian $(K',\Delta_\cyl)$  positive on $\p\Delta_\cyl\times S^1$ there is a special contact Hamiltonian $K: \Delta_\cyl \to \R$ such that $K < K'$.
\begin{ex}\label{ex:special}{\rm
	For positive constants $a, b, \lambda$ with $b < 1$ and $\lambda > \tfrac{a}{1-b}$, define the 
	special piecewise smooth function
	$$
		k(u) = \begin{cases}
		\lambda(u-b) - a &   \mbox{if $u \geq b$}\\
		-a & \mbox{if $u \leq b$}
		\end{cases}
	$$
	and the special piecewise smooth contact Hamiltonian
	$$
		K(u,z) = \max\{k(u),\,k(\abs{z})\}.
	$$
	By a perturbation of $K$ near its singular set, we may construct a smooth
	special contact Hamiltonian $\tilde{K}$ that is $C^0$-close to $K$, though smoothness of $K$ will not be needed in the proof.}
\end{ex}

Let $K:\Delta_\cyl\to\R$ be a special contact Hamiltonian
and define $(D_{K}, \eta_{K})$ to be the contact germ on the 
$2n$-dimensional disc
\begin{equation}\label{eq:ot-disc}
	D_{K} := \{(x, v, t) \in \p B_{K}: z(x) \in   [-1, z_D]\} \subset (B_{K}, \eta_{K})
\end{equation}
where $z_D$ is the constant in Definition~\ref{def:SpecialHam}.
Notice that $D_{K}$ inherits the south pole of the corresponding circular
model and the coorientation of $\p B_{K}$ as a boundary. 

\begin{figure}
   \centering
   \def\svgwidth{50pt}
   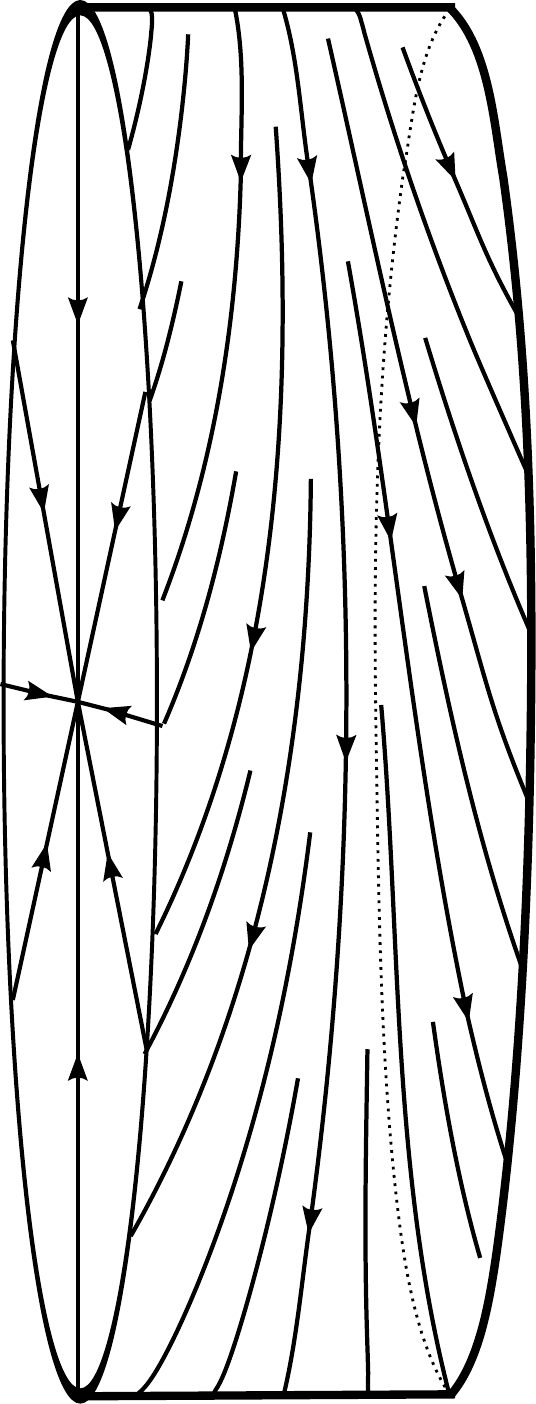
   \caption{A $2$-dimensional overtwisted disc $(D_{\ot}, \eta_{\ot})$ with its characteristic foliation.}
  \label{f:Disk}
\end{figure} 

\begin{definition}\label{def:OTD}
	Let $K_{\univ}$ be as in Proposition \ref{prop:unique-model}.
	An \emph{overtwisted disc} $(D_\ot,\eta_\ot)$ is a $2n$-dimensional disc with a germ of a contact structure
	such that there is a contactomorphism
	$$
		(D_\ot,\eta_\ot) \cong \left(D_{K}, \eta_{K}\right)
	$$
	where $K$ is some special contact Hamiltonian such that $K < K_\univ$.
	A contact manifold $(M^{2n+1}, \xi)$ is {\em overtwisted} if it admits a contact embedding 
	$(D_\ot,\eta_\ot)\to (M,\xi)$ of some overtwisted disc.
\end{definition}

\begin{ex}
{\rm In the $3$-dimensional case, it follows from Lemma~\ref{lm:3-domin} that the disc
$$
	D_{K} := \{(z, v, t) \in \p B_{K}: z \in   [-1, z_D]\} \subset (B_{K}, \eta_{K})
$$
is overtwisted in the sense of Definition~\ref{def:OTD} for {\em any} special contact Hamiltonian, i.e a somewhere negative function on the interval $[-1,1]$, positive  near the end-points $\pm 1$. }
\end{ex}
\begin{remark}\label{rem:various-ot-discs}{\rm
The definition of the overtwisted disc $(D_K,\eta_K)$ depends on the choice of a special Hamiltonian $K<K_\univ$, and the germs $\eta_K$  need not to be contactomorphic when we vary $K$. However, as Corollary~\ref{cor:isocontact}  shows,  for any two  special Hamiltonians  $K,K'<K_\univ$  any neighborhood of $(D_K,\eta_K)$ contains $(D_{K'},\eta_{K'})$.}
\end{remark}

As the following proposition shows any overtwisted contact manifold contains infinitely many disjoint overtwisted discs.

\begin{prop}\label{prop:neighb-ot}
Every neighborhood of an overtwisted disc in a contact manifold contains a foliation by overtwisted discs.
\end{prop}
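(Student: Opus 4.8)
\textbf{Proof proposal for Proposition~\ref{prop:neighb-ot}.}

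The plan is to prove this by a direct construction inside the circle model, exploiting the fact that an overtwisted disc $D_K \subset \p B_K$ sits inside the $(2n+1)$-dimensional shell $(B_K,\eta_K)$ and that the germ $\eta_K$ near $D_K$ is modeled on a region in $\R^{2n-1}\times T^*S^1$ with its standard contact structure. Recall from \eqref{eq:ot-disc} that $D_K=\{z\in[-1,z_D]\}\cap\p B_K$, and that near $\Sigma_{1,K}$ the germ is contactomorphic, via Lemma~\ref{l:EasyCircleGerm}, to the graphical hypersurface $\{v=K(x,t)\}\subset(\Delta_\cyl\times T^*S^1,\ker(\lambda_\st+v\,dt))$. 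So a neighborhood of $D_K$ in the ambient contact manifold contains a neighborhood of the graph $\{v=K(x,t),\ z\in\Op[-1,z_D]\}$ inside $\Delta_\cyl\times T^*S^1$. First I would fix such an identification and work entirely in this local model.

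Next, the key idea: produce a one-parameter family of special contact Hamiltonians $K_s<K$, $s\in(0,1]$, whose associated discs $D_{K_s}$ are all overtwisted (since $K_s<K<K_\univ$, Definition~\ref{def:OTD} applies once we check $K_s$ is special), are pairwise disjoint as they sit inside the fixed neighborhood of $D_K$, and whose union is a foliation. The cleanest way is scaling: for a special $K$ with associated special function $k$, set $K_s(u,z):=s\,K(u/s,z)$ — or more carefully a reparametrization that keeps $\Delta_\cyl$ fixed and only shrinks in the $v$-direction — and note that inequality \eqref{e:subscale} for $k$ is exactly the statement that the rescaled functions are ordered, $s\,k(u/s)<k(u)$ for $s<1$, which is what guarantees both $K_s<K$ and that the graphs $\{v=K_s(x,t)\}$ are disjoint for distinct $s$. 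Conditions (i)--(v) of Definition~\ref{def:SpecialHam} are preserved under this scaling with the same $z_D$ and the special function $k_s(u)=s\,k(u/s)$, which is special because \eqref{e:subscale} is scale-invariant in the appropriate sense; one checks \eqref{e:specialCon} likewise transfers. Then $D_{K_s}\cong D_{K_s}$ as germs lie on the disjoint graphs, and letting $s$ range over an interval (together with a collar in the normal direction to account for the germ thickening) sweeps out an open region foliated by these discs. Finally I would verify that this foliated region is contained in any prescribed neighborhood of $D_K=D_{K_1}$ by taking $s$ sufficiently close to $1$: since $K_s\to K$ in $C^0$ (indeed uniformly on $\Delta_\cyl$) as $s\to1$, the graphs $\{v=K_s\}$ converge to $\{v=K\}$, so $D_{K_s}\to D_K$ in the $C^0$ sense, and after a small isotopy supported in the neighborhood we get the discs genuinely inside it.

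The main obstacle I anticipate is bookkeeping at the boundary and corners of the disc rather than anything conceptually deep: $D_K$ is only piecewise smooth (it has the corner locus $z=z_D$ where $\p B_K$ bends, and the "equatorial" part coming from $\Sigma_{2,K}$), and one must arrange the scaling so that the discs $D_{K_s}$ fit together along their boundaries into an honest foliation-with-boundary of the neighborhood — or, if one only wants a foliation of a slightly smaller open set, this is avoided. There is also the point that the germ $\eta_K$ is defined on an open neighborhood of $D_K$, not just on $D_K$ itself, so "foliation by overtwisted discs" should be read as: the neighborhood is foliated by hypersurfaces each of which carries the germ $\eta_{K_s}$ and contains an overtwisted disc; I would state it that way to match Remark~\ref{rem:various-ot-discs}, which already records that nearby special Hamiltonians' discs all embed in a neighborhood of $D_K$. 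Once the local model is set up, the scaling argument makes the disjointness and the foliation property essentially automatic from \eqref{e:subscale}, which is precisely why the definition of \emph{special} was engineered with that inequality.
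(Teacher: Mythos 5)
There is a genuine gap, on two levels. First, the scaling inequality you rely on is backwards: from \eqref{e:subscale} applied at the point $u/s$ with $a=1/s>1$ one gets $\tfrac1s\,k(u)<k(u/s)$, i.e.\ $s\,k(u/s)>k(u)$ for $s<1$ (equivalently, $a\mapsto a\,k(u/a)$ is decreasing by \eqref{e:specialCon}), so your rescaled graphs lie \emph{above} the original, outside $B_K$. Reversing to $s>1$ does not repair this: specialness only controls $k$, and away from $z\in\Op\{z_D\}$ one merely has $K\ge k$; near $z=\pm1$, where $K(u,\pm1)=E(u)>0$, there is no reason that $s\,E(u/s)\le E(u)$ (it fails already for constant $E$). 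Moreover a rescaling of $u$ alone, with the domain $\Delta_\cyl$ held fixed, is not induced by a contactomorphism of $(\R^{2n-1},\xi_\st)$, so even where an ordering held you would not get the germ identification that Lemma~\ref{l:PhiToTildePhi} supplies for genuine push-forwards.

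Second, and more fundamentally, the discs you produce are not overtwisted discs. By \eqref{eq:ot-disc}, $D_K$ is not the graphical cap $\{v=K\}\cap\{z\le z_D\}$ alone: it also contains the portion of $\Sigma_{2,K}$ over $\p\Delta_\cyl\cap\{z\le z_D\}$, where $v$ runs all the way down to $0$. Your local model is only set up near $\Sigma_{1,K}$, and any family obtained by keeping the base $\Delta_\cyl$ fixed and moving only in the $v$-direction has all of its side walls over the same $\p\Delta_\cyl$; any two such discs then overlap there (e.g.\ along $\{v=0\}$ over $\p\Delta_\cyl\cap\{z\le z_D\}$), so they cannot be disjoint leaves of a foliation, while discarding the side walls leaves graph pieces whose germs are not of the form $\eta_{K_s}$ required by Definition~\ref{def:OTD}. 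This is not corner bookkeeping but the crux, and it forces one to shrink the base as well: the paper sets $K_\delta:=K+(\delta-1)$ on $\Delta_\delta=\{u\le\delta,\,\abs{z}\le\delta\}$, observes that the full boundaries $\p B_{K_\delta}$ foliate a one-sided neighborhood of $\p B_K$, and then renormalizes by the contactomorphism $C_\delta$ (scaling $u$ and $z$ together, with conformal factor $1/\delta$); Lemma~\ref{lem:Scale} — which is where \eqref{e:subscale} is actually used, together with the shift $(\delta-1)/\delta$ — shows $(C_\delta)_*K_\delta$ is special and $<K_\univ$ for $\delta$ close to $1$, and Lemma~\ref{l:PhiToTildePhi} gives the contactomorphism of germs. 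Your overall strategy (nearby special Hamiltonians plus a scaling argument) is the right one, but as written the family neither stays below $K$ nor yields disjoint honest overtwisted discs.
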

We prove Proposition \ref{prop:neighb-ot}    at the end of Section~\ref{sec:effect}.

Given a special contact Hamiltonian $K: \Delta_\cyl \to \R$, 
the contact germ $(D_{K}, \eta_{K})$ has the following remarkable property, which we will
prove in Section \ref{sec:final-filling}.
Suppose one has a $(2n+1)$-dimensional contact ball $(B,\xi)$ with piecewise smooth boundary such that
$(D_{K}, \eta_{K}) \subset (\p B, \xi)$ where the co-orientation of $D_{K}$ coincides with the 
outward co-orientation of $\p B$.

\begin{prop}\label{p:ConnectSumDisc}
Let $K_0, K, \;K_0\geq K,$ be two contact Hamiltonians and $K$ is special. Then
the contact shell $(B_{K_0}\# B, \eta_{K_0} \# \xi)$
given by performing a boundary connected sum at the north pole of $B_{K_0}$ and the south pole of $D_{K} \subset \p B$
is equivalent to a genuine contact structure.
\end{prop}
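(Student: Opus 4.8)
The plan is to realize the connected sum $(B_{K_0}\# B,\,\eta_{K_0}\#\xi)$ as \emph{dominated} by a solid shell, so that by the observation following the definition of domination it is itself equivalent to a genuine contact structure. The geometric idea is that the special Hamiltonian $K$ controls a ``scaling'' behavior: because $k$ is special, the inequality $a\,k(u/a)<k(u)$ lets us shrink the overtwisted disc $D_K$ inside a larger circular model while keeping everything contact away from the core. So first I would set up the enlarged circular model $(B_{K_0},\eta_{K_0})$, locate the south pole of $D_K\subset\p B$ and the north pole $P_{+1}$ of $B_{K_0}$ as gluing places, and arrange (after the allowed $C^1$-perturbation from Section~\ref{sec:BCS}) that the two shells agree near the gluing points via an orientation-reversing diffeomorphism, so the boundary connected sum $(B_{K_0}\#B,\eta_{K_0}\#\xi)$ is a well-defined contact shell.

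Next I would construct an explicit solid shell $\wt\zeta=(\wt B,\wt\xi)$ together with an embedding $h:B_{K_0}\#B\hookrightarrow \wt B$ with $h^*\wt\xi=\eta_{K_0}\#\xi$ and $\wt\xi$ genuine on the complement of the interior of the image. The natural candidate for $\wt B$ is a circular model $(B_{K_0},\eta_{K_0})$ for the \emph{larger} Hamiltonian $K_0$: since $K_0\ge K$, the ball $B_K$ sits inside $B_{K_0}$, and using Lemma~\ref{l:EasyCircleGerm}'s description of $\p B_{K_0}$ as $\tilde\Sigma_{1,K_0}\cup\tilde\Sigma_{2,K_0}$, the region between the graphs $v=K$ and $v=K_0$ over $\Delta_\cyl$ carries a genuine contact structure (this is exactly where conditions (ii),(iii),(iv) on $K$ and the contact condition \eqref{e:CC} are used: they guarantee $\p_v\rho>0$ can be maintained everywhere except in a collar of the disc $D_K$ inside the south cap). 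The portion of $B$ attached along $D_K$ then gets absorbed by the collar of $D_K$ in $B_{K_0}$, using that $(D_K,\eta_K)\subset(\p B_{K_0},\eta_{K_0})$ with matching coorientation; here the specialness of $k$ (via \eqref{e:specialCon}, the negativity of the $y$-intercepts of tangent lines to $k$) is what allows the attaching region to be pushed into $B_{K_0}\setminus\Int B_K$ while preserving the contact condition on the overlap, so that $(B_{K_0}\# B,\eta_{K_0}\#\xi)$ is dominated by $(B_{K_0},\eta_{K_0})$, which is solid because $K_0\ge K$ and... actually $K_0$ need not be positive, so I must instead dominate by a genuinely solid shell obtained by the same construction with $K_0$ replaced by a sufficiently large positive $\wh K\ge K_0$, which is solid by the Remark after Lemma~\ref{l:EasyCircleGerm} since $\wh K>0$; then $(B_{\wh K},\eta_{\wh K})$ dominates $(B_{K_0}\#B,\eta_{K_0}\#\xi)$ and solidity of the dominator forces the latter to be equivalent to a genuine contact structure.

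The main obstacle I expect is the local model near the south pole of $D_K$ where the connected-sum neck meets the scaling region: one must check that the family $\rho_{(x,t)}$ defining $\eta_{K_0}$ (or $\eta_{\wh K}$) can be chosen so that $\p_v\rho>0$ holds on all of $\wt B\setminus\Int h(B_{K_0}\#B)$ simultaneously, i.e.\ that the ``hole'' one is left with is exactly the prescribed shell and nothing more. This is where the precise inequalities \eqref{e:subscale}--\eqref{e:specialCon}, the monotonicity condition (iv), and the matching of coorientations \eqref{eq:ot-disc} all have to be combined carefully; the rest (the $C^1$-perturbation for gluing, the passage $K_0\rightsquigarrow\wh K$, invoking domination) is comparatively routine. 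I would organize the argument by first proving a scaling lemma — that for special $k$ and $a>1$ the rescaled circular model $B_{a\cdot k(\cdot/a)}$ embeds into $B_k$ as a solid subordination complement — and then deduce the proposition by applying it with $a$ large enough that $a\,k(\cdot/a)$ dominates both $K_0$ and the germ $\xi$ on $\p B$ near $D_K$.
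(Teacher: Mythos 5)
There is a genuine gap, and it sits at the very center of your argument: you conclude by saying that $(B_{\wh K},\eta_{\wh K})$ with $\wh K>0$ \emph{dominates} $(B_{K_0}\# B,\eta_{K_0}\#\xi)$ and that ``solidity of the dominator forces the latter to be equivalent to a genuine contact structure.'' This reverses the implication recorded in Section~\ref{sec:shells}: if $(B_+,\xi_+)$ dominates $(B_-,\xi_-)$ and the \emph{dominated} shell $(B_-,\xi_-)$ is solid, then the \emph{dominating} shell $(B_+,\xi_+)$ is equivalent to a solid one. Being dominated by a solid shell gives nothing at all: by Lemma~\ref{l:HamiltonianShellsAnnulus} every circular model $(B_K,\eta_K)$ is dominated by $(B_{K'},\eta_{K'})$ for any constant $K'\geq\max K$, and the latter is solid, so your criterion would declare every shell fillable and make the overtwisted disc (and the whole paper) unnecessary. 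For the same reason your intermediate step only relocalizes the hole: showing that the region between the graphs of $K$ and $K_0$ is genuinely contact just reproves that $(B_{K_0},\eta_{K_0})$ dominates $(B_K,\eta_K)$; the non-contact core is still there, and nothing in your construction ever uses the genuine contact ball $B$ attached along $D_K$ to kill it. (Note also the confusion of roles: $D_K$ lies in $\p B$, the boundary of the \emph{genuine} piece, not in $\p B_{K_0}$; it is the attached ball $B$, or rather the model neighborhood $\B\subset\A$ of $D_K$, that must do the filling, not be ``absorbed.'')

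The missing idea is the mechanism of Lemma~\ref{p:ContEmb}: one constructs a family of contact embeddings $\Theta_\sigma:\Delta\to\Delta\#\Delta$, equal to the inclusion near $\{z\in[z_D,1]\}$, which drags a slightly shrunk copy $B_{K'}$ of the model hole across the connected-sum neck, and one proves the push-forward estimate $(\Theta_1)_*K'<K\#K$. This is where specialness enters, exactly as your ``scaling lemma'' intuition suggests, but in a precise form: the stretching is implemented by the composition of the transverse scaling $\Phi_h$ and the twist $\Psi_{g,z_0}$ (so that the $u$-width is preserved while the $z$-length changes), and the inequality \eqref{e:subscale} for the special function $k$ guarantees $(\Gamma_\sigma)_*K\leq k(u)\leq K\#K$ on the stretched middle region. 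Once this is in hand, the proposition follows by exhibiting an ambient isotopy $\Psi_\sigma$ of $B_K\# B_K$ carrying $B_{K'}$ along $\wh\Theta_\sigma$ and pulling back the shell structures $\eta_{K\#K,\hat\rho_\sigma}$: the resulting family $\xi_\sigma=\Psi_\sigma^*(\eta_{K\#K,\hat\rho_\sigma})$ on $B_K\#\B$ is fixed near the boundary, starts at $\eta_K\#\xi_\B$, and ends at a genuine contact structure because at $\sigma=1$ the image $\Psi_1(B_K\#\B)$ lies in the complement of $\Int\wh\Theta_1(B_{K'})$, where $\eta_{K\#K,\hat\rho_1}$ is genuine since $\wh\Theta_1$ is a subordination map. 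In other words, the correct logic is that $(B_{K_0}\#B,\eta_{K_0}\#\xi)$ \emph{dominates} (indeed is deformed into) something genuine after the hole has been slid into coincidence with a subordinate copy of itself inside the double model $B_K\#B_K$; no appeal to a solid dominating shell can replace this step.
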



\subsection{Proof of Theorem \ref{thm:main-existence}}\label{sec:proof-existence}
 
Choose a ball $B\subset M\setminus A$ with piecewise smooth boundary and deform the almost contact structure $\xi$ to make it a contact structure on $B$ with an overtwisted disc $(D_\ot, \eta_{\ot}) \subset (\p B, \xi)$ on its boundary.
This can be done since any two almost contact structures on the ball are homotopic if
we do not require the homotopy to be fixed on $\p B$.

 Using Proposition \ref{prop:unique-model} we deform the almost  contact  structure $\xi$  relative to $A\cup B$ to an almost contact structure $\xi$ on $M$, which is genuine in the complement of finitely many disjoint balls
$B_1,\dots, B_N\subset M\setminus (A\cup B)$ where each $(B_i, \xi|_{B_i})$ is isomorphic to $(B_{K_\univ}, \eta_{K_\univ})$
as almost contact structures.
    
    According to Proposition \ref{prop:neighb-ot} we can pick disjoint balls $B_i' \subset \Int(B)$, $i=1,\dots, N$, each with an overtwisted disc on their boundary $(D^i_{\ot}, \eta_{\ot}^i) \subset (\p B_i', \xi)$.
As we will describe in Section~\ref{sec:BCS} we can perform
    an ambient boundary connected sum $B_i \# B_i' \subset M \setminus A$ such that the sets $B_i \# B_i'$ are disjoint for $i=1, \dots, N$ and there are isomorphisms of almost contact structures
    $$
    	(B_i \# B_i', \xi|_{B_i \# B_i'}) \cong (B_i \# B_i', \xi|_{B_i} \# \xi|_{B_i'}) \cong (B_K \# B_i', \eta_K \# \xi|_{B_i'}).
    $$
Now for $i=1, \dots, L$ by definition we have $(D^i_{\ot}, \eta_{\ot}^i) = (D_{K_i}, \eta_{K_i})$ for
special contact Hamiltonians $K_i$ such that $K_i < K_\univ$.  Therefore we can
apply Proposition \ref{p:ConnectSumDisc} to homotope $\xi|_{B_i \# B_i'}$ relative to the boundary to a genuine contact structure on $B_i \# B_i'$ for each $i=1,\dots, N$.  The result will be a contact structure on $M$ that is homotopic relative to $A$ to the original almost contact structure.
 


  \subsection{Fibered structures}
  To prove the parametric version of the Theorem  \ref{thm:main-existence},
  we need to discuss  the  parametric form of the introduced above notions.  The parameter space, always denoted by $T$, will be assumed to be a compact manifold of dimension $q$, possibly with boundary, and 
  we will use the letter $\tau$ for points in $T$.

A family of (almost) contact    structures $\{\xi^\tau\}_{\tau\in T}$
 on a manifold $M$ can be  equivalently viewed as   a fiberwise, or as we also say {\em fibered} (almost) contact  structure ${}^T\!\xi$ on the total space of the trivial fibration ${}^TM:=T\times M\to T$, which coincides with $\xi_\tau$ on the fiber $M^\tau:=\tau\times M$,
   $\tau\in T$.  
  
 
A \emph{fibered contact shell} $({}^T M, {}^T\xi)$ is a fibered almost contact structure that is genuine on $\Op \p({}^T M)$,
by which we mean $(M^{\tau}, \xi^\tau)$ is genuine for all $\tau \in \Op \p T$ and $(\Op M^{\tau}, \xi^{\tau})$ is genuine for all 
$\tau \in T$.
 An {\em equivalence} between fibered contact shells 
  $$
  G: ({}^{T_1}B_1 ,{}^{T_1}\xi_1) \to ({}^{T_2}B_2 ,{}^{T_2}\xi_2)
  $$ 
  is a diffeomorphism covering a diffeomorphism $g:T_1\to T_2$ such that
  $G^*\left({}^{T_2}\xi_2\right)$ and ${}^{T_1}\xi_1$ are homotopic relative to $\Op \p({}^{T_1}B_1)$
   through fibered almost contact structures on ${}^{T_1}B_1$. 
  In particular this requires $G: (B_1^{\tau}, \xi_1^{\tau}) \to (B_2^{g(\tau)}, \xi_2^{g(\tau)})$ to be an equivalence of contact shells for all 
  $\tau \in T_1$
  and to be a contactomorphism when $\tau \in \Op \p T_1$.
  
    Given fibered contact shells 
       ${}^{T_\pm}\zeta_\pm=\left({}^{T_\pm}B_\pm,{}^{T_\pm}\xi_\pm\right)$ 
       we say ${}^{T_+}\zeta_+$ \emph{dominates} ${}^{T_-}\zeta_-$
       if there is a third fibered contact shell $\zeta=\left({}^T B,{}^T\xi\right)$ such that
\begin{itemize}
\item there is a fibered equivalence   $G:{}^{T}\zeta \to {}^{T_+} \zeta_+$ and
\item a fiberwise embedding $ H: {}^{T_-}B_-\to{}^{T}  B$ covering an embedding $h:  T_-\to T$
such that $H^*({}^{T}\zeta)= {}^{T_-}\zeta_{-}$ and ${}^T \xi$ is genuine on $^TB\setminus H(\Int ^{T_-}B_-)$.  
  \end{itemize}
               
    We will refer to the embedding $G\circ H:\left({}^{  T_-}   B_-,{}^{T_-}  \xi_-\right)\to   \left({}^{T_+}B_+,{}^{T_+}\xi_+\right)$ as a {\em subordination map}.
    
    Finally we note that the  boundary connected sum construction can be performed in the fibered set-up to define a fibered connected sum 
    $$
    ({}^{T}B_1 \# {}^{T}B_2,\, {}^{T}\xi_1\# {}^{T}\xi_2) \quad\mbox{with fibers}\quad
     (B_1^\tau \# B_2^\tau, \xi_1^{\tau} \# \xi_2^{\tau})
     $$
    provided that we are given a family of boundary points 
    $p_{1}^\tau \in\p B_1^\tau$ and $p_{2}^{\tau} \in \p B_2^\tau$ as in the non-parametric case.
     
  \subsection{Parametric contact structures with universal holes}\label{sec:univ-holes-param}
  Given a special contact Hamiltonian $K:\Delta_\cyl \to\R$ we define a function $E\to\R$ by the formula 
    $E(u, z) := K(u,1)$. By assumption, we have  $K \leq E$ on $\Delta_{\cyl}$.
We further define a family of contact Hamiltonians $K^{(s)}:\Delta_\cyl\to\R$ by
 \begin{equation}\label{e:ParametricHam}
	K^{(s)} :=sK+ (1-s)E \quad\mbox{for $s \in [0,1]$.}
 \end{equation}

   Given a disc $T := D^q \subset \R^q$ pick a bump function $\delta: T \to [0,1]$ with support in the interior of $T$
   and consider the   family of contact Hamiltonians $K^{(\delta(\tau))}: \Delta_{\cyl} \to \R$ parametrized by  $\tau \in T$ 
 and  the fibered circular model shell over $T$
   \begin{equation}\label{e:FiberHamShell}
   	\Big({}^T B_{K},\, {}^T\eta_{K}\Big) \quad\mbox{where}\quad  {}^T B_{K} = \bigcup_{\tau \in T} \{\tau\} \times B_{K^{\delta(\tau)}}
   \end{equation}
 and  the fiber over $\tau \in T$ is given by $(B_{K^{(\delta(\tau))}}, \eta_{K^{(\delta(\tau))}})$.

Recall Proposition \ref{prop:unique-model} and its contact Hamiltonian $K_{\univ}: \Delta_{\cyl} \to \R$.   
The next proposition, which we prove in Section \ref{sec:standard-param},
is the parametric generalization of Proposition \ref{prop:unique-model} and says that any fibered almost contact structure is equivalent to a fibered almost contact structure that is genuine away from holes modeled on $({}^T B_{K_{\univ}},\, {}^T\eta_{K_\univ})$.

 \begin{proposition}\label{prop:unique-model-param}
Let $T = D^q$ and let $A \subset M$ be a closed subset.
Every fibered almost contact structure ${}^T\xi_0$ on ${}^TM = T \times M$
that is genuine on $(T\times\Op A)\cup (\Op\p T \times M)$ is homotopic relative to 
$(T\times A) \cup (\p T \times M)$ through fibered almost contact structures on ${}^TM$ 
to some structure ${}^T\xi$ with the following property:

There is a collection of disjoint  embedded fibered shells ${}^{T_i}B_i \subset {}^TM$ 
 over (not necessarily disjoint) $q$-dimensional discs $T_i \subset T$ for $i = 1, \dots, L$ such that 
\begin{enumerate}
	\item 
	the fibers of ${}^T\xi$ are genuine contact structures away from $\bigcup_{i=1}^L \Int({}^{T_i} B_i)$ and
	\item the fibered contact shells
	$({}^{T_i}B_i, {}^{T_i}\xi)$ and $({}^{T_i} B_{K_{\univ}}, {}^{T_i}\eta_{K_{\univ}})$
	are equivalent.
\end{enumerate}
Furthermore for every $C \subset \{1, \dots L\}$ the intersection $\bigcap_{i \in C} T_i$ either empty or a disc. 
\end{proposition}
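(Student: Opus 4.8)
The plan is to derive Proposition~\ref{prop:unique-model-param} from its non-parametric counterpart Proposition~\ref{prop:unique-model} by a standard parametrized-induction/compactness argument, exactly as in the classical $h$-principle literature (Gromov, Eliashberg--Mishachev). First I would set up the inductive skeleton: triangulate $T = D^q$ finely enough that over each closed simplex $\sigma$ the fibered almost contact structure ${}^T\xi_0$ can be treated as a family parametrized by a disc. Proceeding by induction on the skeleton, I assume the structure has already been made genuine-away-from-universal-holes over a neighborhood of the $(k-1)$-skeleton, with the holes ${}^{T_i}B_i$ already constructed there, and I want to extend over each $k$-simplex $\sigma$. The key observation is that Proposition~\ref{prop:unique-model}, although stated for a single almost contact manifold, is \emph{already the fibered statement over a point}; what I really need is a fibered version over a disc $\sigma$ \emph{relative} to its boundary $\partial\sigma$, where the structure near $\partial\sigma$ already has the required form. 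This relative-over-a-disc statement is obtained by combining the non-parametric Proposition~\ref{prop:unique-model} with the fact that all of its ingredients (Gromov's open $h$-principle Theorem~\ref{thm:Gromov-open}, the reductions of Sections~\ref{sec:holes}--\ref{sec:RedSaucers}, the equivariant covering trick of Section~\ref{sec:universal}) go through in families --- indeed the diagram in the introduction records that each of these has a parametric sibling, and Proposition~\ref{prop:unique-model-param} sits at the end of that chain fed by $\S\ref{sec:standard-param}$, $\S\ref{sec:equi-cov}$ in the parametric column.

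Concretely, the steps I would carry out are: (1) Using a collar of $\partial T$ and the bump function $\delta$, arrange that near $\Op\partial T$ the Hamiltonian is the ``extremal'' one $E(u,z) = K(u,1)$, so that on $\Op\partial T \times M$ the structure is genuine (this matches the hypothesis that ${}^T\xi_0$ is genuine on $\Op\partial T\times M$, and it is also why the fibered shell ${}^TB_{K_\univ}$ in \eqref{e:FiberHamShell} degenerates to a genuine contact structure over $\partial T$, since $E>0$ forces the circle model to be contact by the Remark after Lemma~\ref{l:EasyCircleGerm}). (2) Run the parametric versions of the holes-creation procedure (the left-to-right bottom row of the diagram: Theorem~\ref{thm:Gromov-open} $\to$ $\S\ref{sec:equi-cov}$ $\to$ $\S\ref{sec:standard-param}$) over each simplex, relative to the part of the boundary where the structure is already standardized. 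This produces, over each $T_i := \sigma_i$ (a chosen collection of $q$-discs in $T$), a fibered shell equivalent to $({}^{T_i}B_{K_\univ}, {}^{T_i}\eta_{K_\univ})$, with the genuineness condition (1) on the complement. (3) Finally, arrange the combinatorial position of the discs $T_i$: since they arise as simplices (or small regular neighborhoods of simplices) of a triangulation of $T$, any nonempty intersection $\bigcap_{i\in C} T_i$ of such a subfamily is again a simplex, hence a disc; a small general-position perturbation of the $T_i$ within $T$ preserves this while keeping the fibered shells embedded and disjoint outside the holes.

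The main obstacle, and the place where real work is hidden rather than in the combinatorics, is step~(2): making the \emph{entire} reduction chain of Sections~\ref{sec:holes}--\ref{sec:universal} function in families with control over the parameter near $\partial T$. The delicate points are (a) the equivariant covering construction of Section~\ref{sec:equi-cov}, which must be performed $\tau$-continuously and compatibly with a given standardization over $\Op\partial T$; (b) ensuring that the homotopy of fibered almost contact structures is genuinely relative to $(T\times A)\cup(\partial T\times M)$, which requires that every cut-and-glue operation be supported away from those sets --- this is where the freedom to shrink the holes and to choose $\delta$ supported in $\Int T$ is used; and (c) keeping the hole models \emph{uniformly} equal to ${}^{T_i}\eta_{K_\univ}$ across the parameter, rather than merely fiberwise equivalent, which is exactly the content of the fibered equivalence in condition~(2) and follows from the fact that Proposition~\ref{prop:unique-model}'s universal model $K_\univ$ does not depend on the manifold or the structure. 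Once the parametric chain is in place, assembling the local extensions over simplices into a global homotopy over $T$ is routine, using a partition of unity subordinate to stars of simplices and the compactness of $T$.
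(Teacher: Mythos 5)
Your overall strategy---``run the non-parametric reduction chain in families''---is indeed the paper's strategy, but your write-up defers exactly the steps where the parametric case differs from the non-parametric one, and in the one place where you commit to a mechanism (skeleton induction over a triangulation of $T$ plus a ``relative over a disc'' version of Proposition~\ref{prop:unique-model}) the argument is circular: a version of Proposition~\ref{prop:unique-model} for a family over a disc, relative to the boundary of that disc, \emph{is} Proposition~\ref{prop:unique-model-param}, and it cannot be obtained by ``combining the non-parametric statement with the fact that the ingredients go through in families''---establishing that they do is the proof. In fact the paper never triangulates $T=D^q$ here: Proposition~\ref{prop:to-saucers-param} works globally over $T$ (an annulus plus the parametric form of Theorem~\ref{thm:Gromov-open}, then Lemma~\ref{lm:annuli-to-saucers-param}), and the discs $T_i$ are not simplices of a triangulation of $T$; they are the parameter factors of a product covering $U_i=\Int T_i\times\Int\Delta_i$ of the non-contact locus in $T\times\Sigma$ used in the partition-of-unity saucer decomposition, with the disc-intersection property arranged by the choice of that covering, not by general position of simplices.

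The genuinely missing content is how the holes close up in the parameter direction. Since ${}^T\xi$ must be genuine away from $\bigcup_i\Int({}^{T_i}B_i)$ for \emph{every} $\tau$, each fibered shell must become solid as $\tau$ approaches $\p T_i$, and condition (ii) demands equivalence with the specific model ${}^{T_i}\eta_{K_\univ}$ whose fiber over $\tau$ is $\eta_{K_\univ^{(\delta(\tau))}}$ with $K^{(s)}=sK+(1-s)E$. You invoke $E$ only in connection with a collar of $\p T$, but the real issue is at $\p T_i$ for each hole, and in the paper it is resolved by three parametric ingredients your proposal does not supply: Proposition~\ref{prop:shallow-param} (the parametric conjugation/disorder result, giving families of subordination maps that are the identity where the Hamiltonians are already positive); Proposition~\ref{prop:circular-to-universal-param}, where a \emph{single} equivariant covering $\Lambda$ is chosen for the whole family by first dominating $K^\tau$ by a monotone family $\ol K^{\|\tau\|}$ and interpolating the negative bumps via $\phi^s=\phi_+-s\phi_-$, so that the resulting saucers are solid near the relevant parameter boundaries; and Lemma~\ref{lm:universal-eps}, which produces a family of subordination maps $\eta_{K_\univ^{(\theta(s))}}\to(B_p,\zeta_p^s)$ that are solid at $s=0$. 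Without these, your point (c) (``keeping the hole models uniformly equal to ${}^{T_i}\eta_{K_\univ}$'') and your claim that the equivariant covering can be performed $\tau$-continuously remain assertions; as written, the proposal names the difficulties but does not close them.
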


Recall the setting of Proposition~\ref{p:ConnectSumDisc}, where 
$(B,\xi)$ is a $(2n+1)$-dimensional contact ball for which there is a 
special contact Hamiltonian $K: \Delta_\cyl \to \R$ such that
$(D_{K}, \eta_{K}) \subset (\p B, \xi)$ where the co-orientation of $D_{K}$ coincides with the 
outward co-orientation of $\p B$. 
The following proposition, which we prove 
in Section \ref{sec:final-filling}, is the parametric generalization of Proposition~\ref{p:ConnectSumDisc}
where $({}^T B,{}^T\xi)$  is the fibered contact structure $T\times(B,\xi)$.

\begin{prop}\label{p:ConnectSumDisc-param}
	Let $(K_0, \Delta_{\cyl})$ be a contact Hamiltonian and consider the fibered contact shell 
	$$({}^TB_{K_0} \# {}^TB, {}^T\eta_{K_0} \# {}^T\xi)$$
	given by performing a boundary connected sum on each fiber over $\tau \in T$ at the north pole of 
	$B_{K^{(\delta(\tau))}_0}$ and the south pole of $D_{K} \subset \p B$.  If $K \leq K_0$ is special, then
	$({}^TB_{K_0} \# {}^TB, {}^T\eta_{K_0} \# {}^T\xi)$ is fibered equivalent to a  genuine fibered contact structure.
\end{prop}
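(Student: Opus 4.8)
The plan is to deduce Proposition~\ref{p:ConnectSumDisc-param} from its non-parametric version Proposition~\ref{p:ConnectSumDisc} by running the same filling construction ``with parameters,'' taking care that the only dependence on $\tau \in T$ enters through the family of Hamiltonians $K_0^{(\delta(\tau))} = \delta(\tau)K_0 + (1-\delta(\tau))E_0$, where $E_0(u,z) := K_0(u,1)$. First I would observe that the fibered shell $({}^TB_{K_0}\#{}^TB, {}^T\eta_{K_0}\#{}^T\xi)$ is genuine over $\Op\,\partial T$: by the definition of the parametric family in \eqref{e:ParametricHam} and the choice of bump function $\delta$ supported in $\Int T$, we have $\delta(\tau) = 0$ near $\partial T$, so the fiber is $(B_{E_0}\#B, \eta_{E_0}\#\xi)$; since $E_0(u,z) = K_0(u,1) > 0$ by condition (i) on special Hamiltonians applied to $K_0$ (or just because $K_0|_{\partial\Delta_\cyl} > 0$ at $z = \pm 1$), actually one needs $K_0(u,1)>0$ for all $u$, which holds because $K_0 \geq K$ and $K > 0$ on $\partial\Delta_\cyl$ — wait, that only controls $u = 1$; more carefully, $E_0 > 0$ need not hold, so instead I would note that over $\Op\,\partial T$ the structure is just a fixed shell independent of $\tau$, so it suffices to know Proposition~\ref{p:ConnectSumDisc} makes it equivalent to a genuine contact structure there (which it does, since $E_0 \geq K$ and $K$ is special), and then the real content is to interpolate over the rest of $T$ relative to this boundary behavior.

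The key steps, in order, would be: (1) Run the proof of Proposition~\ref{p:ConnectSumDisc} from Section~\ref{sec:final-filling} for the single Hamiltonian $K_0$ and record that every construction in it — the choice of auxiliary functions $\rho$, the diffeomorphisms realizing the equivalences, the explicit contact forms produced in the filling — depends smoothly on $K_0$ when $K_0$ varies in a compact family of Hamiltonians that are all $\geq K$ (the hypothesis $K \leq K_0$ is the one structural constraint, and it is preserved along the segment from $K_0$ to $E_0$ precisely because $K$ is spherically symmetric, non-increasing in $z$ on $[-1,z_D]$, and $E_0(u,z) = K_0(u,1) \geq K_0(u,z) \geq K(u,z)$ using condition (iii)/(iv) — so in fact $K \leq K_0^{(s)} \leq$ something for all $s$). (2) Apply this smooth family of fillings over the family $\{K_0^{(\delta(\tau))}\}_{\tau\in T}$ to produce a fibered almost contact structure on ${}^TB_{K_0}\#{}^TB$ which is genuine on each fiber and agrees with the given ${}^T\eta_{K_0}\#{}^T\xi$ on $\Op\,\partial({}^TB_{K_0}\#{}^TB)$. (3) Check that the homotopy of almost contact structures exhibiting the fibered equivalence can also be taken fiberwise-smooth and relative to $\Op\,\partial({}^TB_{K_0}\#{}^TB)$; this is automatic once the filling itself is a smooth family, since the straight-line homotopy between two $1$-forms defining the same hyperplane field near the boundary is used, exactly as in Lemma~\ref{l:CircleShell}.

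The main obstacle I anticipate is parametric smoothness and compatibility at the level of the gluing/boundary-connected-sum construction: one must verify that the boundary connected sum at the north pole of $B_{K_0^{(\delta(\tau))}}$ and the south pole of $D_K \subset \partial B$ can be performed in a $\tau$-smooth way (the north pole $P_{+1} = (0,1,0)$ is a gluing place for \emph{every} Hamiltonian, by the remark in Section~\ref{sec:ContactHamShell}, so the gluing locus does not move with $\tau$ — this is the reason the construction goes through), and that the $C^1$-perturbation making the two shells isomorphic near the gluing points, referenced in Section~\ref{sec:shells}, is carried out uniformly in $\tau$. A secondary point is ensuring the filling genuinely respects the relative condition over $\partial T$: since $\delta \equiv 0$ there, the parametric construction must reduce \emph{on the nose} to the non-parametric filling of $(B_{E_0}\#B,\eta_{E_0}\#\xi)$, so I would set up the smooth family so that it is literally constant in $\tau$ near $\partial T$. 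Once these bookkeeping issues are settled, the proposition follows, because ``genuine on each fiber plus fibered-smooth plus boundary-rel'' is exactly the definition of fibered equivalence to a genuine fibered contact structure.
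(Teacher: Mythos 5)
Your plan --- run the proof of Proposition~\ref{p:ConnectSumDisc} smoothly in the parameter $\tau$ and arrange the construction to be constant in $\tau$ near $\p T$ --- has a genuine gap at exactly the point you dismiss as automatic in step (3). Fibered equivalence requires the homotopy of fibered almost contact structures to be \emph{fixed} on $\Op\,\p({}^TB_{K_0}\#{}^TB)$, and this includes the fibers over $\Op\,\p T$ (this is what Theorem~\ref{thm:main-existence-T} needs: nothing may move over $\p T\times M$, where the structure is already genuine). Making the family of fillings constant in $\tau$ near $\p T$ does not achieve this: on each such fiber you would still be running the full non-parametric homotopy $\xi_\sigma=\Psi_\sigma^*(\eta_{K\#K,\hat\rho_\sigma})$ of Proposition~\ref{p:ConnectSumDisc}, which is a large deformation (the ambient isotopy $\Psi_\sigma$ drags the hole across the connected sum, and the intermediate structures $\eta_{K\#K,\hat\rho_\sigma}$ are not genuine outside $\wh\Theta_\sigma(B_{K'})$), not the constant homotopy. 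So your construction yields a homotopy from ${}^T\eta_{K_0}\#{}^T\xi$ to a genuine fibered structure which is relative to the fiberwise boundaries but \emph{not} relative to $\p T$, and that is strictly weaker than the statement being proved.

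Closing this gap is precisely the content of the paper's intermediate Proposition~\ref{p:LocalConnectSumDisc-param}, and it needs an input your proposal never uses: the second part of Lemma~\ref{p:ContEmb}, namely that $\left((\Theta_\sigma)_*K',\Theta_\sigma(\Delta')\right)<\left(E\#K,\Delta\#\Delta\right)$ for \emph{all} $\sigma$, not just $\sigma=1$. This guarantees that when the fiber Hamiltonian is close to $E$ (i.e.\ for parameters near $\p T$, after the reduction from $K_0$ to the special Hamiltonian $K$ by domination and shrinking $\eps'$), every $\wh\Theta^s_\sigma$ is a subordination map, so the whole $\sigma$-family of shells $\hat\zeta^s_\sigma$ is already genuine there; one then reparametrizes with a cutoff $f(\sigma,s)$ vanishing for $s$ near $0$ to make the homotopy honestly constant near $\p T$ while keeping the time-one structure genuine on every fiber. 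The paper flags this explicitly (Section~\ref{sec:filling} uses only part (i) of Lemma~\ref{p:ContEmb}, while the parametric argument needs part (ii)); without it, ``smooth in $\tau$ and constant near $\p T$'' does not suffice. Two smaller points: your worry about $E_0>0$ is unfounded, since $\{z=1\}\subset\p\Delta_\cyl$ gives $E_0(u,z)=K_0(u,1)\geq K(u,1)>0$; and the inequality $E_0\geq K_0$ you invoke uses condition (iii) of Definition~\ref{def:SpecialHam} for $K_0$, which is not assumed --- what you actually need (and can get from $K\leq K_0$ and $K\leq E\leq E_0$) is $K\leq K_0^{(s)}$ for all $s$, or better, the paper's reduction via Lemma~\ref{l:HamiltonianShellsAnnulus} to the fibered shell built from $K$ itself.
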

	 
\subsection{Proof of Theorem \ref{thm:main} and Corollary \ref{cor:isocontact}}\label{sec:proof-main}
Theorem \ref{thm:main} is an immediate corollary of the following theorem, which is a fibered version of Theorem~\ref{thm:main-existence}.  In particular for each $q \geq 0$, we see that
$$
 j_*: \pi_q\big(\Cont_\ot(M;A,\xi_0,\phi)\big) \to \pi_q\big(\cont_\ot(M;A,\xi_0,\phi)\big)
$$
is an isomorphism by applying Theorem~\ref{thm:main-existence-T} in the cases of 
$D^q$ and $D^{q+1}$.

  \begin{theorem}\label{thm:main-existence-T} 
  Let $T = D^q$ and $A \subset M$ be a closed subset such that $M \setminus A$ is connected, and let 
  ${}^T\xi$ be a fibered almost contact structure on ${}^T M$ which is genuine on 
  $(T\times \Op A) \cup (\p T\times M)$.
  If there exists a fixed overtwisted disc $(D_\ot, \eta_{\ot}) \subset M\setminus A$ such that for all
  $\tau \in T$ the inclusion $(D_\ot, \eta_{\ot}) \subset (M \setminus A, \xi^\tau)$ is a contact embedding,
  then ${}^T\xi$ is homotopic   to a fibered genuine contact structure   through fibered almost contact structures
fixed on $\big(T\times (A\cup D_\ot)\big)\cup\big(\p T\times M\big)$
 \end{theorem}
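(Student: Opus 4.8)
The plan is to combine the two main black boxes of the paper — Proposition~\ref{prop:unique-model-param} (parametric standardization into universal holes) and Proposition~\ref{p:ConnectSumDisc-param} (parametric filling of a universal hole connect-summed with an overtwisted disc) — exactly as was done in the non-parametric proof of Theorem~\ref{thm:main-existence} in Section~\ref{sec:proof-existence}, but now being careful to keep everything fixed on $\big(T\times(A\cup D_\ot)\big)\cup\big(\p T\times M\big)$. First I would set $A' := A \cup D_\ot$; since $(D_\ot,\eta_\ot)$ is a fixed overtwisted disc and ${}^T\xi$ restricts to a contact embedding of it on every fiber, we may pass to a neighborhood $\Op D_\ot$ on which ${}^T\xi$ is genuine (independent of $\tau$, after an isotopy supported away from $A$). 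We may also arrange that $M\setminus A'$ is connected — if removing $D_\ot$ disconnects $M\setminus A$, we first connect the pieces by a thin tube along which $\xi^\tau$ is made genuine, using Gromov's $h$-principle on this open piece; this is harmless for the final statement since the tube can be absorbed into $A'$ or simply handled because the hypothesis only asks us to fix things on $T\times A$, not $T\times A'$. (More cleanly: Proposition~\ref{prop:unique-model-param} is stated for arbitrary closed $A$ and does not require $M\setminus A$ connected, so I would simply apply it with the closed set $A'$.)

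The steps, in order: (1) Apply Proposition~\ref{prop:unique-model-param} with the closed set $A'=A\cup D_\ot$ to homotope ${}^T\xi$, relative to $(T\times A')\cup(\p T\times M)$, through fibered almost contact structures to a structure ${}^T\xi'$ that is a genuine fibered contact structure away from finitely many disjoint embedded fibered holes ${}^{T_i}B_i$ ($i=1,\dots,L$) over discs $T_i\subset T$, with each $({}^{T_i}B_i,{}^{T_i}\xi')$ equivalent to $({}^{T_i}B_{K_\univ},{}^{T_i}\eta_{K_\univ})$, and with all finite intersections $\bigcap_{i\in C}T_i$ empty or discs. (2) Invoke Proposition~\ref{prop:neighb-ot}: inside the fixed neighborhood $\Op D_\ot$, which by Definition~\ref{def:OTD} contains $(D_K,\eta_K)$ for a special Hamiltonian $K<K_\univ$, there is a whole foliation by overtwisted discs; in particular we can choose $L$ disjoint overtwisted discs $(D_\ot^i,\eta_\ot^i)=(D_{K_i},\eta_{K_i})$ sitting on the boundaries of disjoint balls $B_i'\subset\Int\Op D_\ot$, with special Hamiltonians $K_i<K_\univ$ — and since we are working fiberwise over a fixed $D_\ot$ on which ${}^T\xi$ is already the fixed genuine structure, these $B_i'$ and their contact germs are $\tau$-independent, i.e.\ of the form $T\times B_i'$. (3) Perform the ambient fibered boundary connected sum ${}^{T_i}B_i \# ({}^{T_i}\text{-restriction of }T\times B_i')$ inside ${}^TM\setminus(T\times A)$, keeping the summed regions disjoint; by the construction recalled in Section~\ref{sec:BCS} the result is fibered-isomorphic, as almost contact structures, to $({}^{T_i}B_{K_\univ}\#\,{}^{T_i}B_i',\ {}^{T_i}\eta_{K_\univ}\#\,{}^{T_i}\xi')$ with $(D_{K_i},\eta_{K_i})\subset\p B_i'$. (4) Since $K_i<K_\univ$ is special, apply Proposition~\ref{p:ConnectSumDisc-param} (with $K_0:=K_\univ$, $K:=K_i$) to each of these fibered shells to homotope ${}^T\xi'$, relative to the boundary of the summed region, to a genuine fibered contact structure on each ${}^{T_i}B_i\#\,{}^{T_i}B_i'$. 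The outcome is a genuine fibered contact structure on ${}^TM$ homotopic to the original through fibered almost contact structures fixed on $\big(T\times A'\big)\cup\big(\p T\times M\big) = \big(T\times(A\cup D_\ot)\big)\cup\big(\p T\times M\big)$, as required.

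A few bookkeeping points deserve care. The homotopy in step (1) is only guaranteed fixed on $T\times A'$; but since ${}^T\xi$ is genuine there, in particular $\Op D_\ot$ is untouched, so the overtwisted discs of step (2) remain available throughout. In steps (3)–(4) the modifications are supported in the disjoint summed regions ${}^{T_i}B_i\#\,{}^{T_i}B_i'$, which lie in ${}^TM\setminus(T\times A')$ and over $\Op\p T$ are already genuine (the fibered shell is genuine near $\p T$), so the homotopy is automatically fixed near $\p T\times M$; one should double-check that $B_i'\subset\Int\Op D_\ot$ can be chosen so that the summed regions stay disjoint from each other and from $T\times A'$, which is possible because the overtwisted-disc foliation of Proposition~\ref{prop:neighb-ot} gives infinitely many disjoint choices. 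The clause about finite intersections of the $T_i$ being discs is needed precisely so that the fibered connect sums and the fibered filling Proposition~\ref{p:ConnectSumDisc-param} can be carried out coherently over the overlaps in $T$; I would cite it explicitly when forming the sums in step (3).

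The main obstacle, to my mind, is not any single step but the parametric coherence in steps (3) and (4): one must perform the ambient fibered boundary connected sum and then apply the parametric filling proposition in a way that is compatible over all the (possibly overlapping) parameter discs $T_i$ simultaneously, while staying fixed over $\p T$ and over $T\times(A\cup D_\ot)$. The two input propositions are engineered to make this work — Proposition~\ref{prop:unique-model-param} delivers the holes with controlled overlap combinatorics and Proposition~\ref{p:ConnectSumDisc-param} is stated in the fibered form $T\times(B,\xi)$ with the connect sum taken fiberwise — so the proof is essentially a matter of assembling these pieces; but verifying that the disjointness and the "fixed on" conditions survive all the cutting and pasting is where the real (if routine) work lies. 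Everything else is a direct transcription of the non-parametric argument of Section~\ref{sec:proof-existence}.
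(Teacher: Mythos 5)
Your outline follows the paper's own route almost verbatim: standardize the holes with Proposition~\ref{prop:unique-model-param}, produce disjoint balls with overtwisted discs on their boundaries near the fixed $D_\ot$ via Proposition~\ref{prop:neighb-ot}, form fibered ambient boundary connected sums, and fill them with Proposition~\ref{p:ConnectSumDisc-param}. The gap is in step (3), which you dismiss as bookkeeping (``keeping the summed regions disjoint'') but which is in fact the one genuinely new ingredient of the parametric case. The fibered holes ${}^{T_i}B_i$ are arbitrary embedded fibered balls whose position in $M$ varies with $\tau\in T_i$, and the discs $T_i$ overlap; to form the ambient connected sums one must construct, for each $i$, a family over $T_i$ of embedded arcs joining $\p B_i^\tau$ to the fixed ball $B_i'$, with all these arc families disjoint from one another, from all the other holes and balls, and from $T\times(A\cup D_\ot)$, simultaneously for all $\tau$. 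Disjointness of the targets $B_i'$ (which is what Proposition~\ref{prop:neighb-ot} gives you) is the easy part; the existence of the disjoint parametric connecting families is not routine, and the paper devotes Lemma~\ref{lm:choosing-paths} to it, proved by induction on the number of parameter discs and using exactly the condition that all intersections $T_{i_1}\cap\dots\cap T_{i_k}$ are contractible --- the clause you correctly flag but do not actually use.

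A second, smaller omission: the ambient boundary connected sum of Section~\ref{sec:BCS} (Lemma~\ref{l:Ambient=Abstract}) requires the connecting arc to be \emph{transverse} to the contact structure, with $\xi$ genuine along it; in the parametric setting the paper secures this by invoking Gromov's parametric $h$-principle for transverse paths after Lemma~\ref{lm:choosing-paths}, and your proposal never addresses it. (Your worry about $M\setminus(A\cup D_\ot)$ being disconnected is unfounded and the auxiliary tube is unnecessary: a codimension-one disc with boundary does not separate a connected $(2n+1)$-manifold, and connectedness of the complement is what Lemma~\ref{lm:choosing-paths} needs.) With the path-family lemma and the transversality step supplied, the rest of your argument is the paper's proof.
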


\begin{proof}[Proof of Theorem \ref{thm:main-existence-T}]
By assumption there is a piecewise smooth disc $D_\ot\subset M\setminus A$ such that all almost  contact structures 
$\xi^\tau$ for $\tau\in T$ are genuine on $\Op D_\ot$ and restrict to $D_\ot$ as $\eta_{\ot}$.  
Since $(D_\ot, \eta_{\ot})$ determines the germ of the contact structure we may pick a ball $B \subset \Op D_\ot$
with $D_\ot \subset \p B$ and assume $({}^{T} B, {}^T \xi) = T \times (B, \xi)$.

By applying Proposition \ref{prop:unique-model-param} we may assume
there is a collection of disjoint  fibered balls ${}^{T_i}B_i \subset M \setminus (A \cup B)$ over a collection of discs $T_i \subset T$ 
for $i = 1, \dots, L$ such that 
\begin{enumerate}
\item ${}^T\xi$ is genuine away from $\bigcup_{i=1}^L \Int({}^{T_i}B_i)$ and
\item the fibered shells
$({}^{T_i}B_i, {}^{T_i}\xi)$ and $({}^{T_i}B_{K_{\univ}}, {}^{T_i}\eta_{K_\univ})$ are equivalent.
\end{enumerate}
Apply Proposition~\ref{prop:neighb-ot} to get $L$ disjoint balls 
$B_i' \subset \Int(B\setminus (D_\ot\cup A))$  with an overtwisted disc 
 $(D^i_{\ot}, \eta_{\ot}) \subset (\p B_i', \xi)$ in each of them. 
 
 It follows from Lemma \ref{lm:choosing-paths}, proven in Section \ref{sec:foliations-proofs} below, that for each $j$ we can find
 a parametric family of embedded paths $^{T_j}\gamma_j$ connecting $^{T_j}B_j$ to $^{T_j}B'_j$ in $T\times(M\setminus A\cup D_\ot)$.    Moreover, using Gromov's parametric $h$-principle for transverse paths, see \cite{Gr-PDR}, we can assume the constructed paths are transverse.

As we explain in Section~\ref{sec:BCS}, with these parametric paths we can form disjoint parametric 
ambient boundary connected sums ${}^{T_j}C_j \subset {}^{T_j}(M \setminus (A \cup D_\ot))$ for each $j=1, \dots, L$,
between the fibered shells $^{T_j}B_j$ and $^{T_j}B'_j$.  Furthermore, by Section~\ref{sec:BCS}
and property (ii) above we have isomorphisms of fibered almost contact structures
$$
	({}^{T_i}C_i, {}^{T_i}\xi) \cong ({}^{T_i}B_{K_\univ} \# {}^{T_i}B_i',\, {}^{T_i}\eta_{K_\univ} \# {}^{T_i}\xi).
$$
Applying Proposition~\ref{p:ConnectSumDisc-param} inductively for $j=1,\dots, L$  we  deform ${}^T\xi$ on these connected sums relative to their boundary
to get a fibered genuine contact structure on ${}^{T}M$.
\end{proof}
\medskip

\begin{proof}[Proof of Corollary \ref{cor:isocontact}] By an isotopy of $f$ we can arrange that the complement $M\setminus f(N)$ is overtwisted and the closure $f(N)$ is compact. Then, slightly reducing  if necessary  the manifold  $M$, we can assume it non-compact and overtwisted at infinity. 	 Let us exhaust $N$ by compact subsets: $N=\bigcup\limits_1^\infty C_j$,  such that $C_j\Subset\Int C_{j+1}$ and $V\setminus  C_j$ is connected, $ j=1,\dots $.  Set $C_0:=\varnothing$.  The result follows by induction from the following claim:

{\em Suppose we are given  an embedding    $f^{j-1}:N\to M$ which  is  contact on $\Op C_{j-1}$ and   a  homotopy  of bundle isomorphisms  $\Phi^{j-1}_{t} :TN\to  TM$   covering $f^{j-1}$ such that the following property  ${\rm P}^{j-1}$ is satisfied:
\begin{description}
\item{${\rm P}^{j-1}:$} The homotopy $\Phi^{j-1}_t$ is contact on $T(N)|_{\Op C_{j-1}}$ for all $t\in[0,1]$,  
 $\Phi^{j-1}_0$ is   contact everywhere, and 
   $\Phi^{j-1}_1=df^{j-1}$.
 \end{description}
Then  there exists a pair $(f^j,\Phi_t^j)$ which satisfies ${\rm P}^{j}$ and such that $f^{j-1}$ and $f^j$ are   isotopic via an isotopy fixed on $C_{j-1}$.}

 Let $\xi_t$ be a family of almost  contact structures on $M$ such that $\xi_t=(\Phi^{j-1}_t)_*\zeta$ on $f^{j-1}(C_j)$ and $\xi_t=\xi$ outside $f^{j-1}(C_{j+1})$. We note that $\xi_0=\xi$ on $f^{j-1}(C_j)$,  and $\xi_t=\xi$ on $f^{j-1}(C_{j-1})$ for all $t\in[0,1]$.
 Theorem \ref{thm:main} allows us to construct  a  compactly supported homotopy $\wt \xi_{t}$ of {\em genuine} contact structures on $M$,  $t\in[0,1]$,  connecting $\wt \xi_0=\xi$ and  a contact structure $\wt\xi_1$ which coincides with $\xi_1$ on $f^{j-1}(C_j)$. Moreover, this can be done to ensure  existence of a homotopy $\Psi_t:TM\to TM$ of bundle isomorphisms such that $\Psi_0=\Id$, $\Psi_t^*\wt\xi_t=\xi_t$,  and $\Psi_t|_{f^{j-1}(C_{j-1})}=\Id$, $t\in[0,1]$. Then Gray's  theorem \cite{Gray59} provides us with   a compactly supported diffeotopy $\phi_t:M\to M$, $t\in[0,1]$, such that $\phi_0=\Id$, $\phi_t^*\xi =\wt \xi_{t}$ and $\phi_t|_{f^{j-1}(C_{j-1})}=\Id$. 
   Set $f^j:=\phi_1\circ f^{j-1}$ and $\Phi^j_{t}:= d\phi_t\circ \Psi_t^*\circ\Phi^{j-1}$, $t\in[0,1]$.
 Then  $\Phi^j_1=df^j$, $(\Phi^j_t)^*\xi= (\Phi^{j-1}_t)^*\circ(\Psi_{t}) \circ (d\phi_t)^*\xi= (\Phi^{j-1}_t)^*\circ\Psi_t^*\wt\xi_{t}= (\Phi^{j-1}_t)^*\xi_t$.
 Hence,  $(\Phi^j_t)^*\xi|_{C_j}=\zeta$ for all $t\in[0,1]$. We also have
 $(\Psi^j_0)^*\xi=\zeta$ everywhere. Thus, the pair
 $(f^j,\Phi^j_t)$ satisfies ${\rm P}^{j}$, and the claim follows by induction.
   \end{proof}


\section{Domination and conjugation for Hamiltonian contact shells}\label{sec:conjugation}

Recall the notation $(K, \Delta)$ to refer to a contact Hamiltonian $K$ on a star-shaped domain
$\Delta \subset (\R^{2n-1}, \xi_\st)$ such that $K|_{\p \Delta \times S^1} > 0$ as in \eqref{e:KBoundary}.

In this section we will develop two properties of Hamiltonian contact shells that make them well-suited for the purposes of this paper.  Namely in Section~\ref{sec:PartialOrder} we show that a natural partial order $(K, \Delta) \leq (K', \Delta')$ is compatible with the partial order on contact shells given by domination, and in Section~\ref{sec:effect} we show the action of
$\CCont(\Delta)$ on a contact Hamiltonian $(K, \Delta)$ by conjugation preserves the equivalence class of the associated contact shell.  

A simple, but very important  observation is then made in Section~\ref{sec:domin} where  we show how  conjugation can be used to make  some
 contact Hamiltonians $(K, \Delta)$  much smaller with respect to the partial order.  For instance in the $3$-dimensional case
 where $\Delta \subset \R$ is an interval, we prove that up to conjugation $K: \Delta \to \R$ is a minimal element for the partial order if $K$ is somewhere negative.  In higher dimensions, the existence of a minimal element up to conjugation seems to no longer be true, but the weaker Propositions~\ref{prop:disorder} and \ref{prop:shallow} hold in general and they suffice for our purposes.


\subsection{A partial order on contact Hamiltonians with domains}\label{sec:PartialOrder}

Let us introduce a partial order on contact Hamiltonians with domains where 
$$(K, \Delta) \leq (K', \Delta')$$
is defined to mean $\Delta \subset \Delta'$ together with
\begin{align}
	K(x,t) &\leq K'(x,t) \quad\mbox{for all $x \in \Delta$ and} \label{e:DirectDomConditon}\\
	0 &< K'(x,t) \quad\mbox{for all $x \in \Delta' \setminus \Delta$.} \label{e:HamDomBasicCondition}
\end{align}
This partial order is compatible with domination of contact shells.
 \begin{lemma}\label{l:HamiltonianShellsAnnulus}
	If $(K, \Delta) \leq (K', \Delta')$, then $(B_{K}, \eta_{K})$ is dominated by
	$(B_{K'}, \eta_{K'})$.  More specifically, given a contact shell $(B_{K,C}, \eta_{K, \rho})$ there is a shell
	$(B_{K'\!,C'}, \eta_{K'\!, \rho'})$ such that 
	$$(B_{K,C}, \eta_{K, \rho}) \subset (B_{K'\!,C'}, \eta_{K'\!, \rho'})$$ 
	the inclusion is a subordination map.
\end{lemma}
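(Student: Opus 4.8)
The plan is to construct the dominating shell $(B_{K',C'}, \eta_{K',\rho'})$ so that it literally contains $(B_{K,C}, \eta_{K,\rho})$ as a subset, with the inclusion being the subordination map; this is actually stronger than what the definition of domination requires (we take the equivalence $g$ in the definition of domination to be the identity). So the real content is: given $(K,\Delta) \le (K',\Delta')$, choose $C'$ and a family $\rho'_{(x,t)}$ satisfying conditions (i)--(iii) of \eqref{e:FamilyRhoS1} for the Hamiltonian $K'$, such that $B_{K,C} \subset B_{K',C'}$, the form $\alpha_{\rho'}$ restricts to $\alpha_\rho$ on $B_{K,C}$ (or at least is homotopic to it rel boundary of $B_{K,C}$ among almost contact structures), and $\eta_{K',\rho'}$ is a \emph{genuine} contact structure on $B_{K',C'} \setminus \Int B_{K,C}$.

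First I would normalize the constants. By Lemma~\ref{l:CircleShell} the equivalence class of $(B_{K,C}, \eta_{K,\rho})$ is independent of $C$ and $\rho$, so I may assume $C = C'$ is a single large constant with $C + \min(K') > 0$ and, rescaling via the diffeomorphisms $\psi$ from the proof of Lemma~\ref{l:CircleShell} if needed, arrange that everything is set up with the \emph{same} $C$. Since $\Delta \subset \Delta'$ and $K \le K'$ on $\Delta$, we get $B_{K,C} = \{(x,v,t): x\in\Delta,\ v \le K(x,t)+C\} \subset \{(x,v,t): x\in\Delta',\ v\le K'(x,t)+C\} = B_{K',C}$ automatically. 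Next, I would build the family $\rho'_{(x,t)}$ for $(x,t) \in \Delta' \times S^1$ so that on the smaller region $\{x \in \Delta,\ v \le K(x,t)+C\}$ it agrees with $\rho_{(x,t)}$. The compatibility to check is that $\rho'$ still satisfies condition (ii) near $\{v = K'(x,t)+C\}$ and condition (iii) near $\{v \le K'(x,t)+C,\ x\in\p\Delta'\}$; here is where \eqref{e:HamDomBasicCondition}, $K' > 0$ on $\Delta'\setminus\Delta$, and $K' > 0$ on $\p\Delta'$ enter, exactly as in the Remark after Lemma~\ref{l:CircleShell} (on $\{K' > 0\}$ one \emph{can} take $\rho'$ strictly increasing, e.g.\ $\rho'(v) = v - C$, making $\eta_{K',\rho'}$ genuinely contact there by \eqref{e:CC}). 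So on the closure of $B_{K',C}\setminus \Int B_{K,C}$, which is covered by a neighborhood of $\{x\in\p\Delta'\}$ together with the region $v \le K'+C$ over $\Delta'\setminus\Delta$ and the transition region between $K$ and $K'$ over $\Delta$, I want $\p_v\rho' > 0$. Over $\Delta'\setminus\Delta$ this is fine since $K'>0$; over $\Delta$, in the slab $K(x,t)+C \le v \le K'(x,t)+C$, I interpolate between $\rho_{(x,t)}$ (which near $v = K(x,t)+C$ equals $v-C$, hence has positive $v$-derivative there by condition (ii)) and the linear function $v-C$, keeping $\p_v\rho' > 0$ throughout the slab — this is possible precisely because $\rho$ already has slope $1$ at the inner boundary.

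The one subtlety, and the step I expect to be the main obstacle, is making the interpolation of the $\rho'$ family \emph{smooth and global} over all of $\Delta'\times S^1$ while simultaneously (a) matching $\rho$ exactly on $B_{K,C}$, (b) keeping $\p_v\rho' > 0$ on the complementary region, and (c) respecting condition (i), $\rho'_{(x,t)}(0) = 0$, everywhere. The functions $K$ and $K'$ may only be piecewise smooth and may coincide on part of $\Delta$, so the "slab" between their graphs can degenerate; one handles this by first perturbing $C'$ slightly upward (allowed by Lemma~\ref{l:CircleShell}) so that $K' + C' > K + C$ strictly on $\Delta$, giving a genuinely thick slab, and then using a partition-of-unity interpolation between the prescribed $\rho$ on the inside and $v \mapsto v - C'$ outside, checking positivity of $\p_v$ is preserved because both endpoints of the interpolation have positive $v$-derivative on the overlap. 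Once $\rho'$ is constructed, the inclusion $B_{K,C}\hookrightarrow B_{K',C'}$ pulls back $\alpha_{\rho'}$ to a $1$-form agreeing with $\alpha_\rho$ near $\p B_{K,C}$; the straight-line homotopy of $1$-forms (as in the proof of Lemma~\ref{l:CircleShell}) shows the pulled-back almost contact structure is equivalent to $\eta_{K,\rho}$ rel boundary, so $H = \mathrm{incl}$ satisfies $H^*\eta_{K',\rho'} = \eta_{K,\rho}$ up to equivalence and $\eta_{K',\rho'}$ is genuine on $B_{K',C'}\setminus\Int B_{K,C}$. Taking $g = \mathrm{id}$, this exhibits domination and identifies the inclusion as a subordination map, completing the proof.
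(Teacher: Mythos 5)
Your main construction is essentially the paper's proof: take $C'=C$, keep $\rho'=\rho$ on $\Op B_{K,C}$, put $\rho'(v)=v-C$ on the slab $\{K(x,t)+C\le v\le K'(x,t)+C\}$ over $\Delta$, and over $\Delta'\setminus\Delta$ choose $\rho'$ increasing with $\rho'(0)=0$ and $\rho'=v-C$ near the top, which is possible exactly because of \eqref{e:HamDomBasicCondition}; then $\p_v\rho'>0$ on $\Op(B_{K',C'}\setminus\Int B_{K,C})$ and the inclusion is a subordination map. Note that with $C'=C$ the ``degenerate slab'' you worry about is not a problem at all: where $K=K'$ both matching conditions force the same formula $\rho'(v)=v-C$, which is smooth (condition (ii) for $\rho$ already gives $\rho=v-C$ near $v=K+C$) and has $\p_v\rho'=1$.

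The fix you propose in the ``main obstacle'' paragraph, however, is wrong and would actually destroy the construction. If you perturb the constant so that $K'+C'>K+C$ strictly while insisting $\rho'=\rho$ on $\Op B_{K,C}$, then $\rho'(K+C)=K$, while condition (ii) for the outer shell forces $\rho'(K'+C')=K'$; strict monotonicity of $\rho'$ on the slab then forces $K'>K$ at \emph{every} point of $\Delta$, which the partial order \eqref{e:DirectDomConditon} does not provide --- this is precisely the content of the Remark after Lemma~\ref{lem:contain}. So wherever $K=K'$ and $C'>C$, no admissible $\rho'$ exists and the inclusion cannot be a subordination map. Relatedly, your claim that a $v$-dependent partition-of-unity interpolation between $\rho$ and $v\mapsto v-C'$ keeps $\p_v\rho'>0$ ``because both endpoints have positive $v$-derivative'' is false: the derivative acquires the term $\chi'(v)\bigl(\rho(v)-(v-C')\bigr)\approx\chi'(v)(C'-C)$, which is negative and unbounded as the transition region shrinks. (A small further slip: on $\Delta'\setminus\Delta$ you cannot literally take $\rho'(v)=v-C$, since condition (i) demands $\rho'(0)=0$; what $K'>0$ buys is an increasing $\rho'$ with $\rho'(0)=0$ and $\rho'=v-C'$ near $v=K'+C'$.) The correct resolution of your subtlety is simply to keep $C'=C$ and extend by $v-C$, as in the first part of your argument and in the paper.
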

\begin{proof}
	If $C' \geq C$, then by \eqref{e:DirectDomConditon} we have
	$(B_{K,C}, \eta_{K,\rho}) \subset (B_{K'\!,C'}, \eta_{K'\!,\rho'})$
	and it will be an embedding of almost contact structures whenever
	$$
	\mbox{$\rho' = \rho$ \quad on \quad $\Op B_{K,C} \subset B_{K',C'}$.}
	$$
	If we pick the extension so that
$$		
		\mbox{$\p_v \rho'_{(x,t)}(v) > 0$ on $\Op\{x \in \Delta, v \geq K(x,t) +C\} \cup \Op\{x \in \Delta' \setminus \Int \Delta\}$\,,}
$$
	which is possible on the latter region by \eqref{e:HamDomBasicCondition}, it follows that 
	$\eta_{K'\!,\rho'}$ is contact on $\Op (B_{K'\!,C'} \setminus \Int B_{K,C})$
	and hence the inclusion is a subordination map.
\end{proof}

\subsection{Conjugation of contact Hamiltonians}\label{sec:effect}

Given a contact manifold $(M, \alpha)$ and a contact Hamiltonian 
$K: M \times S^1 \to \R$ let $\{\phi^t_K\}_{t \in [0,1]}$ be the unique contact isotopy with $\phi^0_K = \id$ and
$$\alpha(\p_t \phi^t_K(x)) = K(\phi^t_K(x), t).$$
For a contactomorphism $\Phi : (M, \alpha) \to (M', \alpha')$ define the push-forward Hamiltonian
\begin{equation}\label{e:ContPushK}
	\Phi_*K: M' \times S^1 \to \R \quad\mbox{by}\quad (\Phi_*K)(\Phi(x),t) = c_{\Phi}(x)\,K(x,t)\,. \quad
\end{equation}
where $c_\Phi: M \to \R_{> 0}$ satisfies $\Phi^*\alpha' = c_\Phi\, \alpha$.
One can verify
$$
	\{\Phi \phi^t_K \Phi^{-1}\}_{t \in [0,1]} = \{\phi^t_{\Phi_*K}\}_{t \in [0,1]}
$$
so $\Phi_*$ corresponds with conjugating by $\Phi$.

In this paper we will primarily be concerned with contactomorphisms $\Phi: \Delta \to \Delta'$ between star-shaped domains
in $(\R^{2n-1}, \xi_{\st})$ where $c_\Phi: \Delta \to \R_{>0}$ is defined by
$$
	\Phi^*\lambda_{\st} = c_\Phi\,\lambda_{\st}.
$$
It is clear that if $(K, \Delta)$ satisfies \eqref{e:KBoundary}, then $(\Phi_*K, \Delta')$ does as well.  As the next lemma shows
the push-forward operation induces an equivalence of contact shells.

\begin{lemma}\label{l:PhiToTildePhi}
A contactomorphism between star-shaped domains $\Phi: \Delta \to \Delta'$ in $(\R^{2n-1}, \xi_{\st})$
induces an equivalence of the contact shells
$$
	\wh\Phi: (B_{K}, \eta_{K}) \to (B_{\Phi_*K}, \eta_{\Phi_*K})
$$
defined by $(K, \Delta)$ and $(\Phi_*K, \Delta')$.
\end{lemma}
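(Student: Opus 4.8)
The plan is to construct the equivalence $\wh\Phi$ explicitly as a fiberwise extension of $\Phi$ to the ball models, compatible with the $(v,t)$-coordinates, and then check that it sends the defining $1$-form $\alpha_\rho$ to a $1$-form of the correct type so that the straight-line homotopy argument of Lemma~\ref{l:CircleShell} applies near the boundary.

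First I would fix representatives $(B_{K,C}, \eta_{K,\rho})$ and $(B_{\Phi_*K, C}, \eta_{\Phi_*K, \rho'})$ using the \emph{same} constant $C$, which is legitimate since by Lemma~\ref{l:CircleShell} the equivalence class is independent of $C$ and $\rho$. Recall $B_{K,C} = \{(x,v,t) \in \Delta \times \R^2 : v \leq K(x,t)+C\}$. The natural candidate is to define $\wh\Phi(x,v,t) := (\Phi(x),\, w,\, t)$ where $w$ is chosen so that $v \leq K(x,t)+C$ maps onto $w \leq (\Phi_*K)(\Phi(x),t) + C$; since $(\Phi_*K)(\Phi(x),t) = c_\Phi(x) K(x,t)$ by \eqref{e:ContPushK}, the affine rescaling in the $v$-variable $w = c_\Phi(x)(v - C) + C$ does not quite work unless we are near the top boundary, so instead I would use a family of diffeomorphisms $g_{(x,t)} : \R_{\geq 0} \to \R_{\geq 0}$ with $g_{(x,t)}(v) = c_\Phi(x)(v-C) + C$ on $\Op\{v = K(x,t)+C\}$ and $g_{(x,t)}$ arbitrary (say the identity rescaled appropriately) elsewhere, exactly mirroring the $\psi$ and $\phi$ constructions in the proof of Lemma~\ref{l:CircleShell}. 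Set $\wh\Phi(x,v,t) = (\Phi(x), g_{(x,t)}(v), t)$; this is a diffeomorphism $B_{K,C} \to B_{\Phi_*K, C}$ carrying $\p B_{K,C}$ to $\p B_{\Phi_*K, C}$.

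Next I would compute $\wh\Phi^*\alpha_{\Phi_*K, \rho'}$. Since $\Phi^*\lambda_{\st} = c_\Phi \lambda_{\st}$ and $\wh\Phi$ acts as the identity on $t$, we get $\wh\Phi^*(\lambda_{\st} + \rho'\,dt) = c_\Phi \lambda_{\st} + (\rho' \circ \wh\Phi)\, dt$, which is not literally of the form $\lambda_{\st} + (\cdot)\,dt$. To fix this I would either conformally rescale — working with the contact \emph{structure} rather than the form, which is all that matters for an equivalence of shells — or, cleaner, absorb the factor $c_\Phi$ by noting that $c_\Phi \lambda_{\st} + \tilde\rho\, dt$ and $\lambda_{\st} + (\tilde\rho/c_\Phi)\,dt$ have the same kernel, and then check that near $\p B_{\Phi_*K,C}$ the function $\tilde\rho / c_\Phi$ satisfies conditions (i)--(iii) of \eqref{e:FamilyRhoS1} for $\Phi_*K$, using that on $\Op\{v = K+C\}$ we arranged $g_{(x,t)}(v) - C = c_\Phi(x)(v-C)$ so that $\rho_{(x,t)}'(g(v)) = g(v) - C = c_\Phi(x)\rho_{(x,t)}(v)$ after choosing $\rho'$ appropriately. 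Then $\wh\Phi^*\eta_{\Phi_*K, \rho'}$ agrees with $\eta_{K,\rho}$ on $\Op \p B_{K,C}$, and on the interior both are almost contact structures $(\alpha, \omega)$ with the same $\omega = d\lambda_{\st} + dv\wedge dt$ (up to the conformal factor, which is homotopic to $1$ rel boundary) and $1$-forms joined by the straight-line homotopy, which stays a valid $1$-form since condition (i) of \eqref{e:FamilyRhoS1} is linear. Hence $\wh\Phi$ is an equivalence of contact shells.

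The main obstacle, I expect, is bookkeeping the conformal factor $c_\Phi$ cleanly: the push-forward Hamiltonian is $c_\Phi$-twisted by definition \eqref{e:ContPushK}, so the naive product map $\Phi \times \id$ does not preserve the ball or the normalized contact form on the nose, and one must interleave the rescaling in the $v$-direction (as in the $\psi$-step of Lemma~\ref{l:CircleShell}) with the conformal rescaling of the form, checking that all of this can be done while keeping $\p_v\rho > 0$ on the region where $\eta$ must stay genuine — i.e. near $\p\Delta$ (condition (iii)) and near the top face (condition (ii)). Everything else is a routine transport of the argument already carried out in the proof of Lemma~\ref{l:CircleShell}, just with the extra diffeomorphism $\Phi$ in the $x$-variables.
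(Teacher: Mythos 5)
Your construction is essentially the paper's: the paper also takes $\wh\Phi(x,v,t)=(\Phi(x),\phi_{(x,t)}(v),t)$ with $\phi_{(x,t)}$ affine of slope $c_\Phi(x)$ near the top face $\{v=K(x,t)+C\}$, and absorbs the conformal factor using that almost contact structures are only defined up to scale. The one streamlining in the paper is that it allows a different constant $\tilde C$ on the target (needed since $C+\min(\Phi_*K)>0$ may fail for your common $C$ unless you enlarge it) and simply \emph{defines} the target family by $\tilde\rho_{(\Phi(x),t)}:=c_\Phi(x)\,\rho_{(x,t)}\circ\phi_{(x,t)}^{-1}$, so that $\wh\Phi$ is an isomorphism of almost contact structures on the nose and your boundary-matching plus straight-line-homotopy step, together with Lemma~\ref{l:CircleShell}, does the rest.
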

\begin{proof}
	For a given model $(B_{K,C}, \eta_{K, \rho})$ we will build a model $(B_{\Phi_*K,\tilde{C}}, \eta_{\Phi_*K, \tilde{\rho}})$
	such that the two models are isomorphic as almost contact structures.
	
	For $\tilde{C} + \min(\Phi_*K) > 0$, pick a family of diffeomorphisms for $(x,t) \in \Delta \times S^1$
	$$
		\phi_{(x,t)}: [0, K(x,t)+C] \to [0, c_{\Phi}(x)K(x,t) + \tilde{C}] 
	$$
	and define a smooth family of functions for $(x, t) \in \Delta \times S^1$ 
	$$
		\tilde{\rho}_{(\Phi(x), t)}: [0, c_{\Phi}(x)K(x,t) + \tilde{C}] \to \R \quad
		\mbox{by}\quad
		\tilde{\rho}_{(\Phi(x), t)}(v) = c_\Phi(x) \rho_{(x,t)}(\phi_{(x,t)}^{-1}(v)).
	$$
	One see one $\tilde{\rho}$ satisfies the conditions in \eqref{e:FamilyRhoS1} to define
	$(B_{\Phi_*K, \tilde{C}}, \eta_{\Phi_*K, \tilde{\rho}})$ provided
	$$
	\mbox{$\phi_{(x,t)}(v) = c_{\Phi}(x)(v - C)+ \tilde{C}$ \quad for \quad $(x,t,v) \in \Op\{v = K(x,t) +C\}$.}
	$$
	It follows by construction that the diffeomorphism
	$$
	\wh\Phi: (B_{K,C}, \eta_{K,\rho}) \to (B_{\Phi_*K, \tilde{C}}, \eta_{\Phi_*K,\tilde{\rho}})
	\quad\mbox{defined by}\quad
	\wh\Phi(x,v,t) = (\Phi(x), \phi_{(x,t)}(v), t)
	$$
	is an isomorphism of almost contact structures.
\end{proof}

\subsubsection{Foliations of overtwisted discs}

For a first example of this push-forward procedure, we will prove Proposition~\ref{prop:neighb-ot}
as a corollary of Lemma~\ref{l:PhiToTildePhi} above and Lemma~\ref{lem:Scale} below. 
For $\delta \in \Op \{1\}$ observe the contactomorphism $C_\delta: \R^{2n-1} \to \R^{2n-1}$ given by 
$$
	C_\delta(u_1, \dots, u_{n-1},\phi_1, \dots, \phi_{n-1}, z) = 
		\left(\frac{u_1}{\delta}, \dots, \frac{u_{n-1}}{\delta}, \phi_1, \dots, \phi_{n-1}, \frac{z}{\delta}\right)
$$
maps $C_{\delta}(\Delta_\delta) = \Delta_{\cyl}$ where $\Delta_\delta := \{\abs{z} \leq \delta,\, u \leq \delta\}$.
\begin{lemma}\label{lem:Scale}
	Let $K:\Delta_\cyl \to \R$ a special contact Hamiltonian and define
	$$
		K_{\delta}: \Delta_\delta \to \R \quad\mbox{by}\quad K_{\delta}:= K + (\delta-1).
	$$
	If $\delta < 1$ is sufficiently close to $1$, then 
	$\tilde{K}_\delta:= (C_\delta)_* K_{\delta} : \Delta_{\cyl} \to \R$ is also a special contact Hamiltonian.
\end{lemma}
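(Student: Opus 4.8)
The plan is to verify directly that $\tilde K_\delta = (C_\delta)_* K_\delta$ satisfies all of the conditions (i)--(v) of Definition~\ref{def:SpecialHam} once $\delta < 1$ is close enough to $1$. First I would compute the conformal factor: since $C_\delta^*\lambda_{\st} = \delta^{-1}\lambda_{\st}$ (each $u_i \mapsto u_i/\delta$ and $z \mapsto z/\delta$, while $d\varphi_i$ is unchanged), we have $c_{C_\delta} \equiv \delta^{-1}$, hence by \eqref{e:ContPushK}
$$
	\tilde K_\delta(u,z) = \tfrac{1}{\delta}\, K_\delta(\delta u, \delta z) = \tfrac{1}{\delta}\big(K(\delta u, \delta z) + (\delta - 1)\big)
$$
on $\Delta_{\cyl} = C_\delta(\Delta_\delta)$, using spherical symmetry to write everything in the $(u,z)$ coordinates. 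Condition (i) (positivity on $\p\Delta_{\cyl}$): this follows because $K > 0$ on the compact set $\p\Delta_{\cyl}$, so $K \geq \epsilon_0 > 0$ on a neighborhood, and for $\delta$ close to $1$ the points $(\delta u, \delta z)$ with $(u,z)\in\p\Delta_{\cyl}$ lie in that neighborhood while $\delta - 1 \to 0$; thus $\tilde K_\delta > 0$ there. Condition (ii) (symmetry in $z$ near $z = \pm 1$) and condition (iii) are inherited from the corresponding properties of $K$ by the substitution $z \mapsto \delta z$, $u \mapsto \delta u$ and the fact that multiplying by the positive constant $\delta^{-1}$ and adding a constant preserves inequalities and equalities; one only has to check that $\delta z \in \Op\{1\}$ when $z \in \Op\{1\}$, which holds for $\delta$ near $1$.

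For conditions (iv) and (v) I would set $\tilde z_D := z_D/\delta$ (note $\tilde z_D \in (-1,1)$ for $\delta$ near $1$ since $z_D \in (-1,1)$) and $\tilde k(u) := \tfrac{1}{\delta}\big(k(\delta u) + (\delta - 1)\big)$. Monotonicity (iv): $z \mapsto \tilde K_\delta(u,z)$ is non-increasing on $\Op[-1, \tilde z_D]$ because $z\mapsto K(u, \delta z)$ is non-increasing there (as $\delta z$ ranges over $\Op[-\delta, z_D] \subset \Op[-1, z_D]$) and $\delta^{-1} > 0$. Condition (v): directly from $K(u,z) \geq k(u)$ with equality at $z_D$, we get $\tilde K_\delta(u,z) \geq \tilde k(u)$ with equality at $\tilde z_D$. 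It then remains to check that $\tilde k$ is a \emph{special} function in the sense of \eqref{e:subscale}: the main calculation is to show $a\,\tilde k(u/a) < \tilde k(u)$ for all $a > 1$, $u \geq 0$. Expanding, $a\,\tilde k(u/a) = \tfrac{1}{\delta}\big(a\,k(\delta u/a) + a(\delta-1)\big)$, so the desired inequality becomes
$$
	a\,k\!\left(\tfrac{\delta u}{a}\right) + a(\delta - 1) < k(\delta u) + (\delta - 1),
$$
i.e. $a\,k(\delta u/a) - k(\delta u) < (1 - a)(\delta - 1) = (a-1)(1-\delta)$. Since $k$ is special, the left side is strictly negative (apply \eqref{e:subscale} with $\delta u \geq 0$ in place of $u$), while the right side is $(a-1)(1-\delta) \geq 0$ for $\delta < 1$; so the inequality holds. (When $n = 1$ one instead checks $\tilde K_\delta(\tilde z_D) = \delta^{-1}(K(z_D) + (\delta-1)) < 0$ for $\delta$ close to $1$, using $K(z_D) < 0$.) Finally I would note that piecewise smoothness of $\tilde K_\delta$ is immediate since $C_\delta$ is a diffeomorphism, and that $k(1) > 0$ is not needed for $\tilde k$ — rather, the relevant hypothesis "$k(1)>0$" for $\tilde k$ follows from $\tilde k(1) = \delta^{-1}(k(\delta) + \delta - 1)$ being positive for $\delta$ near $1$ since $k(1) > 0$ and $k$ is continuous.

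The only genuine obstacle is the bookkeeping around "$\delta$ sufficiently close to $1$": all the conditions that involve a neighborhood of $z = \pm 1$ or the boundary $\p\Delta_{\cyl}$ require that the rescaling by $\delta$ does not move points out of the relevant open sets, and that the additive error $\delta - 1$ does not overwhelm the strict positivity of $K$ on compact sets; each of these is an easy compactness/continuity argument, but one must collect the finitely many smallness conditions on $1 - \delta$ and take $\delta$ closer to $1$ than all of them. The specialness inequality for $\tilde k$, by contrast, holds for \emph{all} $\delta \in (0,1)$ as the computation above shows, so it imposes no constraint.
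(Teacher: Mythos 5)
Your proposal is correct and follows essentially the same route as the paper: one computes $\tilde K_\delta(u,z)=\tfrac1\delta\bigl(K(\delta u,\delta z)+\delta-1\bigr)$, checks conditions (i)--(iv) of Definition~\ref{def:SpecialHam} with $\tilde z_D=z_D/\delta$ for $\delta$ near $1$, and verifies specialness of $\tilde k_\delta(u)=\tfrac1\delta\bigl(k(\delta u)+\delta-1\bigr)$ via the same inequality $a\,\tilde k_\delta(u/a)-\tilde k_\delta(u)<(a-1)\tfrac{\delta-1}{\delta}<0$, with $\delta$ close enough to $1$ only needed for $\tilde k_\delta(1)>0$ and the neighborhood conditions. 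The only (harmless) difference is that you spell out the routine verifications of (i)--(iv) that the paper leaves implicit.
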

\begin{proof}
	Provided $\delta$ is sufficiently close to $1$, it is clear that
	$$
		\tilde{K}_\delta := (C_\delta)_* K_{\delta} = \frac{K \circ C_\delta^{-1}}{\delta} + \frac{\delta -1}{\delta}
	$$
	satisfies items (i)-(iv) in Definition~\ref{def:SpecialHam} with 
	$\tilde{z}_D = z_D / \delta$.
	If $k: \R_{\geq 0} \to \R$ is the special function for $K$, then let 
	$\tilde{k}_\delta(u) := \tfrac{k(\delta u)}{\delta} + \tfrac{\delta-1}{\delta}$.
	Using that $k$ is special, computing for $a > 1$ and recalling $\delta < 1$, we see
	$$
		a\,\tilde{k}_\delta(u/a) -\tilde{k}_{\delta}(u)
		< (a-1)\tfrac{\delta - 1}{\delta} < 0
	$$
	so therefore $\tilde{k}_\delta$ is special provided $\delta$ is close enough to $1$ so that $\tilde{k}_\delta(1) > 0$.
	Therefore item (v)
	in Definition~\ref{def:SpecialHam} holds for $\tilde{K}_\delta$ with respect to $\tilde{k}_\delta$.
%
%
\end{proof}

\begin{proof}[Proof of Proposition~\ref{prop:neighb-ot}]
	Consider an overtwisted disc $(D_{K}, \eta_{K})$ defined by a special contact Hamiltonian $K: \Delta_{\cyl} \to \R$.  
	For $\delta \in [1-\eps, 1]$, let $\Delta_\delta = \{\abs{z} \leq \delta,\, u \leq \delta\}$
	and consider the family of contact Hamiltonians
	$$
		K_{\delta}: \Delta_\delta \to \R \quad\mbox{where}\quad K_{\delta}:= K + (\delta - 1).
	$$	
	Observe any neighborhood of $(\p B_{K}, \eta_{K})$ contains
	a foliation $\{(\p B_{K_{\delta}}, \eta_{K_{\delta}})\}_{\delta \in [1-\eps, 1]}$ provided
	$\eps > 0$ is small enough.
	
	Furthermore, when $\eps > 0$ is sufficiently small, Lemmas~\ref{l:PhiToTildePhi} and \ref{lem:Scale} give us
	a family of special contact Hamiltonians
	$\{\tilde{K}_\delta: \Delta_{\cyl} \to \R\}_{\delta \in [1-\eps, 1]}$ such that
	$\tilde{K}_\delta < K_\univ$ together with contactomorphisms
	$$
		(\p B_{K_{\delta}}, \eta_{K_{\delta}}) \cong (\p B_{\tilde{K}_{\delta}}, \eta_{\tilde{K}_{\delta}})\,.
	$$
	Therefore every neighborhood of $(D_{K}, \eta_{K})$ contains a foliation
	$\{(D_{K_{\delta}}, \eta_{K_{\delta}})\}_{\delta \in [1-\eps, 1]}$ of overtwisted discs.
\end{proof}

\subsubsection{Embeddings of contact Hamiltonian shells}

As a second application of the push-forward procedure, we have the following lemma about embeddings of contact Hamiltonian shells.

\begin{lemma}\label{lem:contain}
	Let $(B_{K, C}, \eta_{K, \rho})$ be a contact shell structure for $(K, \Delta)$.  For any other
	$(K', \Delta')$ there exists a contact shell structure $(B_{K', C'}, \eta_{K', \rho'})$ together with
	an embedding of almost contact structures
	$$
		(B_{K, C}, \eta_{K, \rho}) \to (B_{K'\!, C'}, \eta_{K'\!, \rho'}).
	$$
	If $\Delta \subset \Int \Delta'$, then the embedding can be taken to be an inclusion map.
\end{lemma}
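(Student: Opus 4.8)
\textbf{Proof proposal for Lemma~\ref{lem:contain}.}

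The plan is to reduce to the already-established Lemma~\ref{l:HamiltonianShellsAnnulus} by first enlarging the domain of $K'$ so that it contains $\Delta$, then arranging the Hamiltonian inequality by a conjugation that shrinks $K$. Concretely, first I would choose a large constant $R > 0$ so that $\Delta \cup \Delta' \subset B_R$, where $B_R \subset (\R^{2n-1},\xi_\st)$ is the star-shaped ball $\{u \le R,\ \abs{z}\le R\}$ (or any convenient star-shaped domain containing both). Extend $K'$ to a contact Hamiltonian $\wh K'$ on $B_R$ that agrees with $K'$ on $\Delta'$ and is strictly positive on $B_R \setminus \Delta'$; this is possible by \eqref{e:KBoundary} applied to $(K',\Delta')$, since the boundary condition gives room to interpolate to a large positive function near $\p B_R$. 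Now $(K',\Delta') \le (\wh K', B_R)$, so by Lemma~\ref{l:HamiltonianShellsAnnulus} the shell $(B_{K'}, \eta_{K'})$ is contained in a shell for $(\wh K', B_R)$ via a subordination map; hence it suffices to produce the desired embedding with $(\wh K', B_R)$ in place of $(K', \Delta')$, i.e.\ we may assume from the outset that $\Delta' = B_R \supset \Delta$ and $K' > 0$ on $B_R \setminus \Delta$.

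Next, the issue is that on $\Delta$ we may not have $K \le K'$. Here I would use conjugation: pick a contact isotopy $\Phi_s: \Delta \to \Delta$, $s \in [0,1]$, contracting $\Delta$ toward the origin — more precisely a time-$s$ flow of the contact vector field $-Z = -(z\p_z + \sum u_i \p_{u_i})$ restricted suitably — so that for $s$ close to $1$ the conjugated Hamiltonian $(\Phi_s)_* K$, whose value is $c_{\Phi_s}(x) K(\Phi_s^{-1}(x),t)$ with $c_{\Phi_s} = e^{-s} < 1$, has both a much smaller positive part and a shrunken support $\Phi_s(\Delta) \Subset \Delta$. Since $K'$ is positive on $B_R \setminus \Delta$ and in particular bounded below by a positive constant on the compact collar $\Delta \setminus \Phi_s(\Delta)$, and since $c_{\Phi_s}K$ is uniformly small there for $s$ near $1$, we obtain $(\Phi_s)_*K \le K'$ on all of $\Phi_s(\Delta)$ while $K' > 0$ on $B_R \setminus \Phi_s(\Delta)$; that is, $((\Phi_s)_*K, \Phi_s(\Delta)) \le (K', B_R)$. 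By Lemma~\ref{l:PhiToTildePhi}, $\wh{\Phi_s}$ is an equivalence of shells $(B_K,\eta_K) \to (B_{(\Phi_s)_*K}, \eta_{(\Phi_s)_*K})$, so composing with the subordination (in fact inclusion) map from Lemma~\ref{l:HamiltonianShellsAnnulus} and then the subordination into the $(\wh K', B_R)$ shell and back down to the $(K',\Delta')$ shell, we get, after choosing the constants $C, C'$ compatibly along the chain, an embedding of almost contact structures $(B_{K,C}, \eta_{K,\rho}) \to (B_{K',C'}, \eta_{K',\rho'})$.

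For the last sentence: if $\Delta \subset \Int \Delta'$ from the start, then the enlargement step is unnecessary — one works directly with $(K',\Delta')$ — and one only needs the conjugation to be supported arbitrarily close to the identity, so that $\Phi_s$ can be taken to fix a neighborhood of the origin and $(\Phi_s)_*K$ still satisfies $(\cdot) \le (K',\Delta')$; alternatively, if already $K \le K'$ on $\Delta$ then $(K,\Delta)\le(K',\Delta')$ directly and Lemma~\ref{l:HamiltonianShellsAnnulus} gives the inclusion with no conjugation at all. In the general $\Delta \subset \Int\Delta'$ case one still shrinks $K$ by conjugation as above but now the ambient domain is already $\Delta'$, and since $\Delta \Subset \Delta'$ there is a genuine collar $\Delta' \setminus \Phi_s(\Delta)$ on which $K' > 0$, so the inclusion $(B_{(\Phi_s)_*K}, \eta_{(\Phi_s)_*K}) \subset (B_{K'\!,C'}, \eta_{K'\!,\rho'})$ of Lemma~\ref{l:HamiltonianShellsAnnulus} is literally an inclusion map; precomposing with the equivalence $\wh{\Phi_s}$ and absorbing it into the choice of model for $(B_K,\eta_K)$ makes the whole map an inclusion.

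The main obstacle I anticipate is the bookkeeping of the constants $C$ and the families $\rho$ along the chain of maps: each of Lemmas~\ref{l:HamiltonianShellsAnnulus} and \ref{l:PhiToTildePhi} builds a \emph{new} model of the target shell (with a new $C'$ and $\rho'$) adapted to the given source model, so one must apply them in the right order — start from the given $(B_{K,C},\eta_{K,\rho})$, push forward by $\wh{\Phi_s}$ to get a specific model downstream, then feed \emph{that} model into Lemma~\ref{l:HamiltonianShellsAnnulus} — rather than trying to match two independently chosen models. Verifying that conjugation by the contracting flow genuinely makes $(\Phi_s)_*K$ small enough on the collar relative to the fixed positive lower bound of $K'$ there is a short compactness argument but needs to be stated carefully; everything else is routine.
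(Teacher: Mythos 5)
Your approach has a genuine gap: you try to deduce the lemma from Lemma~\ref{l:HamiltonianShellsAnnulus} by first arranging the partial order $\big((\Phi_s)_*K,\Phi_s(\Delta)\big)\leq (K',\Delta')$ (or $(\wh K',B_R)$) via conjugation, but that inequality is unobtainable in general, and it is also stronger than what the lemma asserts. The partial order requires, by \eqref{e:HamDomBasicCondition}, that $K'>0$ on $\Delta'\setminus\Phi_s(\Delta)$, so every point where $K'\leq 0$ must lie in $\Phi_s(\Delta)$; there \eqref{e:DirectDomConditon} forces $c_{\Phi_s}\cdot K\circ\Phi_s^{-1}\leq K'\leq 0$, and since $c_{\Phi_s}>0$ conjugation never changes the sign of $K$. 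Hence if, say, $K>0$ everywhere while $K'$ is somewhere negative (a pair the lemma must cover), no contactomorphism at all achieves $\Phi_*K\leq K'$, and your contracting-flow argument fails concretely: the inequality is needed on $\Phi_s(\Delta)$ itself, where $K'$ may be arbitrarily negative while $(\Phi_s)_*K$ remains positive; the positive lower bound for $K'$ on the collar $\Delta\setminus\Phi_s(\Delta)$ is beside the point. This is exactly the content of the remark following the lemma in the paper: an embedding of almost contact structures as in Lemma~\ref{lem:contain} cannot in general be a subordination map, so any route through domination is doomed. Two smaller problems: your preliminary step is not a reduction, since an embedding into a shell for the enlarged Hamiltonian $(\wh K',B_R)$ is not an embedding into a shell for $(K',\Delta')$, which is what is required; and in the final step you cannot ``absorb $\wh\Phi_s$ into the choice of model for $(B_K,\eta_K)$,'' because the source model $(B_{K,C},\eta_{K,\rho})$ is given in the statement.

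The actual proof needs no inequality between $K$ and $K'$ whatsoever; it exploits the freedom in the constant $C'$ and the family $\rho'$ of the \emph{target} model. After applying Lemma~\ref{l:PhiToTildePhi} to the target to reduce to the case $\Delta\subset\Int\Delta'$, choose $C'$ so large that $K'+C'>K+C$ on $\Delta\times S^1$; then $B_{K,C}\subset B_{K'\!,C'}$ as domains, and $B_{K,C}$ is disjoint from $\Op\{v=K'+C'\}$ and from $\Op\{x\in\p\Delta'\}$. Since conditions \eqref{e:FamilyRhoS1} constrain $\rho'$ only on those two regions (besides $\rho'(0)=0$), one may simply take $\rho'=\rho$ on $\Op B_{K,C}$, and the inclusion is then an embedding of almost contact structures --- in general not a subordination map, and none is needed. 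So the real missing idea is not the bookkeeping of $C$ and $\rho$ you flag at the end, but the observation that ``embedding of almost contact structures'' is far weaker than domination and is achieved by building the target model around the given source model.
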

\begin{proof}
	Since $\Delta'$ is star-shaped, there is a contactomorphism $\Phi \in \CCont_0^c(\R^{2n-1})$ such that 
	$\Delta \subset \Int \Phi(\Delta')$ and therefore by Lemma~\ref{l:PhiToTildePhi} without loss of generality we may
	assume $\Delta \subset \Int \Delta'$.
	
	Given the contact shell structure $(B_{K, C}, \eta_{K, \rho})$, pick any contact shell $(B_{K'\!,C'}, \eta_{K'\!,\rho'})$
	subject to the additional conditions that
	\begin{equation}\label{e:containCon}
		K'(x,t) + C' > K(x,t) + C \mbox{ for all $(x,t) \in \Delta \times S^1$}
	\end{equation}
	and the smooth family of functions 
	$\rho'_{(x,t)} : \R_{\geq 0} \to \R$ for $(x,t) \in \Delta' \times S^1$ satisfy
	\begin{equation}\label{e:embed}
		\rho' = \rho \quad\mbox{on\quad $\Op B_{K,C} \subset B_{K',C'}$},
	\end{equation}
	where the latter is always possible since $\Delta \subset \Int(\Delta')$.
	By \eqref{e:containCon} we have an inclusion 
	\begin{equation}\label{e:inclusion}
		(B_{K,C}, \eta_{K,\rho}) \subset (B_{K'\!,C'}, \eta_{K'\!,\rho'})
	\end{equation}
	and by \eqref{e:embed} it is an embedding of almost contact structures.
\end{proof}

\begin{remark}
{\rm If the inclusion \eqref{e:inclusion} was a subordination map, then
$$
	\p_v \rho'_{(x,t)}(v) > 0 \quad\mbox{ for all $(x,v,t) \in \Op\{x \in \Delta\,, v \geq K(x,t) + C\}$}
$$
which together with \eqref{e:containCon} and \eqref{e:embed} imply $K'(x,t) > K(x,t)$ for all $x \in \Delta$ since
$$
	K'(x,t) - K(x,t) = \rho'_{(x,t)}(K'(x, t)+C') - \rho'_{(x,t)}(K(x, t)+C) > 0 \,.
$$
A similar argument shows why assuming $\Delta \subset \Delta'$ is not sufficient, since the conditions \eqref{e:containCon}, 
\eqref{e:embed}, and $\p_v\rho' > 0$ on $\p\Delta'$ imply $K'(x,t) > K(x,t)$ for all $x \in \partial \Delta \cap \p\Delta'$}
\end{remark}

\subsubsection{Changing the contactomorphism type of the domain}

Recall that star-shaped domains $\Delta \subset (\R^{2n-1}, \xi_\st)$ are ones for which the contact vector field
$Z = z \tfrac{\p}{\p z} + u \tfrac{\p}{\p u}$ is transverse to $\p\Delta$ and we denote the flow of a vector field $X$ by $X^t$.
While not all star-shaped domains are contactomorphic, up to mutual domination of contact shells $(B_K, \eta_K)$
the choice of domain does not matter.

 \begin{lemma}\label{lm:changing-B}
 For any contact Hamiltonian $(K, \Delta)$ and star-shaped domain $\Delta'$ there is a
 contact Hamiltonian $(K', \Delta')$ such that $(B_K, \eta_K)$ dominates $(B_{K'}, \eta_{K'})$.
 \end{lemma}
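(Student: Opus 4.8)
The plan is to realize the domain change by conjugating with a contactomorphism of $\R^{2n-1}$ that is the time-one map of the flow of $Z$ (or rather of a suitable reparametrization of $Z$), which rescales star-shaped domains along the radial direction. Concretely, both $\Delta$ and $\Delta'$ are star-shaped, so each is swept out by the flow lines of the contact vector field $Z = z\tfrac{\p}{\p z} + u\tfrac{\p}{\p u}$ emanating from the origin; hence for a sufficiently large constant $\lambda > 0$ the time-$\lambda$ flow $Z^{\lambda}$ contracts $\Delta'$ into the interior of $\Delta$, i.e. $Z^{\lambda}(\Delta') \Subset \Int \Delta$. Since $Z$ is a contact vector field for $\xi_{\st}$, its flow $\Phi := Z^{\lambda}$ is a contactomorphism of $(\R^{2n-1},\xi_{\st})$ with $\Phi^*\lambda_{\st} = c_\Phi\,\lambda_{\st}$ for the explicit positive conformal factor $c_\Phi = e^{-\lambda}$ (constant along flow lines after rescaling; in any case $c_\Phi > 0$).

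The key steps, in order, are as follows. First, use $\Phi = Z^{\lambda}$ to push the Hamiltonian $(K,\Delta)$ forward: by Lemma~\ref{l:PhiToTildePhi} the push-forward $(\Phi_*K, \Phi(\Delta))$ defines a contact shell $(B_{\Phi_*K}, \eta_{\Phi_*K})$ that is \emph{equivalent} (in particular mutually dominating) to $(B_K, \eta_K)$. Note $\Phi(\Delta)$ is again star-shaped, and by our choice of $\lambda$ we have $\Phi(\Delta) \subset \Int \Delta'$ after possibly also first shrinking: more carefully, choose $\lambda$ so that $\Phi(\Delta) \Subset \Int \Delta'$ — this is possible because $\Phi$ contracts \emph{every} compact set into arbitrarily small neighborhoods of the origin as $\lambda \to \infty$, while $\Delta'$ is a fixed neighborhood of the origin. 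Second, having arranged $\Phi(\Delta) \subset \Int \Delta'$, invoke Lemma~\ref{l:HamiltonianShellsAnnulus}: choose any extension $K' : \Delta' \to \R$ of $\Phi_*K$ with $K' = \Phi_*K$ on $\Phi(\Delta)$ and $K' > 0$ on $\Delta' \setminus \Phi(\Delta)$ (possible since $\Phi_*K$ is already positive near $\p\Phi(\Delta)$ by \eqref{e:KBoundary}, and we can simply bump it upward positively on the collar). Then $(\Phi_*K, \Phi(\Delta)) \leq (K', \Delta')$ in the partial order of Section~\ref{sec:PartialOrder}, so $(B_{K'}, \eta_{K'})$ dominates $(B_{\Phi_*K}, \eta_{\Phi_*K})$. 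Third, compose the two: $(B_{K'}, \eta_{K'})$ dominates $(B_{\Phi_*K}, \eta_{\Phi_*K})$, which is equivalent to $(B_K, \eta_K)$; since domination is transitive (a subordination map composed with an equivalence is again a subordination map), we get that $(B_{K'}, \eta_{K'})$ dominates $(B_K, \eta_K)$.

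Wait — the statement asks for the \emph{reverse}: $(B_K, \eta_K)$ dominates $(B_{K'}, \eta_{K'})$. So instead I would run the argument with the roles of $\Delta$ and $\Delta'$ exchanged in the contraction step: pick $\lambda$ so that $Z^{\lambda}$ contracts $\Delta'$ into $\Int \Delta$, set $\Phi := Z^{\lambda}$, and then the push-forward under $\Phi^{-1} = Z^{-\lambda}$ is the wrong direction too, so more precisely: let $\Psi := Z^{-\lambda}$ expand $\Delta$ — no. The cleanest route: choose $\lambda$ with $Z^{\lambda}(\Delta') \Subset \Int\Delta$, and define $K'$ on $\Delta'$ by first restricting: let $K'_0 := (Z^{\lambda})_*K' $ — this is circular. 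Let me instead proceed directly: we want $(K', \Delta')$ with $(B_K,\eta_K)$ dominating $(B_{K'},\eta_{K'})$. Apply Lemma~\ref{l:HamiltonianShellsAnnulus} in the form: if there is a contactomorphism $\Phi$ with $\Phi(\Delta') \subset \Int\Delta$, then defining $K'$ on $\Delta'$ by $K' := \Phi^*(K|_{\Phi(\Delta')})$ extended appropriately — one checks $(\Phi_*K', \Phi(\Delta')) \leq (K, \Delta)$, so $(B_K, \eta_K)$ dominates $(B_{\Phi_*K'}, \eta_{\Phi_*K'})$, which by Lemma~\ref{l:PhiToTildePhi} is equivalent to $(B_{K'}, \eta_{K'})$; transitivity finishes it. The existence of such a $\Phi$ is exactly the radial contraction $Z^{\lambda}$ for large $\lambda$, since $\Delta'$ is star-shaped and hence compact with the origin in its interior, while $\Delta$ is a fixed neighborhood of the origin.

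The main obstacle is purely bookkeeping: making sure the extension $K'$ on the collar $\Delta' \setminus \Phi^{-1}(\text{core})$ can be chosen positive and smooth (or piecewise smooth) matching the pushed-forward Hamiltonian near the boundary of the core region — but this is immediate from condition \eqref{e:KBoundary} for $K$ (which guarantees positivity near $\p\Delta$, hence near $\p\Phi(\Delta')$ after pushing forward) together with the freedom in Lemma~\ref{l:HamiltonianShellsAnnulus} to choose the annular extension. One should also double-check that the flow $Z^{\lambda}$ is globally defined for all $\lambda$ (it is: $Z$ is linear in $z$ and in $u$, generating $z \mapsto e^{\lambda}z$, $u_i \mapsto e^{\lambda}u_i$, a complete flow of contactomorphisms of $(\R^{2n-1}, \xi_{\st})$ with conformal factor $e^{\lambda}$), and that star-shapedness is preserved by this flow, which is clear since $Z$ is transverse to the boundary of any star-shaped domain and its flow maps such domains to such domains.
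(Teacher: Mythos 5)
Your overall strategy (conjugate by a contactomorphism, then invoke Lemmas~\ref{l:PhiToTildePhi} and \ref{l:HamiltonianShellsAnnulus} and transitivity of domination) is the same as the paper's, but the geometric step is arranged the wrong way round, and this is a genuine gap, not bookkeeping. In your final, corrected version you take $\Phi$ to be a radial flow that sends $\Delta'$ deep into $\Int\Delta$ (even into an arbitrarily small neighborhood of the origin), set $K':=\Phi^*\bigl(K|_{\Phi(\Delta')}\bigr)$, and then claim $\bigl(\Phi_*K',\Phi(\Delta')\bigr)=\bigl(K|_{\Phi(\Delta')},\Phi(\Delta')\bigr)\leq (K,\Delta)$. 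But the partial order of Section~\ref{sec:PartialOrder} also requires condition \eqref{e:HamDomBasicCondition}: $K>0$ on all of $\Delta\setminus\Phi(\Delta')$. The hypothesis \eqref{e:KBoundary} only gives $K>0$ near $\p\Delta$; in the cases of interest $K$ is negative somewhere in $\Int\Delta$, and if $\Phi(\Delta')$ is a small ball near the origin then $\Delta\setminus\Phi(\Delta')$ contains that negative region, so the inequality in the partial order (and hence the domination, which needs $\eta_K$ to be genuinely contact outside the embedded image) simply fails. Contracting $\Delta'$ "as much as you like" is exactly the wrong move.

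The fix, which is what the paper does, is to make $\Phi(\Delta')$ nearly exhaust $\Delta$: first arrange $\Delta'\subset\Delta$ (here your $Z^{-N}$ contraction is fine), fix a neighborhood $U\supset\p\Delta$ with $K|_{U\times S^1}>0$, and then expand $\Delta'$ inside $\Delta$ so that $\Phi(\p\Delta')\subset U$, which forces $\Delta\setminus\Phi(\Delta')\subset U$ where $K>0$, so \eqref{e:HamDomBasicCondition} holds with equality of Hamiltonians on $\Phi(\Delta')$. Note that this expansion cannot be done with the global radial flow $Z^T$ (which would push $\Delta'$ outside $\Delta$); one uses the time-$T$ flow of a cut-off contact vector field $\tilde Z$ with $\supp\tilde Z\subset\Int\Delta$ and $\tilde Z=Z$ on a collar of $\p\Delta$, so that the image stays in $\Delta$ while the boundary is pushed into $U$. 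With that replacement your argument goes through; also note the minor sign slip that $(Z^t)^*\lambda_{\st}=e^t\lambda_{\st}$, so the conformal factor of $Z^{\lambda}$ is $e^{\lambda}$, not $e^{-\lambda}$.
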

 \begin{proof}
 For any neighborhood $U\supset \p \Delta$ there is a contactomorphism $\Phi \in \CCont_0^c(\R^{2n-1})$
 such that $\Phi(\Delta') \subset \Delta$ and $\Phi(\p \Delta')\subset U$.  
 To see this first note without loss of generality we may assume $\Delta' \subset \Delta$ by replacing $\Delta'$
 with $Z^{-N}(\Delta')$ for $N$ sufficiently large.
 After this reduction, the required contactomorphism is given by $\tilde{Z}^T$ for $T$ sufficiently large,
 where $\tilde{Z}$ is a contact vector field with $\supp(\tilde{Z}) \subset \Int \Delta$
 and $\tilde{Z} = Z$ on $\Op Z^{-\eps}(\Delta)$ where $\Delta \setminus Z^{-\eps}(\Delta) \subset U$.
 
 Now pick $U \supset \p \Delta$ to be such that $K|_{U \times S^1} > 0$, take the constructed contactomorphism $\Phi$ above,
 and consider the contact Hamiltonian $K' = \Phi^{-1}_*(K|_{\Phi(\Delta')})$ on $\Delta'$.  It follows from 
 Lemmas~\ref{l:HamiltonianShellsAnnulus} and \ref{l:PhiToTildePhi} that $(B_K, \eta_K)$ dominates $(B_{K'}, \eta_{K'})$.
 \end{proof}

\subsection{Domination up to conjugation}\label{sec:domin}


If we want to prove the contact shell $(B_K, \eta_K)$ is dominated by the shell $(B_{K'}, \eta_{K'})$
then Lemmas~\ref{l:HamiltonianShellsAnnulus} and \ref{l:PhiToTildePhi} instruct us to care about the partial order
from Section~\ref{sec:PartialOrder} up to conjugation.  In particular it is enough to find a contact embedding $\Phi: \Delta \to \Delta'$
such that $(\Phi_*K, \Phi(\Delta)) \leq (K', \Delta')$ to prove $(B_K, \eta_K)$ is dominated $(B_{K'}, \eta_{K'})$.

\subsubsection{Minimal elements up to conjugation in the $3$-dimensional case}

In the $3$-dimensional case where $\Delta \subset \R$ is always a closed interval, 
up to conjugation any somewhere negative Hamiltonian $(K, \Delta)$ is minimal with respect to the partial order from
Section~\ref{sec:PartialOrder}.
 
 \begin{lemma} \label{lm:3-domin}
 Let $(K, \Delta)$ be somewhere negative where $\Delta = [-1, 1]$.  For any other contact Hamiltonian
 $(\wt{K}, \Delta)$ there is a contactomorphism $\Phi \in \CCont_0(\Delta)$ such that
  $(\Phi_*K,\Delta) \leq (\wt K,\Delta)$, and hence $(B_K,\eta_K)$ is dominated by $(B_{\wt K},\eta_{\wt K})$.
%
%
 \end{lemma}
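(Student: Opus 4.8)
The plan is to reduce the statement to a one-dimensional version of the partial-order calculation, exploiting the simple structure of contactomorphisms of an interval in $(\R, \xi_{\st})$. Recall that on $\R^1 = \{z\}$ with contact form $\lambda_{\st}^1 = dz$, a compactly supported contactomorphism $\Phi \in \CCont_0(\Delta)$ is just an orientation-preserving diffeomorphism $\Phi: [-1,1] \to [-1,1]$ fixed near the endpoints, and its conformal factor $c_\Phi$ is $\Phi'$. By \eqref{e:ContPushK}, the push-forward Hamiltonian satisfies $(\Phi_* K)(\Phi(z)) = \Phi'(z)\,K(z)$, i.e. $\Phi_* K = (\Phi' \cdot K) \circ \Phi^{-1}$. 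So the problem becomes: given a somewhere-negative $K$ and an arbitrary $\widetilde{K}$ on $[-1,1]$ (both positive near $\pm 1$), find an orientation-preserving diffeomorphism $\Phi$ of $[-1,1]$, fixed near the endpoints, with $(\Phi' \cdot K)\circ \Phi^{-1} \le \widetilde K$ everywhere and with strict positivity of $\widetilde K$ wherever the domains differ (here $\Delta = \Delta' = [-1,1]$ so the second condition \eqref{e:HamDomBasicCondition} is vacuous). Once this is achieved, Lemmas~\ref{l:HamiltonianShellsAnnulus} and \ref{l:PhiToTildePhi} immediately give that $(B_K, \eta_K)$ is dominated by $(B_{\widetilde K}, \eta_{\widetilde K})$.

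The key construction is to use the conjugation to ``concentrate'' the negative part of $K$ and make $\Phi_* K$ as small as we like on a tiny interval, while on the complement $\Phi' \cdot K$ can be made small by letting $\Phi$ move very slowly (small derivative) there. Concretely, pick $z_0 \in (-1,1)$ with $K(z_0) < 0$, and a small interval $J = [z_0 - \epsilon, z_0 + \epsilon]$ on which $K < 0$. First, I would choose $\Phi$ so that the preimage $\Phi^{-1}(J)$ is a very large subinterval $J'$ of $[-1,1]$ that captures ``most'' of $[-1,1]$; on the complement $[-1,1] \setminus \overline{J'}$, which is a union of two short intervals near the endpoints, $\Phi'$ will necessarily be large (since $\Phi$ must sweep out the two short intervals $[-1,1]\setminus J$ there), but these regions can be taken inside the set where $K > 0$ is bounded; more importantly, the relevant estimate is the other way. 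Let me instead arrange it so that $\Phi$ maps a short interval near each endpoint to itself by the identity (so $\Phi' = 1$ and $\Phi_* K = K = \widetilde K$-comparable is not automatic) --- so I actually need a different split. The cleaner approach: since $K$ is somewhere negative, $\Phi_* K$ can be made to have arbitrarily negative minimum and, crucially, on the region where $\widetilde K$ might be large and positive we want $\Phi_* K$ there to still be $\le \widetilde K$. Because $\widetilde K > 0$ near $\pm 1$, it suffices to (a) make $\Phi_* K < 0$ on a neighborhood of the support of where $\widetilde K \le 0$ together with a large portion of the interior, and (b) near $\pm 1$ keep $\Phi = \id$ so $\Phi_* K = K = \widetilde K$-comparable fails; hence instead shrink: arrange $\Phi$ to be the identity only on $\Op\{\pm 1\}$ where both $K$ and $\widetilde K$ are positive, and I will simply need $K \le \widetilde K$ there, which is not given.

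So the actual mechanism must be: use $\Phi$ to drag the sublevel set $\{K < 0\}$ over the whole region where $\widetilde K$ could be non-positive, AND shrink $\Phi'$ to near $0$ on the region where $K > 0$ so that $\Phi' \cdot K$ becomes smaller than $\widetilde K$ there (this works as long as $\widetilde K > 0$ on that region — but $\widetilde K$ need not be positive everywhere). The resolution is that on the set where $\widetilde K \le 0$, we must have $\Phi_* K \le \widetilde K \le 0$, so that region must lie in $\Phi(\{K<0\})$; on the set where $\widetilde K > 0$, we can afford $\Phi_* K$ to be small positive or negative, and on that set we shrink $\Phi'$. Since $\{K < 0\}$ is a nonempty open set and we may take $\Phi$ to expand it to cover any prescribed compact subset of $(-1,1)$ — in particular the closure of $\{\widetilde K \le 0\}$, which is a compact subset of $(-1,1)$ because $\widetilde K > 0$ near $\pm 1$ — while on the complementary part of $[-1,1]$ (where $\widetilde K > 0$, with a positive lower bound on the relevant compact piece) we choose $\Phi$ with derivative small enough that $\Phi' \cdot K < \min \widetilde K$ there, we obtain the required inequality. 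I'd carry this out by building $\Phi^{-1}$ explicitly: choose nested intervals, define $\Phi^{-1}$ to be a diffeomorphism that is the identity near $\pm 1$, maps the relevant ``good'' region by a contraction, and maps a neighborhood of $z_0$ onto a large interval covering $\overline{\{\widetilde K \le 0\}}$; then verify both conditions \eqref{e:DirectDomConditon} and \eqref{e:HamDomBasicCondition} (the latter trivial). The main obstacle is the bookkeeping of the two competing requirements — negativity of $\Phi_* K$ over $\{\widetilde K \le 0\}$ versus smallness of $\Phi' K$ over $\{\widetilde K > 0\}$ — and checking that a single $\Phi$, fixed near the endpoints, can satisfy both; the somewhere-negativity of $K$ is exactly what makes the first requirement satisfiable, and the positivity of $\widetilde K$ near $\pm 1$ is what confines the problematic region to a compact subset of the open interval, making the construction possible.
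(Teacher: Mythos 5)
Your overall mechanism --- stretch the negative lobe of $K$ over almost all of $[-1,1]$ and compress the part where $K$ may be positive into the end regions where $\wt K>0$ --- is exactly the mechanism of the paper's proof. However, there is a genuine gap coming from your reading of $\CCont_0(\Delta)$: you take $\Phi$ to be the identity (``fixed'') near the endpoints. With that constraint the required inequality cannot in general be arranged: on $\Op\{\pm 1\}$, where $\Phi=\Id$, one has $\Phi_*K=K$, and $K\leq \wt K$ near $\pm 1$ is not among the hypotheses (take $K\equiv 100$ and $\wt K\equiv 1$ near $z=1$). You notice precisely this obstruction in the middle of your argument, but your final construction (``define $\Phi^{-1}$ to be a diffeomorphism that is the identity near $\pm 1$\dots'') reinstates it, and it contradicts the sentence immediately before, where you ask for $\Phi'$ small on the whole complement of the stretched region, including near $\pm 1$. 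In the lemma, $\CCont_0(\Delta)$ is not the compactly supported group: for $\Delta=[-1,1]$ it consists of all orientation-preserving diffeomorphisms of the interval, which fix the endpoints but need not be the identity nearby, and the freedom to make $\Phi'$ very small at the ends is essential --- it is what shrinks the (possibly large) positive values of $K$ there below $\wt K$.

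With that freedom your argument closes and coincides with the paper's proof: normalizing $K(0)<0$, pick $\delta>0$ and $a,\eps\in(0,1)$ with $K<-\delta$ on $\{\abs{z}\leq a\}$ and $\wt K>\delta$ on $\{\abs{z}\geq 1-\eps\}$; for small $\sigma$ let $\Phi$ map $[-\sigma,\sigma]$ linearly onto $[-1+2\sigma,1-2\sigma]$ and $\pm[2\sigma,1]$ linearly onto $\pm[1-\sigma,1]$. Then $\Phi_*K<-\tfrac{1-2\sigma}{\sigma}\,\delta<\min\wt K$ on $\abs{z}\leq 1-2\sigma$ (note the quantitative point you left implicit: the stretched subinterval must be one on which $K\leq-\delta$, so that the large derivative forces $\Phi_*K$ below $\min\wt K$ on its entire image, not merely negative), $\Phi_*K<0<\wt K$ on $1-2\sigma\leq\abs{z}\leq 1-\sigma$, and $\Phi_*K\leq\tfrac{\sigma}{1-2\sigma}\max K<\delta<\wt K$ on $\abs{z}\geq 1-\sigma$. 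The final passage from $\Phi_*K\leq\wt K$ to domination of the shells via Lemmas~\ref{l:PhiToTildePhi} and \ref{l:HamiltonianShellsAnnulus} is as you say, and condition \eqref{e:HamDomBasicCondition} is indeed vacuous here since $\Delta=\Delta'$.
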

\begin{proof}
Without loss of generality assume $K(0) < 0$ and pick a $\delta > 0$ such that
\begin{align*}
K(z) &< -\delta  \mbox{ if $\abs{z} \in [0,a]$ for some $a \in (0,1)$, and}\\
\tilde{K}(z) &> \delta  \mbox{ if $\abs{z} \in [1-\eps, 1]$ for some $\eps \in (0,1)$.}
\end{align*}
For $0 < \sigma \ll 1$, pick a diffeomorphism $\Phi: [-1,1] \to [-1, 1]$ such that linearly
$$
\mbox{$[-\sigma,\sigma]$ maps onto $[-1+2\sigma,1-2\sigma]$\quad and\quad $\pm [2\sigma, 1]$ maps onto $\pm [1-\sigma, 1]$.}
$$
Since $(\Phi_*K)(\Phi(z)) = \Phi'(z) K(z)$ we can pick $\sigma$ sufficiently small so that
\begin{align*}
(\Phi_*K)(z) &< -\tfrac{1-2\sigma}{\sigma}\, \delta < \tilde{K}(z) & \mbox{ if $\abs{z} \in [0, 1-2\sigma]$}\\
(\Phi_*K)(z) &< 0 < \tilde{K}(z) & \mbox{ if $\abs{z} \in [1-2\sigma, 1-\sigma]$}\\
(\Phi_*K)(z) &\leq \tfrac{\sigma}{1-2\sigma}\, \max(K) < \delta < \tilde{K}(z) &\mbox{ if $\abs{z} \in [1-\sigma, 1]$}
\end{align*}
and hence get that $\Phi_*K < \tilde{K}$.
\end{proof}

As a consequence of this lemma, the $3$-dimensional case simplifies by making Section~\ref{sec:universal} unnecessary
and allows us to give an effective description of an overtwisted disc.  It seems unlikely to us (though we do not have a proof) disc that
the generalization of Lemma~\ref{lm:3-domin} holds when $\dim(\Delta_{\cyl}) \geq 3$.  The immediate obstacle to adapting the proof 
is essentially that  $\Op\p\Delta_{\cyl}$  is not a star-shaped domain in higher dimensions, while for $\Delta_\cyl = [-1, 1]$ we get two intervals which are star-shaped.

\subsubsection{Remnants of the $3$-dimensional case}\label{ssec:disorder}

Proposition~\ref{prop:disorder} and its corollary Proposition~\ref{prop:shallow} below,
represent the remnants of the $3$-dimensional Lemma~\ref{lm:3-domin} that survive to higher dimensions.

Proposition~\ref{prop:shallow} essentially says that up to conjugation the only part of $(K, \Delta)$ that is relevant for the partial order is $K|_{\{K \geq 0\}}$ whereas for instance $\min(K)$ is irrelevant if $K < 0$ somewhere.
It will play a key role in Section~\ref{sec:universal}  
where we prove the existence universal contact shells.

Given a domain $\Delta\subset\R^{2n-1}_\st$ let 
$$
	F_+(\Delta) := \{K \in C^0(\Delta): \supp(K) \subset \Int \Delta,\, K \geq 0\,, \mbox{ and } K \not= 0\}
$$ 
and consider the action of $\cD_0(\Delta):=\mbox{Cont}^c_0(\Int\Delta)$ on $F_+(\Delta)$ given by
$$
	\Phi_*K :=(c_{\Phi} \cdot K) \circ \Phi^{-1} \quad\mbox{for $K\in F_+(\Delta)$ and $\Phi\in\cD_0(\Delta)$}
$$
i.e.\ the push-forward operation from \eqref{e:ContPushK}.

 \begin{prop}\label{prop:disorder}
 If $\Delta \subset (\R^{2n-1}, \xi_{\st})$ is star-shaped, then for any two $K, H\in F_+(\Delta)$ there is a contactomorphism 
 $\Phi\in\cD_0(\Delta)$ such that $\Phi_*K\geq H$.
 \end{prop}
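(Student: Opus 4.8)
The plan is to reduce the statement to a purely local, "absorbing" maneuver: since $\supp(H)\subset\Int\Delta$ is compact, it suffices to find $\Phi\in\cD_0(\Delta)$ which maps a fixed small Darboux ball $U$ (on which $K$ is positive — such a ball exists because $K\not=0$, $K\geq 0$, so $K>0$ on some open set) to a neighborhood of $\supp(H)$, while inflating the conformal factor $c_\Phi$ so much on $U$ that $(c_\Phi\cdot K)\circ\Phi^{-1}\geq H$ on all of $\supp(H)$. Away from $\supp(H)$ the inequality $\Phi_*K\geq 0=H$ is automatic. So the whole problem is: \emph{given a target compact set $C:=\supp(H)\Subset\Int\Delta$ and a source ball $U\Subset\Int\Delta$ on which $K\geq c_0>0$, produce $\Phi\in\CCont^c_0(\Int\Delta)$ with $\Phi(C)\subset U$ and $c_{\Phi^{-1}}$ (equivalently $1/c_\Phi\circ\Phi^{-1}$) as small as we like on $C$.} Note $c_{\Phi^{-1}}(\Phi(x)) = 1/c_\Phi(x)$, so making $\Phi$ strongly contract the standard contact form near $C$ — i.e. $c_\Phi$ very large there — does both jobs at once.

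First I would set up the contraction using the contact vector field $Z = z\p_z + \sum_i u_i\p_{u_i}$, whose time-$(-T)$ flow $Z^{-T}$ satisfies $(Z^{-T})^*\lambda_\st = e^{-T}\lambda_\st$ and shrinks everything toward the origin; conjugating by an ambient contactomorphism we may arrange $C$ and $U$ to both contain the origin in their interiors and $C\subset U$ after applying $Z^{-T}$ for large $T$, with conformal factor $e^{-T}\to 0$, i.e. $c_\Phi = e^{T}$ large. The only defect is that $Z^{-T}$ is not compactly supported in $\Int\Delta$. I would fix this exactly as in the proof of Lemma~\ref{lm:changing-B}: replace $Z$ by a cutoff contact vector field $\tilde Z$ with $\supp(\tilde Z)\subset\Int\Delta$ and $\tilde Z = Z$ on a large neighborhood of $C\cup U\cup Z^{-T}(C)$. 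The time-$(-T)$ flow $\tilde Z^{-T}$ then lies in $\cD_0(\Delta)$, agrees with $Z^{-T}$ near $C$, hence $\Phi:=\tilde Z^{-T}$ maps $C$ into $U$ with conformal factor $e^{T}$ on $C$. Choosing $T$ with $e^{-T}\cdot\max_\Delta(K) \geq \ldots$ — rather, choosing $T$ so that $c_0 e^{T}\geq \max_C H/\,\min_{\Phi^{-1}(C)} (\text{nothing})$; concretely so that on $\Phi^{-1}(C)\subset U$ we have $c_\Phi\cdot K \geq e^{T}c_0 \geq \|H\|_{C^0}$, gives $\Phi_*K = (c_\Phi K)\circ\Phi^{-1} \geq \|H\|_{C^0}\geq H$ on $C$ and $\geq 0 = H$ off $C$.

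The main obstacle, and the only place requiring care, is the compact-support/cutoff step: one must guarantee that after truncating $Z$ the flow still exists for all time $T$ (automatic once $\tilde Z$ is compactly supported), still maps $C$ inside $U$, and still has the prescribed large conformal factor on $C$ — this works because the cutoff can be taken to equal $Z$ on any prescribed compact neighborhood of the relevant orbit segment $\{\,\tilde Z^{-t}(x): x\in C,\ t\in[0,T]\,\}$, which stays inside $\Int\Delta$ since the true $Z$-orbits contract monotonically toward the origin. A secondary subtlety is justifying the reduction "$C,U$ both contain $0$ and $C\subset U$ after an initial ambient contactomorphism of $\Int\Delta$": since $\Delta$ is star-shaped one can isotope $C$ into any Darboux ball by a compactly supported contactomorphism (again via a suitably cutoff contact flow), and inside a Darboux ball a further compactly supported contactomorphism carries $C$ into the smaller ball $U$ — standard. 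Everything else is bookkeeping with conformal factors and the identity $c_{\Phi\circ\Psi} = (c_\Phi\circ\Psi)\cdot c_\Psi$.
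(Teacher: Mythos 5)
Your overall strategy is the same as the paper's: use the radial contact vector field $Z$, cut off to be compactly supported in $\Int\Delta$, so that the conformal factor $e^{T}$ of its time-$T$ flow inflates $K$ over $\supp(H)$. However, your concrete construction runs the flow in the wrong direction, and as written the key estimate fails. You take $\Phi:=\tilde Z^{-T}$, the contracting map with $\Phi(C)\subset U$, and assert it has ``conformal factor $e^{T}$ on $C$''; in fact $(\tilde Z^{-T})^*\lambda_\st=e^{-T}\lambda_\st$ wherever the cutoff is inactive, so $c_\Phi=e^{-T}$, not $e^{T}$ (your sentence ``with conformal factor $e^{-T}\to 0$, i.e.\ $c_\Phi=e^{T}$ large'' is self-contradictory). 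Moreover, what the inequality $(\Phi_*K)(y)=c_\Phi(\Phi^{-1}(y))\,K(\Phi^{-1}(y))\ge H(y)$ actually requires is $\Phi^{-1}(C)\subset U$ together with $c_\Phi$ large on $\Phi^{-1}(C)$ (equivalently $c_{\Phi^{-1}}$ small on $C$), not $\Phi(C)\subset U$. With your $\Phi=\tilde Z^{-T}$ one has $\Phi^{-1}(C)=\tilde Z^{T}(C)$, which is pushed outward toward $\p\Delta$ where $K$ vanishes, and the conformal factor there is $e^{-T}$, so $\Phi_*K$ is small (eventually $0$) on $\supp(H)$ rather than large; your displayed step ``on $\Phi^{-1}(C)\subset U$ we have $c_\Phi\cdot K\ge e^{T}c_0$'' is incompatible with the map you constructed. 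Note also that $\max H$ may be arbitrarily large compared with $\max K$, so no choice of cutoff can rescue the contracting map: the only source of largeness is the conformal factor, which for $\tilde Z^{-T}$ is at most of order $1$ at the relevant points.

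The repair is immediate and lands exactly on the paper's proof: after arranging (as you do) that $K>0$ on a small neighborhood $U$ of the origin and that $\supp(H)\subset Z^{T}(U)$, with the backward orbit segments $\{Z^{-t}(y): y\in\supp(H),\ t\in[0,T]\}$ contained in $\Int\Delta$ (star-shapedness gives this, so the cutoff field $\tilde Z$ can be taken equal to $Z$ there), set $\Phi:=\tilde Z^{+T}$. Then for $y\in\supp(H)$ one has $\Phi^{-1}(y)=Z^{-T}(y)\in U$ and $c_\Phi(\Phi^{-1}(y))=e^{T}$, hence $(\Phi_*K)(y)\ge e^{T}\inf_U K\ge\max H$ for $T$ large, while $\Phi_*K\ge 0=H$ elsewhere. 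So the idea is correct and coincides with the paper's, but the direction of the flow, and with it the conformal-factor bookkeeping, must be reversed for the argument to close.
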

 \begin{proof}
 Without loss of generalize assume $\Delta$ is star-shaped with respect to the radial vector field 
 $Z$ and that $K(0) > 0$.  Pick a sufficiently small neighborhood $U \ni 0$ so that for some $T >0$:
 $$
 	\mbox{$\inf_U(K) > 0$\,, \quad $\supp(H) \subset Z^T(U)\subset\Int \Delta$\,, \quad and \quad $e^T \inf_U(K) > \max(H)$}
$$
where $Z^t:\R^{2n-1}\to\R^{2n-1}$ is the flow of $Z$ and satisfies $(Z^t)^*\lambda_{\st} = e^t \lambda_{\st}$.
Let $\tilde{Z}$ be another contact vector field supported in $\Int(\Delta)$ and equal to $Z$ on $Z^T(U)$.
It follows the contactomorphism $\Phi := \tilde{Z}^T \in \cD_0(\Delta)$ satisfies $\Phi_* K \geq H$ since
$$
	(\Phi_*K)(x) = (c_\Phi \cdot K)(\Phi^{-1}(x)) \geq e^T \inf_U(K) \geq H(x) \quad\mbox{if $x \in \supp(H)$}
$$
and $\Phi_*K \geq 0$ otherwise.
 \end{proof}
 
 Note that Proposition \ref{prop:disorder} shows that on the conjugacy classes of elements of the positive cone
 $\CC:=\{f\in\cD_0;\; f\geq\Id,\;f\neq\Id\}$ partial order from \cite{EliPolt00} is trivial
 and it would be to understand for which contact manifolds the analog of Proposition \ref{prop:disorder} holds.
 As pointed out to us by L.~Polterovich, a non-trivial bi-invariant metric on $\CCont_0^c$ compatible with the notion of order
 on $\CCont_0^c$ from \cite{EliPolt00} provides an obstruction to Proposition \ref{prop:disorder}.  For instance
 Sandon's metric \cite{Sandon} shows Proposition \ref{prop:disorder} does not hold for $D^{2n}_R\times S^1$ with contact form $dz+\sum_{i=1}^n u_id\phi_i$ where $D^{2n}_R$ is a $2n$-disc of a sufficiently large radius $R$.  
%

As an application of  Proposition \ref{prop:disorder} we show in this next proposition
that condition \eqref{e:DirectDomConditon} in the definition of
the partial order $(K, \Delta) \leq (K', \Delta')$ from Section~\ref{sec:PartialOrder} can be weaken so that there is still
domination of the contact shells.

		\begin{figure}[h]
   \centering
   \def\svgwidth{400pt}
   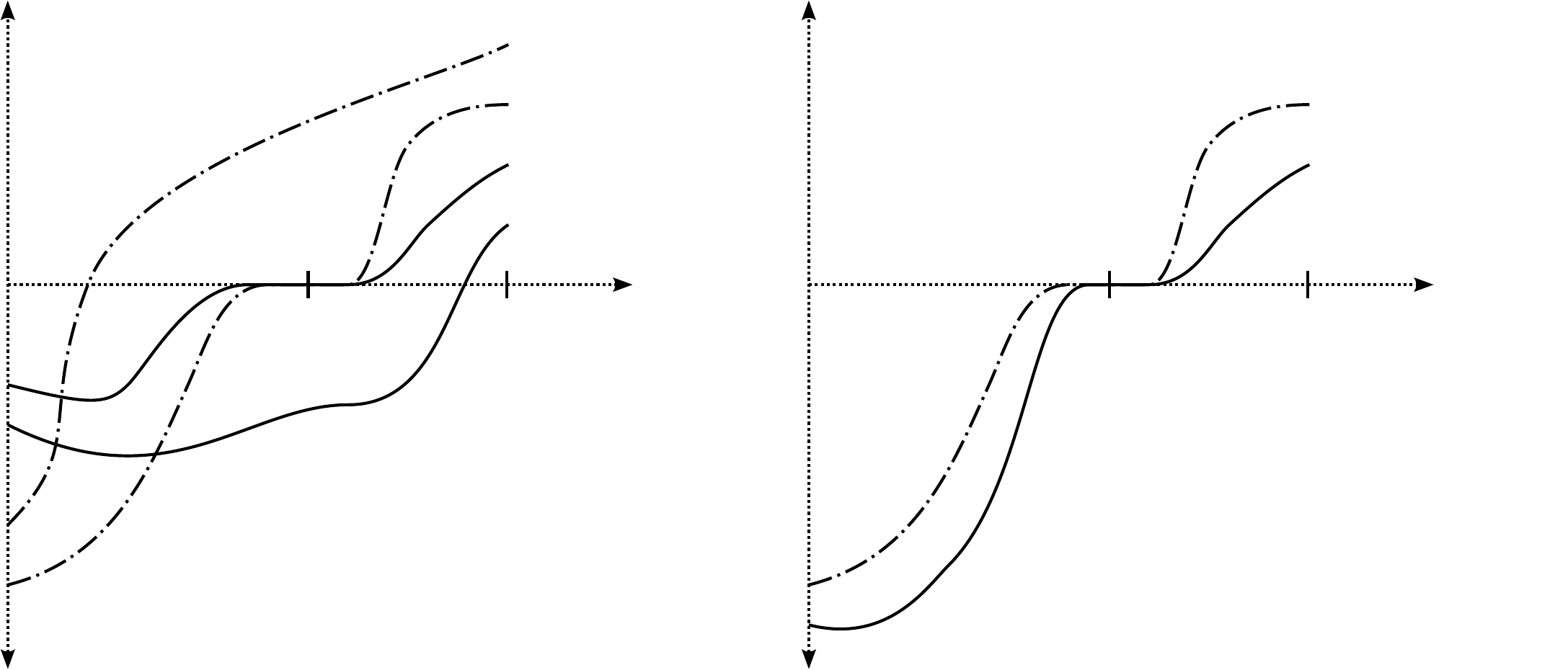
   \caption{Schematic representation of the proof of Proposition~\ref{prop:shallow}}
  \label{f:shallow}
\end{figure}

\begin{prop}\label{prop:shallow}
Consider contact Hamiltonians $K_i: \Delta \to\R$ defining contact shells $(B_{K_i}, \eta_{K_i})$ for $i=1,2$.
If there is a star-shaped domain $\tilde\Delta \subset \Int \Delta$ such that
$$
	K_0 \leq K_1 \mbox{ on $\Op(\Delta \setminus \Int \tilde\Delta)$,} \quad
	0 \leq K_1 \mbox{ on $\Op \partial \tilde\Delta$,} \quad \mbox{and}\quad
	K_0 \leq 0 \mbox{ on $\Op \tilde\Delta$}
$$
with $K_0|_{\Int \Delta} \not\equiv 0$,
then the contact shell $(B_{K_0}, \eta_{K_0})$ is dominated by 	$(B_{K_1}, \eta_{K_1})$.  
\end{prop}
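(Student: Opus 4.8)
The plan is to reduce Proposition~\ref{prop:shallow} to the already-established partial-order domination (Lemma~\ref{l:HamiltonianShellsAnnulus}) together with conjugation-invariance (Lemma~\ref{l:PhiToTildePhi}), using Proposition~\ref{prop:disorder} to absorb the negative part of $K_0$ supported inside $\tilde\Delta$. The key point is that the inequality $K_0 \le K_1$ is only violated (possibly) on $\Int\tilde\Delta$, where $K_0 \le 0$; so we want to find a contactomorphism that ``pushes'' the behaviour of $K_1$ on $\Op\partial\tilde\Delta$ over the whole of $\tilde\Delta$ and replaces the troublesome region by something nonnegative. Concretely, first I would introduce an auxiliary Hamiltonian $\tilde K_1$ obtained by modifying $K_1$ inside $\tilde\Delta$: set $\tilde K_1 = K_1$ on $\Op(\Delta\setminus\Int\tilde\Delta)$ and choose $\tilde K_1$ on $\tilde\Delta$ so that $0 \le \tilde K_1 \le K_1$ there (possible since $K_1 \ge 0$ near $\partial\tilde\Delta$, though $K_1$ may be negative deeper inside — this requires a little care and may force $\tilde K_1$ to be supported, as a function that could differ from $K_1$, only where $K_1 \ge 0$). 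The goal is to arrange $(\tilde K_1,\Delta) \le (K_1,\Delta)$ in the sense of Section~\ref{sec:PartialOrder}, giving domination of $(B_{\tilde K_1},\eta_{\tilde K_1})$ by $(B_{K_1},\eta_{K_1})$, and then to dominate $(B_{K_0},\eta_{K_0})$ by $(B_{\tilde K_1},\eta_{\tilde K_1})$.

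For the second domination I would use conjugation. The idea (cf.\ the schematic in Figure~\ref{f:shallow}) is to choose a contactomorphism $\Phi \in \CCont_0^c(\R^{2n-1})$ supported in $\Int\Delta$ that maps $\tilde\Delta$ to a very thin star-shaped collar neighborhood of $\partial\tilde\Delta$ inside $\tilde\Delta$ — i.e.\ $\Phi^{-1}$ expands a small collar of $\partial\tilde\Delta$ to fill most of $\tilde\Delta$ — while being the identity near $\partial\Delta$. Applying Lemma~\ref{l:PhiToTildePhi}, the shell $(B_{K_0},\eta_{K_0})$ is equivalent to $(B_{\Phi_*K_0},\eta_{\Phi_*K_0})$ where $\Phi_*K_0$ agrees with $K_0$ near $\partial\Delta$ (hence is $\le K_1 = \tilde K_1$ there) and on most of $\tilde\Delta$ is a rescaling by $c_\Phi$ of the values $K_0$ took on a thin collar of $\partial\tilde\Delta$ — where, by hypothesis, $K_0 \le K_1$. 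Since $K_0 \le 0$ on $\tilde\Delta$, the factor $c_\Phi$ (which can be made large or small by choosing $\Phi$ appropriately, via the flow of a contact vector field as in Lemma~\ref{lm:changing-B} and Proposition~\ref{prop:disorder}) only makes $\Phi_*K_0$ more negative, hence keeps it $\le \tilde K_1$. On the remaining thin inner core of $\tilde\Delta$ one needs $\Phi_*K_0$ to still be $\le \tilde K_1$; since $\tilde K_1 \ge 0$ there and $K_0 \le 0$, and $\Phi_* K_0 \le 0$ as well (conjugation preserves the sign because $c_\Phi > 0$), this is automatic. Thus $(\Phi_*K_0, \Delta) \le (\tilde K_1, \Delta)$, and Lemma~\ref{l:HamiltonianShellsAnnulus} gives the domination.

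Chaining the two dominations — $(B_{K_0},\eta_{K_0})$ equivalent to $(B_{\Phi_*K_0},\eta_{\Phi_*K_0})$, dominated by $(B_{\tilde K_1},\eta_{\tilde K_1})$, dominated by $(B_{K_1},\eta_{K_1})$ — and using transitivity of domination (which follows from composing subordination maps) yields the claim. One subtlety to check is the boundary positivity condition \eqref{e:KBoundary}: all of $K_0, K_1, \tilde K_1, \Phi_*K_0$ must be positive on $\partial\Delta\times S^1$; this holds for $K_0, K_1$ by hypothesis, for $\tilde K_1$ since it equals $K_1$ near $\partial\Delta$, and for $\Phi_*K_0$ since $\Phi$ is the identity near $\partial\Delta$ so $\Phi_*K_0 = K_0$ there. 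The condition \eqref{e:HamDomBasicCondition} is vacuous since all domains equal $\Delta$.

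\textbf{Main obstacle.} The delicate step is constructing $\Phi$ so that $\Phi(\tilde\Delta)$ is squeezed into a thin collar of $\partial\tilde\Delta$ while the distortion factor $c_\Phi$ on the image is controlled in the right direction; this is exactly the kind of ``contact squeezing inside a star-shaped region'' that works smoothly when $\tilde\Delta$ (equivalently $\Op\partial\tilde\Delta$) is star-shaped — which is why the hypothesis that $\tilde\Delta$ is a star-shaped domain is essential, and why (as the authors remark) the naive three-dimensional argument fails for $\Delta_{\cyl}$ when $n \ge 2$ but the version with an auxiliary star-shaped $\tilde\Delta$ survives. I expect the bookkeeping of which inequalities hold on which of the three regions ($\Op(\Delta\setminus\Int\tilde\Delta)$, the collar of $\partial\tilde\Delta$, the inner core) after applying $\Phi$, together with the interpolation defining $\tilde K_1$, to be where all the real care is needed; the rest is an assembly of Lemmas~\ref{l:HamiltonianShellsAnnulus}, \ref{l:PhiToTildePhi}, and Proposition~\ref{prop:disorder}.
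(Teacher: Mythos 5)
Your construction of $\tilde K_1$ is where the argument breaks. You require $0 \le \tilde K_1 \le K_1$ on $\tilde\Delta$, but the hypotheses only give $K_1 \ge 0$ on $\Op\partial\tilde\Delta$; inside $\Int\tilde\Delta$ the Hamiltonian $K_1$ is allowed to be arbitrarily negative, and in the applications (Proposition~\ref{prop:circular-to-universal-saucers} and its parametric version) it genuinely is — the whole point of Proposition~\ref{prop:shallow} is to replace a deep hole $K_1$ by a shallow one $K_0$. At any point where $K_1<0$ the two requirements $0\le\tilde K_1$ and $\tilde K_1\le K_1$ are incompatible, so no amount of care produces such a $\tilde K_1$. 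Note also that if it did exist you would already have $K_0\le 0\le\tilde K_1$ on $\tilde\Delta$ and $K_0\le K_1=\tilde K_1$ outside, so Lemma~\ref{l:HamiltonianShellsAnnulus} alone would finish the proof; your conjugation step only verifies that $\Phi_*K_0\le 0$ on $\tilde\Delta$, which is already true of $K_0$ itself, so the actual difficulty — the locus where $K_1<K_0\le 0$ — is never addressed. (As a side remark, the $\Phi$ you describe cannot exist as stated: a compactly supported contactomorphism cannot map the ball $\tilde\Delta$ onto a thin collar of $\partial\tilde\Delta$.)

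The correct move goes in the opposite direction, and this is where Proposition~\ref{prop:disorder} does real work. One introduces two auxiliary Hamiltonians: $\tilde K_1\le K_1$ (lowered, not raised, so the order with $K_1$ is kept) and $\tilde K_0\ge K_0$, chosen so that $\tilde K_0\le\tilde K_1$ on $\Delta\setminus\tilde\Delta$ and so that both restrictions to $\tilde\Delta$ are nonpositive with $-\tilde K_0|_{\tilde\Delta}$ and $-\tilde K_1|_{\tilde\Delta}$ in $F_+(\tilde\Delta)$ (this is where the hypotheses $K_0\le 0$ on $\Op\tilde\Delta$ and $K_1\ge 0$ on $\Op\partial\tilde\Delta$ are used). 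Proposition~\ref{prop:disorder}, applied on the star-shaped $\tilde\Delta$ to the two nonnegative functions $-\tilde K_0$ and $-\tilde K_1$, yields $\Phi\in\CCont_0^c(\Int\tilde\Delta)$ with $\Phi_*(-\tilde K_0)\ge -\tilde K_1$, i.e.\ $\Phi_*\tilde K_0\le\tilde K_1$ on $\tilde\Delta$; since $\Phi$ is supported in $\Int\tilde\Delta$, the inequality holds on all of $\Delta$. Then Lemmas~\ref{l:PhiToTildePhi} and \ref{l:HamiltonianShellsAnnulus} give that $(B_{K_0},\eta_{K_0})$ is dominated by $(B_{\tilde K_0},\eta_{\tilde K_0})$, which is equivalent to $(B_{\Phi_*\tilde K_0},\eta_{\Phi_*\tilde K_0})$, which is dominated by $(B_{\tilde K_1},\eta_{\tilde K_1})$, which is dominated by $(B_{K_1},\eta_{K_1})$. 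In other words, the conjugation is used to make the shallow negative well of $\tilde K_0$ deeper than the arbitrarily deep well of $\tilde K_1$ — precisely the step your proposal skips by trying to flatten $K_1$ upward, which the partial order forbids.
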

\begin{proof} 
The assumptions ensure we can pick contact Hamiltonians $\tilde{K}_i: \Delta \to \R$ defining contact shells $(B_{\tilde{K}_i}, \eta_{\tilde{K}_i})$ so that
\begin{enumerate}
\item $K_0 \leq \tilde{K}_0$ and $\tilde{K}_1 \leq K_1$, 
\item $\tilde{K}_0 \leq \tilde{K}_1$ on $\Delta \setminus \tilde{\Delta}$, and
\item $-\tilde{K}_i|_{\tilde{\Delta}} \in F_+(\tilde{\Delta})$ for $i=1,2$.
\end{enumerate}
By item (i) and Lemma~\ref{l:HamiltonianShellsAnnulus} it suffices to show $(B_{\tilde{K}_0}, \eta_{\tilde{K}_0})$
is dominated by $(B_{\tilde{K}_1}, \eta_{\tilde{K}_1})$.

Applying Proposition~\ref{prop:disorder} to item (iii) gives a $\Phi \in \CCont_0^c(\Int \tilde{\Delta})$ such that
$$
	\Phi_*(\tilde{K}_0|_{\tilde{\Delta}}) \leq \tilde{K}_1|_{\tilde{\Delta}}.
$$
Together with item (ii) this means $\Phi_*\tilde{K}_0 \leq \tilde{K_1}$ where we think of $\Phi \in \CCont_0^c(\Int \Delta)$ and therefore 
$(B_{\tilde{K}_0}, \eta_{\tilde{K}_0})$
is dominated by $(B_{\tilde{K}_1}, \eta_{\tilde{K}_1})$ by Lemmas~\ref{l:PhiToTildePhi} and \ref{l:HamiltonianShellsAnnulus}.
\end{proof}

 We also have the following parametric version of Proposition \ref{prop:shallow}.
 
 \begin{prop}\label{prop:shallow-param}
Assume that $\Delta \subset \R^{2n-1}$ is a star-shaped domain.  	Let $\Delta' \subset \Delta$ be a smooth star-shaped subdomain  and let $K^\tau: \Delta \to \R$, $\tau\in T$,  be a family  time-independent functions satisfying
	$K^\tau|_{\Delta \setminus \Int \Delta'} > 0$.  Suppose that  $K^\tau>0$ for $\tau$ in a closed subset $A\subset T$.
	 Then for any $\delta > 0$, there exists a family  $\wt K^\tau$ such that
	 \begin{itemize}
	 \item $\wt K^\tau=K^\tau$ on 
	 $\Delta \setminus \Int \Delta'$ and  $\wt{K^\tau} > -\delta$,  $\tau\in T$;
	 \item $\wt K^\tau= K^\tau$ for $\tau\in  A$
\item there exists a family of subordination maps $h^\tau: \eta_{{\wt K}^\tau}\to \eta_{ {K^\tau}}$
which are identity maps 	for $\tau\in A$.
\end{itemize} 
\end{prop}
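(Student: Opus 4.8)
The plan is to run the argument of Proposition~\ref{prop:shallow} with parameters, carefully preserving the relative condition over the set $A$ where $K^\tau$ is already positive. First I would observe that since $K^\tau|_{\Delta \setminus \Int\Delta'} > 0$ for all $\tau$ and the family is continuous with $T$ compact, there is a slightly smaller star-shaped domain $\Delta'' \Subset \Delta'$ and a uniform constant $c_0 > 0$ with $K^\tau > c_0$ on $\Op(\Delta \setminus \Int\Delta'')$ for every $\tau$; I will work inside $\Delta''$. The goal is to produce $\wt K^\tau$ with $\wt K^\tau = K^\tau$ outside $\Int\Delta''$, with $\wt K^\tau > -\delta$ everywhere, with $\wt K^\tau = K^\tau$ for $\tau \in A$, and with a subordination map $h^\tau$ of the associated circular model shells that is the identity for $\tau\in A$.

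The mechanism is conjugation by a contactomorphism supported in $\Int\Delta''$, exactly as in Proposition~\ref{prop:disorder}/\ref{prop:shallow}: pushing $K^\tau$ forward by the time-$T$ flow $\tilde Z^T$ of a contact vector field that equals the radial field $Z$ on a large subregion and is cut off near $\p\Delta''$ multiplies $K^\tau$ by a large factor $e^T$ on that subregion, so any part of $K^\tau$ that is already positive there gets amplified, while the negative part (which is automatically $\ge \min_\tau\min(K^\tau) =: -M$) only gets pushed toward the small neighborhood of $0$ and can be bounded below by $-\delta$ after also adding a small positive bump. The key point for the parametric statement: since conjugation by a fixed $\Phi$ is the operation $\Phi_*$ from \eqref{e:ContPushK}, and by Lemma~\ref{l:PhiToTildePhi} a fixed contactomorphism $\Phi \in \CCont_0^c(\Int\Delta'')$ induces an equivalence $\wh\Phi : (B_{K^\tau}, \eta_{K^\tau}) \to (B_{\Phi_*K^\tau}, \eta_{\Phi_*K^\tau})$ fiberwise, I can use the \emph{same} $\Phi$ for all $\tau$ and get a family of such equivalences (hence subordination maps via Lemma~\ref{l:HamiltonianShellsAnnulus}). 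To make $h^\tau$ the identity for $\tau \in A$, I interpolate: choose a cutoff function $\beta : T \to [0,1]$ vanishing on $A$ and equal to $1$ off a neighborhood of $A$, and conjugate the $\tau$-slice by $\tilde Z^{\beta(\tau) T}$; for $\tau \in A$ this is $\tilde Z^0 = \id$ so $\wt K^\tau = K^\tau$ and $h^\tau = \id$, while for the remaining $\tau$ the amplification factor is $e^{\beta(\tau)T}$, which is still $\ge 1$, so combined with the small bump the lower bound $\wt K^\tau > -\delta$ is maintained provided $T$ is taken large enough that even a partial amplification suffices — but we do not actually need large amplification for the lower bound, only the added bump; the conjugation is there to keep $\wt K^\tau$ a legitimate Hamiltonian inducing an equivalent shell. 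Concretely I set $\wt K^\tau := (\tilde Z^{\beta(\tau)T})_* K^\tau + \chi$ where $\chi \ge 0$ is a fixed bump supported in $\Int\Delta''$ chosen so that $(\tilde Z^{\beta(\tau)T})_* K^\tau + \chi > -\delta$; since adding a positive function only enlarges the Hamiltonian, Lemma~\ref{l:HamiltonianShellsAnnulus} still gives the subordination $h^\tau : \eta_{\wt K^\tau} \to \eta_{(\tilde Z^{\beta(\tau)T})_*K^\tau} \to \eta_{K^\tau}$, composing with the equivalence $\wh{\tilde Z^{\beta(\tau)T}}$ from Lemma~\ref{l:PhiToTildePhi}, and for $\tau \in A$ the bump can be taken to vanish (since there $K^\tau > 0$ already, no correction is needed — so really $\chi$ should itself be scaled by $\beta(\tau)$, i.e. use $\beta(\tau)\chi$), making $h^\tau = \id$.

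The main obstacle I expect is \emph{smoothness/continuity of the family $h^\tau$ across $\p A$} and checking that $\wt K^\tau = K^\tau$ is exactly achieved on $\Delta \setminus \Int\Delta'$ rather than merely $\Delta \setminus \Int\Delta''$: the latter is fine because $\Delta'' \subset \Delta'$ so $\Delta \setminus \Int\Delta' \subset \Delta \setminus \Int\Delta''$ and nothing is modified there. For the former, one must verify that the assignment $\tau \mapsto \wh{\tilde Z^{\beta(\tau)T}} \circ (\text{inclusion})$ varies continuously (indeed smoothly) in $\tau$ and limits to the identity as $\beta(\tau) \to 0$; this is automatic because the flow $\tilde Z^s$ depends smoothly on $s$ with $\tilde Z^0 = \id$, and the shell equivalence constructed in the proof of Lemma~\ref{l:PhiToTildePhi} is built from a choice of auxiliary diffeomorphisms $\phi_{(x,t)}$ that can be chosen to depend smoothly on the same parameter and to be the identity when the conjugating contactomorphism is the identity. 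Once this continuity is in hand, setting $\wt K^\tau$ and $h^\tau$ as above completes the proof; the verification that $\wt K^\tau > -\delta$ is the routine estimate $\min(\tilde Z^s)_* K^\tau \ge e^{?}\cdot(-M)$ controlled by shrinking the support of the conjugation's non-amplifying region, together with the added bump, and I would not grind through it here.
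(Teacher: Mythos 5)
Your overall strategy---conjugation by a contactomorphism supported inside $\Delta'$, switched off over the parameter near $A$ by a cutoff $\beta(\tau)$, together with parametric versions of Lemmas~\ref{l:PhiToTildePhi} and \ref{l:HamiltonianShellsAnnulus}---is the natural one (the paper gives no separate argument, presenting the proposition as the parametric form of Proposition~\ref{prop:shallow}), but two of your central steps run in the wrong direction, and this is a genuine gap. First, the scaling: pushing $K^\tau$ forward by the time-$T_0$ flow of the (cut-off) radial field multiplies it by the conformal factor $e^{T_0}\ge 1$ wherever the field agrees with $Z$, so negative values become \emph{deeper}, not ``pushed toward $0$''; the unresolved sign in your final estimate ``$e^{?}\cdot(-M)$'' is exactly this issue. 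The bound $\wt K^\tau>-\delta$ must come from the \emph{contracting} flow $\tilde Z^{-T_0}$, whose factor $e^{-T_0}$ scales the negativity locus (a compact subset of $\Int\Delta'$, uniformly in $\tau$) below $\delta$ in absolute value; then no bump $\chi$ is needed. Second, the subordination direction: since $\wt K^\tau=(\tilde Z^{\beta(\tau)T_0})_*K^\tau+\beta(\tau)\chi\ \ge\ (\tilde Z^{\beta(\tau)T_0})_*K^\tau$, Lemma~\ref{l:HamiltonianShellsAnnulus} combined with the equivalence of Lemma~\ref{l:PhiToTildePhi} produces a subordination $\eta_{K^\tau}\to\eta_{\wt K^\tau}$, i.e.\ your new shells \emph{dominate} the old ones; the proposition asserts the opposite, $h^\tau:\eta_{\wt K^\tau}\to\eta_{K^\tau}$, and the opposite is what the application requires: in Proposition~\ref{prop:circular-to-universal-param} one standardizes $\eta_{\wt K}$ and must conclude for the original family, which needs the original shells to dominate the new ones, exactly as in the non-parametric use of Proposition~\ref{prop:shallow} in the proof of Proposition~\ref{prop:circular-to-universal-saucers}, where ``$\eta_{\wt K}$ is dominated by $\eta_K$'' (the arrow printed inside the proof of Proposition~\ref{prop:circular-to-universal-param} is written the other way, but the reduction logic forces this direction). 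In Proposition~\ref{prop:shallow} the conjugation is applied to the shallow Hamiltonian to drive it \emph{below} the deep one; you apply it to the deep one and then enlarge it, which reverses the domination.

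There is also a gap at the transition over $\tau$: a partial conjugation time $\beta(\tau)T_0$ gives a factor that is useless when $\beta(\tau)$ is small and $K^\tau$ is deeply negative, and the added bump cannot repair this without destroying the required subordination. The missing observation is that positivity is open in $\tau$: since $T$ and $\Delta$ are compact, $U_A:=\{\tau\in T:\ K^\tau>0\ \mbox{on}\ \Delta\}$ is an open neighborhood of $A$, and $\beta$ should vanish near $A$ and equal $1$ outside $U_A$. Then wherever the conjugation is only partial one already has $K^\tau>0$ and nothing needs to be estimated, while for $\beta(\tau)=1$ the full factor $e^{-T_0}$ gives $\wt K^\tau>-\delta$, provided $e^{-T_0}\max_\tau\max(-K^\tau)<\delta$ and the cut-off field equals $Z$ on a star-shaped neighborhood of $\bigcup_\tau\{K^\tau\le 0\}\Subset\Int\Delta'$, so that backward orbits of this set stay in the radial region. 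With the corrected definition $\wt K^\tau:=(\tilde Z^{-\beta(\tau)T_0})_*K^\tau$ the shells $\eta_{\wt K^\tau}$ and $\eta_{K^\tau}$ are equivalent by a $\tau$-parametric version of Lemma~\ref{l:PhiToTildePhi} (auxiliary data chosen smoothly in $\tau$ and trivially when the conjugating map is the identity, as you correctly anticipate); these equivalences are in particular subordination maps in the required direction (take $h=\id$ in the definition of domination), they are the identity over $A$, and the remaining listed properties of $\wt K^\tau$ are immediate. With these corrections your argument goes through.
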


\section{Filling of the universal circular models}\label{sec:tripling}
We prove in this section Propositions \ref{p:ConnectSumDisc} and \ref{p:ConnectSumDisc-param}.
In this section, we will always take 
$$
	\Delta =\Delta_\cyl= \{ u \leq 1,\, \abs{z} \leq 1\} \subset (\R^{2n-1}, \xi_{\st})
	\quad\mbox{where}\quad u = u_1 + \dots + u_{n-1}.
$$ 
  All contact Hamiltonians $(K, \Delta)$ will be assumed time indepedent and spherically symmetric, i.e.
  functions $K(u,z)$ of only the $u$ and $z$ variables.

The contactomorphism of $(\R^{2n-1}, \xi_{\st})$ that is translation in the $z$-coordinate will be
$$
\mbox{$Z_\tau : \R^{2n-1} \to \R^{2n-1}$ \quad where \quad $Z_\tau(q,z) = (q, z +\tau)$}
$$
using coordinates $(q, z) \in \R^{2n-2} \times \R$.

\subsection{Boundary connected sum}\label{sec:BCS}

\subsubsection{Abstract boundary connected sum}\label{s:AbstractBCS}
	Consider the $\R^{2n}$ with polar coordinates 
	$(u_1, \varphi_1, \dots, u_{n-1}, \varphi_{n-1}, v, t)$ equipped with the radial
	Liouville form and vector field
	$$
		\theta := \sum_{i=1}^{n-1} u_i\, d\varphi_i + v\, dt\,\quad\mbox{and}\quad L := \sum_{i=1}^{n-1} u_i \tfrac{\p}{\p u_i}
		+ v \tfrac{\p}{\p v}.
	$$
	and denote by $L^t:\R^{2n}\to\R^{2n}$ the Liouville flow.
	
	 A \emph{gluing disc} for 
	a contact shell $(W, \zeta)$ is a smooth embedding $\iota: D \to \p W$, where $D \subset \R^{2n}$ is 
	compact domain, star-shaped with respect to $L$, and with piecewise smooth boundary 
	such that  $\iota^* \alpha = \theta$  for a choice of  a contact form 
	$\alpha$ for $\zeta$ in $\Op \p W$.  Note this implies $\iota(0) \in \p W$ is a gluing place in the sense of 
	Section~\ref{sec:shells} and that the Reeb vector field $R_{\alpha}$ is transverse to $\iota(D)$.
	
	Given contact shells $(W^{2n+1}_\pm, \zeta_\pm)$  with gluing discs $\iota_\pm: D \to \p W_\pm$
	such that $\iota_+$ preserves and $\iota_-$ reverses orientation, the Reeb flows define contact embeddings
	\begin{equation}\label{eq:Darboux-gluing}
	\begin{split}
	&\Phi_+: D \times (-\eps, 0] \to \Op \iota_+(D) \quad\mbox{with} \;\Phi_+^*\alpha_+ = dz + \theta,\\ 
	&	\Phi_-:  D \times [0, \eps) \to \Op \iota_-(D) \quad\mbox{with} \;  \Phi_-^*\alpha_- = dz + \theta, 
		\end{split}
	\end{equation}
	such that $\Phi_\pm|_{D \times 0} = \iota_\pm$.
	For $\ell > 0$ pick a  smooth function $\beta: [-\ell,\ell]\to\R_{\geq 0}$ such that $\beta(z)=0$ for $z$ near $\pm\ell$
	and denote  $D(z):=L^{-\beta(z)}(D)$. 
	Define the \emph{abstract boundary connected sum} to be the almost contact manifold
	\begin{equation}\label{e:ABCS}
		(W_+ \#_{T} W_-\,, \zeta_+ \#_{T} \zeta_-) := 
		\Big((W_+, \zeta_+) \cup (T, \ker(dz+\theta)) \cup (W_-, \zeta_-)\Big)/\sim
	\end{equation}
	where
	\begin{equation}\label{e:Tube}
		T = \{(p,z) \in \R^{2n} \times [-\ell, \ell]: p \in D(z)\} \subset \R^{2n+1}
	\end{equation}
	and one identifies 
	$$
		\Phi_+(p, 0) \sim (p, -\ell) \in T \quad\mbox{and}\quad \Phi_-(p,0) \sim (p,\ell) \in T\,.
	$$

\subsubsection{Abstract connected sum of $S^1$-model contact shells}\label{s:S1ConnectSum}

Consider a Hamiltonian contact shell $(B_{K,C}, \eta_{K, \rho})$
associated to a contact Hamiltonian $(K, \Delta)$.
There are canonical gluing discs
$$
	D_{\pm} = \{ u \leq 1,\, v \leq K(u,\pm 1)\} \subset \R^{2n}
$$
with maps $\iota_{\pm}: D_{\pm} \to (\p B_{K,C}, \eta_{K, \rho})$
$$
	 \iota_\pm(q, v, t)
	= (q, \pm 1, \rho_{(q,\pm1)}^{-1}(v), t) \in \R^{2n-1} \times \R^{2},
$$
where $\iota_\pm(0, 0) = (0, \pm1, 0)$ are the north and south poles of $B_K$.

For two contact Hamiltonians $K_\pm: \Delta \to \R$ assume $E(u) = K_\pm(u, \pm 1)$ is well defined.
For any $\ell > 0$ and a smooth function $\beta: [-\ell, \ell] \to \R_{\geq 0}$ such that $\beta = 0$ near $z = \pm\ell$,
define the domain
\begin{equation}\label{e:ConnectedSumDomain}
	\Delta\#_{\beta, \ell}\Delta := Z_{1+\ell}^{-1}(\Delta) \cup T_{\beta, \ell} \cup Z_{1+\ell}(\Delta) \subset \R^{2n-1}
\end{equation}
where 
\begin{equation}\label{e:domaintube}
T_{\beta, \ell} := \{u \leq e^{-\beta(z)},\, \abs{z} \leq 1\} \subset \R^{2n-1}
\end{equation}
 and define the contact Hamiltonian 
$K_+ \#_{\beta} K_-: \Delta\#_{\beta, \ell}\Delta \to \R$ by
$$
	(K_+\#_\beta K_-)(u,z) = \begin{cases}
	(K_+ \circ Z_{1+\ell})(u,z)&\quad\mbox{on $Z_{1+\ell}^{-1}(\Delta)$}\\
	e^{-\beta(z)} E(u) &\quad\mbox{for $(z,q) \in T_{\beta, \ell}$}\\
	(K_- \circ Z_{1+\ell}^{-1})(u,z) &\quad\mbox{on $Z_{1+\ell}(\Delta)$}
	\end{cases}
$$
Going forward we will drop $\beta$ from the notation when $\beta \equiv 0$.
\begin{figure}
   \centering
   \def\svgwidth{400pt}
   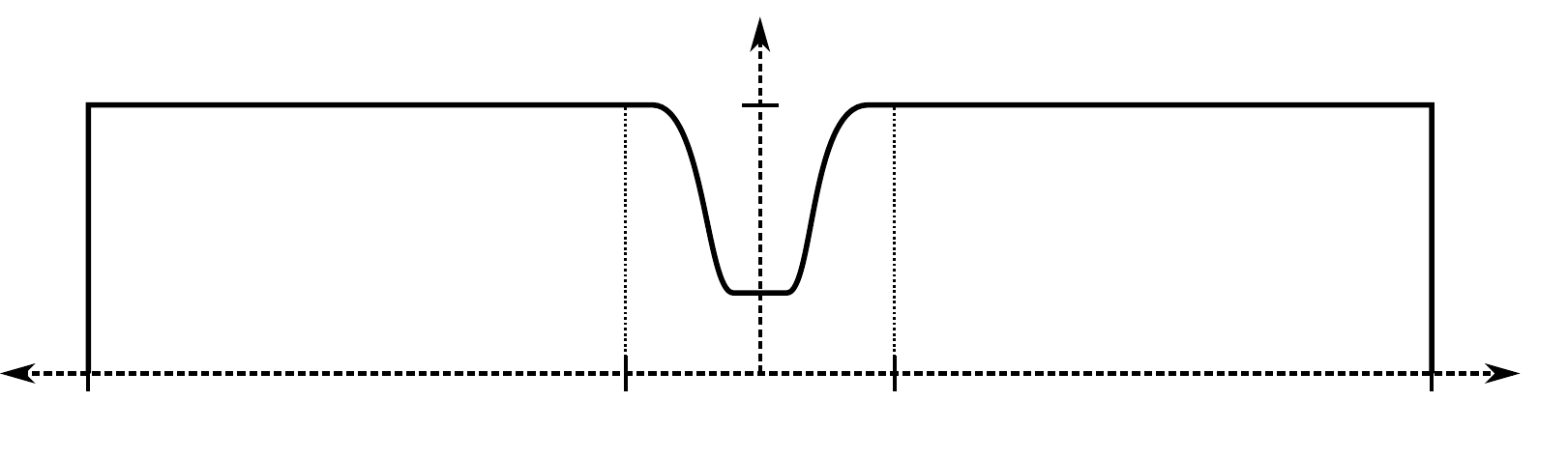
   \caption{The domain of the Hamiltonian $K_+\#_\beta K_-: \Delta\#_{\beta, \ell}\Delta \to \R$.}
  \label{f:ConnectSumDomain}
\end{figure} 

It follows from Example~\ref{ex:domain} below that
$\Delta\#_{\beta, \ell}\Delta$ is star-shaped since it is contactomorphic to $\Delta\#_{\ell} \Delta$, 
which is star-shaped with respect to $Z = \tfrac{\p}{\p z} + L$, and hence $(K_+ \#_\beta K_-, \Delta\#_{\beta, \ell}\Delta)$
defines an $S^1$-model contact shell 
$$(B_{K_+ \#_\beta K_-}, \eta_{K_+ \#_\beta K_-})$$
as in Section~\ref{sec:ContactHamShell}.  It is straightforward to check that we have the following lemma.
\begin{lemma}\label{l:abstract=S1}
	The contact shell $(B_{K_+ \#_\beta K_-}, \eta_{K_+ \#_\beta K_-})$ 
	is equivalent to the abstract connected sum $(B_{K_+} \#_T B_{K_-}, \eta_{K_+} \#_T \eta_{K_-})$
	with tube
	$$
		T = \{u \leq e^{-\beta(z)},\, v \leq e^{-\beta(z)}E(u)\} \subset \R^{2n+1}
	$$
	where the connected sum is done at the north pole of $B_{K_+}$ and the south pole of
	$B_{K_-}$.
\end{lemma}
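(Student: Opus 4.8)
The plan is to verify the claimed equivalence by unwinding both constructions and matching them piece by piece; the statement is essentially a bookkeeping exercise, which is why it is asserted without proof. First I would use Lemma~\ref{l:CircleShell} to normalize the auxiliary data: take $C=0$ for all three contact Hamiltonians involved and choose the families $\rho$ so that $\rho_{(x,t)}(v)=v$ on a neighborhood of the relevant boundary regions. With this choice the contact form $\alpha_\rho=\lambda_\st+\rho\,dt$ literally equals $dz+\theta$ with $\theta=\sum_i u_i\,d\varphi_i+v\,dt$ near the gluing places, which is exactly the Darboux normal form used in \eqref{eq:Darboux-gluing} for the abstract connected sum, so the Reeb collars $\Phi_\pm$ appearing there become tautological in these coordinates.

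Next I would decompose $B_{K_+\#_\beta K_-}$ over the three pieces of its domain $\Delta\#_{\beta,\ell}\Delta=Z_{1+\ell}^{-1}(\Delta)\cup T_{\beta,\ell}\cup Z_{1+\ell}(\Delta)$ from \eqref{e:ConnectedSumDomain}. Over $Z_{1+\ell}^{\mp1}(\Delta)$ the Hamiltonian is $K_\pm\circ Z_{1+\ell}^{\pm1}$, and since $Z_{1+\ell}$ is a contactomorphism of $(\R^{2n-1},\xi_\st)$ with conformal factor $1$, Lemma~\ref{l:PhiToTildePhi} identifies these two outer pieces with $(B_{K_+},\eta_{K_+})$ and $(B_{K_-},\eta_{K_-})$ up to equivalence of shells. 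Over $T_{\beta,\ell}=\{u\le e^{-\beta(z)},\,|z|\le1\}$ the Hamiltonian is $e^{-\beta(z)}E(u)$, so the middle piece is $\{u\le e^{-\beta(z)},\,v\le e^{-\beta(z)}E(u)\}$ carrying $(\lambda_\st+\rho\,dt,\,d\lambda_\st+dv\wedge dt)$; with the choice of $\rho$ above and $\rho(v)=v$ there, this is precisely $(T,\ker(dz+\theta))$ for the tube $T$ named in the statement, once one records how the canonical gluing discs $D_\pm=\{u\le1,\,v\le E(u)\}\subset\partial B_{K_\pm}$ sit relative to the translated copies of $\Delta$ inside $\Delta\#_{\beta,\ell}\Delta$.

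It then remains to check that the identification recipes agree: the quotient $\sim$ in \eqref{e:ABCS}, which glues $\Phi_\pm(p,0)$ to the ends of $T$, must match the way the three domain pieces of $\Delta\#_{\beta,\ell}\Delta$ are glued together. This follows from two observations: both gluings take place exactly in the region where $\rho(v)=v$, so the contact forms agree on the nose there and the Reeb collars $\Phi_\pm$ are the coordinate inclusions; and the translation $Z_{1+\ell}$ sends $D_\pm\subset\partial B_{K_\pm}$ onto the matching end face of the tube $T_{\beta,\ell}$. Since all pieces carry the standard almost contact data $(\alpha_\rho,\,d\lambda_\st+dv\wedge dt)$ and the connecting tube is standard, the glued structure is homotopic relative to $\partial$ through almost contact structures to $\eta_{K_+}\#_T\eta_{K_-}$, which is what an equivalence of shells requires.

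I expect the only genuinely fiddly point to be reconciling the two descriptions of the tube: in the abstract construction of \S\ref{s:AbstractBCS} the tube is $\{(p,z):p\in L^{-\beta(z)}(D)\}$, cut out by the Liouville flow, whereas $T_{\beta,\ell}$ and the middle Hamiltonian use an explicit $e^{-\beta(z)}$-scaling of the $u$- and $v$-coordinates. Matching them amounts to a fibered (in $z$) rescaling diffeomorphism of the $u$-directions, and one must confirm it carries $\ker(dz+\theta)$ to the standard structure up to the allowed homotopy; for $\beta\equiv0$ the tube is a straight cylinder and this is immediate, and the general case reduces to it since $\Delta\#_{\beta,\ell}\Delta$ is contactomorphic to $\Delta\#_\ell\Delta$. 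Everything else is a routine coordinate comparison.
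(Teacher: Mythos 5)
Your decomposition is the intended one: the paper offers no argument for this lemma (it is declared straightforward), and restricting the glued circle model over the three pieces of $\Delta\#_{\beta,\ell}\Delta$, identifying the two outer pieces with $(B_{K_\pm},\eta_{K_\pm})$ via the translations $Z_{1+\ell}^{\pm1}$ together with Lemma~\ref{l:PhiToTildePhi}, identifying the neck with the tube, and checking that the transition regions reproduce the Reeb-collar gluing of Section~\ref{s:AbstractBCS} is exactly the verification that is meant.

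Two caveats, neither of which damages the architecture. First, the normalization ``$C=0$ with $\rho_{(x,t)}(v)=v$ near the gluing faces'' is not available in general: $B_{K,C}$ is only defined when $C+\min(K)>0$, and the Hamiltonians this lemma is applied to (e.g.\ special ones) are somewhere negative, so $C=0$ is illegal, and condition (ii) of \eqref{e:FamilyRhoS1} then forces $\rho_{(x,t)}(v)=v-C$ rather than $v$ near the top face. The repair is standard: fix one sufficiently large $C$ for all three models and use the reparametrizations $\iota_\pm(q,v,t)=(q,\pm1,\rho^{-1}_{(q,\pm1)}(v),t)$ already built into the definition of the gluing discs in Section~\ref{s:S1ConnectSum} (the same device as in Lemma~\ref{l:EasyCircleGerm}); since over the faces $z=\pm1$ and over the neck the Hamiltonian equals $E>0$, the family $\rho$ can be chosen fiberwise monotone there, the structure is genuinely contact on those regions, and your Reeb-collar matching goes through verbatim after this substitution. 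Second, the tube discrepancy you flag is real but is shared by the statement itself: the set $\{u\le e^{-\beta(z)},\,v\le e^{-\beta(z)}E(u)\}$ is the rescaled subgraph, whereas the abstract construction literally prescribes $L^{-\beta(z)}(D)=\{u\le e^{-\beta(z)},\,v\le e^{-\beta(z)}E(e^{\beta(z)}u)\}$; both are fiberwise $L$-star-shaped interpolations between the same gluing discs $D_\pm=\{u\le1,\,v\le E(u)\}$, and reconciling them (or reducing to $\beta\equiv0$, as you propose, via the contactomorphism of Example~\ref{ex:domain} invoked just before the lemma) only requires an equivalence of shells, i.e.\ a diffeomorphism matching the germs near the ends plus a homotopy of almost contact structures in the interior, which the straight-line homotopies used in Lemma~\ref{l:CircleShell} supply. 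With these adjustments your proof is correct.
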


\subsubsection{Ambient boundary connected sum}
	Suppose in an almost contact manifold $(W^{2n+1}, \xi)$ there are disjoint codimension $0$ submanifolds 
	$W_\pm \subset \Int W$ with piecewise smooth boundary such that $\xi$ is a genuine contact structure in $\Op \p W_\pm$.
	Assume the contact shells $(W_\pm, \xi)$ are equipped with gluing discs
	$\iota_\pm: D \to \p W_\pm$
	where $\iota_{\pm}^* \alpha = \theta$ for a contact form $\alpha$ for $\xi$ such that
	$\iota_+$ preserves and $\iota_-$ reverses orientation.  
	
	For a smooth embedding	
		$\gamma: [0,1] \to \Int W$
		such that 
		\begin{itemize}
		\item   $\gamma(0) = \iota_+(0)$, $\gamma(1) = \iota_-(0)$, 
		and  $\gamma(t) \notin W_+ \cup W_-$ otherwise;
	\item $\xi$ is a genuine contact structure on $\Op \Gamma$ where $\Gamma:=\gamma([0,1])$;
	\item $\gamma$ is transverse to $\xi$,
	\end{itemize}
	we can think of $(W_+ \cup \Op \Gamma \cup W_-, \xi)$ as an ambient boundary connected sum of
	the shells $(W_\pm, \xi)$.  This is made precise with the following lemma.
	
	\begin{lemma}\label{l:Ambient=Abstract}
		Every neighborhood $(W_+ \cup \Op \Gamma \cup W_-, \xi)$ contains the image of
		an almost contact embedding of an abstract connected sum $(W_+ \#_{T} W_-, \xi \#_{T} \xi)$.
\end{lemma}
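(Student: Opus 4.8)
The plan is to realise the map $G:(W_+\#_T W_-,\xi\#_T\xi)\to(W,\xi)$ as the union of three pieces that agree on their overlaps: the inclusions $W_\pm\hookrightarrow W$, the Reeb--flow charts $\Phi_\pm$ on collars of $\iota_\pm(D)$, and a Darboux chart for $\xi$ along $\Gamma$. First I would reparametrize $[0,1]$ as $[-\ell,\ell]$ so that $\gamma(-\ell)=\iota_+(0)$ and $\gamma(\ell)=\iota_-(0)$, and fix a contact form $\alpha$ for $\xi$ on $\Op(\Gamma\cup\p W_+\cup\p W_-)$ normalised so that near $\iota_\pm(D)$ the maps of \eqref{eq:Darboux-gluing} satisfy $\Phi_\pm^*\alpha=dz+\theta$. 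Since the Reeb field $R_\alpha$ exits $W_+$ along $\iota_+(D)$ and enters $W_-$ along $\iota_-(D)$, I can extend $\Phi_\pm$ to contact embeddings $\Phi_+:D\times(-\eps,\eps')\to W$ and $\Phi_-:D\times(-\eps',\eps)\to W$ with $\Phi_\pm|_{D\times 0}=\iota_\pm$, whose parts lying on the ``$\Gamma$ side'' of $\iota_\pm(D)$ sit in $\Op\Gamma\setminus(W_+\cup W_-)$. After a preliminary isotopy of $\gamma$ rel endpoints through transverse embeddings (the $h$-principle for transverse curves, cf.\ \cite{Gr-PDR}), I may moreover assume $\gamma$ coincides with the Reeb arcs $\Phi_+(\{0\}\times\cdot)$ near $-\ell$ and $\Phi_-(\{0\}\times\cdot)$ near $\ell$.

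The crux is to produce a contact embedding $\Psi:(T,\ker(dz+\theta))\to\Op\Gamma$, with $T$ the tube of \eqref{e:Tube}, satisfying $\Psi(p,z)=\Phi_+(p,z+\ell)$ for $z$ near $-\ell$ and $\Psi(p,z)=\Phi_-(p,z-\ell)$ for $z$ near $\ell$; this is a relative version of the standard neighbourhood theorem for a transverse arc. Because $\Gamma$ is contractible the symplectic bundle $(\xi|_\Gamma,d\alpha|_\xi)$ is trivial and any two symplectic trivialisations of it are homotopic, so I can choose one that near the two ends agrees with the trivialisations coming from $\Phi_\pm$. Using this trivialisation together with the Reeb direction as a framing of a tubular neighbourhood of $\Gamma$ produces a diffeomorphism $\psi_0$ from a neighbourhood of $\{0\}\times[-\ell,\ell]\subset\R^{2n}\times[-\ell,\ell]$ onto a neighbourhood of $\Gamma$, equal to $\Phi_\pm$ near the ends and with $\psi_0^*\alpha$ agreeing with $dz+\theta$ along the core to first order; a Moser--Gray deformation supported near $\Gamma$ then corrects $\psi_0$ to $\psi_1$ with $\psi_1^*\alpha=f\cdot(dz+\theta)$ on a smaller neighbourhood of the core, where $f>0$ and $f\equiv1$ near the ends, and rescaling in the fibre absorbs $f$ to give an honest contact chart still equal to $\Phi_\pm$ near $z=\pm\ell$.

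Finally I would choose the shrinking profile $\beta$ in \eqref{e:Tube} to vanish near $\pm\ell$ and be large enough in the middle so that the contracted tube $T=\{(p,z):p\in D(z)=L^{-\beta(z)}(D)\}$ lies inside the domain of $\psi_1$ and $\psi_1(T)$ is contained in the prescribed neighbourhood of $W_+\cup\Op\Gamma\cup W_-$; this is possible because $L^{-\beta(z)}(D)$ shrinks to $\{0\}$ as $\beta(z)\to\infty$. Setting $\Psi:=\psi_1|_T$, the maps $W_\pm\hookrightarrow W$ and $\Psi$ match over $\Op\iota_\pm(D)$ exactly via the identifications used in \eqref{e:ABCS}, so they descend to an embedding $G$ of $W_+\#_T W_-$ into $W$; it is an almost contact embedding since it is the inclusion on $W_\pm$ (hence pulls $\xi$ back to $\xi$) and a genuine contact embedding on $T$. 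The main obstacle I anticipate is the bookkeeping in the relative neighbourhood theorem — matching the transverse-arc chart to the prescribed end data $\Phi_\pm$ while controlling the image — together with fitting the possibly ``fat'' gluing disc $D$ inside a thin tube around $\Gamma$, which is precisely what the Liouville-flow rescaling $D(z)=L^{-\beta(z)}(D)$ is engineered to accommodate.
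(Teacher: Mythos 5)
Your argument is correct and is essentially the paper's proof: extend the collar charts $\Phi_\pm$ across the gluing discs, align $\gamma$ with their cores, invoke the neighborhood theorem for a transverse arc (which you prove by hand via a symplectic trivialization of $\xi|_\Gamma$ plus a Gray--Moser correction, whereas the paper simply cites it), and then choose $\beta$ so that the Liouville-shrunk tube $T$ lies in the chart with image in the prescribed neighborhood. The only cosmetic point is that your final fibre rescaling to absorb $f$ is unnecessary (and scaling the fibre is not literally a conformal rescaling of $dz+\theta$): the relation $\psi_1^*\alpha=f\,(dz+\theta)$ with $f>0$ already makes $\psi_1|_T$ a contact embedding of $\bigl(T,\ker(dz+\theta)\bigr)$, which is all the gluing requires.
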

\begin{proof}
The gluing discs $\iota_\pm:D \to\p W_\pm$ extend to Darboux embeddings  
 \begin{equation*} 
 \Phi_\pm: D \times (\mp\ell-\eps, \mp\ell+\eps) \to \Op \iota_\pm (D) \quad\mbox{with } \;\Phi_\pm^* \alpha = dz + \theta
 \mbox{ and } \Phi_\pm|_{D \times \mp \ell} = \iota_\pm
\end{equation*}
and moreover one can ensure $\Phi_+^{-1} (\Gamma)=0 \times [-\ell,-\ell+\eps )$ and $\Phi_-^{-1}(\Gamma)=0 \times (\ell-\eps,\ell]$.

By the neighborhood theorem for transverse curves in a contact manifold, for $N > 0$ sufficiently large the embeddings
$\Phi_\pm$ can be extended (after possibly decreasing $\eps$)  to a contact embedding  
$$
	\Phi: \Big(D \times (-\ell-\eps, \ell+\eps)\Big)\cup \Big(L^{-N}(D) \times [-\ell,\ell] \Big)\cup 
	\Big(D \times (\ell-\eps, \ell+\eps)\Big) \to \Int W,
$$ 
whose image is contained in $\Op(\iota_+(D) \cup \Gamma \cup \iota_-(D))$ and such that $\Phi(0 \times [-\ell,\ell])=\Gamma$.
Picking $\beta:[-\ell,\ell]\to\R_{\geq 0}$ such that the tube
 $$T=  \{(p,z) \in [-\ell, \ell] \times \R^{2n} : p \in L^{-\beta(z)}D \} $$
is contained in the domain of $\Phi$, we can now use $\Phi$ to define the required contact embedding
 $(W_+ \#_{T} W_-, \xi \#_{T} \xi) \to (W,\xi)$.
 \end{proof}


\subsection{Filling a connected sum of a shell with a neighborhood of an overtwisted disc}

For the rest of this section we will let $(K, \Delta)$ be a special Hamiltonian associated to a special function 
$k: \R_{\geq 0} \to \R$ such that 
\begin{equation}\label{e:special3}
\mbox{$K(u,z) = k(u)$ when $z \in \Op [z_D', z_D]$}
\end{equation}
where $z_D' \in (-1, z_D)$ is sufficiently close to $z_D$
and recall that $E(u) := K(u, \pm 1)$ is well-defined and satisfies $K \leq E$.  For $\eps' > 0$, define
$$
	K' = K - \eps' \quad\mbox{and}\quad \Delta' = \{u \leq 1-\eps',\, \abs{z} \leq 1-\eps'\}
$$
and assume $\eps' > 0$ is small enough so that $K'|_{\p \Delta'} > 0$.

The goal of this subsection is  the proof of Proposition~\ref{p:ConnectSumDisc}
and its parametric version Proposition~\ref{p:ConnectSumDisc-param}.
All the connected sums as in Sections~\ref{s:AbstractBCS} and \ref{s:S1ConnectSum} will be done
with a fixed choice of function $\beta: [-\ell, \ell] \to \R_{\geq 0}$, which we will suppress from the notation.  In particular
we will be considering abstract connected sums such as
$$
	(B_{K} \# B_{K}, \eta_{K} \# \eta_{K}) \quad\mbox{and}\quad
	(B_{K\#K}, \eta_{K \# K})
$$
where we will always use the north pole gluing place on the first factor and the south pole gluing place on the second factor.
We will also freely use Lemma~\ref{l:abstract=S1} to identify such connect sums.

By Lemma~\ref{l:HamiltonianShellsAnnulus} we can arrange the inclusion 
$$(B_{K'\!,C}, \eta_{K'\!, \rho}) \hookrightarrow (B_{K,C}, \eta_{K, \rho})$$   to be  a subordination map,
so that we have a $(2n+1)$-dimensional contact annulus
	$$
		(\A, \xi_{\A}) := (B_{K,C} \setminus \Int B_{K'\!,C},\, \ker \eta_{K, \rho}|_{\A}).
	$$
Define the contact ball $(\B, \xi_\B) \subset (\A, \xi_\A)$ given by
	\begin{equation}\label{e:ModelOtBall}
		 \B:= \{(x,v,t) \in \A : z(x) \in [-1, z_D]\}
	\end{equation}
and by design the $2n$-dimensional disc $(D_K, \eta_K) \subset (\p\B, \xi_\B)$ appears with the correct coorientation.	
\begin{figure}[h]
   \centering
   \def\svgwidth{450pt}
   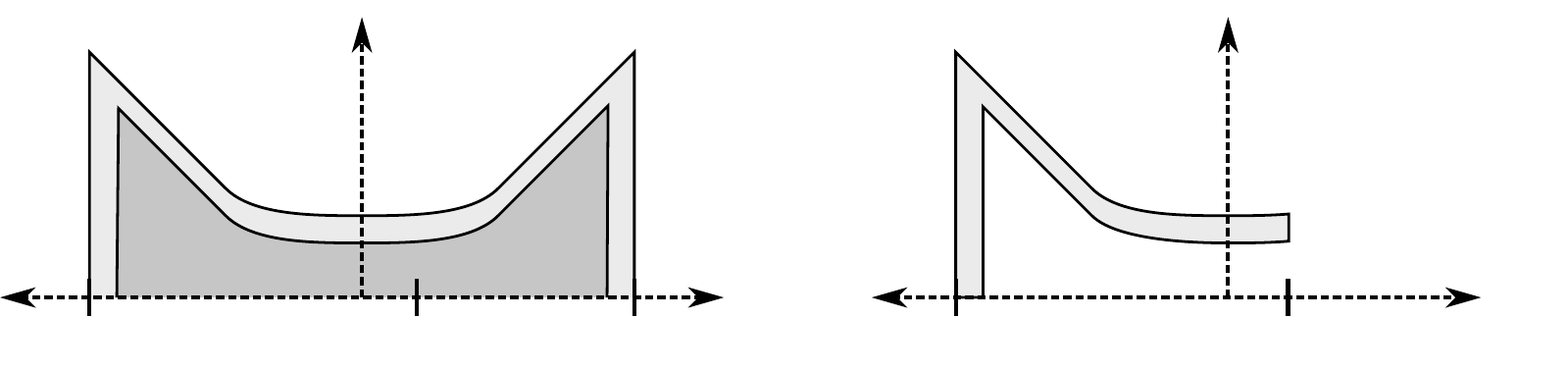
   \caption{On the left: The union of the grey regions is $B_K$,
   the dark grey region is $B_{K'}$, and the light grey region is
   $(\A, \xi_{\A})$.
   On the right: The contact ball $(\B, \xi_{\B}) \subset B_K$
   obtained from $(\A, \xi_{\A})$.}
  \label{f:embedHamiltonian2}
\end{figure}

\subsubsection{Non-parametric version}\label{sec:filling}

To prove Proposition~\ref{p:ConnectSumDisc} it 
will suffice to show the contact shell $(B_K \# \B, \eta_K \# \xi_\B)$, defined as a subset of
$(B_K \# B_K, \eta_K \# \eta_K)$, is equivalent to a genuine contact structure.
Denoting by $\iota\colon \Delta \to \Delta \# \Delta$ the inclusion into the right hand factor,
we will prove in Lemma~\ref{p:ContEmb}(i) below that
there is a family of contact embeddings 
\begin{equation}\label{e:Theta}
	\Theta_\sigma: \Delta \to \Int(\Delta \# \Delta) \quad\mbox{for $\sigma \in [0,1]$ with $\Theta_0 = \iota$}
\end{equation}
such that $\Theta_\sigma = \iota$ in $\Op \{z \in [z_D, 1]\}$ for all $\sigma \in [0,1]$
and $\Theta := \Theta_1$ satisfies 
$$(\Theta_*K' , \Theta(\Delta')) < (K \# K, \Delta \# \Delta).$$

\begin{figure}[h]
   \centering
   \def\svgwidth{300pt}
   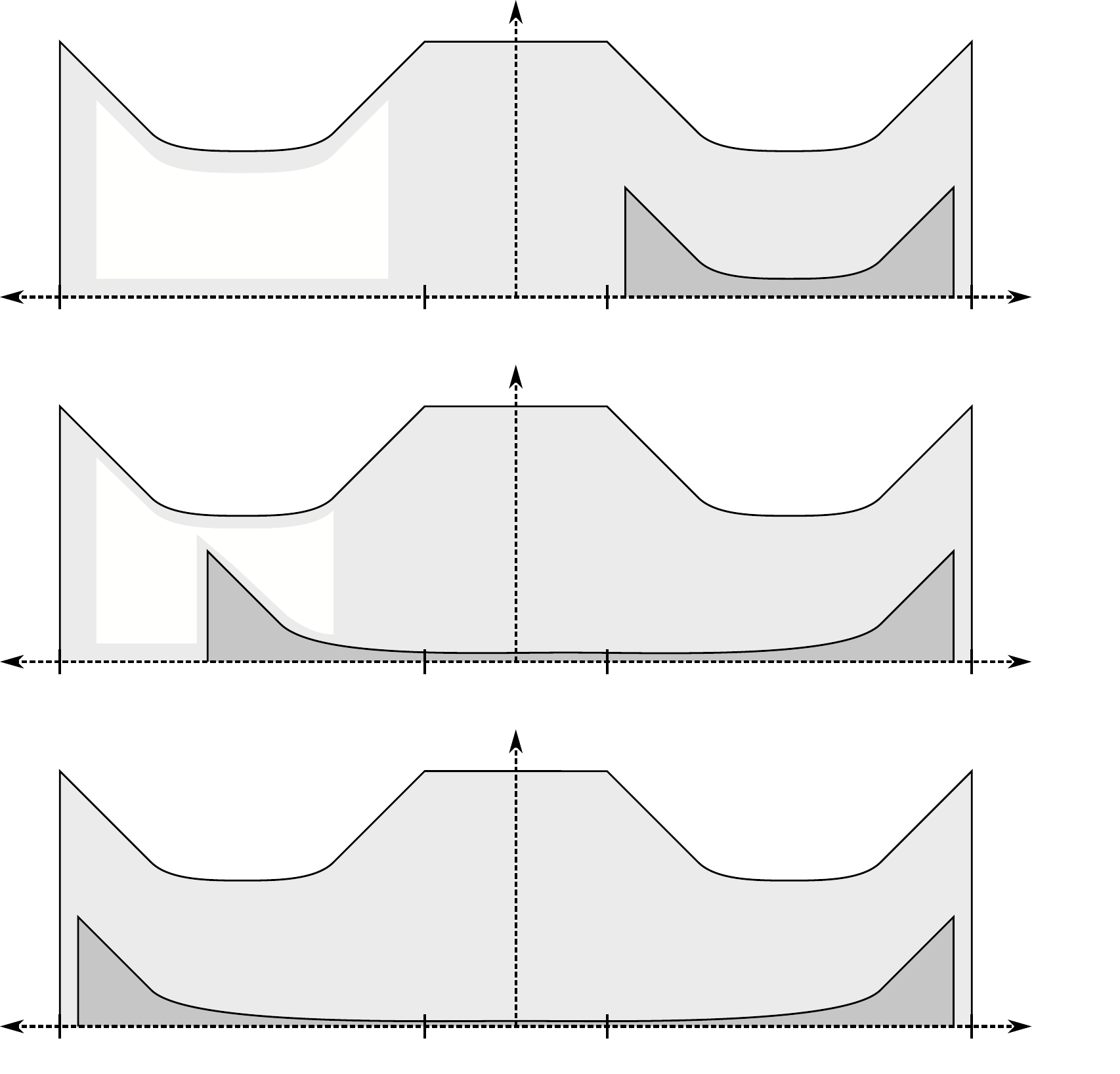
   \caption{Images of the almost contact embeddings $\wh{\Theta}_\sigma : B_{K'} \to B_{K\#K}$ in dark grey.
   The white regions denote where outside of $\wh{\Theta}_\sigma(B_{K'})$ the almost contact structure
   $\eta_{K\#K,\hat{\rho}^\sigma}$ is not genuine.}
  \label{f:embedHamiltonian3}
\end{figure} 

\begin{proof}[Proof of Proposition~\ref{p:ConnectSumDisc}]
It suffices to prove $(B_K \# \B, \eta_K \# \xi_\B)$ is equivalent to a genuine contact structure,
since it is dominated by $(B_{K_0} \# B, \eta_{K_0} \# \xi)$ if we pick $\eps' > 0$ sufficiently small in the definition of $(\B, \xi_\B)$.

By Lemmas~\ref{l:HamiltonianShellsAnnulus}, \ref{l:PhiToTildePhi}, and \ref{lem:contain} we can pick
a family of contact shell structures on $(B_{K} \# B_{K}, \eta_{K\#K,\hat\rho^{\sigma}})$ 
such that there is   a family of contact shell embeddings
\begin{equation}\label{e:hatTheta}
	\wh\Theta_\sigma: (B_{K'}, \eta_{K'}) \to 
	(B_{K} \# B_{K}, \eta_{K\#K,\hat\rho_{\sigma}})
\end{equation}
with  $\wh\Theta_1$ a subordination map.  
We can arrange that $\eta_{K\#K, \hat\rho_0} = \eta_K \# \eta_K$ and 
for all $\sigma \in [0,1]$ to have
$$
\mbox{$\eta_{K\#K, \hat\rho_{\sigma}} = \eta_{K} \# \eta_{K}$ \quad on \quad 
$\Op \hat\iota\{z \in [z_D, 1]\}$}
$$
where 
$\hat\iota: B_{K} \to B_K \# B_K$ is the inclusion into the right hand factor.

We can pick an isotopy $\{\Psi_{\sigma}\}_{\sigma \in [0,1]}$ of
$B_K \# B_K$ based at the identity and supported away from the boundary such that
\begin{enumerate}
\item $\Psi_\sigma \circ \hat\iota = \wh\Theta_\sigma: B_{K'} \to B_K \# B_K$\,,
\item $\Psi_\sigma = \Id$ on $\Op \hat\iota\{z \in [z_D, 1]\}$\,, \mbox{ and}
\item $\Psi_1(B_K \# \A) = (B_K \# B_K) \setminus \Int \wh\Theta(B_{K'})$.
\end{enumerate}
Observe that  a point 
in $\Op \partial(B_K \# \B)$ is one of the following regions
\begin{enumerate}
\item $\Op \partial(B_K \# B_K)$ where $\Psi_\sigma = \Id$ and $\eta_K \# \eta_\B = \eta_K \# \eta_K = \eta_{K\#K,\hat\rho_{\sigma}}$
\item $\Op \hat\iota(\{z = z_D\})$ where $\Psi_\sigma = \Id$ and $\eta_K \# \eta_\B = \eta_K \# \eta_K = \eta_{K\#K,\hat\rho_{\sigma}}$
\item $\Op \hat\iota(\partial B_{K'})$ where $\eta_{K}\#\eta_\B = \hat\iota_*\eta_{K'} 
= \hat\iota_* \wh\Theta_{\sigma}^*(\eta_{K\#K,\hat\rho^{\sigma}})
= \Psi_{\sigma}^*(\eta_{K\#K,\hat\rho_{\sigma}})$
\end{enumerate}
This shows $\xi_{\sigma} := \Psi_{\sigma}^*(\eta_{K\#K,\hat\rho_{\sigma}})$ is a family of equivalent contact shells on $B_K \# \B$
with $\xi_0 = \eta_K \# \eta_\B$.
We know $\eta_{K\#K,\hat\rho_1}$ is a contact structure away from
$\Int \wh\Theta(B_{K'})$, since $\wh\Theta_1$ is a subordination map, and therefore $\xi_1$ is a genuine contact
structure on $B_K \# \B$.
\end{proof}



\subsubsection{Parametric version}\label{sec:final-filling}
Recall the family on Hamiltonians $K^{(s)}=sK +(1-s)E$ for $s\in[0,1]$ from \eqref{e:ParametricHam}.

Let $(\B, \xi)$ be the contact ball from \eqref{e:ModelOtBall} and
let $({}^I B_K \# {}^I \B, {}^I \zeta)$ be the family of contact shells fibered over $I = [0,1]$ 
with fiber $(B_{K^{(s)}} \# \B, \zeta^s)$ over $s \in [0,1]$ where
$$ 
	 \zeta^s = \eta_{K^{(s)}} \# \xi_\B \quad\mbox{ and }\quad
	 (B_{K^{(s)}}\# \B, \eta_{K^{(s)}} \# \xi) \subset (B_{K^{(s)}}\# B_{K}, \eta_{K^{(s)}} \# \eta_K).
$$ 
We may assume  that $\zeta^s$ is a genuine contact structure when $s \in \Op\{0\}$ since 
$K^{(0)} = E$ is positive.

Let us first prove the following proposition similar to Proposition~\ref{p:ConnectSumDisc-param}.

\begin{prop}\label{p:LocalConnectSumDisc-param}
The fibered family of contact shells ${}^I \zeta$ is homotopic relative to
$$
	\Op\{s = 0\} \cup \bigcup_{s \in [0,1]} \Op \partial(B_{K^{(s)}}\# \B) \subset {}^I B_{K}\# {}^I \B
$$
through fibered families of contact shells on ${}^I B_{K}\# {}^I \B$ to a fibered family of genuine contact structures.
\end{prop}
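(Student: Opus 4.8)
The strategy is to upgrade the non-parametric argument for Proposition~\ref{p:ConnectSumDisc} to the family parametrized by $s \in I$, using that the family $K^{(s)} = sK + (1-s)E$ interpolates between the special Hamiltonian $K$ at $s=1$ and the positive Hamiltonian $E$ at $s=0$, with $K^{(s)} \leq E$ for all $s$. First I would observe that since $K^{(s)}$ is special for $s$ near $1$ (conditions (i)--(v) of Definition~\ref{def:SpecialHam} survive a small perturbation, and in fact $K^{(s)} = sk + (1-s)E \geq s k \geq $ a special function for the relevant range, or we simply restrict attention to $s$ bounded away from $0$ where $K^{(s)}$ is special, and use that near $s=0$ the shell is already genuine), the pointwise construction from the proof of Proposition~\ref{p:ConnectSumDisc} applies fiberwise. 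The key point is that the contact embeddings $\Theta_\sigma$ of \eqref{e:Theta}, the shell structures $\eta_{K\#K, \hat\rho^\sigma}$, the embeddings $\wh\Theta_\sigma$ of \eqref{e:hatTheta}, and the isotopy $\Psi_\sigma$ can all be chosen to depend smoothly on the auxiliary parameter $s \in I$, because all the ingredients (Lemmas~\ref{l:HamiltonianShellsAnnulus}, \ref{l:PhiToTildePhi}, \ref{lem:contain}, and the contact embeddings coming from Lemma~\ref{p:ContEmb}(i)) are constructed by explicit formulas or by one-parameter families of diffeomorphisms that vary continuously with the data.

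Concretely, the steps I would carry out are: (1) Verify that for $s \in [0,1]$ the Hamiltonian $K^{(s)}$ satisfies $K^{(s)} \leq E$ and that $K^{(s)}$ is special for $s$ away from $0$, while for $s$ near $0$ the shell $\zeta^s = \eta_{K^{(s)}} \# \xi_\B$ is already genuine (since $K^{(0)} = E > 0$ so $B_{E}$ is solid, and the connected sum with the genuine ball $\B$ along a gluing place is genuine), hence there is nothing to do near $s = 0$ and the homotopy can be taken constant there. (2) Apply Lemma~\ref{p:ContEmb}(i) in its parametric form (or re-run its proof with $s$ as a silent parameter) to get a two-parameter family of contact embeddings $\Theta_{\sigma}^{s}: \Delta \to \Int(\Delta \# \Delta)$, equal to $\iota$ on $\Op\{z \in [z_D, 1]\}$ and for $\sigma = 1$ satisfying $(\Theta^s_* K^{(s)\prime}, \Theta^s(\Delta')) < (K^{(s)} \# K, \Delta \# \Delta)$. (3) Use the parametric versions of Lemmas~\ref{l:HamiltonianShellsAnnulus}, \ref{l:PhiToTildePhi}, \ref{lem:contain} to produce the smooth family of shell structures $\eta_{K^{(s)}\#K, \hat\rho^\sigma_s}$ and shell embeddings $\wh\Theta^s_\sigma$, with $\wh\Theta^s_1$ a subordination map and with $\eta_{K^{(s)}\#K,\hat\rho^\sigma_s} = \eta_{K^{(s)}}\#\eta_K$ on $\Op\hat\iota\{z \in [z_D,1]\}$ and at $\sigma = 0$. (4) Build the fiberwise isotopy $\Psi^s_\sigma$ of $B_{K^{(s)}}\# B_K$ with properties (i)--(iii) as in the non-parametric proof, and set ${}^I\zeta_\sigma$ to have fiber $\xi^s_\sigma := (\Psi^s_\sigma)^*(\eta_{K^{(s)}\#K, \hat\rho^\sigma_s})$; the same three-case boundary analysis shows this is a fibered family of contact shells on ${}^I B_K \# {}^I \B$, equal to ${}^I\zeta$ at $\sigma = 0$, constant near $s = 0$ and near $\Op\partial(B_{K^{(s)}}\#\B)$, and genuine at $\sigma = 1$ since $\wh\Theta^s_1$ is a subordination map.

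\textbf{Main obstacle.} The delicate point is the behavior near $s = 0$: the construction from the proof of Proposition~\ref{p:ConnectSumDisc} requires $K^{(s)}$ to be special (in particular somewhere non-positive, via condition (v)), which fails as $s \to 0$. The cleanest fix is to choose a cutoff: the homotopy is declared constant (equal to the already-genuine ${}^I\zeta$) for $s \in [0, s_0]$ for some small $s_0 > 0$, is the full construction for $s \in [2s_0, 1]$, and interpolates on $[s_0, 2s_0]$ by running the construction with $\sigma$ replaced by $\sigma \cdot \chi(s)$ for a cutoff function $\chi$ vanishing near $s_0$ and equal to $1$ near $2s_0$ — this works because at $\sigma = 0$ the output is exactly the input shell regardless of $s$, so no compatibility condition is violated. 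One must check that for $s \in [s_0, 1]$ the Hamiltonian $K^{(s)}$ is genuinely special (or at least dominated appropriately) so that steps (2)--(3) go through; since $K^{(s)} = s K + (1-s) E$ and $K$ satisfies \eqref{e:specialCon} with a special function $k$, one checks $K^{(s)} \geq s\,k + (1-s)\min E \geq $ (special function) for $s_0 \leq s \leq 1$, after possibly enlarging the special function's negativity, and conditions (i)--(iv) are convex combinations of the corresponding conditions for $K$ and $E$ hence preserved. The rest of the argument is a routine, if bookkeeping-heavy, parametrization of the non-parametric proof, with no new conceptual content.
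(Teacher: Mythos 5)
Your overall architecture (run the non-parametric construction fiberwise in $s$, keep the homotopy constant near $s=0$ where $\zeta^s$ is already genuine, and interpolate in between with a cutoff in $\sigma$) is the right shape, but the interpolation step has a genuine gap. For $s$ in your transition interval $[s_0,2s_0]$ the endpoint of your homotopy is $\hat\zeta^s_{\sigma}$ at an \emph{intermediate} value $\sigma=\chi(s)\in(0,1)$, and nothing in your argument makes such a shell genuine. The construction from Proposition~\ref{p:ConnectSumDisc} only yields genuineness at $\sigma=1$, where $\wh\Theta_1$ is a subordination map; for $0<\sigma<1$ the structure $\eta_{K^{(s)}\#K,\hat\rho^s_\sigma}$ is \emph{not} contact outside $\wh\Theta^s_\sigma(B_{K'})$ (this is exactly the white region in Figure~\ref{f:embedHamiltonian3}), and inside that image it is the push-forward of the non-genuine shell $\eta_{K'}$. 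Your remark that ``at $\sigma=0$ the output is exactly the input shell'' only guarantees continuity of the homotopy and its being relative to $\Op\{s=0\}$; it says nothing about the final slice being a contact structure for $s\in(s_0,2s_0)$. The missing ingredient is precisely the \emph{second} part of Lemma~\ref{p:ContEmb}, which you never invoke: $\bigl((\Theta_\sigma)_*K',\Theta_\sigma(\Delta')\bigr)<(E\#K,\Delta\#\Delta)$ for \emph{all} $\sigma\in[0,1]$. Since $K^{(s)}\to E$ as $s\to 0$, this lets one arrange $\wh\Theta^s_\sigma$ to be a subordination map for all $\sigma$ once $s\leq 3a$, so that $\hat\zeta^s_\sigma$ is genuine for \emph{all} $\sigma$ in that range; only then does a two-variable reparametrization $f(\sigma,s)$ (constant in $\sigma$ for $s\leq a$, full for $s\geq 2a$) produce an endpoint that is genuine for every $s$, with genuineness coming from part (ii) of Lemma~\ref{p:ContEmb} for $s\geq 2a$ and from the ``all $\sigma$'' statement for $s\leq 3a$. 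Without this your transition region fails.

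A secondary confusion, worth fixing because it generates unnecessary work: you push forward $K^{(s)\prime}$ and therefore worry about whether $K^{(s)}$ is special. In the relevant construction the ball that gets displaced is always $B_{K'}$ for the \emph{fixed} $K'=K-\eps'$ coming from the fixed ball $\B$ (the fiber is $B_{K^{(s)}}\#\B$ with $\B$ independent of $s$); the parameter $s$ enters only through the target Hamiltonian $K^{(s)}\#K$, and all one needs is $K\leq K^{(s)}\leq E$, so that part (i) of Lemma~\ref{p:ContEmb} gives subordination at $\sigma=1$ for every $s$ and part (ii) gives it for all $\sigma$ when $s$ is small. No specialness of $K^{(s)}$ is needed anywhere (and your convexity argument for it is shaky, e.g.\ condition (v) of Definition~\ref{def:SpecialHam} is not preserved under convex combination with $E$), so that whole detour can be dropped once the argument is organized around the fixed $K'$.
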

\begin{proof}
	Inspecting the proof of Proposition~\ref{p:ConnectSumDisc}
	shows that it can be done parametrically.  In particular we can get a family of contact shell embeddings
	$$
		\wh\Theta_\sigma^s: (B_{K'}, \eta_{K'}) \to (B_{K^{(s)}}\#B_K, \eta_{K^{(s)}\#K, \hat\rho_\sigma^s})
	$$
	and associated isotopies $\{\Psi_\sigma^s\}_{\sigma \in [0,1]}$ of $B_{K^{(s)}} \# B_K$, which lead to
	contact shell structures
	$$
		\hat\zeta_\sigma^s := (\Psi_\sigma^s)^*(\eta_{K^{(s)}\#K, \hat\rho^\sigma_s}) \mbox{ on $B_{K^{(s)}} \# \B$}
	$$
	that define a family of fibered contact shells ${}^I \hat\zeta_\sigma$ on ${}^I B_{K}\#{}^I \B$.
	It follows from the second part of Lemma~\ref{p:ContEmb} that
	\begin{equation}\label{e:Embed2}
		\Big((\Theta_\sigma)_*K', \Theta_\sigma(\Delta')\Big) < (K^{(s)}\#K, \Delta \# \Delta) \quad\mbox{if $s \in \Op\{0\}$,}
	\end{equation}
	and therefore we can arrange for $\wh\Theta_\sigma^s$ to be a subordination map when $s \in \Op\{0\}$.
	
	With this set-up the proof of Proposition~\ref{p:ConnectSumDisc} shows we can ensure
	the family of fibered contact shells ${}^I \hat\zeta_\sigma$ is such that ${}^I \hat\zeta_0 = {}^I \zeta$ as well as
	\begin{enumerate}
	\item $\hat\zeta_\sigma^s = \zeta^s$ on $\Op \partial(B_{K^{(s)}}\# \B)$ for all $s$ and $\sigma$,
	\item $\hat\zeta_1^s$ is a genuine contact structure for all $s$, and
	\item $\hat\zeta_\sigma^s$ is a genuine contact structure for all $(\sigma, s) \in [0,1] \times [0, 3a]$
	for some $a > 0$.
	\end{enumerate}
	Pick any smooth function
	$$
	f: [0,1] \times [0,1] \to [0,1] \quad\mbox{with}\quad
		f(\sigma, s) = \begin{cases} 
		0 & \mbox{ if $\sigma = 0$}\\
		0 & \mbox{ if $s \in [0, a]$}\\
		1 & \mbox{ if $s \in [2a, 1]$ and $\sigma =1$}
		\end{cases}
	$$
	and define the family of contact shells $\zeta_\sigma^s := \hat\zeta_{f(\sigma, s)}^s$ on $B_{K^{(s)}}\#\B$,
	which represents a homotopy of fibered families of contact shells
	$$
	 \{{}^I \zeta_\sigma\}_{\sigma \in [0,1]}  \quad\mbox{on}\quad {}^I B_K\# {}^I \B.
	$$
	It follows from item (i) and the fact $f(\sigma, s) = 0$ if $s \in [0,a]$, that this homotopy is relative is the appropriate set.
	Observe that $\zeta_1^s$ is a genuine contact structure for all $s \in [0,1]$, since either $s \leq 3a$
	and $\zeta_1^s := \hat\zeta_{f(1,s)}^s$ is a genuine by item (iii), or
	$s \geq 2a$ and $\zeta_1^s :=  \hat\zeta_{f(1,s)}^s = \hat\zeta_{1}^s$ is genuine by item (ii).
	Therefore we have the desired homotopy between ${}^I \zeta = {}^I \zeta_0$ and a fibered family
	of genuine contact structures ${}^I \zeta_1$.
\end{proof}

\begin{proof}[Proof of Proposition \ref{p:ConnectSumDisc-param}]
	Recall that $({}^TB_{K_0} \# {}^TB, {}^T\eta_{K_0} \# {}^T\xi)$ is the fibered contact shell, which at the point
	$\tau \in T = D^q$ is given by
	$$
		(B_{K_{0}^{(\delta(\tau))}} \# B, \eta_{K_{0}^{(\delta(\tau))}} \# \xi)
	$$	
	where $\delta: T \to [0,1]$ is a bump function that vanishes near the boundary.
	It suffices to prove $({}^TB_K \# {}^T\B, {}^T\eta_{K} \# {}^T\xi_\B)$ 
	is fibered equivalent to a fibered contact structure over $T$,
	since it is dominated by $({}^TB_{K_0} \# {}^TB, {}^T\eta_{K_0} \# {}^T\xi)$
	if we pick $\eps' > 0$ sufficiently small in the definition of $(\B, \xi_\B)$.
	
	In the notation of Proposition~\ref{p:LocalConnectSumDisc-param}
	we have the identification
	$$
		\zeta^{\delta(\tau)} = \eta_{K^{(\delta(\tau))}} \# \xi_\B 
		\quad\mbox{as contact shell structures on $B_{K^{(\delta(\tau))}} \# \B$}
	$$
	and a fibered contact structure on $({}^TB_K \# {}^T\B, {}^T\zeta_1)$ with contact structure
	$$
		\zeta^{\delta(\tau)}_1 \mbox{ on the fiber } B_{K^{(\delta(\tau))}} \# \B.
	$$
	Since $\delta(\tau) = 0$ if $\tau \in \Op \partial T$, the homotopy constructed in Proposition~\ref{p:LocalConnectSumDisc-param} 
	when used fiberwise gives a homotopy between ${}^T\eta_{K} \# {}^T\xi_\B$ and ${}^T\zeta_1$ that shows they are
	fibered equivalent.
\end{proof}


\subsection{Main lemma}\label{sec:OvertwistDisc}


		\begin{figure}[h]
   \centering
   \def\svgwidth{450pt}
   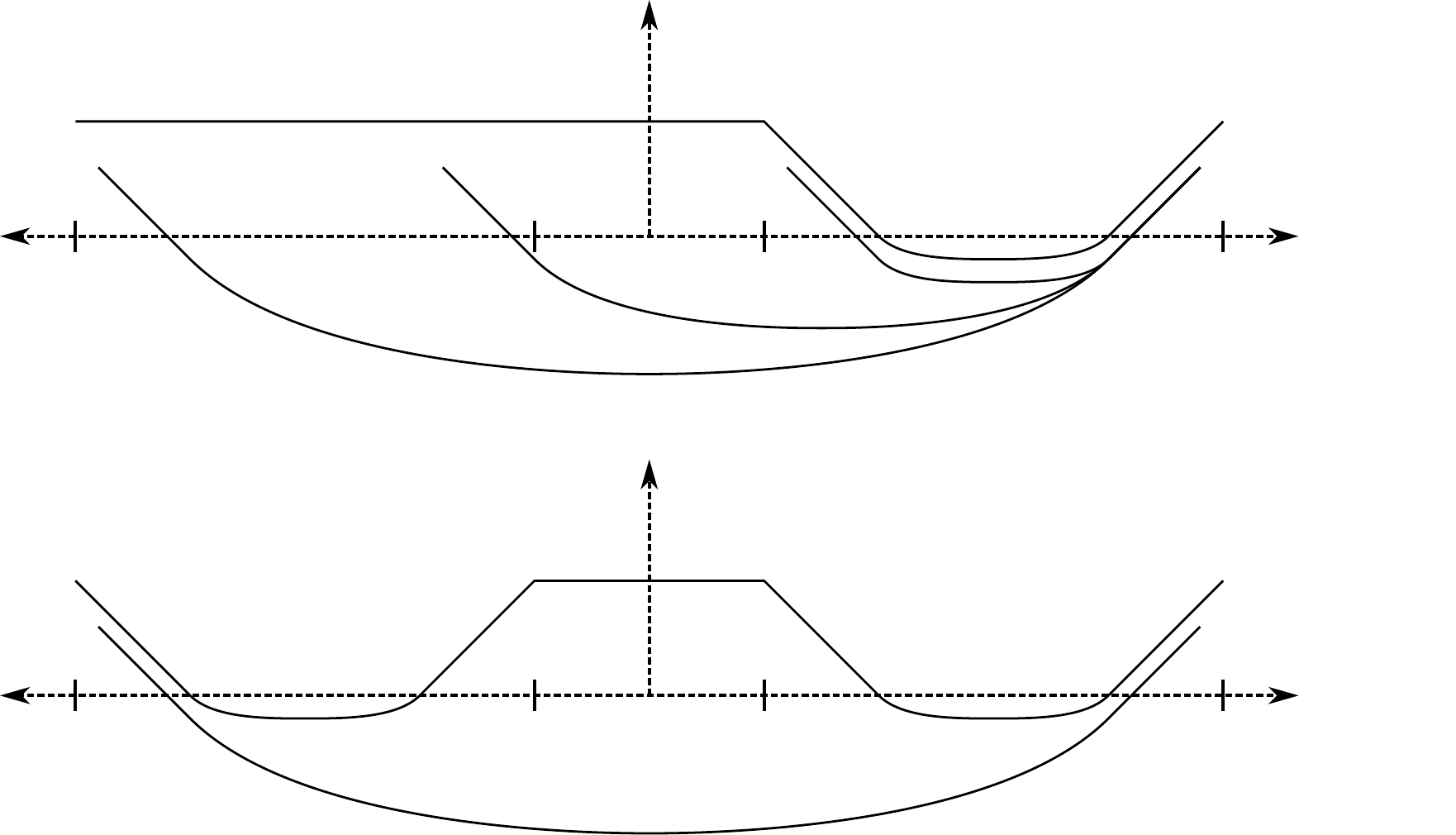
   \caption{Schematic representative of Lemma~\ref{p:ContEmb} where 
   $\Delta = \{\abs{z} \leq 1\} \subset \R$.
   Here $K'(z) = K(z) - \eps'$ is restricted to $\Delta' = \{\abs{z} \leq 1-\eps'\}$}
  \label{f:embedHamiltonian}
\end{figure} 
  
Consider the connected sums $(K\#_\beta K, \Delta\#_{\beta, \ell}\Delta)$ and
$(E\#_\beta K, \Delta\#_{\beta, \ell}\Delta)$ as in Section~\ref{s:S1ConnectSum}.
The main goal of this section will be to prove the following lemma, which we will break up into two sublemmas below.
  
\begin{lemma}\label{p:ContEmb}
	There is a family of contact embeddings for $\sigma\in[0,1]$
	$$
		\Theta_\sigma: \Delta \to \Delta\#_{\beta, \ell}\Delta \quad\mbox{with
		$\Theta_\sigma = Z_{1+\ell}$ on $\Op\{z \in [z_D, 1]\}$}
	$$
	based at $\Theta_0 := Z_{1+\ell}$ such that
	\begin{enumerate}
	\item $\left((\Theta_1)_*K' ,\Theta_1(\Delta')\right) < \left(K\#_\beta K, \Delta\#_{\beta, \ell}\Delta\right)$ and
	\item $\left((\Theta_\sigma)_*K',\Theta_\sigma(\Delta')\right) < \left(E\#_\beta K, \Delta\#_{\beta, \ell}\Delta\right)$ for all $\sigma \in [0,1]$.
	\end{enumerate}
\end{lemma}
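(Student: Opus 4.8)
The plan is to build $\Theta_\sigma$ as a concatenation of contact isotopies of $(\R^{2n-1},\xi_\st)$, each generated by a vector field supported in $\Op\{z<z_D\}$, so that the matching condition $\Theta_\sigma=Z_{1+\ell}$ on $\Op\{z\in[z_D,1]\}$ holds automatically. One starts from $\Theta_0=Z_{1+\ell}$, which identifies $\Delta$ with the right-hand factor $Z_{1+\ell}(\Delta)\subset\Delta\#_{\beta,\ell}\Delta$, and then isotopes the image so that at time $\sigma=1$ the set $\Theta_1(\Delta')$ contains the whole locus $\{K\#_\beta K\le 0\}$ --- which consists of one piece in each of the two factors, the tube being positive --- while for every $\sigma$ the set $\Theta_\sigma(\Delta')$ still contains $\{E\#_\beta K\le 0\}$, a single piece lying in the right factor, since $K|_{\p\Delta_\cyl\times S^1}>0$ forces $E\#_\beta K\equiv E>0$ on the tube and on the left factor. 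Concretely, the part of $\Theta_\sigma(\Delta')$ coming from $\Delta'\cap\{z\le z_D'\}$ is kept inside the right factor, covering $\{E\#_\beta K\le 0\}$, while the deformation grows a ``finger'' out of the slab $\Delta'\cap\{z\in[z_D',z_D]\}$, where by \eqref{e:special3} the Hamiltonian is exactly $k(u)$; this finger is pushed leftward along the tube and into the left factor, ending near the region $\{z\in[z_D',z_D]\}$ of the left copy, where $K\#_\beta K$ again equals $k(u)$.

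\textbf{Tools and estimates.} The elementary moves are contact Hamiltonian flows. For a Hamiltonian $H=H(z)$ the contact vector field is $X_H=H\,\p_z+\sum_i H'(z)\,u_i\,\p_{u_i}$, which translates in the $z$-direction while rescaling each $u$-slice by the factor $c=H(z_1)/H(z_0)$ --- the same $c$ that is the conformal factor of the time-one map and the $z$-stretching factor of the flow; together with these one uses the radial scaling flow $Z^t$ of $Z=z\,\p_z+\sum_i u_i\,\p_{u_i}$, whose conformal factor is $e^t$. The only analytic facts needed are then elementary consequences of $k$ being special: under any of these flows with conformal factor $c$, the Hamiltonian $k$ is pushed forward to $u\mapsto c\,k(u/c)$, and for $c\ge 1$ one has $c\,k(u/c)-c\eps'<c\,k(u/c)\le k(u)$, where $c\,k(u/c)\le k(u)$ is exactly \eqref{e:subscale} (with infinitesimal form \eqref{e:specialCon}); whereas when the target Hamiltonian is positive, any bound of the form $c\,(k(u/c)-\eps')<(\text{positive target})$ is automatic once the finger's preimage is restricted to the subslab on which $K'=K-\eps'<0$, since there the pushed Hamiltonian is negative.

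\textbf{The two sublemmas.} This gives the first sublemma (ii): off the right factor the target equals $E>0$, where the estimate above controls the finger; on the right factor $\Theta_\sigma(\Delta')$ contains $\{E\#_\beta K\le 0\}$ by construction, and the required inequality there reduces to $K'\circ Z_{1+\ell}^{-1}<K\circ Z_{1+\ell}^{-1}$, immediate from $K'<K$. The second sublemma (i) adds one point: the left factor's negative locus $\{K\le 0\}$ lies in $\{u\le u_0\}$, where $u_0\in(0,1)$ is the zero of $k$ (because $K\ge k(u)$, with equality near $z=z_D$), so for $\eps'$ small the finger tip --- obtained by flowing the subslab into the region $\{z\in[z_D',z_D]\}$ of the left copy with conformal factor $c\ge 1$ --- is wide enough to cover $\{K\le 0\}$ and, by $c\,k(u/c)-c\eps'<k(u)$, has pushed-forward Hamiltonian strictly below the target $k(u)$ there; away from that region the left-factor target is $E>0$ or large, where the (ii)-type estimate applies, and on the right factor the inequality is again $K'<K$. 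Letting $\Theta_\sigma$ interpolate this construction produces the family required by Lemma~\ref{p:ContEmb}.

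\textbf{Main obstacle.} The difficulty is organizational rather than conceptual. A naive $H(z)$-flow moves all of $\Delta'$ at once, so one must actually assemble $\Theta_\sigma$ from $u$-localized contact flows that grow the finger while leaving $\Delta'\cap\{z\le z_D'\}$ essentially fixed in the right factor, and one must track every conformal and rescaling factor carefully so that the two displayed inequalities are the only estimates invoked. One must also verify that the finger fits inside the possibly narrow tube $T_{\beta,\ell}$; this forces a genuine $u$-compression when $\beta\not\equiv 0$, but because the finger is grown from the subslab where $K'<0$, its pushed Hamiltonian is negative there and hence automatically below the positive tube-target, so no specialness is needed in the tube --- the inequality $a\,k(u/a)<k(u)$ is used precisely, and only, at the finger tip inside the left factor, where the target is negative and $c\ge 1$. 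Making Figures~\ref{f:embedHamiltonian3} and~\ref{f:embedHamiltonian} rigorous is exactly this bookkeeping.
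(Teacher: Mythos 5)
Your scheme is genuinely different from the paper's, and as described it has a gap in each of its two load-bearing claims. First, the covering bookkeeping is off: the negative locus of a special Hamiltonian is not contained in $\{z\le z_D'\}$ (already for the model $K=\max\{k(u),k(|z|)\}$ of Example~\ref{ex:special} one has $\{K<0\}=\{u<u_0,\,|z|<u_0\}$ with $u_0>|z_D|$), so keeping $\Delta'\cap\{z\le z_D'\}$ essentially fixed in the right-hand factor does \emph{not} make its image, together with the fixed image of $\{z\ge z_D\}$, cover $\{E\#_\beta K\le 0\}$: the portion of the right copy's negative locus with $z\in(z_D',z_D)$ can only be covered by the image of the slab, i.e.\ by the finger, where your map is far from $Z_{1+\ell}$. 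Consequently the assertion that on the right factor the inequality ``reduces to $K'<K$'', and that \eqref{e:subscale} is used ``only at the finger tip inside the left factor'', is inconsistent with your own construction: the same special-function estimate against the negative target $k(u)$ is needed on the stretched part of the finger lying over the right-hand strip, and for every $\sigma$, not just at $\sigma=1$.

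Second, and more seriously, anchoring both ends of the slab in the right factor (the end $z=z_D$ is forced there by the condition $\Theta_\sigma=Z_{1+\ell}$ on $\Op\{z\in[z_D,1]\}$, the end $z=z_D'$ by your choice to keep $\{z\le z_D'\}$ in the right factor) forces the finger to double back: it cannot be monotone in $z$. But every estimate you invoke --- pushforward $=c\,k(u/c)$ with $c\ge 1$ --- presupposes maps of the graphical type coming from $H(z)$-flows and radial scaling (possibly corrected by the twist $\Psi_g$), all of which are monotone in $z$; on the return branch no such normal form exists, and the only remaining mechanism for shrinking the $u$-width, the twist alone, has conformal factor $\tfrac{1}{1+gu}<1$ and pushes the inequality the \emph{wrong} way where the target is negative: by \eqref{e:subscale}, compressing $u$ with conformal factor $<1$ makes the pushed Hamiltonian less negative while the target $k(u)$ becomes more negative. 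So the ``organizational'' difficulty is exactly where the mathematical content of the lemma lies, and your moves do not visibly assemble into the required family. The paper sidesteps all of this: after reducing to $\beta\equiv 0$ via Lemma~\ref{l:ChangeNeck}, it translates the entire region $\{z\le z_D'\}$ of $\Delta$ into the \emph{left} factor and stretches the thin slab $[z_D',z_D]$ monotonically across the whole middle by the explicit family $\Gamma_{h_\sigma}=\Psi_g\circ\Phi_{h_\sigma}$ of Lemma~\ref{p:ContEmb1}, so every point is covered by a graphical piece with conformal factor $\ge 1$; note that the verification of part (ii) there also uses the monotonicity condition (iv) of Definition~\ref{def:SpecialHam}, which your sketch never invokes and which you would likely need in some form to control intermediate times $\sigma$.
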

\begin{proof}
	It follows from Lemma~\ref{l:ChangeNeck} that it suffices to prove this lemma when $\beta \equiv 0$
	and this special case is proved in Lemma~\ref{p:ContEmb1}.
\end{proof}

Let us remark that Section~\ref{sec:filling} only used the first part of Lemma~\ref{p:ContEmb}, while 
Section~\ref{sec:final-filling} used the both parts.

\begin{remark} In the $3$-dimensional case, where $\Delta = [-1,1]$,
this lemma essentially follows from Lemma \ref{lm:3-domin}.
\end{remark}

\subsubsection{Transverse scaling and simplifying the neck region}

\textbf{Transverse scaling.}
	A orientation preserving diffeomorphism $h: \R \to \R$ defines a contactomorphism
	$\Phi_h$ of $(\R^{2n-1}, \xi_\st)$ by
	$$
		\Phi_h(u_i, \varphi_i, z) = (h'(z)u_i, \varphi_i, h(z)) 
	$$
	where $\Phi_{h}^{-1} = \Phi_{h^{-1}}$.  
	By \eqref{e:ContPushK} we have 
	\begin{equation}\label{e:PushS}
		(\Phi_{h})_{*}H(u,z) = h'(h^{-1}(z))\, H(\tfrac{u}{h'(h^{-1}(z))}, h^{-1}(z))
	\end{equation}
	for a contact Hamiltonian $H(u,z) : \R^{2n-1} \to \R$.

\begin{ex}\label{ex:Scaledomain}{\rm
For our purposes $\Phi$ should be thought of as a way to manipulate the $z$-variable at the cost of a scaling factor on
the $u$-variable, in particular we have a contactomorphism between domains in $(\R^{2n-1}, \xi_\st)$
$$
	\Phi_h: \{ u \leq f(z),\, z \in [a,b]\} \to \{u \leq (h' \cdot f)(h^{-1}(z)),\, z \in [h(a), h(b)]\}
$$
where $f: \R \to \R_{>0}$.}
\end{ex}

This contactomorphism allows us to reduce the proof of Lemma~\ref{p:ContEmb} to when $\beta \equiv 0$.
\begin{lemma}\label{l:ChangeNeck}
	For every connected sum $(K \#_{\beta} K, \Delta \#_{\beta, \ell} \Delta)$, if $\ell' > \ell$ is sufficiently
	large, then there is a a contact embedding
	$$
		\Phi: \Delta\#_{\ell'}\Delta \to \Delta\#_{\beta, \ell}\Delta \quad\mbox{with}\quad
		\Phi = Z_{\pm(\ell-\ell')} \mbox{ on } \Op \{\pm z \geq \ell'\}
	$$
	such that 
	$(\Phi_*(K \# K), \Phi(\Delta\#_{\ell'}\Delta)) \leq (K \#_{\beta} K, \Delta \#_{\beta, \ell} \Delta).$
\end{lemma}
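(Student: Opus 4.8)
The plan is to use the transverse scaling contactomorphisms $\Phi_h$ to straighten out the "neck" of the connected sum domain $\Delta \#_{\beta,\ell}\Delta$, turning it into a standard neck $\Delta\#_{\ell'}\Delta$ at the cost of allowing the parameter $\ell'$ to be larger than $\ell$. First I would recall that by \eqref{e:ConnectedSumDomain} the domain $\Delta\#_{\beta,\ell}\Delta$ is the union $Z_{1+\ell}^{-1}(\Delta) \cup T_{\beta,\ell}\cup Z_{1+\ell}(\Delta)$, where the tube $T_{\beta,\ell} = \{u\leq e^{-\beta(z)},\ \abs{z}\leq 1\}$ is pinched according to $\beta$. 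On the tube region the contact Hamiltonian is $e^{-\beta(z)}E(u)$. The goal is a contactomorphism $\Phi$ that reparametrizes the $z$-coordinate so that the pinched profile $e^{-\beta(z)}$ becomes (after the induced $u$-scaling) the constant profile $1$ of a longer straight tube, while being the identity (up to a $z$-translation) near the two ends $\pm z\geq \ell'$ where it must glue to the two copies of $\Delta$.

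The key steps in order: (1) Choose an orientation-preserving diffeomorphism $h:\R\to\R$ which is a translation by $\pm(\ell-\ell')$ near $\pm z\geq \ell'$, and on the middle region stretches the interval $[-\ell',\ell']$ in a way designed so that the $u$-scaling factor $h'(h^{-1}(z))$ compensates for the pinch. Concretely, I want the image under $\Phi_h$ of the straight tube $\{u\leq 1,\ \abs z\leq \ell'\}$ to be contained in the pinched tube $T_{\beta,\ell}$ (extended by the translated copies of $\Delta$ near the ends); by Example~\ref{ex:Scaledomain}, $\Phi_h$ sends $\{u\leq f(z),\ z\in[a,b]\}$ to $\{u\leq (h'\cdot f)(h^{-1}(z)),\ z\in[h(a),h(b)]\}$, so I need $(h'\cdot 1)(h^{-1}(z)) \leq e^{-\beta(z)}$ on the neck, i.e.\ $h'$ small where $\beta$ is large. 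Since $\beta\geq 0$ is supported in a compact interval and vanishes near the ends, such an $h$ exists provided $\ell'$ is taken large enough that the "lost length" from having $h'<1$ on the pinched part can be recovered: $\int$ of $h'$ over $[-\ell',\ell']$ must equal $2\ell$ plus the contributions near the ends, and making $\ell'$ large gives enough room. (2) Set $\Phi := \Phi_h$ and verify $\Phi = Z_{\pm(\ell-\ell')}$ on $\Op\{\pm z\geq \ell'\}$, which is immediate from the choice of $h$ as a translation there. (3) Check the Hamiltonian inequality: using the push-forward formula \eqref{e:PushS}, $(\Phi_*(K\#K))(u,z) = h'(h^{-1}(z))\,(K\#K)\big(\tfrac{u}{h'(h^{-1}(z))}, h^{-1}(z)\big)$. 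On the end regions $h$ is a translation so $h'=1$ and this is just $K\circ Z_{\pm(\ell-\ell')}$, which matches $K\#_\beta K$ on the corresponding translated copies of $\Delta$ by construction of both connected-sum Hamiltonians. On the neck, $(K\#K)$ restricted to its tube equals $E(u)$ (with $\beta\equiv 0$ there for the source), so $(\Phi_*(K\#K))(u,z) = h'(h^{-1}(z))\,E\big(\tfrac{u}{h'(h^{-1}(z))}\big)$, and I must compare this with the target value $e^{-\beta(z)}E(u)$; the inequality \eqref{e:HamDomBasicCondition} also needs the target to be positive on $\Phi(\Delta\#_{\ell'}\Delta)\setminus$-complement, which holds since $E>0$.

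The main obstacle is arranging the comparison $h'(h^{-1}(z))\,E\big(\tfrac{u}{h'(h^{-1}(z))}\big) \leq e^{-\beta(z)}E(u)$ on the neck together with the domain containment, simultaneously for all $u$ in the relevant range. This is exactly where one uses that $E$ comes from a \emph{special} function $k$ via the scaling inequality \eqref{e:subscale}: writing $a = 1/h'(h^{-1}(z)) \cdot e^{\beta(z)}\cdot(\text{stuff})$ — more precisely, if one sets things up so that $h'(h^{-1}(z)) = e^{-\beta(z)}$ on the neck, then $h'(h^{-1}(z))E(u/h'(h^{-1}(z))) = e^{-\beta(z)}E(e^{\beta(z)}u)$ and one needs $E(e^{\beta(z)}u)\leq E(u)$ for $\beta(z)\geq 0$; since $E(u)=K(u,\pm1)$ inherits from $K$ being special the property that $E$ is of the special-function type, and special functions satisfy $a\,k(u/a)<k(u)$ for $a>1$ which rearranges to give monotonicity-type control — but one must be careful the pinching is \emph{toward smaller} $u$, so the direction of the inequality works out. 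Choosing $h'(h^{-1}(z))\leq e^{-\beta(z)}$ (rather than equality) and then using \eqref{e:subscale} with $a=e^{\beta(z)}\geq 1$ to absorb the discrepancy, while keeping $h$ a genuine diffeomorphism with the prescribed end behavior by taking $\ell'$ sufficiently large, is the technical heart; everything else is bookkeeping with Example~\ref{ex:Scaledomain} and the definition of the connected-sum Hamiltonian.
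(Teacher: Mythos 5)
Your overall route is the paper's route: build $\Phi$ as a transverse scaling contactomorphism $\Phi_h$ from \eqref{e:PushS} with $h$ a translation near the two ends, check the domain inclusion via Example~\ref{ex:Scaledomain}, and verify the Hamiltonian inequality on the neck, with $\ell'$ large providing the room for $h' < 1$. But the mechanism you propose for the one nontrivial step --- the inequality $h'(h^{-1}(z))\,E\bigl(\tfrac{u}{h'(h^{-1}(z))}\bigr) \leq e^{-\beta(z)}E(u)$ on the neck --- does not work, and this is exactly the heart of the lemma. First, $E(u)=K(u,\pm 1)$ is \emph{not} of special-function type: specialness forces $k(0)<0$, whereas $E>0$ everywhere by condition (i) of Definition~\ref{def:SpecialHam} (in Example~\ref{ex:special}, $E$ is in fact non-decreasing in $u$). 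Second, even formally, \eqref{e:subscale} points the wrong way: substituting $u\mapsto au$ it reads $\tfrac1a k(au) > k(u)$ for $a>1$, so if $E$ did satisfy it, then with $\lambda = e^{-\beta(z)}$ and $a=e^{\beta(z)}$ one would get $\lambda E(u/\lambda) = \tfrac1a E(au) > E(u) \geq e^{-\beta(z)}E(u)$, i.e.\ the scaling property would \emph{violate} the inequality you need rather than supply it. So "use \eqref{e:subscale} with $a=e^{\beta(z)}$ to absorb the discrepancy" is not a proof step; it is the gap.

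No specialness is needed at all, only that $E$ is positive and bounded. The paper fixes a constant $0<C<\min(E)/\max(E)$ as in \eqref{e:CE} and demands the stronger condition $h'(h^{-1}(z)) \leq C e^{-\beta(z)}$ as in \eqref{e:hprime}, which is achievable for a diffeomorphism $h:[-\ell',\ell']\to[-\ell,\ell]$ with $h'=1$ near the ends once $\ell' > \tfrac{1}{2C}\int_{-\ell}^{\ell} e^{\beta(z)}\,dz$. Then on the image of the tube (where $u \leq h'(h^{-1}(z))$, so $u/h'(h^{-1}(z)) \leq 1$) one has the crude chain $h'(h^{-1}(z))\,E\bigl(\tfrac{u}{h'(h^{-1}(z))}\bigr) \leq C e^{-\beta(z)}\max(E) < e^{-\beta(z)}\min(E) \leq e^{-\beta(z)}E(u) = (K\#_\beta K)(u,z)$, and on the end regions $\Phi_h$ is the required translation so the Hamiltonians agree there. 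With that one replacement (a smallness constant $C$ instead of an appeal to \eqref{e:subscale}), the rest of your outline --- the existence of $h$ via the integral condition, the domain containment, and the positivity needed for \eqref{e:HamDomBasicCondition} --- goes through as you describe.
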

\begin{proof}
	Recall $E(u) := K(u, \pm 1) > 0$ with $K \leq E$ and pick a constant
	\begin{equation}\label{e:CE}
		0 < C < \frac{\min(E)}{\max(E)} \leq 1.
	\end{equation}
	Pick a diffeomorphism $h:[-\ell', \ell'] \to [-\ell, \ell]$ with $h'(z) = 1$ on $z \in \Op\{\pm \ell\}$
	and
	\begin{equation}\label{e:hprime}
		h'(h^{-1}(z)) \leq C e^{-\beta(z)}
	\end{equation}
	which is possible provided
	$$
		\ell' > \frac{1}{2C} \int_{-\ell}^{\ell} e^{\beta(z)}dz
	$$
	Extend $h$ by translation to get a diffeomorphism $h: \R \to \R$ and consider the associated contactomorphism 
	$\Phi_h: (\R^{2n-1}, \xi) \to (\R^{2n-1}, \xi)$ from \eqref{e:PushS}.  This is the desired contact embedding
	for by \eqref{e:hprime} we have
	$$
		\Phi_h(\Delta\#_{\ell'}\Delta) = \{u \leq h'(h^{-1}(z))\,,\,\, z \in [-2-\ell, 2+\ell]\} \subset \Delta\#_{\beta,\ell}\Delta\,.
	$$
	To check the order on the Hamiltonians, it suffices to check on $\Phi_h(T_{\ell'})$ where we have
	$$
		(\Phi_h)_*E(u,z) = h'(h^{-1}(z)) E(\tfrac{u}{h'(h^{-1}(z))})
		 < e^{-\beta(z)} E(u) = (K\#_{\beta}K)(u,z)
	$$
	by \eqref{e:CE} and \eqref{e:hprime}.
\end{proof}

\subsubsection{The twist contactomorphism and a special case of Lemma~\ref{p:ContEmb}}

We will use the transverse scaling contactomorphisms $\Phi_h$ together with the following contactomorphism.

\textbf{Twist contactomorphism.}
	For $g \in C^\infty(\R)$ and $z_0 \in \R$,
	define
	$$
		\Psi_{g,z_0}(u_i, \varphi_i, z) := \left(\frac{u_i}{1+g(z)u},\,\varphi_i - \int_{z_0}^z g(s)\,ds,\, z \right)
	$$
	which is a contactomorphism between the subsets of $(\R^{2n-1}, \xi_\st)$
	$$
		\Psi_{g, z_0}: \{1+g(z)u > 0\} \to \{1-g(z)u > 0\}
	$$
	where $\Psi_{g, z_0}^{-1} = \Psi_{-g, z_0}$.
	By \eqref{e:ContPushK} we have
	\begin{equation}\label{e:PushT}
		(\Psi_{g, z_0})_{*}H(u,z) = (1-g(z)u)\, H(\tfrac{u}{1-g(z)u}, z)
	\end{equation}
	for a contact Hamiltonian $H(u,z) : \R^{2n-1} \to \R$.
	
\begin{ex}\label{ex:domain}{\rm
For our purposes $\Psi$ should be thought of as a way to manipulate the $u$-variable at the cost of a rotation
in the angular coordinates, in particular we have a contactomorphism between domains in $(\R^{2n-1}, \xi_\st)$
$$
	\Psi_{g}: \{u \leq f_2(z)\} \to \{u \leq f_1(z)\}
$$	
where $f_j: \R \to \R_{>0}$ and $g(z) =  \tfrac{1}{f_1(z)} - \tfrac{1}{f_2(z)}$.}
\end{ex}

\textbf{Composing twist and scaling.}
Fix an orientation preserving diffeomorphism $h:\R \to \R$ and define $g(z) := 1 - \frac{1}{h'(h^{-1}(z))}$.
It follows from Examples~\ref{ex:Scaledomain} and \ref{ex:domain}
\begin{equation}\label{e:Gamma}
	\Gamma_{h, z_0}:= \Psi_{g,z_0} \circ \Phi_{h} : \{u \leq 1,\, z \in [a,b]\} \to \{u \leq 1,\, z \in [h(a), h(b)]\}
\end{equation}
is a contactomorphism of these domains in $(\R^{2n-1}, \xi_{\st})$.  So $\Gamma_{h,z_0}$ lets us change the $z$-length of a region 
without changing the $u$-width, albeit still at the cost of a rotation in the angular coordinates.

Computing shows that 
$$
	\Gamma_{h, z_0}(u_i, \varphi_i, z) = \left(\frac{h'(z)u_i}{1+(h'(z) -1)u}\,,\,\, \varphi_i - \int_{z_0}^z \Big(1 - \tfrac{1}{h'(h^{-1}(z))}\Big)\,ds\,,\,\, h(z)\right)
$$
so if $h(z) = z + \tau$ on $z \in A \subset \R$ and $z_0 \in h(A)$, then $\Gamma_{h, z_0}$ is just translation
\begin{equation}\label{e:GammaZ}
	\Gamma_{h, z_0} = Z_{\tau} \quad\mbox{on}\quad \{z \in A\} \subset \R^{2n-1}.
\end{equation}
If we define
\begin{equation}\label{e:PushTh}
	\tilde{h}(u,z) := h'(h^{-1}(z)) - \big(h'(h^{-1}(z)) - 1\big)u
\end{equation}
then for a contact Hamiltonian $H(u,z): \R^{2n-1} \to \R$ we have that
\begin{equation}\label{e:PushH}
	(\Gamma_{h,z_0})_*H(u,z) = \tilde{h}(u,z)\,\, H\!\left(\frac{u}{\tilde{h}(u,z)}, h^{-1}(z)\right).
\end{equation}

\begin{figure}[h]
   \centering
   \def\svgwidth{450pt}
   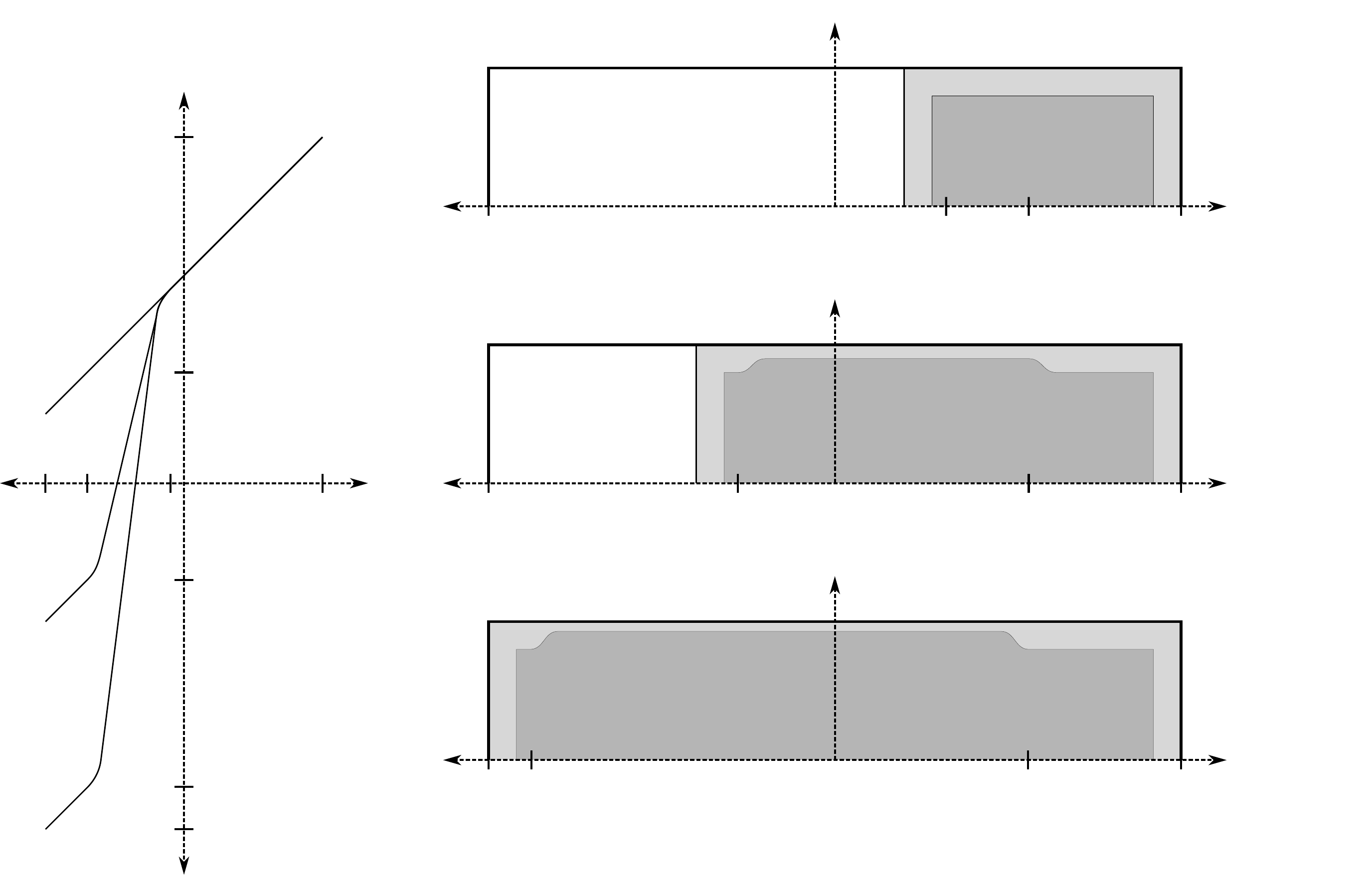
   \caption{The family of diffeomorphisms $h_\sigma$ and embeddings $\Gamma_\sigma: \Delta \to \Delta\#_\ell \Delta$.  
   The union of the grey regions denote the image $\Gamma_\sigma(\Delta)$ while
   the dark grey regions denote the image $\Gamma_\sigma(\Delta')$ for $\Delta' = \{\abs{z} \leq 1-\eps',\, u \leq 1-\eps'\}$.}
  \label{f:graphH}
\end{figure}

\textbf{Proving Lemma~\ref{p:ContEmb} when $\beta \equiv 0$}

Pick a family of diffeomorphisms $h_\sigma: \R \to \R$ for $\sigma \in [0,1]$ such that
	\begin{equation}\label{e:defh}
		h_\sigma(z) = \begin{cases}
			z+(1-2\sigma)(1+\ell) & \mbox{ for $z \in \Op (-\infty, z_D']$}\\
			h_\sigma'(z) \geq 1 & \mbox{ for $z \in [z_D', z_D]$}\\
			z+1+\ell & \mbox{ for $z \in \Op [z_D, \infty) $}
			\end{cases}
	\end{equation}	
	Recall the contactomorphism $\Gamma_{h, z_0}$ from \eqref{e:Gamma} and define the contact embeddings
	\begin{equation}\label{e:GammaS}
			\Gamma_\sigma := \Gamma_{h_\sigma, 2+\ell}: \Delta \to \Delta\#_{\ell}\Delta
			\quad\mbox{for $s \in [0,1]$.}
	\end{equation}
	By \eqref{e:GammaZ}, we see $\Gamma_0 = Z_{1+\ell}$ and on $\Op\{z \in [z_D, 1]\}$
	we have $\Gamma_\sigma = Z_{1+\ell}$ for all $\sigma \in [0,1]$.	
With this family of contactomorphisms we can prove Lemma~\ref{p:ContEmb} with the simplifying assumption that $\beta = 0$.

\begin{lemma}\label{p:ContEmb1}
	The family of contact embeddings $\Gamma_\sigma: \Delta \to \Delta \#_{\ell} \Delta$ for $\sigma \in [0,1]$
	satisfy:
	\begin{enumerate}
	\item $(\Gamma_\sigma)_*K \leq E\# K$ on $\Gamma_\sigma(\Delta)$ for all $\sigma \in [0,1]$, and
	\item $(\Gamma_1)_*K \leq K\# K$ on $\Gamma_1(\Delta)$.
	\end{enumerate}
\end{lemma}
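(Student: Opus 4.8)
The idea is to compute the push-forward $(\Gamma_\sigma)_*K$ explicitly using formula~\eqref{e:PushH} and to compare it region by region with the connected sum Hamiltonians $E\#K$ and $K\#K$ on $\Delta\#_\ell\Delta = Z_{1+\ell}^{-1}(\Delta)\cup T_\ell\cup Z_{1+\ell}(\Delta)$. Recall that $\Gamma_\sigma = \Gamma_{h_\sigma, 2+\ell}$ and that $h_\sigma$ is a translation outside $[z_D', z_D]$ with $h_\sigma' \geq 1$ on $[z_D',z_D]$; correspondingly $\tilde h_\sigma(u,z) = h_\sigma'(h_\sigma^{-1}(z)) - (h_\sigma'(h_\sigma^{-1}(z))-1)u \geq 1$ for $u\leq 1$ because $h_\sigma' \geq 1$. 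The plan is to split the source domain $\Delta = \{\abs z\le 1,\ u\le 1\}$ into the part $\{z\ge z_D\}$, the part $\{z_D'\le z\le z_D\}$ where the twisting and scaling actually happen, and the part $\{z\le z_D'\}$, and to track where each piece lands under $\Gamma_\sigma$.

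First I would handle the trivial region $\{z\ge z_D\}$: there $h_\sigma(z) = z+1+\ell$ so by~\eqref{e:GammaZ} $\Gamma_\sigma = Z_{1+\ell}$ and $(\Gamma_\sigma)_*K = K\circ Z_{1+\ell}^{-1}$, which by definition of the connected sum Hamiltonian equals $(K\#K)$ and $(E\#K)$ on $Z_{1+\ell}(\Delta)$; so there is nothing to check there. Next, on the region $\{z\le z_D'\}$, again $h_\sigma$ is a translation (by $(1-2\sigma)(1+\ell)$), so $(\Gamma_\sigma)_*K(u,z) = K(u, z - (1-2\sigma)(1+\ell))$. For $\sigma=1$ this is $K\circ Z_{1+\ell}^{-1}$, landing in $Z_{1+\ell}(\Delta)$ where $(K\#K)$ is exactly this, giving (ii) on that part; for general $\sigma$ the image sits in $Z_{1+\ell}^{-1}(\Delta)$ where $(E\#K) = K\circ Z_{1+\ell}$, and one uses $K\le E$ together with spherical symmetry — crucially $K(u,z)\le K(u,1) = E(u)$ for all $\abs z\le 1$ — to bound $K$ by $E$ appropriately; near the neck one invokes $\Gamma_\sigma(\Delta)\subset\{u\le 1\}$ and the fact that the neck value of $E\#K$ is $e^{-\beta}E = E$ (since $\beta\equiv 0$) which dominates $K$ restricted to the slice. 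This is where I expect to spend the most care.

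The heart of the argument is the middle region $\{z_D'\le z\le z_D\}$, where by~\eqref{e:special3} we have $K(u,z) = k(u)$ depending only on $u$. Here~\eqref{e:PushH} gives $(\Gamma_\sigma)_*K(u,z) = \tilde h_\sigma(u,z)\, k\!\big(u/\tilde h_\sigma(u,z)\big)$ with $a := \tilde h_\sigma(u,z) \geq 1$. If $a>1$ the \emph{special} condition~\eqref{e:subscale}, namely $a\,k(u/a) < k(u)$, gives immediately $(\Gamma_\sigma)_*K(u,z) < k(u) = K(u,z)$; if $a=1$ we get equality. Since in the neck region of $\Delta\#_\ell\Delta$ both $E\#K$ and $K\#K$ take a value $\ge$ the relevant comparison (one checks the image $\Gamma_\sigma(\{z\in[z_D',z_D]\})$ lands in the tube $T_\ell$ or its neighbors where the connected-sum Hamiltonian is $\ge k(u)$), this yields both (i) and (ii) on the middle region. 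The main obstacle is bookkeeping: one must verify that the images $\Gamma_\sigma(\Delta)$ genuinely lie inside $\Delta\#_\ell\Delta$ (using~\eqref{e:Gamma} and $\tilde h_\sigma\ge1$ to keep $u\le 1$), and then match the three pieces of $\Gamma_\sigma(\Delta)$ with the three-part formula for $K\#K$ and $E\#K$, checking the inequality is non-strict where needed and strict on $\Delta'$. The inequality on $\Delta'$ rather than $\Delta$ (the primed versions $\Theta_\sigma(\Delta')$ in Lemma~\ref{p:ContEmb}) is obtained because $K' = K-\eps'$ is strictly below $K$; together with the weak inequalities above this upgrades everything to the strict partial order $<$ required in the statement of Lemma~\ref{p:ContEmb}, and the passage $\beta\equiv 0 \Rightarrow$ general $\beta$ is exactly Lemma~\ref{l:ChangeNeck}.
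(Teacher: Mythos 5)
Your overall scheme (computing $(\Gamma_\sigma)_*K$ via \eqref{e:PushH}, splitting $\Delta$ into $\{z\geq z_D\}$, $\{z_D'\leq z\leq z_D\}$, $\{z\leq z_D'\}$, and using the special condition \eqref{e:subscale} on the middle piece) is the same as the paper's, and your treatment of the regions $\{z\geq z_D\}$ and $\{z_D'\leq z\leq z_D\}$ is essentially correct: there $(\Gamma_\sigma)_*K\leq k(u)$, and both $E\#K$ and $K\#K$ dominate $k(u)$ everywhere by conditions (iii) and (v) of Definition~\ref{def:SpecialHam}. However, there is a genuine gap in your handling of the region $\{z\leq z_D'\}$ for statement (i). You assert that for general $\sigma$ its image under $\Gamma_\sigma$ sits in $Z_{1+\ell}^{-1}(\Delta)$; this is false. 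Since $h_\sigma$ translates this region by $(1-2\sigma)(1+\ell)$, for intermediate $\sigma$ (in particular for $\sigma$ near $0$) the image $\{z\leq z_D'+(1-2\sigma)(1+\ell)\}$ extends well into the right-hand copy $Z_{1+\ell}(\Delta)=\{z\in[\ell,2+\ell]\}$, where $(E\#K)(u,z)=K(u,z-(1+\ell))$ rather than $E(u)$. On that overlap the inequality you must prove is
$$
K\big(u,\,h_\sigma^{-1}(z)\big)\;\leq\;K\big(u,\,z-(1+\ell)\big)\qquad\mbox{for }z\in[\ell,\,h_\sigma(z_D')],
$$
and the tools you invoke ($K\leq E$, spherical symmetry, $u\leq 1$) do not give this, since the right-hand side can be strictly smaller than $E(u)$. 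The missing ingredient is condition (iv) of Definition~\ref{def:SpecialHam}: $K$ is non-increasing in $z$ on $\Op[-1,z_D]$. Since $h_\sigma^{-1}(z)=z-(1-2\sigma)(1+\ell)\geq z-(1+\ell)$ and both arguments lie in $[-1,z_D']$ in the relevant range, monotonicity yields exactly the needed inequality; this is the step the paper's proof rests on and the one your argument would fail without.

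There is also a minor left/right mix-up in your $\sigma=1$ discussion: on $\{z\leq z_D'\}$ the map $\Gamma_1$ translates by $-(1+\ell)$, so the image lies in $Z_{1+\ell}^{-1}(\Delta)$ and the push-forward equals $K\circ Z_{1+\ell}$ there, matching the left-hand piece of $K\#K$ (not the right-hand one as you wrote). This slip does not affect the validity of (ii), but combined with the point above it indicates that the bookkeeping of where each piece of $\Gamma_\sigma(\Delta)$ lands, and which branch of the connected-sum Hamiltonian it must be compared with, is exactly where your write-up needs to be repaired.
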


\begin{proof}[Proof of Lemma~\ref{p:ContEmb1}]
By \eqref{e:PushH} we have
$$
	(\Gamma_\sigma)_*K(u,z) = \tilde{h}_\sigma(u,z)\,\, K\!\left(\frac{u}{\tilde{h}_\sigma(u,z)}, h^{-1}_\sigma(z)\right),
$$
where recall from \eqref{e:PushTh} that
$$\tilde{h}_\sigma(u,z):= h_\sigma'(h_\sigma^{-1}(z)) - \big(h_\sigma'(h_\sigma^{-1}(z)) - 1\big)u \geq 1$$ 
where the inequality follows from $h'_\sigma(h^{-1}_\sigma(z)) \geq 1$ and $u \leq 1$.
Bringing these two together, we have
\begin{equation}\label{e:GsK}
	(\Gamma_\sigma)_*K(u,z) = \begin{cases} K(u,h_\sigma^{-1}(z)) & \mbox{ if } z \in \Op h_\sigma([-1,\, z_D'])\\
	\leq k(u) &  \mbox{ if } z \in h_\sigma([z_D',\, z_D])\\ 
	K(u,z -(1+\ell)) & \mbox{ if } z \in \Op [z_D+1+\ell,\, 2+\ell]
	\end{cases}
\end{equation}
using that $h_\sigma$ is translations on the ends, while on the middle we have
\begin{align*}
	(\Gamma_\sigma)_*K(u,z) &= (\Gamma_\sigma)_*k(u,z) = \tilde{h}_\sigma(u,z)\,\, k\!\left(\frac{u}{\tilde{h}_\sigma(u,z)}\right) \leq
	k(u) 
\end{align*}
using \eqref{e:special3} and \eqref{e:subscale}.  

To verify $(i)$, since
$$
	(E\# K)(u,z) = \begin{cases} E(u) & \mbox{ if } z \in [-2-\ell,\, \ell]\\
	K(u,z-(1+\ell)) & \mbox{ if } z \in [\ell,\, 2+\ell]
	\end{cases}
$$
it follows from \eqref{e:GsK} and the inequalities
\begin{equation}\label{e:INEM}
	k(u) \leq K(u,z) \leq E(u).
\end{equation}
that it suffices to check
$$
	K(u, h_\sigma^{-1}(z)) \leq K(u, z-(1+\ell)) \quad\mbox{when}\quad z \in [\ell,\, h_\sigma(z_D')].
$$
Since $h_\sigma^{-1}(z) = z-(1-2\sigma)(1+\ell)$ here, this is equivalent to
$$
	K(u, z+2\sigma(1+\ell)) \leq K(u, z) \quad\mbox{when}\quad z \in [-1,\, z_D' -2\sigma(1+\ell)]
$$
and this latter condition follows from Definition~\ref{def:SpecialHam}.

To verify $(ii)$, using \eqref{e:defh} we see \eqref{e:GsK} at $\sigma=1$ becomes
$$
	(\Gamma_1)_*K(u,z) = \begin{cases} K(u,z+(1+\ell)) & \mbox{ if } z \in \Op [-2-\ell,\, z_D'-1-\ell]\\
	\leq k(u) &  \mbox{ if } z \in [z_D' -1-\ell,\, z_D + 1+\ell]\\
	K(u,z-(1+\ell)) & \mbox{ if } z \in \Op [z_D+1+\ell,\, 2+\ell]
	\end{cases}
$$
while by definition
$$
	(K\# K)(u,z) = \begin{cases} K(u,z+(1+\ell)) & \mbox{ if } z \in [-2-\ell,\, -\ell]\\
	E(u) &  \mbox{ if } z \in [-\ell,\, \ell]\\
	K(u,z-(1+\ell)) & \mbox{ if } z \in [\ell,\, 2+\ell]
	\end{cases}
$$
so $(ii)$ follows from \eqref{e:INEM}.
\end{proof}




\section{Contact structures with holes}\label{sec:holes}  \label{sec:further}

The goal of this section is   Proposition \ref{prop:to saucers}
and its parametric version   Proposition \ref{prop:to-saucers-param}, which are the first steps in proving
    Propositions \ref{prop:unique-model}
and  Proposition  \ref{prop:unique-model-param}.

\subsection{Semi-contact structures}\label{sec:semi-holonomic}

Let $\Sigma$ be a closed $2n$-dimensional manifold. A {\em semi-contact} structure on an annulus $C=\Sigma\times[a,b]$ is smooth family $\{\zeta_s\}_{s \in [a,b]}$ such that $\zeta_s$ is a germ of a contact structure along the slice $\Sigma_s:=\Sigma\times s$.
If $\{\alpha_s\}_{s \in [a,b]}$ is a smooth family of $1$-forms with $\zeta_s = \ker \alpha_s$ on $\Op \Sigma_s$, then one gets an almost contact structure $(\lambda, \omega)$ on $C$ where
$$
\mbox{$\lambda(x,s) = \alpha_s(x,s)$ and $\omega(x,s) = d\alpha_s(x,s)$.}
$$
It follows that every semi-contact structure on $C$ defines an almost contact structure on $C$ that equals $\zeta_s$ on 
$TC|_{\Sigma_s}$. 

Given a contact structure $\xi$ on $\Sigma\times\R$ and a smooth family of functions $\psi_s:\Sigma\to\R$ for $s\in[a,b]$, if
we pick
$\Psi_s: \Op \Sigma_s \to \Op(\mbox{graph } \psi_s) \subset \Sigma \times \R$ to be a
smooth family of diffeomorphisms such that $\Psi_s|_{\Sigma_s} = \Id \times \psi_s$, then
we can define a semi-contact structure on $\Sigma\times[a,b]$ by 
$\zeta_s := \Psi_s^*\xi$.  Any semi-contact structure of this form will be said to be of {\em immersion type}.

\begin{remark} {\rm The term  is motivated by the fact that on  the boundary of each domain
$\Sigma^{[a',b']}:=\Sigma\times[a',b']$ for $a\leq a'<b'<b$ the structure  $\zeta|_{\p \Sigma^{[a',b']}} $ is induced from the genuine contact structure $\xi$ by an immersion 
$\p \Sigma^{[a',b']}\to \Sigma\times\R$. Of course, this is an immersion of a very special type, which maps the  boundary components $\Sigma\times a'$ and $\Sigma\times b'$ onto  intersecting graphical hypersurfaces. }
\end{remark}
%
%
  
 \subsection{Saucers}\label{sec:saucers}
  \medskip 
 A {\it saucer} is a domain $B \subset D\times\R$, where $D$ is a $2n$-disc possibly with a piecewise smooth boundary, 
 of the form
 $$B=\{(w,v)\in D\times\R:\; f_-(w)\leq v\leq f_+(w)\}$$
 where $f_\pm:D\to\R$ are smooth functions such that
 $f_-<f_+$ on $\Int D$ and whose $\infty$-jets coincide along $\p D$.
Observe that every saucer comes with a family of discs 
$$
D_s=\{(w,v) \in D \times \R:\; v=(1-s)f_-(w)+s f_+(w)\}
\quad\mbox{for $s \in [0,1]$}
$$ 
such that the interiors $\Int D_s$ foliate $\Int B$ and
the family of discs $D_s$ coincide with their $\infty$-jets along their common boundary $S=\p D_s$, which is called the {\em border} of the saucer $B$.
 
A \emph{semi-contact} structure on a saucer $B$ is a family $\{\zeta_s\}_{s \in [0,1]}$ of germs of contact structures along the
discs $D_s$ for $s\in[0,1]$, which coincide as germs along the border $S$.  
As in Section~\ref{sec:semi-holonomic} a semi-contact structure on 
a saucer $B$ defines an almost contact structure $\xi$ on $B$.  Furthermore $(B, \xi)$ is a contact shell since 
$\zeta_0$ and $\zeta_1$ are germs of contact structures on $D_0$ and $D_1$ and the family $\zeta_s$ 
coincide along the border of $B$.

\subsection{Regular semi-contact saucers}\label{sec:Regsaucers}

In $(\R^{2n+1},\xi^{2n+1}_\st= \{\lambda_\st^{2n-1} + vdt=0\})$ where $v := -y_n$ and $t:= x_n$, 
define the hyperplane $\Pi:=\{v=0\}$. Observe the characteristic foliation on $\Pi \subset (\R^{2n+1},\xi^{2n+1}_\st)$ is formed by the fibers of 
the projection $\pi: \Pi\to\R^{2n-1}$ given by $\pi(x, t)=x$ for $x\in\R^{2n-1}$.

\begin{figure}[h]
\begin{center}
\includegraphics[scale=.5]{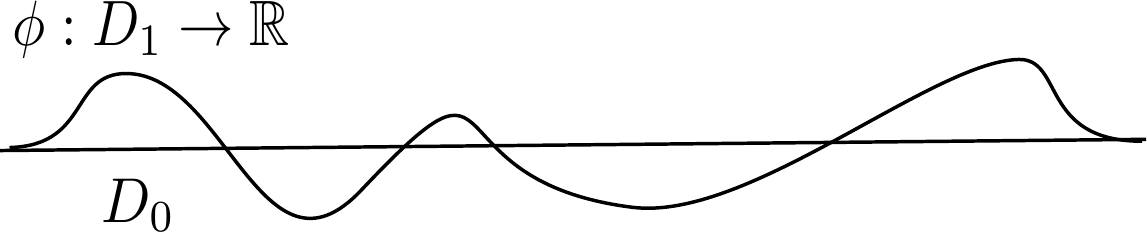}
\caption{A typical regular contact saucer.}\label{fig: saucer}
\end{center}\end{figure}

Let $D\subset\Pi$ be a $2n$-disc and let $\phi:D\to\R$ be a smooth function such that
$\phi>0$ on $\Int D\cap\Op(\p D)$ and whose $\infty$-jet vanishes on $\p D$. Let $F: D \to \R$ be a function, compactly supported in $\Int D$, so that $\phi + F$ is positive on $\Int D$. Define the saucer   $B := \{(w,v) \in D \times \R : w \in D,\; 0\leq v \leq \phi(w) + F(w)\}$.  Note that up to a canonical  diffeomorphism the saucer $B$ is independent of a choice of the function $F$.
There is a natural family of diffeomorphisms between $D_s \subset B$
and the graphs
$$\Gamma_{s\phi}:= \{v = s\phi(w),\;w\in D\} \subset \R^{2n+1}$$
whose $\infty$-jets coincide along the border.

Define $\sigma_\phi = \{\zeta_s\}$ to be the semi-contact structure on $B$ where
$\zeta_s$ is the pullback of the germ of the contact structure on $\Gamma_{s\phi} \subset (\R^{2n+1}, \xi_{\std}^{2n+1})$.
We see that $\phi$ defines the contact shell $(B, \sigma_\phi)$ up to diffeomorphism of the domain.

Parametrize $B$ with coordinates $(w,s) \in D \times [0,1]$, so that $D_{s_0}=\{s=s_0\}$ and consider the map
$$
	\Phi: B \to \R^{2n+1} \quad\mbox{where}\quad \Phi(w,s) = (w, s\phi(w)).
$$
If $\phi$ is positive everywhere on $\Int D$, then $\Phi$ is an embedding,
and hence $\sigma_\phi$ is a genuine contact structure since it can be identified with $\Phi^*\xi_{\st}^{2n+1}$.
Similarly for $2n$-discs $D' \subset D$ and associated semi-contact structures $\sigma_{\phi'}$ and $\sigma_{\phi}$    a  contact shell  $\sigma_{\phi'}$ is dominated by a shell $\sigma_{\phi}$ if $\phi' \leq \phi|_{D'}$ and 
$\phi|_{\Int D \setminus D'} > 0$.

An embedded $2n$-disc $D \subset \Pi$ is called {\em regular} if 
  \begin{itemize}
  \item   the characteristic foliation $\FF$ on $D \subset (\R^{2n+1}, \xi_\st^{2n+1})$ is diffeomorphic to
  the characteristic foliation on the standard round disc in $\Pi$;
    \item the ball $\Delta:= D/\FF$ with its induced contact structure is star-shaped. 
    \end{itemize}
An embedded $2n$-disc $D \subset (M^{2n+1}, \xi)$ in a contact manifold is {\em regular} if the contact
germ of $\xi$ on $D$ is contactomorphic to the contact germ of a regular disc in $\Pi$.
A semi-contact saucer is \emph{regular} if it is equivalent to a semi-contact saucer of the form
$(B, \sigma_\phi)$ defined over a regular $2n$-disc $D \subset \Pi$.

\begin{figure}\begin{center}
\includegraphics[scale=.5]{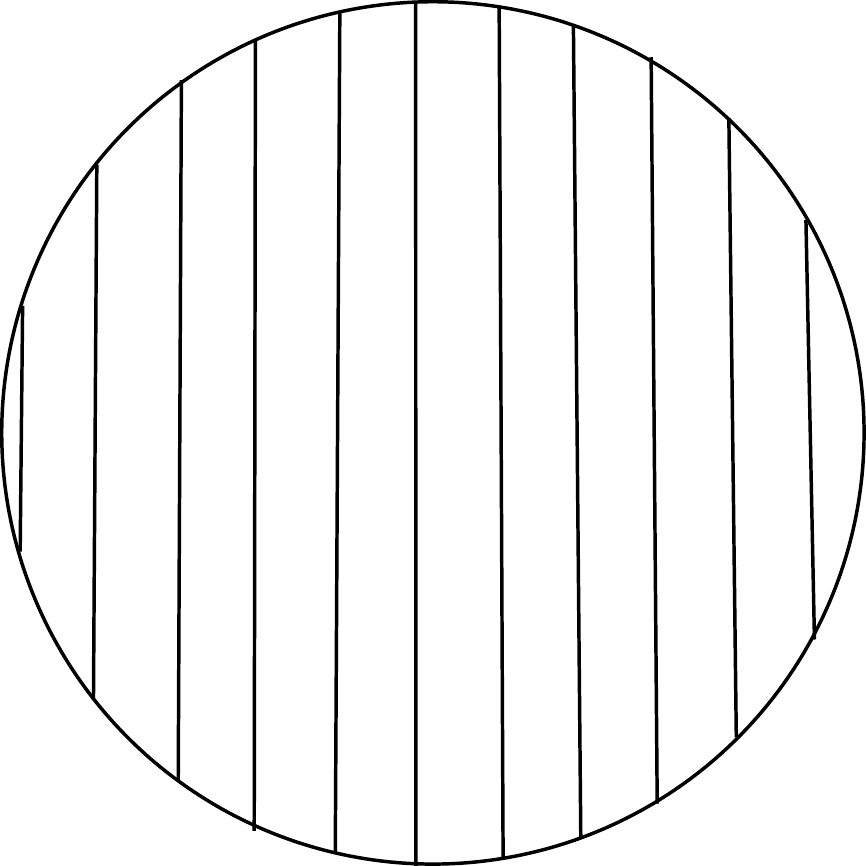}
\caption{A regular foliation on the disc.}\label{fig: regular fol}
\end{center}\end{figure}

In Section~\ref{sec:RedSaucers} we will prove the following proposition.
 \begin{proposition}\label{prop:to saucers}
 Let $M$ be a $(2n+1)$-manifold,  $A\subset M$ a closed subset, and
  $\xi_0$   an almost contact structure on $M$ that is genuine on $\Op A\subset M$. There exists a finite number of embedded saucers $B_i\subset M$ for $i=1, \dots, N$ such that  $\xi_0$ is homotopic relative to $A$ to an almost contact structure $\xi_1$ which is genuine on $M\setminus\bigcup_{i=1}^N B_i$ and whose restriction to each saucer $B_i$ is semi-contact and regular.  
 \end{proposition}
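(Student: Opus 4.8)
\textbf{Proof proposal for Proposition~\ref{prop:to saucers}.}

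The plan is to run a standard $h$-principle type argument over a handle decomposition (equivalently, a fine triangulation) of $M$, pushing the ``non-genuine'' part of $\xi_0$ into smaller and smaller neighborhoods of the skeleta, and finally into disjoint balls, where after a $C^0$-small perturbation the remaining defect can be packaged into regular semi-contact saucers. First I would invoke Gromov's $h$-principle for open manifolds (Theorem~\ref{thm:Gromov-open}) on $M \setminus A$: after a homotopy of $\xi_0$ rel $A$ we may assume $\xi_0$ is genuine on $\Op A$ and on a neighborhood of the $2n$-skeleton of a fine triangulation of $M \setminus A$ subordinate to a chart cover. Indeed, on the open manifold which is a neighborhood of the $2n$-skeleton the $h$-principle applies directly, so all of the obstruction to genuineness is concentrated in a disjoint union of small $(2n+1)$-balls, one for each top-dimensional simplex, each meeting the genuine region in a neighborhood of its boundary sphere.

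Next I would localize to a single such ball $B \subset M \setminus A$, with $\xi_1$ genuine on $\Op(\p B)$, and normalize the germ of $\xi_1$ on $\p B$. Choosing the triangulation fine enough, inside a Darboux chart, I can arrange by a further homotopy rel $\p B$ (again using the flexibility of almost contact structures on a ball: any two are homotopic with free boundary behavior, and Gromov's theorem lets us control the genuine collar) that $B$ sits as a saucer $B = \{(w,v) : w \in D,\ f_-(w) \le v \le f_+(w)\} \subset D \times \R$ over a $2n$-disc $D$, with the almost contact structure arising from a family $\{\zeta_s\}_{s\in[0,1]}$ of germs of contact structures along the slices $D_s$ agreeing to infinite order along the border $S = \p D$. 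The point is that on each slice $D_s$ the structure $\xi_1$ restricts to a genuine contact germ (this is where the bulk of the work goes: one foliates $B$ by discs transverse to a suitable vector field and checks that $\xi_1$ can be homotoped so that its restriction to each leaf is contact, using that $\zeta_0,\zeta_1$ on $D_0,D_1$ are already genuine and come from the ambient genuine structure on $\Op \p B$). Then I would further arrange, by an isotopy moving $D$ in $M$ and a contactomorphism of the ambient Darboux chart, that the characteristic foliation on $D_0$ is the standard one — i.e.\ that $D_0$, and hence each $D_s$, is a regular disc in the sense of Section~\ref{sec:Regsaucers} — and that the semi-contact structure is of immersion type, so that $\zeta_s = \Psi_s^* \xi_{\std}^{2n+1}$ for graphs over $\Pi$. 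This exhibits $(B,\xi_1|_B)$ as a regular semi-contact saucer $(B,\sigma_\phi)$.

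The main obstacle is the middle step: making the restriction of the homotoped almost contact structure genuine contact on every slice $D_s$ simultaneously, i.e.\ producing the semi-contact structure. This is a parametric ($s$-family) version of the fact that a non-degenerate hyperplane-field-with-symplectic-form on a disc can be homotoped to a contact germ, and it has to be done relative to $\p B$ where things are already genuine and relative to the border $S$ to infinite order. I expect to handle it by the traditional technique of Gromov \cite{Gro69, Gro72} and Eliashberg--Mishachev \cite{EliMi97}: convex integration / holonomic approximation along the $s$-direction, using that the contact condition is an open ample differential relation in the appropriate jet space once one fixes the foliation by discs, so that the family of formal germs $\zeta_s$ can be $C^0$-approximated by genuine ones in a way that is fixed where we already have a genuine solution. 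The regularization of the characteristic foliation on $D_0$ and the reduction to immersion type are then comparatively soft: the space of characteristic foliations realized by regular discs is open and one uses the $h$-principle for the corresponding (Legendrian-fibered) structures together with star-shapedness of the quotient, which can always be achieved after a contact isotopy shrinking $D$. Assembling the finitely many balls $B_i$ gives the statement, with $\xi_1$ genuine on $M \setminus \bigcup_i B_i$ and regular semi-contact on each $B_i$.
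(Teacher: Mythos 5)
Your first two steps are broadly in the spirit of the paper: the paper also begins with Theorem~\ref{thm:Gromov-open} to push the defect into a region where the structure is genuine outside (it uses a single annulus $S^{2n}\times[0,1]$ rather than one ball per top simplex), and it also uses the $1$-parametric part of that theorem, applied to neighborhoods of the slices, to replace the almost contact structure on the defect region by a semi-contact family of genuine germs. (One technical quibble: the contact relation is open but not ample, so ``convex integration along the $s$-direction'' is not the available tool; what works is exactly Gromov's theorem for open manifolds applied parametrically, as the paper does.) The genuine gap is in your final step, which you dismiss as ``comparatively soft'': the claim that the resulting saucer on each defect ball can be made \emph{regular} and of immersion type by an isotopy, a contactomorphism of an ambient Darboux chart, and ``shrinking $D$''. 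The base disc of your saucer is forced to be a hemisphere $D_0\subset\p B$ capping off the non-genuine region, so it cannot be shrunk, and there is no Darboux chart containing $B$ since the structure is not genuine there. The contact germ along $D_0$ is whatever the $h$-principle produced on $\Op\p B$; its characteristic foliation on a large disc can have interior singular points and complicated dynamics, its leaf space need not be a ball, let alone a star-shaped contact ball, and nothing forces the interpolating family $\zeta_s$ to be induced by graphs over a single fixed regular disc. Regularity is precisely the substantive content of Proposition~\ref{prop:to saucers}, and your outline contains no mechanism that produces it.

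The paper's route around this is the content of Lemma~\ref{lm:annuli-to-saucers} and its supporting lemmas, and it is structurally different from what you propose: one first cuts the semi-contact annulus into thin pieces of immersion type (Lemma~\ref{lm:partition-to-special}), then uses the Reeb-flow trick (Lemma~\ref{lm:Reeb-trick}) to create a region where the family is already genuinely contact and where the relevant graphs are transverse to the ambient contact structure, and finally uses a partition-of-unity ``stacking of graphs'' construction that replaces the original defect region by a genuine contact structure away from finitely many \emph{new} saucers sitting over small balls $U_i$ of a fine covering. Regularity of these saucers is then automatic from their smallness via Lemma~\ref{lm:foliation} (small discs in hypersurfaces transverse to the contact structure are regular), with the needed $C^1$-smallness achieved by further partitioning. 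In other words, the saucers of the proposition are not the balls in which you first concentrated the defect, but many small saucers manufactured by the covering; without an analogue of this decomposition your argument establishes only that the defect lies in semi-contact saucers, not regular ones, and so does not prove the proposition.
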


 \subsection{Fibered saucers}\label{sec:fibered-saucer}
    
Slightly stretching the definition of a fibered shell we  will allow $(2n+1)$-dimensional discs $B^\tau$ for $\tau\in \p T$ to degenerate into $2n$-dimensional discs, as in the following definition of fibered saucers.
  A domain ${}^TB \subset T \times D\times\R$   is called a {\it fibered saucer} if $T=D^q$ and
   it has a form
 $${}^TB=\{(\tau, x,v)\in T\times D\times\R: f_-(\tau,x)\leq v\leq f_+(\tau,x)\},$$
 where
  $f_\pm:T\times D\to\R$ are two   $C^\infty$-functions such that
 $f_-(\tau,x)<f_+(\tau,x)$ for all $(\tau,x) \in \Int(T \times D)$ and
 $f_\pm$ coincide along $\p\left(T \times D\right) $ together with their $\infty$-jet.
  Every fibered saucer comes with a family of discs 
 $$
 D^\tau_s=\{(\tau, x,v): x\in D,\; v=(1-s)f_-(\tau,x)+sf_+(\tau, x)\}
 $$
 where for fixed $\tau\in T$ the discs $\{D^\tau_s\}_{s \in [0,1]}$ coincide with their $\infty$-jets along  their common boundary 
 $S^\tau=\p D^\tau_s$. We call the union ${}^TS:=\bigcup_{\tau\in T}S^\tau$    the {\em border} of the  fibered saucer ${}^TB$.
  
 A fibered semi-contact structure ${}^T\xi$ on a fibered saucer  $B$ is a family $\zeta^\tau_s$ of germs of contact structures along discs $D^\tau_s $ for $s\in[0,1]$ and $\tau\in T$, which coincide along the border ${}^TS$. A fibered semi-contact structure defines a fibered almost contact structure on ${}^TB$. In particular, any fibered semi-contact structure on a fibered saucer ${}^TB$ defines a fibered contact shell.

    A fibered semi-contact structure on a fibered saucer $B$ is called 
   {\em regular} if the saucer $(B^\tau,\xi^\tau)$ is regular for each $\tau\in\Int T$.
  More precisely  a fibered    semi-contact saucer  ${}^T\zeta=\left({}^TB, {}^T\xi\right)$ is regular if there exists a regular $2n$-ball   $D\subset\Pi$   and a  $C^\infty$-function  $\phi:{}^TD=\bigcup\limits_{\tau\in T}\{\tau\times D\}\to\R $   such that
   \begin{description}
   \item{-} $\phi$ vanishes with its $\infty$-jet along $\p ({}^TD)$, and
  $\phi >0$ on  $\Op \p({}^T D)\cap\Int {}^T D$;
   \item{-}  for each $s\in[0,1]$ the contact structure $\zeta_s$ is induced by an  embedding  onto a neighborhood of the graph
   $\{y_n=s\phi(\tau,x), \tau\in T, x\in D^\tau\}\subset\R^{2n+1}_\std;$
   \item{-} the disc $D$ is regular.
   \end{description}
    Thus a  fibered regular semi-contact saucer is determined by the function $\phi$,  and we will denote it by ${}^T\sigma_\phi$. 
  



 
\subsection{Interval model}

Proposition~\ref{prop:to saucers} says any contact shell dominates a collection of regular semi-contact saucers.  So the next step
towards proving Proposition~\ref{prop:unique-model} will be to relate regular semi-contact structures and circle model contact shells
and this will be the goal of the remainder of the section.

We will start by introducing one more model contact shell, which we call an {\em interval model}, and it will help us interpolate between regular semi-contact saucers and circle models shells.

Recall that  the standard contact $(\R^{2n-1}, \xi_{\st})$ with $\xi_\st$ is 
given by the contact form 
$$\lambda_\st = dz + \sum_{i=1}^{n-1} u_i \,d\varphi_i\,.$$
In this section the notation $(v,t)$ stands for 
 canonical coordinates on the cotangent  bundle  $T^*I$.

For $\Delta \subset \R^{2n-1}$ a compact star-shaped domain and a contact Hamiltonian
 \begin{equation}\label{e:KBoundaryInterval}
	K: \Delta \times S^1 \to \R \quad\mbox{such that \quad $K|_{\partial \Delta \times S^1} > 0$ and $K|_{\Delta \times \{0\}} > 0$}
\end{equation}
we will build a contact shell structure, similar to the circle model,
on a piecewise smooth $(2n+1)$-dimensional ball 
$$(B^{I}_K, \eta_K^{I}) \subset \Delta \times T^*I$$
which we will refer to as the \emph{interval model contact shell} for $K$.

For any constant $C > - \min(K)$, define the domain
$$
	B_{K,C}^{I} := \{(x,v,t) \in \Delta \times T^*I : 0 \leq v \leq K(x,t)+C\}
$$
which is a piecewise smooth $(2n+1)$-dimensional ball in $\R^{2n-1} \times T^*I$
whose diffeomorphism type is independent of the choice of $C$.
Denote the boundary by
\begin{align*}
	\Sigma_{K,C}^{I} &= \p B_{K,C}^{I} = \Sigma_{0,K,C}^I \cup \Sigma_{1,K,C}^I \cup \Sigma_{2,K,C}^I \quad\mbox{where}\\
	\Sigma_{0,K,C}^I &= \{(x,v,t): v = 0\} \subset \Delta \times T^*I\\
	\Sigma_{1,K,C}^I &= \{(x,v,t): v = K(x,t)+C\} \subset \Delta \times T^*I\\
	\Sigma_{2,K,C}^I &= \{(x,v,t): 0 \leq v \leq K(x,t)+C\,,\,\, (x,t) \in \p(\Delta \times I) \} \subset \Delta \times T^*I\,.
\end{align*}
Now pick a smooth family of functions
\begin{equation}\label{e:FamilyRhoI}
	\rho_{(x,t)}: \R_{\geq 0} \to \R \quad\mbox{for $(x,t) \in \Delta \times I$} \quad\mbox{such that}
\end{equation}
\begin{enumerate}
\item $\rho_{(x,t)}(v) = v$ when $v \in \Op \{0\}$,
\item $\rho_{(x,t)}(v) = v - C$ for $(x,t,v) \in \Op\{v \geq K(x,t) +C\}$, and
\item $\p_v \rho_{(x,t)}(v) > 0$ for $(x,t) \in \Op \p(\Delta \times I)$
\end{enumerate}
which is possible by \eqref{e:KBoundaryInterval}, and consider the distribution on $\Delta \times T^*I$
$$
	\ker \alpha_\rho \quad\mbox{for the $1$-form}\quad
	\alpha_\rho = \lambda_{\st} + \rho\,dt. 
$$
We now have the following lemma, whose proof is analogous to Lemma~\ref{l:CircleShell}.
\begin{lemma}
	The almost contact structure given by $\alpha_\rho$ defines a contact shell
	$(B^{I}_{K,C}, \eta_{K,\rho}^{I})$ that is independent of
	the choice of $\rho$ and $C$, up to equivalence.
	If $K > 0$, then the contact germ $(\Sigma^{I}_K, \eta_K^{I})$ extends
	canonically to a contact structure on $B^{I}_K$.
\end{lemma}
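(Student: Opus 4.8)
The plan is to mimic the proof of Lemma~\ref{l:CircleShell} almost verbatim, making the small adjustments forced by the presence of the extra boundary face $\Sigma_{0,K,C}^I = \{v=0\}$ and the extra normalization $\rho_{(x,t)}(v) = v$ near $v=0$. First I would check the contact condition: exactly as in \eqref{e:CC}, one computes $\alpha_\rho \wedge (d\alpha_\rho)^n > 0$ wherever $\p_v\rho > 0$, so by conditions (i)--(iii) on $\rho$ in \eqref{e:FamilyRhoI} the $1$-form $\alpha_\rho$ is contact on a neighborhood of all three boundary faces $\Sigma_{0,K,C}^I$, $\Sigma_{1,K,C}^I$, $\Sigma_{2,K,C}^I$; note condition (i) gives the contact condition near $\Sigma_0^I$, condition (ii) near $\Sigma_1^I$, and condition (iii) near $\Sigma_2^I$. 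Hence $(B_{K,C}^I, \eta_{K,\rho}^I)$ is a contact shell.

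Next I would establish independence of $\rho$ and $C$ up to equivalence. For two families $\rho_0, \rho_1$ with the same $C$, one builds a family of diffeomorphisms $\phi_{(x,t)}:\R_{\geq 0}\to\R_{\geq 0}$ agreeing with $(\rho_1^{-1}\circ\rho_0)_{(x,t)}$ on $\Op\{v=0\}\cup\Op\{v=K(x,t)+C\}\cup\Op\p\Delta\times I$ — this is where one uses that both families satisfy the normalizations (i) and (ii), so on those neighborhoods $\rho_1^{-1}\circ\rho_0$ is just the identity near $v=0$ and the translation by $C$ cancels near $v=K+C$. The induced diffeomorphism $\Phi$ of $B_{K,C}^I$ pulls $\alpha_{\rho_1}$ back to $\alpha_{\rho_1\circ\phi}$, which agrees with $\alpha_{\rho_0}$ near $\p B_{K,C}^I$, and the straight-line homotopy $\alpha_{(1-r)\rho_0 + r\rho_1\circ\phi}$ gives the equivalence. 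Changing $C$ is handled as in Lemma~\ref{l:CircleShell}: compose with a fiberwise shift $\psi_{(x,t)}(v) = v + (C_1-C_0)$ supported near $\{v = C_0 + K(x,t)\}$, which is an honest contactomorphism onto $(B_{K,C_1}^I, \alpha_{\rho_1})$, and then invoke the previous case. Since near $\Sigma_0^I$ we have $v\in\Op\{0\}$ uniformly and $\rho=v$ there, the shift construction does not disturb that face.

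Finally, for the last assertion, if $K>0$ then one may simply take $C=0$ and $\rho_{(x,t)}(v)=v$ for all $(x,t)$, which satisfies (i)--(iii) since $\p_v\rho\equiv 1>0$ everywhere; then $\alpha_\rho = \lambda_\st + v\,dt$ is globally contact on $B_{K,0}^I$, so $\eta_{K,\rho}^I$ is a genuine contact structure extending the germ $(\Sigma_K^I,\eta_K^I)$, and by the equivalence statement this extension is canonical up to the contact shell equivalence. The main (minor) obstacle is purely bookkeeping: one must be a little careful that the new face $\Sigma_0^I$, which was absent in the circle model, is respected by every diffeomorphism used in the equivalence arguments; this is exactly why condition (i) in \eqref{e:FamilyRhoI} pins $\rho$ to the identity near $v=0$ rather than leaving it free, and once that is observed the circle-model proof transfers with no essential change.
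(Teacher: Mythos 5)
Your proposal is correct and is exactly what the paper intends: the paper gives no separate argument for this lemma, stating only that the proof is analogous to Lemma~\ref{l:CircleShell}, and your adaptation carries that proof over faithfully, including the one genuinely new point, namely that condition (i) of \eqref{e:FamilyRhoI} forces $\rho$ to be the identity near $v=0$ so that the extra face $\Sigma^I_{0,K,C}$ is both genuinely contact and preserved (with fixed germ) by all the fiberwise diffeomorphisms and the straight-line homotopy. The concluding observation that for $K>0$ one may take $C=0$, $\rho(v)=v$ to get the canonical contact extension $\ker(\lambda_{\st}+v\,dt)$ likewise matches the paper's treatment of the circle model.
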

Similarly, we also have a direct description of the contact germ $(\Sigma_K^{I}, \eta_K^{I})$ without the shell
given by gluing together the contact germs on the hypersurfaces
\begin{align*}
	\tilde\Sigma_{0,K}^I &= \{(x,v,t): v = 0\} \subset \Delta \times T^*I\\
	\tilde\Sigma_{1,K}^I &= \{(x,v,t): v = K(x,t)\} \subset \Delta \times T^*I\\
	\tilde\Sigma_{2,K}^I &= \{(x,v,t): 0 \leq v \leq K(x,t)\,,\,\, (x,t) \in \p(\Delta \times I) \} \subset \Delta \times T^*I
\end{align*}
to form a contact germ on $\tilde{\Sigma}_K^{I} := \tilde\Sigma_{0,K}^{I}  \cup \tilde\Sigma_{1,K}^{I} \cup \tilde\Sigma_{2,K}^{I}$.
\begin{lemma}\label{l:EasyIntervalGerm}
	The contact germs on $\Sigma_K^{I}$ and $\tilde{\Sigma}_K^{I}$ are contactomorphic.
\end{lemma}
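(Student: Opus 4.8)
The plan is to mimic the proof of Lemma \ref{l:EasyCircleGerm}, only now with an extra third boundary piece to handle (the "floor" $\{v=0\}$). First I would fix a constant $C > -\min(K)$ and a family of functions $\rho_{(x,t)}$ as in \eqref{e:FamilyRhoI}; recall that $\alpha_\rho = \lambda_\st + \rho\,dt$ is a contact form near $\partial B^I_{K,C} \subset \Delta \times T^*I$, so that $\ker\alpha_\rho$ restricts to a genuine contact germ on each of the three faces $\Sigma^I_{j,K,C}$, $j=0,1,2$. The key observation, exactly as in Lemma \ref{l:EasyCircleGerm}, is that the map $(x,v,t)\mapsto(x,\rho_{(x,t)}(v),t)$ sends neighborhoods of the three faces of $\Sigma^I_{K,C}$ to neighborhoods of the corresponding three faces of $\tilde\Sigma^I_K$, and under it $\alpha_\rho$ pulls back from $\lambda_\st + v\,dt$. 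Concretely: near $\Sigma^I_{0,K,C}=\{v=0\}$ condition (i) gives $\rho_{(x,t)}(v)=v$, so the identity already identifies $(\Op\Sigma^I_{0,K,C}, \ker\alpha_\rho)$ with $(\Op\tilde\Sigma^I_{0,K}, \ker(\lambda_\st+v\,dt))$; near $\Sigma^I_{1,K,C}=\{v=K+C\}$ condition (ii) gives $\rho_{(x,t)}(v)=v-C$, so the shift $v\mapsto v-C$ does the job and identifies it with $\tilde\Sigma^I_{1,K}=\{v=K\}$; and near $\Sigma^I_{2,K,C}$, where $(x,t)\in\partial(\Delta\times I)$, condition (iii) gives $\partial_v\rho>0$, so $v\mapsto\rho_{(x,t)}(v)$ is a fiberwise diffeomorphism carrying $\{0\le v\le K+C\}$ to $\{0\le v\le K\}$ compatibly with the $1$-forms.

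The one point that needs slightly more care than in the circle case is that these three local identifications must agree on the overlaps (the edges where two faces meet) so that they glue to a single well-defined contactomorphism of a neighborhood of the whole piecewise-smooth hypersurface $\Sigma^I_K$ onto a neighborhood of $\tilde\Sigma^I_K$. This is automatic because all three are restrictions of the single globally defined fiber-preserving map $R(x,v,t) = (x,\rho_{(x,t)}(v),t)$ on $\Op\partial B^I_{K,C}$: conditions (i)--(iii) are precisely what guarantee that $R$ is a diffeomorphism of $\Op\partial B^I_{K,C}$ onto $\Op\tilde\Sigma^I_K$ with $R^*(\lambda_\st + v\,dt) = \lambda_\st + \rho\,dt = \alpha_\rho$, hence $R$ carries $\ker\alpha_\rho$ to $\ker(\lambda_\st+v\,dt)$. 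Restricting $R$ to the hypersurface $\Sigma^I_K = \partial B^I_{K,C}$ then yields the desired contactomorphism of germs $(\Sigma^I_K,\eta^I_K)\cong(\tilde\Sigma^I_K,\tilde\eta^I_K)$.

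I do not expect any serious obstacle here; the statement is genuinely of "normal form" type and the construction is forced once the circle-model argument is in hand. The only thing to be mildly careful about is bookkeeping at the corners of the piecewise-smooth ball: one should check that the family $\rho_{(x,t)}$ can be chosen smoothly in $(x,t)$ across $\partial(\Delta\times I)$ so that $R$ is genuinely smooth (in the stratified sense) on a full neighborhood of $\Sigma^I_K$ — but this is exactly what \eqref{e:FamilyRhoI} already provides. So the proof is: choose $C$ and $\rho$, form $R(x,v,t)=(x,\rho_{(x,t)}(v),t)$, verify $R^*(\lambda_\st+v\,dt)=\alpha_\rho$ on $\Op\partial B^I_{K,C}$ using (i)--(iii), and conclude that $R|_{\Sigma^I_K}$ is the required contactomorphism.
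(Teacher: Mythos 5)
Your proof is correct and is essentially the paper's own argument: the paper disposes of this lemma by noting it is ``completely analogous'' to Lemma~\ref{l:EasyCircleGerm}, whose proof is precisely your observation that $(x,v,t)\mapsto(x,\rho_{(x,t)}(v),t)$ pulls back $\lambda_\st+v\,dt$ to $\alpha_\rho$ and hence identifies neighborhoods of each boundary face with neighborhoods of the corresponding face of $\tilde\Sigma_K^{I}$, compatibly on the overlaps. One wording caveat: when $K\leq 0$ somewhere the union $\tilde\Sigma_K^{I}$ is only an \emph{immersed} sphere in $\Delta\times T^*I$ (as the paper remarks right after the lemma), so your map $R$ should be described as inducing contactomorphisms of the glued germs face by face rather than as a single diffeomorphism of $\Op\,\p B^{I}_{K,C}$ onto an embedded neighborhood of $\tilde\Sigma_K^{I}$.
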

The proof is completely analogous to Lemma \ref{l:EasyCircleGerm}. Notice one important distinction compared to the circle model: the contact germ on $\tilde{\Sigma}_K^I$ is defined by a global immersion of the sphere into $\Delta \times T^*I$ (piecewise smooth, topologically embedded at the non-smooth points). This property allows us to use the interval model as a bridge between regular contact saucers and the circle model.


\begin{figure}
   \centering
   \def\svgwidth{200pt}
   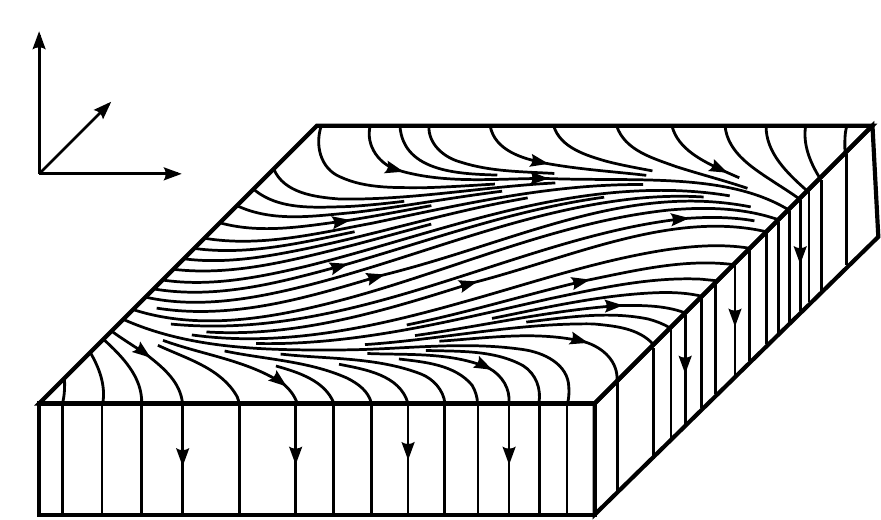
   \caption{The interval model with its characteristic distribution}
  \label{f:poly2}
\end{figure} 

\subsection{Relations between the model contact shells}\label{lm:domination}

We will now establish some domination relations between our three models. 
 
\begin{prop}\label{l:equiv12}
	For star-shaped domains $\Delta' \subset \Int \Delta$, let 
	$K : \Delta \times S^1 \to \R$ be such that $K|_{\Delta \times 0} > 0$ and $K|_{\Delta\setminus\Delta' \times S^1} > 0$.
	For $K' := K|_{\Delta' \times S^1}$, the  interval model contact shell  
	$(B_K^I, \eta^I_K)$ dominates  the circle model contact shell $(B_{K'}, \eta_{K'})$.  
\end{prop}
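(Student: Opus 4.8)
The plan is to exhibit an explicit dominating contact shell built from the interval model $(B^I_{K,C}, \eta^I_{K,\rho})$ by "closing up" the $I$-direction into an $S^1$-direction, in such a way that after the identification the embedded circle model $(B_{K',C'},\eta_{K',\rho'})$ sits inside it with the complement genuinely contact. Concretely, I would first use Lemma~\ref{l:EasyIntervalGerm} and Lemma~\ref{l:EasyCircleGerm} to replace both shells by their direct models $\tilde\Sigma^I_K$ and $\tilde\Sigma_{K'}$, so that the problem becomes one about gluing contact germs defined on hypersurfaces in $\Delta\times T^*I$ and $\Delta\times\R^2$. The key geometric observation is that $T^*I$ with its canonical contact form $\lambda_{\st}+v\,dt$, after bending the interval $I$ around into a circle and filling in, produces exactly the $\Delta\times\R^2$ model used for the circle shell, provided the Hamiltonian stays positive in the region where the gluing/bending happens — which is guaranteed here because $K|_{\Delta\times 0}>0$ (this is the point of the extra hypothesis \eqref{e:KBoundaryInterval} for the interval model).

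In detail, I would carry out the following steps. First, choose $C$ large and a family $\rho$ as in \eqref{e:FamilyRhoI} so that $(B^I_{K,C},\eta^I_{K,\rho})$ is the chosen representative of the interval model shell. Second, construct a shell $\wt\zeta=(B,\xi)$ together with an equivalence $g\colon(B,\xi)\to(B^I_K,\eta^I_K)$ of contact shells, where $B$ is obtained from $B^I_{K,C}$ by attaching a collar along $\Sigma^I_{0,K,C}=\{v=0\}$ in which one rotates the $t$-coordinate: near $v=0$ the contact form is $\lambda_{\st}+\rho\,dt$ with $\rho(v)=v$, which is precisely the local model $\lambda_{\st}+v\,dt$ for $\Delta\times\R^2$ near the core $\{r=0\}$ in polar coordinates $(r,t)$ with $v=r^2$. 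Thus one can glue $\{v\le\varepsilon\}$-type collars to turn the two boundary pieces $\{t=0\}$ and $\{t=1\}$ of $\Delta\times T^*I$ into a single smooth $\Delta\times\R^2$-neighborhood, exactly as in the proof of Lemma~\ref{l:EasyCircleGerm}; the positivity $K|_{\Delta\times 0}>0$ ensures the contact condition \eqref{e:CC} persists throughout this collar. Third, inside the resulting ball $B$, identify the sub-ball $h(B_{K',C'})$ corresponding to $\Delta'\times\R^2$ with $v\le K'+C'$; by $\Delta'\subset\Int\Delta$ and $K|_{(\Delta\setminus\Delta')\times S^1}>0$ together with \eqref{e:HamDomBasicCondition}-type reasoning (cf.\ Lemma~\ref{l:HamiltonianShellsAnnulus} and Lemma~\ref{lem:contain}), choose $\rho$ on $B\setminus\Int h(B_{K',C'})$ with $\p_v\rho>0$ there, so that $\xi$ is genuine on $B\setminus\Int h(B_{K',C'})$. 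Fourth, check $h^*\xi=\eta_{K',\rho'}=\eta_{K'}$ as shell structures, which is immediate from how $\rho'$ was chosen. Together the data $(g,h)$ constitute a domination of $(B_{K'},\eta_{K'})$ by $(B^I_K,\eta^I_K)$.

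The main obstacle I anticipate is the bookkeeping in the second step: verifying that bending $\Delta\times T^*I$ into $\Delta\times\R^2$ can be done \emph{through almost contact structures fixed near the outer boundary}, i.e.\ producing a genuine \emph{equivalence} of shells rather than merely a contactomorphism of germs, and confirming that the almost contact homotopy needed (the "straight-line homotopy" of $1$-forms, as in the proof of Lemma~\ref{l:CircleShell}) stays valid globally on $B$ once the collars are attached. The technical heart is choosing the collar and the extended family $\rho$ simultaneously so that (a) $\alpha_\rho$ is contact wherever $\p_v\rho>0$, (b) the region where $\p_v\rho$ vanishes is confined to $\Int h(B_{K',C'})$, and (c) the two gluings at $t=0,1$ match smoothly; all three are arranged exactly as in the circle-model construction, so no genuinely new difficulty arises, only careful coordinates. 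Everything else — independence of $C$ and $\rho$, star-shapedness of the relevant domains — is quoted from the already-established lemmas.
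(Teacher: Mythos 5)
Your plan has a structural problem with the definition of domination itself (Section~\ref{sec:shells}). A domination of $(B_{K'},\eta_{K'})$ by $(B^I_K,\eta^I_K)$ requires an auxiliary shell $(B,\xi)$ together with an \emph{equivalence} $g:(B,\xi)\to(B^I_K,\eta^I_K)$, i.e.\ a diffeomorphism under which the contact germs along the boundaries literally agree, with only the interior allowed to differ up to a homotopy of almost contact structures fixed near $\p B$. Your $B$ is obtained by \emph{enlarging} $B^I_{K,C}$: you glue on collars/core so as to close the $t$-interval into a circle. After that surgery the boundary faces $\{v=0\}$, $\{t=0\}$, $\{t=1\}$ of the interval model are gone and $\p B$ carries (most of) the circle-model boundary germ, which is not contactomorphic to the interval-model boundary germ; no homotopy rel boundary can repair this, because the defect is in the germ itself, not in the interior extension. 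So the equivalence $g$ in your second step cannot exist, and the rest of the argument collapses. A telling symptom: your steps 3--4 would, if correct, prove that $(B^I_K,\eta^I_K)$ dominates the circle model $(B_K,\eta_K)$ over \emph{all} of $\Delta$, whereas the proposition deliberately shrinks to $\Delta'\subset\Int\Delta$ with $K>0$ on $(\Delta\setminus\Delta')\times S^1$; that extra room is not bookkeeping, it is what makes the statement true and provable.

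The paper's proof goes in the opposite direction: instead of enlarging the interval model it \emph{restricts} it to the ``keyhole'' $B_K^\eps=\{\eps\le v\le K+C,\ \eps\le t\le 1-\eps\}$, which is dominated by $(B^I_K,\eta^I_K)$ because the removed core $\{v\le\eps\}$ and wedge $\{t\in[-\eps,\eps]\}$ are genuinely contact (this is where $K|_{\Delta\times 0}>0$ enters). The keyhole, with $(v,t)$ reinterpreted as polar coordinates, is then literally a subdomain of the circle model $B_K$ cut out by the same inequalities. The technical heart --- entirely absent from your proposal --- is a compactly supported contact isotopy $\Psi_s$ of $B_K$, built by a Moser-type argument supported near the slice $\{p=0,\ q\ge-2\delta\}$ where $\alpha_\rho$ is a genuine contact form, which pushes $B_{K'}$ into the keyhole; this uses both $K(\cdot,0)>0$ (to have a contact region in which to work) and $\Delta'\subset\Int\Delta$ with $K>0$ near $\p\Delta$ (to have somewhere to push). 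Without such an isotopy, simply choosing $\rho$ as in Lemma~\ref{l:HamiltonianShellsAnnulus} on a complement does not connect the interval model to the circle model at all.
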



\begin{proof}
	Fix a $C > -\min\, K$ and a $\rho$ as in \eqref{e:FamilyRhoI} that defines contact shell models
	$$
	\mbox{$(B_K, \eta_{K, \rho})$, $(B_{K'}, \eta_{K'\!, \rho})$, and $(B_K^{I}, \eta_{K, \rho}^{I})$.}
	$$
	Take any $\eps > 0$ such that $K|_{\Delta \times [-\eps, \eps]} > \eps$ and
 	consider the domain 
\begin{align} \label{eq:keyhole}
	B_{K}^{\eps} &:= B_K^I\setminus\left(\{v\leq\eps\}\cup\{t\in[-\eps,\eps]\}\right)
	\cr&=
	\{(x,v,t) \in \Delta \times T^*I : \eps\leq v \leq K(x,t)+C\,,\,\, \eps\leq t\leq1-\eps\}.
\end{align}
	Note that $\eta^I_{K, \rho}$ restricted to $B_{K}^{\eps}$ defines a contact shell
	$(B_K^\eps,\eta^\eps_{K,\rho})$ that we will call the {\em keyhole} model, 
	and it follows from \eqref{eq:keyhole} that $(B_K^\eps,\eta^\eps_{K, \rho})$ is dominated by $(B^I_K,\eta^I_{K, \rho})$.
	 It remains to show for sufficiently small $\eps$ the shell $(B_K^\eps,\eta^\eps_{K,\rho})$ 
	 dominates $(B_{K'},\eta_{K'\!,\rho})$.
	
	Note that $(B_{K}^{\eps}, \eta_{K,\rho}^\eps)$ can be  cut out of $(B_K, \eta_{K,\rho})$ 
	by the same inequalities \eqref{eq:keyhole} where $(v,t)$ are viewed as coordinates $v=r^2$ and $t=\frac{\phi}{2\pi}$ on $\R^2$,
	rather then on $T^*I$.  This embedding is shown on Figure \ref{fig:keyhole} and explains the term `keyhole'. 

\begin{figure}\begin{center}
\includegraphics[scale=.5]{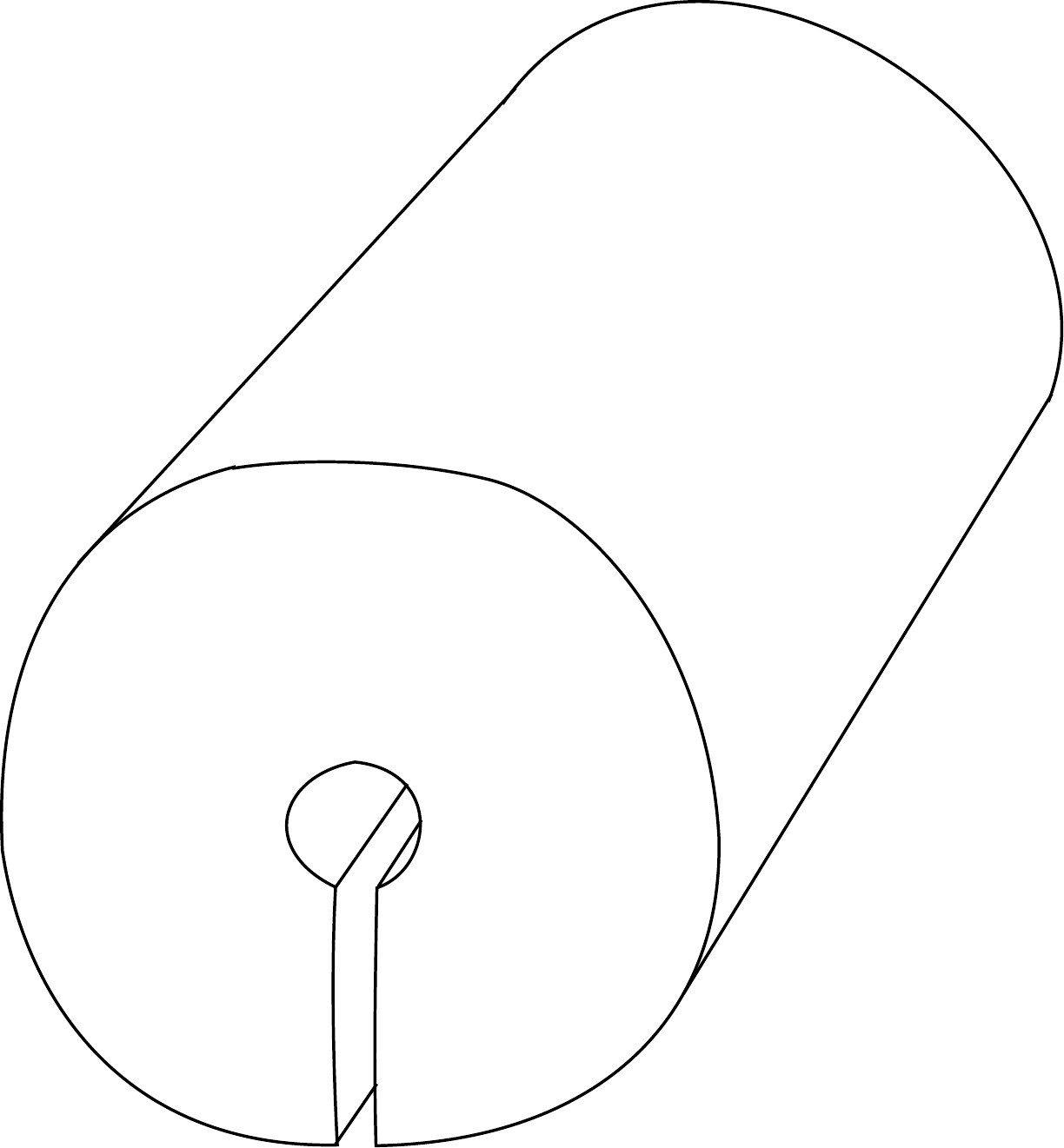}
\caption{The keyhole model inside $B_K$.}\label{fig:keyhole}
\end{center}\end{figure}

	 
	 For standard coordinates $(q, p) \in \R^2$ where $q = \sqrt{v}\cos(2\pi t)$ and 
	 $p = \sqrt{v}\sin(2\pi t)$, by the assumptions on $\rho$ in \eqref{e:FamilyRhoI}, the $1$-form on $\Delta \times \R^2$ 
	 defining $\eta_{K, \rho}$ can be written as
	 $$
	 	\alpha_\rho = \lambda_\st + \frac{\rho(v)}{2\pi v}(qdp-pdq)
	 $$
	 and on $\Delta \times \Op\{(q,0) \in \R^2: q \geq -2\delta\}$ is a genuine contact form for some $\delta >0$.
	 
	 Pick a smooth function $k: \Delta \to [-\delta, \infty)$ such that both $k(x) = -\delta$ on $\Op \p \Delta$
	 and $k(x) = K(x,0)$ on $\Op \Delta'$, and 
	 $$
	 	\Gamma_k := \{(x, q, 0) \in \Delta \times \R^2 : -2\delta \leq q \leq k(x)\} \subset B_K.
	 $$
	 Consider a smooth isotopy $\{\psi_s\}_{s \in [0,1]}$ of $\Delta \times \{q \geq -2\delta,\, p = 0\}$ 
	 of the form $\psi_s(x,q) = (x, g_s(x,q))$, supported away from $\p B_K$, and such that
	 $$
	 	\psi_1(\Gamma_k) = \{(x, q, 0) \in \Delta \times \R^2 : -2\delta \leq q \leq -\delta\} \subset B_K.
	 $$
	Since this isotopy preserves $\alpha_\rho|_{\Delta \times \{p=0\}}  = \lambda_{\st}$, it follows from a Moser-method
	argument (c.f.\ \cite[Theorem 2.6.13]{Gei08}) that $\psi_s$ can be extended to a  contact isotopy $\Psi_s$ of $B_K$  supported in 
	$\Delta \times \Op\{(q,0) \in \R^2: q \geq -2\delta\}$.
	
	If $\eps$ is small enough, then the contactomorphism $\Psi_1$ satisfies $\Psi_1(B_{K'})\subset B_K^\eps$
	and hence the keyhole model shell $(B_K^\eps,\eta^\eps_{K, \rho})$ dominates  the circular model shell 
	$(B_{K'}, \eta_{K', \rho})$.
\end{proof}	 
 
We also have the following parametric version of  Proposition \ref{l:equiv12}, whose proof is analogous.
\begin{prop}\label{l:equiv12-param}
	Let $K^\tau : \Delta \times S^1 \to \R$ be a family of contact Hamiltonians  such that 
	$K^\tau|_{\Delta \times 0} > 0, $ and  $K^\tau|_{\p \Delta \times S^1} > 0$. If
	$\Delta' \subset \Int \Delta$ is a star-shaped domain and $K'^{\tau}:= K^\tau|_{\Delta' \times S^1}$,  
	then the fibered  shell $^{T}\eta^I_{{}^T\!K}$ dominates  ${}^T\eta_{{}^T\! K'}$.  
\end{prop}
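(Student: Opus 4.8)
The plan is to run the argument of Proposition~\ref{l:equiv12} fiberwise over $T = D^q$, making every auxiliary choice depend smoothly on $\tau$ and using compactness of $T$ to make the relevant constants uniform. Concretely, I would produce a fibered \emph{keyhole} model $({}^TB^\eps_K,\,{}^T\eta^\eps_{K,\rho})$, show that it is dominated by ${}^T\eta^I_{{}^T\!K}$, show that it dominates ${}^T\eta_{{}^T\!K'}$, and then chain these two dominations exactly as in the proof of Proposition~\ref{l:equiv12}.

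First I would fix a constant $C$ with $C > -\min(K^\tau)$ for all $\tau\in T$ and, via a partition of unity on $T$, a smooth family $\{\rho^\tau\}$ satisfying \eqref{e:FamilyRhoI} for every $\tau$; this produces fibered shell structures $({}^TB_K,{}^T\eta_{K,\rho})$, $({}^TB_{K'},{}^T\eta_{K',\rho})$ and $({}^TB^I_K,{}^T\eta^I_{K,\rho})$. Using compactness of $T$ I would choose a single $\eps>0$ with $K^\tau|_{\Delta\times[-\eps,\eps]}>\eps$ and $K^\tau|_{\p\Delta\times S^1}>\eps$ for all $\tau$, and set
$$
{}^TB^\eps_K := \bigcup_{\tau\in T}\{\tau\}\times\Big(B^I_{K^\tau}\setminus\big(\{v\leq\eps\}\cup\{t\in[-\eps,\eps]\}\big)\Big),
$$
with ${}^T\eta^\eps_{K,\rho}$ the restriction of ${}^T\eta^I_{K,\rho}$. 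As in the non-parametric case, ${}^T\eta^I_{K,\rho}$ is genuine on ${}^TB^I_K\setminus\Int{}^TB^\eps_K$, so ${}^T\eta^I_{{}^T\!K}$ dominates $({}^TB^\eps_K,{}^T\eta^\eps_{K,\rho})$.

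Next I would embed the fibered keyhole into the fibered circle model $({}^TB_K,{}^T\eta_{K,\rho})$, where $(v,t)$ are now read as $v=r^2$, $t=\tfrac{\phi}{2\pi}$ on $\R^2$. Writing $\alpha_{\rho^\tau}=\lambda_{\st}+\frac{\rho^\tau(v)}{2\pi v}(q\,dp-p\,dq)$ in the coordinates $q=\sqrt v\cos(2\pi t)$, $p=\sqrt v\sin(2\pi t)$, compactness of $T$ gives a single $\delta>0$ on whose slab $\Delta\times\Op\{(q,0):q\geq-2\delta\}$ the form $\alpha_{\rho^\tau}$ is contact for every $\tau$. I would then pick a smooth family $k^\tau:\Delta\to[-\delta,\infty)$ with $k^\tau\equiv-\delta$ near $\p\Delta$ and $k^\tau=K^\tau(\cdot,0)$ near $\Delta'$, and a smooth family of isotopies $\psi^\tau_s(x,q)=(x,g^\tau_s(x,q))$, supported away from $\p B_{K^\tau}$, carrying $\Gamma_{k^\tau}$ onto $\{-2\delta\leq q\leq-\delta\}$. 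Since each $\psi^\tau_s$ preserves $\alpha_{\rho^\tau}|_{\{p=0\}}=\lambda_{\st}$, a parametric form of the Moser-type extension argument \cite[Theorem 2.6.13]{Gei08} extends $\{\psi^\tau_s\}$ to a fibered contact isotopy $\{{}^T\Psi_s\}$ of ${}^TB_K$, supported in $\Delta\times\Op\{q\geq-2\delta\}$. For $\eps$ small enough ${}^T\Psi_1$ carries ${}^TB_{K'}$ into ${}^TB^\eps_K$; taking $H:={}^T\Psi_1$ as the fiberwise embedding over $h=\id_T$, the complement ${}^TB^\eps_K\setminus H(\Int{}^TB_{K'})$ is genuine by construction, so $({}^TB^\eps_K,{}^T\eta^\eps_{K,\rho})$ dominates ${}^T\eta_{{}^T\!K'}$.

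The one place this differs from a verbatim parametrization of Proposition~\ref{l:equiv12} is the extension of $\{\psi^\tau_s\}$ to a fibered contact isotopy, and this is the step I expect to need the most care: it requires a parametric version of the contact-isotopy extension theorem. I expect it to be routine, since the relevant Moser vector field is obtained from the isotopy data by algebraic operations together with solving a pointwise linear equation, so it depends smoothly on $\tau$ automatically; the remaining work is simply keeping all the auxiliary data ($C$, $\rho^\tau$, $\eps$, $\delta$, $k^\tau$, $\psi^\tau_s$) smooth in $\tau$ and the constants uniform over the compact disc $T$. With this in hand the rest of the argument is identical to the proof of Proposition~\ref{l:equiv12}.
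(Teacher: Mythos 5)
Your proposal is correct and is essentially the paper's intended argument: the paper gives no separate proof of Proposition~\ref{l:equiv12-param}, stating only that it is "analogous" to Proposition~\ref{l:equiv12}, and your fiberwise repetition of that proof (fibered keyhole, uniform choices of $C$, $\rho^\tau$, $\eps$, $\delta$ over the compact parameter space, and a parametric Moser-type extension of the isotopy) is precisely that analogue.
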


The next proposition relates our saucer models from the previous section with the interval models discussed here.  
\begin{proposition} \label{lm: saucer dom interval}
Let $\zeta = (B, \xi)$ be a regular semi-contact saucer viewed as a shell,
then $\zeta$ dominates an interval model $\eta^I_K$ for some $K:\Delta\times I \to \R$.
\end{proposition}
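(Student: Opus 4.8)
The plan is to start from a regular semi-contact saucer in its normal form $(B, \sigma_\phi)$ over a regular $2n$-disc $D \subset \Pi$, where $\phi : D \to \R$ vanishes to infinite order on $\p D$ and is positive near $\p D$, and the semi-contact structure $\{\zeta_s\}_{s\in[0,1]}$ is the family of pullbacks of the germ of $\xi_{\std}^{2n+1}$ along the graphs $\Gamma_{s\phi} = \{v = s\phi(w)\} \subset (\R^{2n+1}, \xi_{\std}^{2n+1})$, with $v = -y_n$, $t = x_n$. By regularity of $D$, the characteristic foliation $\FF$ on $D \subset \Pi$ is the fiberwise foliation of $\pi : \Pi \to \R^{2n-1}$, and the leaf space $\Delta := D/\FF$ is a star-shaped domain in $(\R^{2n-1}, \xi_{\std})$ with its induced contact structure. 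The key idea is that $\phi$ descends to a contact Hamiltonian on $\Delta$: since the characteristic leaves are the $t$-lines $x = \mathrm{const}$, one can first isotope $\phi$ (through functions with the same infinite-order boundary behavior) so that $D = \Delta \times I$ and $\phi = \phi(x,t)$, and moreover arrange $\phi > 0$ near $t \in \{0,1\}$ and near $\p \Delta$. Set $K := \phi : \Delta \times I \to \R$; this satisfies \eqref{e:KBoundaryInterval}.

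Next I would identify the shell $(B, \sigma_\phi)$, after suitable choice of the auxiliary function $F$ and coordinates $(w,s) \in D \times [0,1]$, with the interval model shell $(B^I_{K,C}, \eta^I_{K,\rho})$ up to equivalence. Concretely, the saucer $B$ is $\{(w,v) : 0 \le v \le \phi(w) + F(w)\} \subset D \times \R \subset \R^{2n-1}\times T^*I$, and the semi-contact structure is by definition induced by the global (piecewise smooth, topologically embedded on the non-smooth locus) immersion of $\p B$ into $\Delta \times T^*I$ sending each disc $D_s$ to the graph $\{v = s\phi\}$; this is exactly the data defining the contact germ on $\tilde\Sigma^I_K$ from Lemma~\ref{l:EasyIntervalGerm}, and the almost-contact extension over the interior is the one built from a family $\rho_{(x,t)}$ as in \eqref{e:FamilyRhoI}. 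The two almost-contact extensions over the interior — the one coming from the foliation $\{D_s\}$ and the one coming from $\alpha_\rho$ — agree near $\p B$ and are homotopic rel $\p B$ through almost-contact structures because both live on the same ball with the same genuine germ on the boundary and the space of such extensions is affine (contractible) once the boundary germ and the 2-form on the hyperplane field are fixed. Hence $(B, \sigma_\phi)$ is \emph{equivalent} (in the sense of Section~\ref{sec:shells}) to $(B^I_{K,C}, \eta^I_{K,\rho})$, which gives the domination $\zeta \succeq \eta^I_K$ trivially (take $\wt\zeta = (B^I_{K,C},\eta^I_{K,\rho})$, $h = \id$).

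I would expect the main obstacle to be the first step: turning the function $\phi$ on the regular disc $D$ into an honest contact Hamiltonian $K$ on $\Delta \times I$ while controlling the infinite-order vanishing along $\p D$ and the genuine-contact behavior near the boundary strata. Regularity of $D$ gives a diffeomorphism $D \cong \Delta \times I$ carrying $\FF$ to the $I$-fibration, but one must check this can be arranged so that $\phi$ becomes a function of $(x,t)$ alone (it is already $\FF$-basic only after quotienting in an appropriate sense — here one really needs $\phi$ constant along characteristic leaves, which requires an initial $C^\infty$-small perturbation of $\phi$ rel $\p D$, harmless up to equivalence of shells by the domination criterion $\phi' \le \phi|_{D'}$ stated in Section~\ref{sec:Regsaucers}). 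One also needs $K|_{\Delta\times\{0,1\}} > 0$, which is forced by the positivity of $\phi$ near $\p D$ together with the fact that $\{t = 0,1\} \subset \p D$. Once $K$ is in hand, Lemma~\ref{l:EasyIntervalGerm} does the bookkeeping on the boundary germ, and the interior homotopy of almost-contact extensions is routine. A small additional point to verify is that the star-shapedness of $\Delta = D/\FF$ — part of the definition of regular — is precisely what is needed for $(B^I_K, \eta^I_K)$ to be a legitimate interval model shell in the sense of the previous subsection.
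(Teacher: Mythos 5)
There is a genuine gap, and it sits exactly at the step you flag as "the main obstacle" and then dispose of too quickly. Regularity of $D$ does \emph{not} give a diffeomorphism $D\cong\Delta\times I$ carrying the characteristic foliation to the $I$-fibration: by definition the characteristic foliation of a regular disc is that of the round disc in $\Pi$, whose leaves are chords degenerating to points over $\p\Delta$, not a product foliation. Consequently the saucer $(B,\sigma_\phi)$ cannot be \emph{equivalent} to an interval model: the saucer's boundary consists of two discs that are infinitely tangent along $S=\p D$ and has no side wall, while $\p B^I_K$ has the side stratum $\tilde\Sigma^I_{2,K}$ over $\p(\Delta\times I)$ along which the germ is genuinely contact; their boundary characteristic foliations are not diffeomorphic, so no shell equivalence exists and "domination with $h=\id$" is not available. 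The correct statement, and the reason the proposition asserts domination rather than equivalence, is that an interval model sits strictly inside the saucer: one must pass to the subdomain $D'=\pi^{-1}(\Delta')\cap D$ over a smaller star-shaped $\Delta'\Subset\Delta$, where the foliation \emph{is} a product, and use the positivity of $\phi$ on $\Int D\setminus\Int D'$ to retain domination (this is how the paper proceeds).

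Even after restricting to $D'=\{h_-(w)\le t\le h_+(w)\}$, identifying the resulting saucer with one over the straight product $\Delta'\times[0,1]$ is not bookkeeping: the contact germ on the top graph carries the holonomy of its characteristic foliation, which is the time-one map of the contact isotopy generated by $-\phi$ viewed as a time-dependent Hamiltonian, and this holonomy must be matched when one replaces $\phi$ by a function on the straightened domain. This is the content of Lemma~\ref{lm:two-saucers}, which in turn needs Lemma~\ref{lm:boundary-behavior} to regenerate the same holonomy by a Hamiltonian with prescribed boundary behavior, followed by a Gray--Moser extension; your proposal omits this entirely. Finally, the justification you offer for the interior identification --- that the space of almost contact extensions of a fixed boundary germ is "affine (contractible)" --- is false: homotopy classes of such extensions rel boundary form a torsor over a generally nontrivial homotopy group (compare Lemma~\ref{lm:comput-sphere}), and if the claim were true the classification problem addressed by this paper would largely trivialize. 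A small perturbation making $\phi$ leafwise constant is also not needed, since the interval-model Hamiltonian is allowed to depend on $t$; the real work is the subdomain reduction, the straightening with holonomy matching, and the final slight rescaling that produces $K$ with the interval model dominated by the original saucer.
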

To prove this we will need the following two lemmas \ref{lm:boundary-behavior} and  \ref{lm:two-saucers}.

\begin{lemma}\label{lm:boundary-behavior}
Let $\Delta$ be a compact contact manifold with boundary with a fixed contact form.
Suppose $h, g:\Delta\to\Delta$ are contactomorphisms that are the time $1$ maps of isotopies
generated by contact Hamiltonians $H,G:\Delta\times I\to\R$ that vanish with their $\infty$-jet on $\p\Delta$.
If $h=g$ on $\Op\p \Delta$, then $h$ can be generated as the time $1$ map of a Hamiltonian $\wt H:\Delta\times I\to\R$
where $\wt H=G$ on $ \Op\p(\Delta \times I)$.
\end{lemma} 

\begin{proof}
Denote by $h_t$ and $g_t$ the contact diffeotopies generated by $H$ and $G$.  
Pick a contact diffeotopy such that $\wh{h}_t = h_t$ on $\p\Delta$ as $\infty$-jets and
$$
	\wh{h}_t =\begin{cases}
	g_t &\mbox{when $t \in [0,\eps]$}\\
	h_t &\mbox{when $t \in [2\eps,1-2\eps]$}\\
	g_t \circ g_1^{-1}\circ h_1 &\mbox{when $t \in [1-\eps,1]$}
	\end{cases}
$$
Observe $\wh{H} = G$ when $t \in [0,\eps] \cup [1-\eps, 1]$ if $\wh{h}_t$ is generated by $\wh{H}: \Delta \times I \to \R$.
Hence without loss of generality we may assume $H = G$ when $t \in \Op \p I$.

Since $h_t$ and $g_t$ are $C^\infty$-small on $\Op \p\Delta$, we can pick an isotopy 
$$
\psi^s: \Delta \times I \to \Delta \times I \quad\mbox{for $s\in [0,1]$ with $\psi^0 = \Id$}
$$
supported in $\Delta \times \Int I$ and such that for $x \in \Op \p\Delta$ we have
$$
	\mbox{$\psi^s(\cdot, t)$ is a contactomorphism and
	 $\psi^1(h_t(x), t) = (g_t(x), t)$.}
$$
Applying the Gray--Moser argument parametrically in $t$ builds a contact isotopy
$\tilde{\psi}_t$ such that $\tilde{\psi}_t = \Id$ when $t \in \Op \p I$ and $\tilde\psi_1(h_t(x)) = g_t(x)$
when $x \in \Op \p\Delta$.  Defining $\tilde{h}_t:= \tilde\psi_t \circ h_t$ and $\wt{H}$ to be its generating
contact Hamiltonian gives the result.
\end{proof}
 
 For the following lemma let $\Pi := \{(w,t,v) \in \R^{2n-1} \times T^*\R: v = 0\} \subset (\R^{2n+1}, \xi_\st^{2n+1})$.
 
\begin{lemma}\label{lm:two-saucers}
For a star-shaped domain $\Delta\subset
(\R^{2n-1}, \xi_\st)$ consider the disc
$$D=\{(w,t)\in \Delta \times \R: \; h_-(w)\leq t \leq h_+(w)\} \subset \Pi$$
where $h_\pm:\Delta\to\R$ are $C^{\infty}$-functions such that $h_-<h_+$.
If $(B,\sigma_\phi)$ is a contact saucer defined over $D$, then it is equivalent to a contact 
saucer $ (\wt B,\sigma_{\wt\phi})$ defined over
$\wt D=\Delta\times[0,1]$.
\end{lemma}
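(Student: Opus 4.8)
The plan is to reduce the saucer over the irregularly-shaped domain $D \subset \Pi$ to a saucer over the standard cylinder $\wt D = \Delta \times [0,1]$ by exhibiting a diffeomorphism of $\Pi$ (or of a neighborhood of $D$ in $\R^{2n+1}$) carrying one disc to the other, while keeping track of how it distorts the semi-contact structure. The point is that $D$ and $\wt D$ both project diffeomorphically onto $\Delta$ under $\pi: \Pi \to \R^{2n-1}$, and $D$ sits between the graphs of $h_-$ and $h_+$ over $\Delta$; rescaling the $t$-fiber linearly, $(w,t) \mapsto (w, \frac{t - h_-(w)}{h_+(w) - h_-(w)})$, identifies $D$ with $\Delta \times [0,1]$. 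However, this fiberwise rescaling is not a contactomorphism of $(\R^{2n+1}, \xi_\st^{2n+1})$, so one cannot simply pull back; instead one argues at the level of contact shells, not contact structures.

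First I would recall that a regular semi-contact saucer $(B, \sigma_\phi)$ over $D$ is, by definition, built from the family of graphs $\Gamma_{s\phi} = \{v = s\phi(w), w \in D\}$ in $(\R^{2n+1}, \xi_\st^{2n+1})$, and that shells are defined up to equivalence, which allows homotopies of the almost contact structure fixed near the boundary. So the strategy is: choose an ambient diffeomorphism $\Psi$ of $\R^{2n+1}$, supported near $D \times \{0\}$ and isotopic to the identity, such that $\Psi(D) = \wt D$ as subsets of $\Pi$ and $\Psi$ carries the characteristic foliation correctly — this is possible because both $D$ and $\wt D$ are regular discs, so by the definition of regularity their characteristic foliations are diffeomorphic to the standard one and the quotients $D/\FF$, $\wt D/\FF$ are the same star-shaped domain $\Delta$. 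The resulting germ $\Psi_*(\xi_\st^{2n+1})$ along $\wt D$ need not equal $\xi_\st^{2n+1}$, but it is a contact structure with the same characteristic foliation on $\wt D$, hence (by the fact that the characteristic foliation plus its transverse contact structure determine the germ, as recalled in Section~\ref{sec:notation}) contactomorphic to it via a map fixing $\wt D$. Composing, we get a semi-contact saucer over $\wt D$ whose germs along the slices are pulled back from $\xi_\st^{2n+1}$ by a family of graphs, i.e. of the form $\sigma_{\wt\phi}$ for an appropriate $\wt\phi$.

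The technical heart is making the above precise parametrically in the slice parameter $s$, i.e. controlling the family of diffeomorphisms identifying $\Gamma_{s\phi}|_D$ with $\Gamma_{s\wt\phi}|_{\wt D}$ simultaneously for all $s \in [0,1]$, with the right $\infty$-jet behavior along the borders. This is exactly where Lemma~\ref{lm:boundary-behavior} comes in: the two identifications (the ``actual'' one obtained from $\Psi$ and the ``model'' one we want, given by some explicit Hamiltonian) agree near $\p\Delta$ since both $h_\pm$-graphs and the cylinder agree there up to $\infty$-jet, so Lemma~\ref{lm:boundary-behavior} lets us replace the generating Hamiltonian by one matching the model near $\p(\Delta \times I)$, which is what equivalence of shells requires. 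I expect the main obstacle to be bookkeeping: verifying that the ambient diffeomorphism can be chosen to respect the foliation-by-discs structure $\{D_s\}$ (equivalently, that it intertwines the $v$-coordinate appropriately so that graphs go to graphs), and that the $\infty$-jet matching along the border survives the construction. Once that is in place the conclusion is immediate: the resulting shell over $\wt D = \Delta \times [0,1]$ is a regular semi-contact saucer $\sigma_{\wt\phi}$, equivalent to the original $(B, \sigma_\phi)$.
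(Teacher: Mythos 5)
Your broad strategy---identify $D$ with $\wt D=\Delta\times[0,1]$ by the fiberwise affine map, use the fact that a contact germ along a hypersurface is determined by its characteristic foliation together with the transverse contact structure, and invoke Lemma~\ref{lm:boundary-behavior} to adjust boundary behavior---points in the same direction as the paper, but the step you defer as ``bookkeeping'' is the actual content of Lemma~\ref{lm:two-saucers}, and your formulation of it is both stronger than what is needed and not something Lemma~\ref{lm:boundary-behavior} can deliver in the way you use it. (Incidentally, your worry that one ``cannot simply pull back'' by the rescaling $G(w,t)=(w,\tfrac{t-h_-(w)}{h_+(w)-h_-(w)})$ is misplaced: $G$ preserves $\lambda_\st$ restricted to $\Pi$ and the characteristic foliation of $\Pi$, hence extends to a contactomorphism $\Op D\to\Op\wt D$; this is exactly how the paper starts.) The gap is in the sentence where you claim that after transporting by $\Psi$ and straightening the germ along $\wt D$ the result ``is of the form $\sigma_{\wt\phi}$'': there is no reason the transported slices, and in particular the transported top boundary graph $\Gamma_\phi=\{v=\phi\}$, become graphs over the cylinder---that is precisely what must be proved. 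The paper isolates the obstruction: a foliation-preserving identification of $\Gamma_\phi$ with a graph $\Gamma_{\wt\phi}$ over $\wt D$, equal to $G$ near the boundary, exists if and only if the holonomies of the two characteristic foliations (from the bottom edge to the top edge of the disc) coincide. The key observation is that for a graph $\{v=\wt\phi(w,t)\}$ over $\Delta\times[0,1]$ the characteristic foliation is spanned by $\tfrac{\p}{\p t}-X_{\wt\phi_t}$, so its holonomy is the time-one map of the contact isotopy generated by $-\wt\phi$ as a time-dependent Hamiltonian; Lemma~\ref{lm:boundary-behavior} is then applied to the holonomy $f_\phi$ of the original graph to produce $\wt\phi$ with the germ near $\p\wt D$ forced by $G$ and with $f_{\wt\phi}=f_\phi$, after which the identification extends to a contactomorphism of neighborhoods and hence to an equivalence of saucers. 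None of this appears in your argument: you never mention holonomy, and you instead ask to control the identifications of $\Gamma_{s\phi}$ with $\Gamma_{s\wt\phi}$ ``simultaneously for all $s$,'' which is more than shell equivalence requires (only the germs along $\p B$, i.e.\ the $s=0,1$ slices and the border, must match exactly; the interior families are then compared by a homotopy rel boundary) and more than Lemma~\ref{lm:boundary-behavior} controls (a time-one map and boundary jets, not a whole isotopy of slices).

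A secondary error: your justification of the first step via regularity is wrong. The discs $D$ and $\wt D$ in Lemma~\ref{lm:two-saucers} are not regular in the sense of Section~\ref{sec:Regsaucers}; their characteristic foliations are product foliations by intervals over $\Delta$ (the paper points this out explicitly in the proof of Proposition~\ref{lm: saucer dom interval}), not the foliation of the round disc. The step survives only because the two product foliations are diffeomorphic to each other with the same transverse contact structure $(\Delta,\xi_\st)$, which is what actually allows the germ along $D$ to be transported to $\wt D$.
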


 \begin{proof}
 We can assume that 
 $B=\{0\leq v\leq\Phi(w,t): (w,t)\in D\}$,
   where the function  $\Phi $ is positive on $\Int D$  and coincides with $\phi$  
 on $\Op\p D$, and hence $\p B=D\cup\Gamma$  where $\Gamma:=\{v=\Phi(w,t): (w,t)\in D\}$. 
 
Given a point $w\in\Delta$, consider a leaf $\ell_w$ through the point $(w,h_-(w)) \in D$
of the characteristic foliation $\FF_\phi$ on the graph $\Gamma_\phi:=\{v=\phi(w,t): (w,t)\in D\}$
and let $(w', h_+(w')) \in D$ be the other end of the leaf $\ell_w$.   This rule $f_\phi(w):=w'$
defines the holonomy contactomorphism $ f_\phi:\Delta \to\Delta$. 

 Consider the  diffeomorphism $G:D\to \wt D=\Delta\times[0,1]$
defined by the formula
$$G(w,t)=\left(w,\frac{t- h_-(w)}{h_+(w)-h_-(w)}\right).$$
Since $\lambda_{\st} + vdt$ restricted to $D$ and $\wt{D}$ is $\lambda_{\st}$ and is preserved by $G$,
it follows that $G$ extends to a contactomorphism $G:\Op D \to \Op\wt D$.
The diffeomorphism $G$ moves the  graph of the function $\phi|_{\Op\p D}$ 
onto a graph of some function $  \wt \phi:\Op\p\wt D\to\R$ whose $\infty$-jet vanishes on $\p\wt D$. Pick any smooth extension $\wt \phi: \wt{D} \to \R$. 

The characteristic foliation $\FF_{\wt\phi}$ on the graph $\Gamma_{\wt\phi} = \{v=\wt\phi(w,t): (w,t)\in \wt{D}\}$ 
is represented by $\tfrac{\partial}{\partial t} - X_{\wt\phi_t}$ where $X_{\wt\phi_t}$ is the contact vector field on $\Delta$ for 
$\wt\phi_t: \Delta \to \R$ thought of as a contact Hamiltonian.   It follows that
the holonomy contactomorphism $f_{\wt\phi}:\Delta\to\Delta$, defined similarly to $f_\phi$, coincides
with the time one map of the contact isotopy of $\Delta$ defined by $-\wt\phi:\Delta\times[0,1]\to\R$
thought of as a time-dependent contact Hamiltonian.
According  to Lemma \ref{lm:boundary-behavior} we can modify $\wt\phi$, keeping it fixed over $\Op\p D$, to make the holonomy contactomorphism $f_{\wt\phi}$  equal to $f_\phi$.  

Since the holonomy maps $f_{\wt\phi}$ and $f_\phi$ are equal, it follows that there is a diffeomorphism
$F:\Gamma_\phi\to \Gamma_{\wt\phi}$ equal to $G$ on $\Op{\p\Gamma_\phi}$, mapping
the characteristic foliation $\FF_\phi$ to the characteristic foliation $\FF_{\wt\phi}$, and with the form
$$F(w,t,\phi(v,t))=(f(w,t), \wt\phi(f(w,t))) \quad\mbox{for $(v,t)\in D$}$$
for some diffeomorphism $f:D\to\wt D$.  It follows that $F$ extends to a contactomorphism of
neighborhoods $F: \Op \Gamma_\phi \to \Op \Gamma_{\wt\phi}$.

Let $(\wt B,\sigma_{\wt\phi})$ be a contact saucer over $\wt D$, where $\wt B=\{0\leq v\leq\wt\Phi(w,t),\, (w,t)\in\wt D\}$ for some function $\wt\Phi:\wt D\to\R$ that coincides with $\wt\phi$ on $\Op\p\wt D$ and is positive on $\Int\wt D$. 
Note $\p\wt B=\wt D\cup\wt \Gamma$ where $\wt\Gamma:=\{v=\wt\Phi(w,t),\, (w,t)\in\wt D\}$.
Let us define a diffeomorphism $H:\p B\to\p\wt B$  so that
 $$
 H|_{\Op D}=G\;\;\; \hbox{and} \;\;\; H|_{\Gamma}(w,t,\Phi(v,t))=(f(w,t),\wt\Phi(f(v,t))).$$  
This diffeomorphism matches the  traces of contact structures on  the boundaries $\p B$ and $\p\wt B$ of the saucers, and hence extends to a contactomorphism 
 between $\Op\p B$ and $\Op\p\wt B$ which can be further extended to an equivalence between the saucers $(B,\sigma_\phi)$ and $(\wt B, \sigma_{\wt\phi})$.
 \end{proof}
 
\begin{proof}[Proof of Proposition \ref{lm: saucer dom interval}]
By the definition of regular semi-contact saucer from Section~\ref{sec:saucers}, we may assume that $\zeta = (B, \sigma_\phi)$ 
where $B$ is defined over a regular domain $D\subset\Pi:=\{v=0\}\subset\R^{2n+1}$ and $\sigma_\phi$ is given by
a family of contact structures $\zeta_s$ on neighborhoods of graphs $D_s:=\{y_n=s\phi(w),\; w\in D\}$ where $\phi:D\to\R$ is a $C^\infty$-function supported in $D$ which is positive on $\Op\p D\cap\Int D$.

By the regularity assumption of $D$ the projection $\pi:D\to\R^{2n-1}$ is equivalent to the linear 
projection of the round ball and the image $\Delta=\pi(D)$ is contactomorphic to a star-shaped domain in $\R^{2n-1}$. 
Choose a slightly smaller star shaped ball $\Delta'\subset\Int \Delta$ such that
$\phi|_{\Int D \setminus \Int D'}>0$ where $D':=\pi^{-1}(\Delta')\cap D$. Note that the characteristic foliation on $D'$ is \emph{not} a regular foliation, rather it is diffeomorphic to the product foliation of the $2n-1$ disc and the interval. There exist functions $h_\pm:\Delta'\to\R$, $h_-<h_+$, such that $D'=
\{(w,t);\; h_-(w)\leq t\leq h_+(w), \; w\in\Delta'\}\subset\Pi$. Choose a function $\phi':D'\to\R$ that defines an immersion type semi-contact saucer $(B',\sigma_{\phi'})$ over $D'$
such that $\phi'\leq\phi|_{D'}$ and hence $(B',\sigma_{\phi'})$ is dominated by $(B, \sigma_\phi)$.  

Hence, we can apply Lemma \ref{lm:two-saucers} and find a function $\wt\phi:\wt D:=\Delta'\times[0,1]\to\R$ which is positive near $\p \wt D$ and such that the corresponding saucer $(\wt B, \sigma_{\wt\phi})$ over $\wt D$ is equivalent to $(B',\sigma_{\phi'})$.
 
 Let us rescale the saucer $(\wt B,\sigma_{\wt\phi})$ by an affine contactomorphism of $\R^{2n+1}_\st$
 $$(z,u,\varphi, t,v)\mapsto \left((1+\delta)^2z, (1+\delta)^2u, \varphi, (1+\delta) t-\frac\delta2, (1+\delta)v\right)$$
 for $\delta > 0$,
  to an equivalent  saucer $(\wh B, \sigma_{\wh\phi})$ over the domain
 $\wh D:=\wh \Delta\times[-\frac\delta2,1+\frac\delta2]$.
The notation $\varphi$ stands for the tuple $(\varphi_1,\dots,\varphi_{n-1})$ of angular coordinates.
 Note that $\Delta'\subset\Int\wh \Delta$ and we may choose $\delta$ sufficiently small so
  that $\wh\phi|_{\Int\wh D\setminus\Int \wt D}>0$.
   
   Then the restriction  $K:=\wh\phi|_{\wt D} $  of the function $\wh \phi$ to the domain $\wt D=\Delta'\times[0,1]$ defines an interval  model shell $(B^I_K,\eta^I_K)$. It is dominated by the saucer $(\wh B, \sigma_{\wh\phi})$,
   which is equivalent to $(B',\sigma_{\phi'})$, which is in turn dominated by $(B,\sigma_{\phi})$ .
   \end{proof}

Similarly, one can prove    the following parametric version of Proposition \ref{lm: saucer dom interval}.
\begin{proposition}\label{lm:saucer-dom-interval-param}
Let ${}^T\zeta=({}^TB,{}^T\xi)$ be a fibered  regular semi-contact saucer.
 Then ${}^T\zeta$ dominates a fibered  interval model ${}^T\eta^I_K$ for some $K:\left({}^T \Delta\right)\times I \to \R$.   
\end{proposition}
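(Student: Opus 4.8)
The plan is to carry out the entire argument of Proposition~\ref{lm: saucer dom interval} parametrically over the disc $T$, noting that each step used there—domination by a sub-saucer, Lemma~\ref{lm:two-saucers}, affine rescaling, and restriction to define the interval model—admits a fibered analog, and that the only genuinely new ingredient is a parametric version of Lemma~\ref{lm:boundary-behavior} (used inside the fibered analog of Lemma~\ref{lm:two-saucers}). First I would normalize ${}^T\zeta=({}^TB,{}^T\xi)$ to a fibered regular semi-contact saucer ${}^T\sigma_\phi$ over ${}^TD$ for a regular $2n$-ball $D\subset\Pi$ and a function $\phi:{}^TD\to\R$ vanishing to infinite order along $\partial({}^TD)$ and positive on $\Op\partial({}^TD)\cap\Int{}^TD$, using the definition in Section~\ref{sec:fibered-saucer}. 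Since $D$ is regular the fiberwise projection $\pi:D\to\R^{2n-1}$ is conjugate to the linear projection of the round ball, so $\Delta=\pi(D)$ is (contactomorphic to) a star-shaped domain; choose $\Delta'\Subset\Int\Delta$ star-shaped with $\phi|_{\Int{}^TD\setminus\Int{}^TD'}>0$ where ${}^TD'=\pi^{-1}(\Delta')\cap{}^TD$ (this can be done uniformly in $\tau$ by compactness of $T$), and pass to a fibered immersion-type semi-contact saucer $({}^TB',{}^T\sigma_{\phi'})$ over ${}^TD'$ with $\phi'\le\phi|_{{}^TD'}$, which is dominated by ${}^T\sigma_\phi$ fiberwise.

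Next I would prove a parametric version of Lemma~\ref{lm:two-saucers}: for a star-shaped $\Delta'$ and $C^\infty$ functions $h_\pm^\tau:\Delta'\to\R$ with $h_-^\tau<h_+^\tau$, the fibered saucer $({}^TB',{}^T\sigma_{\phi'})$ over ${}^TD'=\{h_-^\tau(w)\le t\le h_+^\tau(w)\}$ is equivalent to a fibered saucer $({}^T\wt B,{}^T\sigma_{\wt\phi})$ over ${}^T\wt D=\Delta'\times[0,1]$. The proof follows Lemma~\ref{lm:two-saucers} verbatim in the fiber: the family of diffeomorphisms $G^\tau(w,t)=(w,(t-h_-^\tau(w))/(h_+^\tau(w)-h_-^\tau(w)))$ extends to a fibered contactomorphism since it preserves $\lambda_\st+v\,dt$; it carries the graph of $\phi'|_{\Op\partial{}^TD'}$ onto a graph of a function $\wt\phi$ defined and infinitely tangent to $0$ on $\Op\partial({}^T\wt D)$, which we extend arbitrarily; the holonomy contactomorphism $f_{\wt\phi}^\tau$ of the characteristic foliation on the graph $\Gamma_{\wt\phi^\tau}$ is the time-one map of the contact isotopy generated by $-\wt\phi^\tau$, and likewise $f_{\phi'}^\tau$ is determined by $\phi'$. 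The one step requiring care is matching these holonomies for all $\tau$ simultaneously while keeping everything fixed near $\partial{}^T\wt D$; this is exactly where one needs a \textbf{parametric form of Lemma~\ref{lm:boundary-behavior}}, asserting that a $T$-family of contactomorphisms $h^\tau$ which near $\partial\Delta'$ agrees with a family $g^\tau$ generated by Hamiltonians vanishing to infinite order on $\partial\Delta'$ can itself be generated by a family $\wt H^\tau$ with $\wt H^\tau=G^\tau$ near $\Op\partial(\Delta'\times I)$. Its proof is the same as Lemma~\ref{lm:boundary-behavior} carried out with parameters: the interpolation of the diffeotopies in the $t$-variable and the Gray–Moser argument (applied parametrically in both $t$ and $\tau$) go through because the relevant isotopies are $C^\infty$-small near $\partial\Delta'$ uniformly in $\tau\in T$; I expect this to be the main obstacle, though a routine one.

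Finally, I would rescale $({}^T\wt B,{}^T\sigma_{\wt\phi})$ by the fiberwise affine contactomorphism of $\R^{2n+1}_\st$ given by $(z,u,\varphi,t,v)\mapsto((1+\delta)^2 z,(1+\delta)^2 u,\varphi,(1+\delta)t-\tfrac\delta2,(1+\delta)v)$ (independent of $\tau$), obtaining an equivalent fibered saucer $({}^T\wh B,{}^T\sigma_{\wh\phi})$ over $\wh\Delta\times[-\tfrac\delta2,1+\tfrac\delta2]$ with $\Delta'\Subset\Int\wh\Delta$, and with $\delta>0$ small enough (uniformly in $\tau$) that $\wh\phi|_{\Int\wh D\setminus\Int\wt D}>0$. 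Then $K:={}^T\wh\phi|_{{}^T\wt D}:({}^T\Delta)\times I\to\R$—here with ${}^T\Delta=\Delta'$, the fiberwise-constant family—satisfies the boundary positivity conditions \eqref{e:KBoundaryInterval} in each fiber and defines a fibered interval model shell ${}^T\eta^I_K$. Chaining the dominations ${}^T\sigma_\phi\succeq({}^TB',{}^T\sigma_{\phi'})\simeq({}^T\wt B,{}^T\sigma_{\wt\phi})\simeq({}^T\wh B,{}^T\sigma_{\wh\phi})\succeq{}^T\eta^I_K$, all of which are fibered versions of relations already established (using the fibered domination/equivalence formalism from Section~``Fibered structures''), gives the desired conclusion. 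The only place any real work is needed beyond transcribing the non-parametric proof is the parametric Gray–Moser/Hamiltonian-extension lemma, and the uniform-in-$\tau$ choices of $\Delta'$ and $\delta$, all of which are handled by compactness of $T$.
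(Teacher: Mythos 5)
Your strategy---rerunning the proof of Proposition \ref{lm: saucer dom interval} with the parameter $\tau$ carried along, with a parametric version of Lemma \ref{lm:boundary-behavior} (holonomy matching via a parametric Gray--Moser argument inside the fibered analog of Lemma \ref{lm:two-saucers}) as the only genuinely new ingredient---is exactly what the paper intends: its own proof consists of the single remark that "similarly, one can prove" the parametric version, so in outline you and the paper agree.

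There is, however, one concrete point where your write-up as stated fails, and it is precisely the point the paper flags in the remark immediately following the proposition. By the definition of a fibered regular semi-contact saucer in Section \ref{sec:fibered-saucer}, the function $\phi$ vanishes with its $\infty$-jet along all of $\p({}^TD)$, which includes $\p T\times D$; hence the fibers $(B^\tau,\xi^\tau)$ degenerate to $2n$-discs as $\tau\to\p T$ and $\phi^\tau\equiv 0$ for $\tau\in\p T$. Consequently the family $K^\tau=\wh\phi^\tau|_{\wt D}$ you produce vanishes identically at $\tau\in\p T$, so it violates the positivity conditions \eqref{e:KBoundaryInterval} there, and the fibered interval model over all of $T$ with a subordination covering $\mathrm{id}_T$ (your "fiberwise-constant family") is not even defined. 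The fix is the one stated in the paper's remark: the subordination map $({}^TB^I_K,{}^T\eta^I_K)\to({}^TB,{}^T\xi)$ must cover an embedding $T\to\Int T$; equivalently, shrink the parameter disc slightly, using that for $\tau$ sufficiently close to $\p T$ one has $\phi^\tau>0$ on $\Int D$, so the saucer fibers are solid there and one can (and must) arrange $K^\tau>0$ for $\tau\in\Op\p T$---compare the statement of Proposition \ref{prop:saucer-to-circle-param}. Relatedly, in your parametric version of Lemma \ref{lm:two-saucers} the choices (the extension $\wt\phi$ and its modification via the parametric Lemma \ref{lm:boundary-behavior}) must be made so that for $\tau$ near $\p T$ the structures remain genuinely contact and the equivalences are honest contactomorphisms, since the definitions of fibered equivalence and domination demand this near the parameter boundary; this is easy because $\phi^\tau>0$ there (take the obvious positive extension), but it needs to be said. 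With these boundary adjustments your argument goes through.
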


\begin{remark}{\rm Let us point out that the shells $(B^\tau, \xi^\tau)$ degenerate when $\tau$ approaches $\p T$. Hence, the subordination map
$({}^TB^I_K,{}^T\eta^I_K)\to ({}^TB,{}^T\xi)$ has to cover an embedding
$T\to\Int T$.
}
\end{remark}

The next proposition is the main result in this section. 

 \begin{prop}\label{prop:saucer-to-circle}
If $(B, \xi) = \sigma_\phi$ is a regular semi-contact saucer viewed as a shell,
then there is a time-independent contact Hamiltonian $K:\Delta\to\R$
such that $(B, \xi)$ dominates the circle model contact shell $(B_K, \eta_K)$.
\end{prop}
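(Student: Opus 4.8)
\textbf{Proof proposal for Proposition~\ref{prop:saucer-to-circle}.}

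The plan is to chain together the domination results already established in this section. By Proposition~\ref{lm: saucer dom interval}, the regular semi-contact saucer $(B,\xi) = \sigma_\phi$ dominates an interval model shell $(B^I_K, \eta^I_K)$ for some contact Hamiltonian $K \colon \Delta \times I \to \R$ satisfying \eqref{e:KBoundaryInterval}, i.e. $K|_{\p\Delta\times I} > 0$ and $K|_{\Delta\times\{0\}} > 0$ (the last condition one should track carefully through the proof of Proposition~\ref{lm: saucer dom interval}: it comes from the fact that the saucer over $\wh D$ has positive ``width'' $\wh\phi$ near the face $t = -\tfrac{\delta}{2}$, so $K = \wh\phi|_{\wt D}$ is positive on $\Delta' \times \{0\}$, possibly after shrinking $\Delta'$ slightly). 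Since domination of contact shells is transitive — if $\zeta_+$ dominates $\zeta_0$ via a subordination map $g_0\circ h_0$ and $\zeta_0$ dominates $\zeta_-$ via $g_-\circ h_-$, one inserts the embedded copy of $B_-$ into the embedded copy of $B_0$ inside the shell equivalent to $\zeta_+$, and the contact condition on the various complements glues up — it suffices to show that the interval model $(B^I_K, \eta^I_K)$ dominates a circle model $(B_{K'}, \eta_{K'})$ for some \emph{time-independent} $K'\colon \Delta'' \to \R$.

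For this I would first reduce to a time-independent Hamiltonian on the interval model side. The issue is purely that $K$ depends on $t\in I$. One can absorb this: choose a star-shaped $\Delta'' \Subset \Delta$ and a contactomorphism of $\Delta$ (using Lemma~\ref{l:PhiToTildePhi} / the transverse scaling and twist contactomorphisms of Section~\ref{sec:OvertwistDisc}, or more directly a Moser-type argument on the level sets) that pushes $K|_{\Delta''\times I}$ below some time-independent function $K''\colon \Delta'' \to \R$ with $K''|_{\p\Delta''} > 0$; then the partial order on Hamiltonians with domains (Section~\ref{sec:PartialOrder}, Lemma~\ref{l:HamiltonianShellsAnnulus}) gives that $(B^I_{K}, \eta^I_{K})$ dominates $(B^I_{K''}, \eta^I_{K''})$ — the keyhole-type argument in the proof of Proposition~\ref{l:equiv12} shows the interval model annular complement stays genuine. (Alternatively: keep $K$ time-dependent here and only make the circle model time-independent at the very end, since the statement only asks for $K$ time-independent on the \emph{circle} model.) Then apply Proposition~\ref{l:equiv12}: for star-shaped $\Delta' \subset \Int \Delta''$ with $K''|_{\Delta''\setminus\Delta'} > 0$ and $K'' |_{\Delta\times 0}>0$ automatically satisfied (no $t$-dependence), the interval model $(B^I_{K''}, \eta^I_{K''})$ dominates the circle model $(B_{K'}, \eta_{K'})$ where $K' := K''|_{\Delta'}$ is time-independent. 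Composing the three subordination maps gives the result with $\Delta$ replaced by the star-shaped domain $\Delta'$, which is all that is required.

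The step I expect to be the main obstacle is verifying that the hypothesis $K|_{\Delta\times\{0\}} > 0$ — which is needed to invoke Proposition~\ref{l:equiv12} — is genuinely available, and if not, arranging it. In the construction behind Proposition~\ref{lm: saucer dom interval} the interval-model Hamiltonian $K = \wh\phi|_{\wt D}$ is obtained from the saucer width function, which is positive near the border of the saucer but need not be positive on the whole face $t=0$ of $\wt D = \Delta'\times[0,1]$; the extra rescaling by $1+\delta$ in that proof is precisely designed to push the nontrivial part of $\phi'$ into the interior $t\in(0,1)$ and leave the faces $t \in \Op\{0,1\}$ in the positive region. I would make this explicit: after the affine rescaling to the domain $\wh D = \wh\Delta\times[-\tfrac\delta2, 1+\tfrac\delta2]$, the function $\wh\phi$ is positive on $\Op\p\wh D$, so in particular $\wh\phi|_{\wt D}$ is positive near $\Delta'\times\{0\}$ and near $\Delta'\times\{1\}$; shrinking $\Delta'$ and adjusting once more if necessary, one gets $K|_{\Delta'\times\{0\}} > 0$ on the nose. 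With that in hand the rest is bookkeeping with the partial order of Section~\ref{sec:PartialOrder} and the already-proven Propositions~\ref{lm: saucer dom interval} and \ref{l:equiv12}. The parametric analog, Proposition~\ref{prop:saucer-to-circle-param}, would be proved identically using Propositions~\ref{lm:saucer-dom-interval-param} and \ref{l:equiv12-param}.
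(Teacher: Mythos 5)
Your proposal is correct and follows essentially the same route as the paper: Proposition~\ref{lm: saucer dom interval} to pass from the saucer to an interval model, Proposition~\ref{l:equiv12} to pass from the interval model to a circle model, and the partial order of Lemma~\ref{l:HamiltonianShellsAnnulus} to arrange a time-independent Hamiltonian. The paper in fact takes exactly your parenthetical alternative — it keeps the interval-model Hamiltonian time-dependent and only at the end chooses a time-independent $K<K'$ below the circle-model Hamiltonian — so the contactomorphism/Moser manipulation you propose at the interval stage (where your inequality would anyway need to go the other way, $K''\leq K$, to give domination) is unnecessary.
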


\begin{proof}
 We first use  Proposition \ref{lm: saucer dom interval} to find
  a dominated by $\zeta$ interval model $\eta^I_{\wt K}$ for some ${\wt K}:D\times I \to \R$. Then, we apply Proposition \ref{l:equiv12} to get a circular  model
  contact shell $(B_{K'}, \eta_{K'})$ dominated by $\left(B^I_{\wt K},   \eta^I_{\wt K}\right))$. Finally, choosing a time-independent contact Hamiltonian $K<K'$ and applying  Lemma \ref{l:HamiltonianShellsAnnulus} we get the required  circle model contact shell 
  $\left(B_K, \eta_K\right)$  dominated by $ (B, \xi)$.
\end{proof}

Similarly, the parametric versions Propositions~\ref{lm:saucer-dom-interval-param} and \ref{l:equiv12-param} prove the following.

\begin{prop}\label{prop:saucer-to-circle-param}
Let $({}^TB, {}^T\xi)$ be a fibered  regular semi-contact saucer,
then there exists a family of time-independent  contact Hamiltonians
$K^\tau:\Delta\to\R$ for $\tau\in T$  which satisfy $K_\tau>0$ for $\tau\in \p T$, such that $({}^TB, {}^T\xi)$ dominates the corresponding fibered  circle model contact shell $({}^TB_K, {}^T\eta_K)$.  
\end{prop}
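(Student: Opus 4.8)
The plan is to run the proof of Proposition~\ref{prop:saucer-to-circle} fiberwise over $T$, feeding the output of each parametric ingredient into the next and using transitivity of fibered domination (composition of fibered subordination maps, each covering an embedding of parameter discs). Concretely, the argument is a chain: $({}^TB, {}^T\xi)$ dominates a fibered interval model, which dominates a fibered circular model with a time-dependent Hamiltonian, which in turn dominates a fibered circular model with a time-independent Hamiltonian positive over $\Op\p T$.

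First I would apply Proposition~\ref{lm:saucer-dom-interval-param} to the fibered regular semi-contact saucer $({}^TB, {}^T\xi)$ to obtain a dominated fibered interval model $({}^TB^I_{\wt K}, {}^T\eta^I_{\wt K})$ for some $\wt K \colon ({}^T\Delta) \times I \to \R$; as recorded in the remark after Proposition~\ref{lm:saucer-dom-interval-param}, the associated subordination map covers an embedding $T \to \Int T$, since the fibers $(B^\tau, \xi^\tau)$ collapse to $2n$-discs along $\p T$. Inspecting the proof of Proposition~\ref{lm: saucer dom interval} shows there is enough freedom to arrange $\wt K^\tau|_{\Delta \times 0} > 0$ and $\wt K^\tau|_{\p\Delta\times S^1} > 0$ for all $\tau$, and moreover $\wt K^\tau > 0$ for $\tau$ in a neighborhood of $\p T$ (there the saucer shell is a genuine contact germ on a disc). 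Next, picking a slightly smaller star-shaped $\Delta' \Subset \Delta$ on whose complement $\wt K^\tau$ stays positive, Proposition~\ref{l:equiv12-param} produces a fibered circular model $({}^TB_{{K'}}, {}^T\eta_{{K'}})$ with ${K'}^\tau := \wt K^\tau|_{\Delta' \times S^1}$ dominated by the interval model. Finally, choose a family of time-independent contact Hamiltonians $K^\tau \colon \Delta' \to \R$ with $K^\tau < {K'}^\tau$ (e.g.\ a smooth lower bound for $\min_t {K'}^\tau(\cdot, t)$ in the $\tau$-parameter), still positive on $\p\Delta'$ and positive for $\tau \in \Op\p T$; Lemma~\ref{l:HamiltonianShellsAnnulus} applied fiberwise gives that $({}^TB_{{K'}}, {}^T\eta_{{K'}})$ dominates the desired $({}^TB_K, {}^T\eta_K)$, and composing the three subordination maps finishes the proof.

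The main obstacle, and essentially the only point not identical to the non-parametric case, is the bookkeeping along $\p T$: one must verify that the three subordination maps cover embeddings of parameter discs into the interior, that these compose to a single embedding $T \to \Int T$, and that the final Hamiltonians satisfy $K^\tau > 0$ over $\Op\p T$ so that the fibered circle model is genuine there, as demanded of a fibered contact shell. Since each of the cited parametric propositions is stated precisely so as to track this boundary degeneration, once the normalizations of $\wt K^\tau$, ${K'}^\tau$, and $K^\tau$ near $\p T$ are pinned down the remainder is the same routine verification as in Proposition~\ref{prop:saucer-to-circle}.
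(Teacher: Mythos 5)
Your proposal follows exactly the paper's route: the paper proves this proposition by citing the parametric Propositions~\ref{lm:saucer-dom-interval-param} and \ref{l:equiv12-param} and then passing to a smaller time-independent family $K^\tau < K'^\tau$ via Lemma~\ref{l:HamiltonianShellsAnnulus}, just as you do. Your extra bookkeeping of positivity near $\p T$ and of the subordination maps covering embeddings $T\to\Int T$ is consistent with what the paper leaves implicit, so the argument is correct and essentially identical.
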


\section{Reduction to saucers}\label{sec:RedSaucers}
     
\subsection{Construction of contact structures in the complement of saucers}\label{sec:to-saucers} 
The goal of this section is to prove the Proposition~\ref{prop:to saucers}.
 The starting point of the proof is  Gromov's $h$-principle for contact structures  on open manifolds, which we will now formulate.
  Given a $(2n+1)$-dimensional manifold  $M$, possibly with boundary, a closet subset  $A\subset M$  and  a  contact structure
  $\xi_0$   on $\Op A\subset M$  define
$\Cont(M;A,\xi_0)$ to be the space of contact structures on $M$ that  coincide with 
$\xi_0$ on $\Op A$ and 
  $\cont(M;A,\xi_0)$ to be the space of {\em almost} contact structures
that agree with $\xi_0$ on $\Op A$. Let 
$j:\Cont(M;A,\xi_0)\to\cont(M;A,\xi_0)$   be  the inclusion map.
We say that the pair $(M,A)$ is {\em relatively open} if for any point $x\in M\setminus A$ either  there   exists a path in $M\setminus A$ connecting $x$ with a boundary point of $M$, or a proper path $\gamma:[0,\infty)\to M\setminus A$ with $\gamma(0)=x$.
 \begin{theorem}\label{thm:Gromov-open}{\rm[M. Gromov, \cite{Gro69,Gr-PDR}]}
 Let $M$ be a $(2n+1)$-manifold,  $A\subset M$ a closed subset, and
  $\xi_0$   a contact structure on  $\Op A\subset M$. 
  Suppose that  $(M,A)$ is relatively open.
  Then the inclusion $$j:\Cont(M;A,\xi_0)\to\cont(M;A,\xi_0)$$ is a homotopy equivalence. 
 \end{theorem}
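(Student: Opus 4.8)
The plan is to recognize Theorem~\ref{thm:Gromov-open} as an instance of Gromov's $h$-principle for open, $\Diff$-invariant differential relations over open manifolds, and to assemble its proof from the microflexibility of the sheaf of contact structures together with a compression trick justified by the relative openness hypothesis.

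First I would reformulate the problem jet-theoretically. Let $\mathcal R \subset J^1(T^*M \oplus \Lambda^2 T^*M)$ be the open subset whose fiber at $x\in M$ consists of pairs $(\alpha,\beta)$ with $\alpha\in T^*_xM$, $\beta\in\Lambda^2T^*_xM$ and $\alpha\wedge\beta^n\neq 0$; it is visibly $\Diff$-invariant. A holonomic section of $\mathcal R$ (with $\beta=d\alpha$) is a non-vanishing $1$-form $\alpha$ with $\alpha\wedge(d\alpha)^n\neq 0$, i.e.\ a co-oriented contact structure, the non-co-orientable case being handled identically with local coefficients; thus $\mathrm{Hol}(\mathcal R;A,\xi_0)=\Cont(M;A,\xi_0)$. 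A formal solution is a pair $(\alpha,\beta)$ with $\alpha\wedge\beta^n\neq0$. Restricting an almost contact pair $(\alpha,\omega)$ to an arbitrary extension $\beta$ of $\omega$ to all of $TM$ gives a map $\cont(M;A,\xi_0)\to\mathrm{Sec}(\mathcal R;A,\xi_0)$ which is a weak homotopy equivalence, since the fiberwise space of such extensions $\beta$ is convex, hence contractible. So it suffices to prove the parametric, relative $h$-principle: $\mathrm{Hol}(\mathcal R;A,\xi_0)\hookrightarrow\mathrm{Sec}(\mathcal R;A,\xi_0)$ is a weak homotopy equivalence.

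Next I would verify that the sheaf $\Phi$ assigning to each open $U\subset M$ the space of contact structures on $U$ is \emph{microflexible}: given a compact pair $K_0\subset K$, a germ of a contact structure near $K$, and a homotopy of formal solutions over $K$ issuing from it and constant near $K_0$, the homotopy can be lifted for a short time to genuine contact structures near $K_0$. This is exactly the Gray--Moser stability argument run with parameters and relative to a closed set, as used elsewhere in the paper (cf.\ Remark~\ref{rem:foliations}): a $C^1$-deformation of contact forms is locally, and for a short time, carried by an ambient isotopy. Microflexibility together with $\Diff$-invariance is precisely the hypothesis of Gromov's flexibility theorem, which yields the full parametric relative $h$-principle for $\Phi$ over any \emph{open} manifold, rel a closed set on which a solution is fixed.

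Finally comes the passage from "open manifold" to the weaker assumption that $(M,A)$ is relatively open, which I expect to be the main point of care. Using that each point of $M\setminus A$ is joined inside $M\setminus A$ either to $\partial M$ or to infinity by a proper path, one constructs a complete vector field on $\Op(M\setminus A)$ that vanishes near $A$ and whose flow $\psi_t$ compresses a neighborhood of the (possibly non-compact) polyhedron $M\setminus A$, after an arbitrarily small perturbation, into an arbitrarily small neighborhood of a subpolyhedron $P\subset M\setminus A$ of positive codimension (or off to $\partial M$ and the ends); relative openness is exactly what guarantees such $P$. One then applies holonomic approximation near $P$ to the given formal solution to obtain a genuine contact structure $\xi$ on a neighborhood of $\psi_1(M\setminus A)$ that agrees with $\xi_0$ near $A$, and pulls $\xi$ back by $\psi_1$, interpolating with $\xi_0$ on the collar of $A$ by microflexibility, to get a contact structure on all of $M$ that is formally homotopic rel $A$ to the original datum. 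Carrying out the same construction over discs and spheres of parameters, with the solution on $A$ untouched, gives surjectivity and injectivity of $j_*$ on all homotopy groups; since both spaces are open subsets of spaces of smooth sections, they have the homotopy type of CW complexes, so the weak equivalence is a genuine homotopy equivalence. The delicate points are the parametric and relative bookkeeping in the microflexibility step and arranging the compressing isotopy to fix $\Op A$ while still compressing all of $M\setminus A$; the rest is formal once Gromov's flexibility theorem is granted.
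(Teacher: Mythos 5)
The paper does not prove this theorem at all---it is quoted as Gromov's result from \cite{Gro69,Gr-PDR}---and your outline is essentially the standard proof supplied by those references: contact structures define an open, $\Diff$-invariant differential relation whose solution sheaf is microflexible by a parametric Gray--Moser argument, the space of almost contact structures is weakly equivalent to the space of formal solutions by a fiberwise convexity argument, and the relative openness of $(M,A)$ is exactly what allows $M\setminus A$ to be compressed rel $\Op A$ into a neighborhood of a positive-codimension core, after which Gromov's flexibility theorem (or, equivalently, holonomic approximation plus $\Diff$-invariance) gives the parametric relative $h$-principle. Apart from a minor slip in the jet-space bookkeeping (the relation should live in $J^1(T^*M)$, with the $2$-form slot played by the antisymmetrized derivative component of the $1$-jet, rather than in $J^1(T^*M\oplus\Lambda^2T^*M)$, where enforcing $\beta=d\alpha$ would not be an open condition), your sketch is sound and matches the intended argument.
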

As we will see, Proposition \ref{prop:to saucers} follows from the following special case.

   \begin{lemma}\label{lm:annuli-to-saucers} For a closed manifold $\Sigma$, any semi-contact structure  $\xi=\{\zeta_s\}$  on the annulus 
    $C=\Sigma\times[0,1]$ is homotopic relative $\Op\p C$ to an almost contact structure $\wt \xi$ which is a genuine contact structure in the complement of   finitely many saucers $B_1,\dots, B_k\subset C$ and such that
    the restriction $\wt\xi|_{B_j}$ for $j=1,\dots, k$, 
     is semi-contact and regular.
    \end{lemma}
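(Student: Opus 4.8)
\textbf{Proof proposal for Lemma \ref{lm:annuli-to-saucers}.}

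The plan is to apply Gromov's $h$-principle (Theorem \ref{thm:Gromov-open}) away from a cleverly chosen codimension-zero subset, and then to recognize what is left over as a disjoint union of regular semi-contact saucers. First I would note that the semi-contact structure $\xi=\{\zeta_s\}$ on $C=\Sigma\times[0,1]$ is, by construction (Section \ref{sec:semi-holonomic}), a genuine contact structure on a neighborhood of each slice $\Sigma_s$; the failure to be genuine comes only from the fact that the $1$-forms $\alpha_s$ and their differentials $d\alpha_s$ are pasted together in the $s$-direction in a way that need not satisfy the contact condition. The idea is to subdivide $[0,1]$ into many short subintervals $[s_{i-1},s_i]$; on each the structure $\xi$ is $C^0$-close to the product of the contact germ $\zeta_{s_{i-1}}$ with the interval, so by compactness of $\Sigma$ and a standard covering argument it is of immersion type in the sense of Section \ref{sec:semi-holonomic}. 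So I would reduce to the case where $\xi$ is already of immersion type, given by a family of graphical hypersurfaces $\mathrm{graph}\,\psi_s\subset\Sigma\times\mathbb{R}$ inside a fixed contact $(\Sigma\times\mathbb{R},\xi_{\mathrm{amb}})$ — indeed on a closed $\Sigma$ the germ of $\zeta_{s_{i-1}}$ itself gives such an ambient contact manifold via its characteristic foliation data (Section \ref{sec:notation}).

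Next, in the immersion-type picture I would choose a Morse function $h:\Sigma\to\mathbb{R}$ and use the functions $\psi_s$ together with a generic perturbation so that the ``defect set'' — the locus where the family of graphs fails to sweep out an embedded region, i.e.\ where the envelope of the family $\mathrm{graph}\,\psi_s$ develops folds — is contained in a finite disjoint union of $(2n+1)$-balls, each of which is a saucer $B_j$ in the sense of Section \ref{sec:saucers} sitting over a $2n$-disc $D_j\subset\Sigma$. Concretely: the region where $\partial_s\psi_s$ does not vanish gives an embedding into $\Sigma\times\mathbb{R}$, hence a genuine contact structure there (as in the discussion of $\sigma_\phi$ in Section \ref{sec:Regsaucers}); the complement, where $\partial_s\psi_s$ vanishes, is what must be packaged into saucers. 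By a relative version of the $h$-principle parametrized standardization (the same traditional techniques cited for Proposition \ref{prop:to saucers}, going back to \cite{Gro69, EliMi97}), I would homotope $\xi$ rel $\Op\partial C$ so that these degenerate regions become standard saucers over discs $D_j$ that are \emph{regular} in the sense of Section \ref{sec:Regsaucers}, i.e.\ their characteristic foliations match that of the round disc in $\Pi$ and the leaf space is star-shaped; this regularity can always be arranged by a further local isotopy since the round model is generic among discs transverse to the Reeb flow. On the complement $C\setminus\bigcup_j B_j$ the structure is now genuine; since $C\setminus\bigcup_j B_j$ deformation retracts onto $\Op\partial C$ together with the saucer boundaries, and since on $\Op\partial C$ nothing was changed, Gromov's Theorem \ref{thm:Gromov-open} applied to $(C\setminus\bigcup_j\Int B_j,\ \partial C\cup\bigcup_j\partial B_j)$ — which is relatively open because each component meets $\partial C$ or is an annular collar — lets us homotope the almost contact structure there, rel that closed set, to a genuine contact structure. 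This produces the desired $\wt\xi$.

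For Proposition \ref{prop:to saucers} itself: given $(M,A,\xi_0)$ with $\xi_0$ genuine on $\Op A$, I would first use Gromov's theorem on an auxiliary open piece to make $\xi_0$ genuine on the complement of a regular neighborhood $N$ of a codimension-zero ``skeleton'', then cut $M\setminus(A\cup\Int N)$ into annuli $\Sigma\times[0,1]$ (this is where the handle/Morse-theoretic decomposition of the manifold enters), observe that $\xi_0$ restricts to each annulus as a semi-contact structure, and apply Lemma \ref{lm:annuli-to-saucers} to each. The saucers produced in the various annuli are disjoint by construction, giving the finite collection $B_1,\dots,B_N$ in Proposition \ref{prop:to saucers}.

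\textbf{Main obstacle.} The hard part will be arranging, by a \emph{relative} homotopy of almost contact structures fixed near $\partial C$, that the non-genuine locus is precisely a disjoint union of \emph{regular} saucers rather than some uncontrolled singular region: one must simultaneously (a) localize the degeneracy of the family $\{\zeta_s\}$ into balls, (b) make each ball graphical over a $2n$-disc with the correct ``pancake'' shape (coinciding $\infty$-jets along the border), and (c) normalize the characteristic foliation on that disc to the regular model. Step (c) in particular requires a careful local $h$-principle-type argument (Moser-type normal form for the characteristic foliation near a generic disc transverse to the Reeb flow, plus the wrinkling/holonomic-approximation machinery to globalize), and is exactly the kind of ``fairly standard but technical'' reduction the paper defers to Section \ref{sec:RedSaucers}.
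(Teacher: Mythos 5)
Your first reduction (partitioning $[0,1]$ so the structure becomes of immersion type, given by graphs of $\psi_s$ in a fixed contact $\Sigma\times[-R,R]$) matches the paper, but the core of your argument has a genuine gap. The locus where the structure fails to be contact is not a thin singular set that a Morse function and a generic perturbation can squeeze into finitely many disjoint balls: in the immersion picture it is the open, codimension-zero region over which the family $s\mapsto\psi_s(x)$ moves the wrong way, and since the germs at $s=0,1$ (hence $\psi_0$ and $\psi_1$) are fixed near $\p C$, no homotopy rel $\Op\p C$ can shrink the set $\{\psi_1<\psi_0\}$ -- every competing family must be non-monotone over all of it. The paper's proof supplies exactly the mechanism you are missing: write $\psi=\psi_+-\psi_-$ and use two nested partitions of unity $\lambda_i^\pm$ subordinate to a fine ball cover to replace the single sweep from $0$ to $\psi$ by a stack of steps $\Psi_{k-1}\to\Psi_k$, each supported over one small ball $U_k$; the union of the intermediate graphs $\Gamma_L$ admits an immersion $P$ into the ambient contact manifold (so the structure is genuine near $\Gamma_L$ and on the leftover product region $\Omega$), and the complement consists precisely of saucers over the $U_k$ -- many overlapping in $\Sigma$ but disjoint in $C$, sitting at different levels. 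Regularity of these saucers is then obtained from Lemma \ref{lm:foliation} because the discs are small and the contact structure is transverse to the graphs there, and this transversality itself is not automatic: it requires the preliminary ``Reeb trick'' (Lemma \ref{lm:Reeb-trick}), which flows by a contact Hamiltonian supported near the tangency locus so that $\psi>0$ there (no saucers needed over that region) and the contact structure is transverse to the graphs everywhere else. Your proposal addresses neither the stacking construction nor the tangency region, and your appeal to ``the round model is generic among discs transverse to the Reeb flow'' does not substitute for either.

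Your closing application of Theorem \ref{thm:Gromov-open} is also either vacuous or invalid: if the structure is already genuine on $C\setminus\bigcup_j B_j$ nothing remains to be done there, and if it is not, the pair $\bigl(C\setminus\bigcup_j\Int B_j,\ \p C\cup\bigcup_j\p B_j\bigr)$ is not relatively open in the sense required -- its entire boundary lies in the fixed set, which is exactly the situation Gromov's $h$-principle cannot handle (otherwise the ball with prescribed boundary germ would already be covered by it and the main theorems of the paper would be immediate). The correct conclusion of the lemma comes from the immersion $P$ itself, not from a second application of the open $h$-principle.
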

    
    \begin{figure}\begin{center}
\includegraphics[scale=.5]{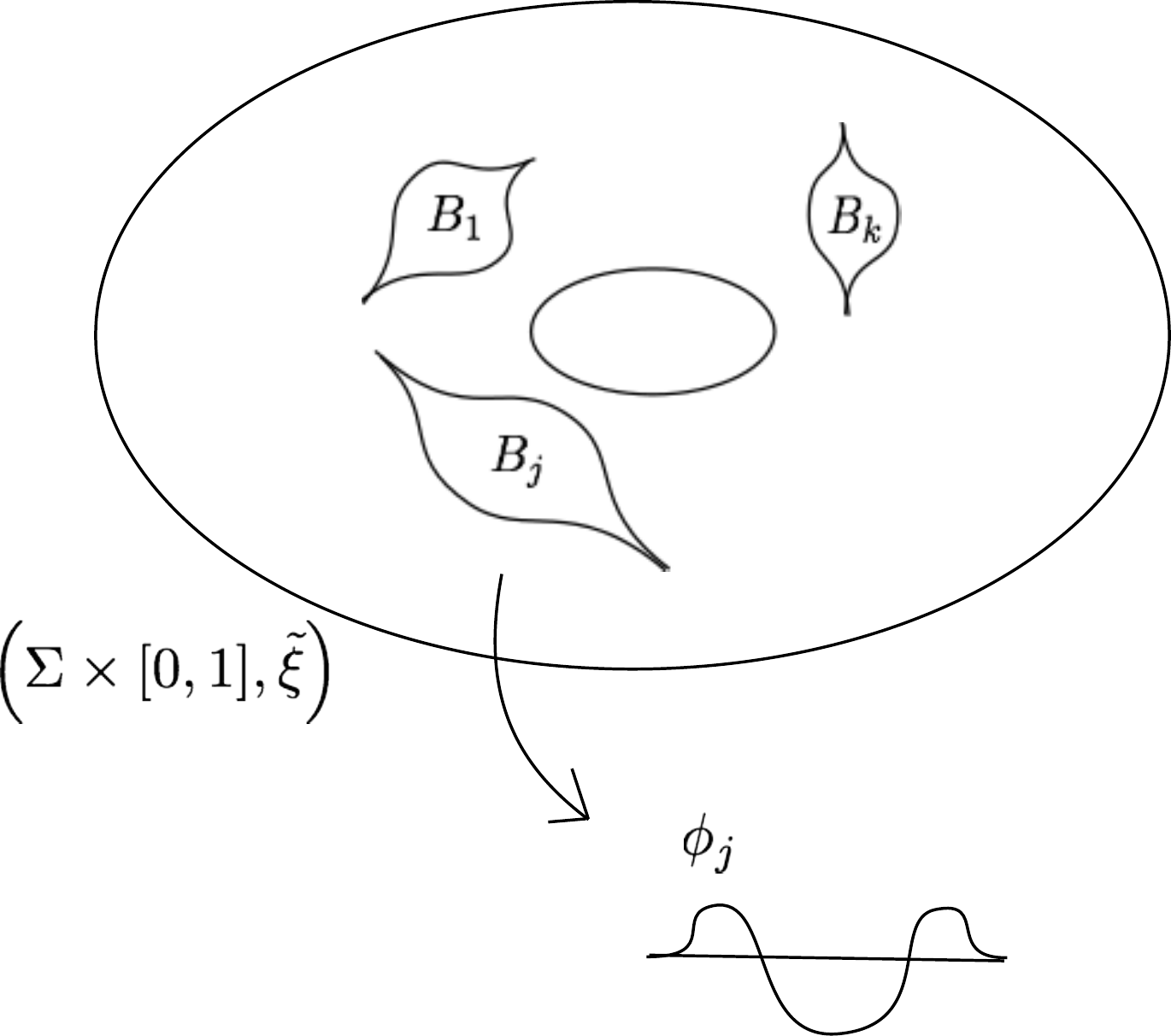}
\caption{A representation of the statement of Lemma \ref{lm:annuli-to-saucers}.}\label{fig: lemma32}
\end{center}\end{figure}
    
  \begin{proof}[Proof of Proposition \ref{prop:to saucers}] Choose an embedded annulus $C=S^{2n}\times[0,1]\subset M\setminus A$ and first use  the existence part of Gromov's $h$-principle \ref{thm:Gromov-open}, to deform $\xi_0$ relative to $A$ to an almost contact structure which is genuine on $M\setminus C$. Next with use the $1$-parametric part of Theorem  \ref{thm:Gromov-open}, applied to the family of neighborhoods of spheres $S^{2n}\times t$ for  $t\in[0,1]$ to make the almost contact structure semi-contact on $C$. Finally we use Lemma \ref{lm:annuli-to-saucers} to complete the proof.
 \end{proof}

 We will need two lemmas in order  to prove Lemma \ref{lm:annuli-to-saucers}.

 Observe we are free to partition $[0,1] = \bigcup_{i=0}^{N} [a_i, a_{i+1}]$
 where $a_i < a_{j}$ for $i < j$ and prove the lemma for the restriction of the semi-contact structure to
 each $\Sigma \times [a_i, a_{i+1}]$, which we will do multiple times through the proof.
 
   \begin{lemma}\label{lm:partition-to-special}
  For a closed manifold $\Sigma$ and a semi-contact structure $\{\zeta_s\}$ on 
  $\Sigma\times[0,1]$ there exists $N>0$ such that the restriction  of 
 $\{\zeta_s\}$  to $ \Sigma\times[a_i,a_{i+1}]$ for $a_i:=a+ \frac iN$ is of immersion type for each $i=0,\dots, N-1$.
 \end{lemma}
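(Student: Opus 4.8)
The plan is to reduce the claim to a purely local statement about a single germ of a contact structure along a slice $\Sigma \times s_0$, and then use compactness of $[0,1]$ to assemble the local statements into a uniform partition. Fix $s_0 \in [0,1]$ and work near the slice $\Sigma_{s_0} = \Sigma \times s_0$. The germ $\zeta_{s_0}$ is a genuine contact structure on $\Op \Sigma_{s_0}$, so by the contact neighborhood theorem there is a contactomorphism identifying $\Op \Sigma_{s_0}$ with $\Op(\Sigma \times 0)$ inside a fixed contact manifold $(\Sigma \times \R, \xi)$ where $\Sigma_{s_0} \mapsto \Sigma \times \{0\}$. Transporting the nearby slices through this identification, for $s$ close to $s_0$ the germ $\zeta_s$ becomes the germ of $\xi$ along a hypersurface in $\Sigma \times \R$ which is $C^1$-close to $\Sigma \times \{0\}$; since $\Sigma \times \{0\}$ is a graph over $\Sigma$, so is each nearby hypersurface, hence each nearby $\zeta_s$ is the pullback of $\xi$ along $\Sigma \times \R$ by a diffeomorphism of the form $\Id \times \psi_s$ composed with a correction term. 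That is exactly the definition of "immersion type" on a short subinterval $[s_0 - \eps(s_0),\, s_0 + \eps(s_0)]$.

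The remaining point is to make the family of diffeomorphisms $\Psi_s \colon \Op \Sigma_s \to \Op(\mathrm{graph}\,\psi_s)$ depend smoothly on $s$ and satisfy $\Psi_s|_{\Sigma_s} = \Id \times \psi_s$, which is immediate from the smooth dependence of $\zeta_s$ on $s$ together with the parametric version of the contact neighborhood theorem (Gray--Moser with parameters). So for each $s_0$ we obtain an open interval $U_{s_0} \ni s_0$ in $[0,1]$ on which the restriction of $\{\zeta_s\}$ is of immersion type. By compactness finitely many $U_{s_0}$ cover $[0,1]$; choosing $N$ large enough that each subinterval $[a_i, a_{i+1}] = [\tfrac{i}{N}, \tfrac{i+1}{N}]$ is contained in some $U_{s_0}$ (possible by the Lebesgue number lemma), the restriction of $\{\zeta_s\}$ to each $\Sigma \times [a_i, a_{i+1}]$ is of immersion type, which is the assertion. (The affine reparametrization of the interval causes no difficulty, since immersion type is obviously preserved under precomposition with an affine reparametrization of the $s$-variable.)

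The only genuine subtlety, and the step I expect to require the most care, is the passage from "the germ $\zeta_s$ is $C^1$-close to $\zeta_{s_0}$" to the clean normal form "$\zeta_s = \Psi_s^*\xi$ with $\Psi_s|_{\Sigma_s} = \Id \times \psi_s$": one must check that the contact neighborhood theorem can be applied simultaneously to the whole nearby family, with the diffeomorphisms varying smoothly in $s$ and restricting on the zero slice to graph-type maps rather than arbitrary diffeomorphisms. This is standard parametric Moser-type technology, and the key input is only that the characteristic foliations of the slices are all $C^1$-close to the fibration by lines, so that each slice is a graph over $\Sigma$ in the chosen Darboux-type model; once that is in hand the normalization is routine.
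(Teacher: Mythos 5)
Your argument is essentially the paper's own: for each $s_0$ a Gray--Moser stability argument identifies the nearby germs $\zeta_s$ with pullbacks of the fixed contact structure $\zeta_{s_0}$ along embeddings whose images of the slices are graphical, which is exactly immersion type on a short subinterval, and then compactness of $[0,1]$ (uniform $\sigma$, i.e.\ a Lebesgue number) yields the uniform $N$. The subtlety you flag --- running the Moser argument smoothly in $s$ and arranging the graphical normalization of $\Psi_s|_{\Sigma_s}$ --- is precisely the point the paper also handles by shrinking $\sigma$, so the proposal is correct and matches the paper's proof.
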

 \begin{proof}
 Choose  $\eps>0$ such  that the contact structure $\zeta_s$ is defined on $\Sigma\times[s-\eps,s+\eps]$ for each $s\in[a,b]$. We will   view $\zeta_s$ as a family of contact structures   on $\Sigma\times[-\eps,\eps]$. 
 For each $s_0$ and $\sigma > 0$ sufficiently small, a Darboux-Moser type argument implies there is an isotopy 
 $\phi^{s_0}_s: \Sigma \times [-\sigma, \sigma] \to \Sigma \times [-\eps, \eps]$ such that $\phi^{s_0}_{s_0} = \Id$ and 
 $(\phi^{s}_{s_0})^*\zeta_s = \zeta_{s_0}$ for $s \in [s_0, s_0+\sigma]$.  Moreover by shrinking $\sigma$ if necessary we can ensure that the hypersurfaces $\phi_{s_0}^s(\Sigma\times 0)$ are graphical in $\Sigma\times[-\eps,\eps]$.
Hence for any $s_0\in[a,b]$ the restriction of $\zeta_s$ to $[s_0, s_0+\sigma]$ is of immersion type and therefore choosing 
$N>\frac1\sigma$ we get the required partition of $\Sigma\times[a,b]$ into the annuli of immersion type. 
\end{proof}

\begin{lemma}\label{lm:Reeb-trick}
Let $\{\xi_s\}$ be a semi-contact structure on $\Sigma\times[0,1]$. Then, after partitioning, $\{\xi_s\}$ is equivalent to a semi-contact structure $\{\zeta_s\}$ which is immersion type satisfying the following properties. There exists a smooth
  function $\psi:\Sigma\to [-\frac R2,\frac R2]$ and a contact structure $\mu$ on $\Sigma\times[-R,R]$ such that
  \begin{itemize}
  \item the germ of contact structure $\zeta_s = \Psi_s^*\mu$ on $\Sigma\times[s-\delta, s+\delta]$ where
  $$
  \Psi_s: \Sigma\times[s-\delta, s+\delta] \to \Sigma\times[-R,R] \quad
  $$ by the embedding $(x,s+t)\mapsto (x, s\psi(x)+t)\in\Sigma\times[-R,R]$ where $x\in\Sigma$ and $t\in[-\delta,\delta]$;
  \item  
 there  are closed domains $V\subset\Sigma$ and $\wh V\subset \Int V$ such that $\psi|_V >0$,  and the contact structure $\mu$ is   transverse to   graphs of functions $s\psi $ over $\wh W=\Sigma\setminus\Int\wh V$ for all $s\in[0,1]$.
  \end{itemize} 
\end{lemma}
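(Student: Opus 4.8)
\textbf{Proof proposal for Lemma~\ref{lm:Reeb-trick}.}

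The plan is to start from the immersion-type normal form produced by Lemma~\ref{lm:partition-to-special}: after passing to a fine enough partition of $[0,1]$ and relabeling, I may assume $\{\xi_s\}$ is of immersion type, so there is a fixed contact structure $\mu_0$ on $\Sigma\times[-\eps,\eps]$ and a family of functions $\psi_s:\Sigma\to\R$ with $\zeta_s$ the pullback of $\mu_0$ under a diffeomorphism from $\Op(\Sigma_s)$ onto $\Op(\mathrm{graph}\,\psi_s)$. The first step is to reduce to the case where the graph data is \emph{linear in} $s$, i.e.\ where $\psi_s = s\psi$ for a single function $\psi$. For this I would differentiate the family of hypersurfaces $\mathrm{graph}\,\psi_s$ in the transverse direction to $\mu_0$: after a further partition so that each subinterval is short, the hypersurfaces $\Sigma_s$ stay $C^1$-close to one fixed one, and a Gray--Moser argument applied in the $\Sigma$-direction parametrically in $s$ replaces the family of embeddings by affine ones $(x, s+t)\mapsto (x, s\psi(x)+t)$ into a slightly larger slab $\Sigma\times[-R,R]$, at the cost of changing $\mu_0$ to an equivalent contact structure $\mu$. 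This is essentially the same bookkeeping as in the proof of Lemma~\ref{lm:partition-to-special} but carried out to produce a graph that moves with constant speed $\psi(x)$ in $s$; the equivalence of semi-contact structures is witnessed by the induced diffeomorphisms of the discs $\Sigma_s$, which agree with their $\infty$-jets near $\p C$.

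The second step is to arrange the transversality of $\mu$ to the moving graphs over a prescribed region. The Reeb vector field $R_\mu$ of $\mu$ is transverse to $\Sigma\times 0$; over any compact set where $R_\mu$ has a nonzero $\partial_z$-component this persists under small perturbations of the graph. So I would choose a function $\psi:\Sigma\to[-\tfrac R2,\tfrac R2]$ with $\psi>0$ on a closed domain $V\subset\Sigma$ chosen so that the complement $\wh W=\Sigma\setminus\Int\wh V$ of a slightly smaller $\wh V\Subset V$ is where I need transversality, and where I take $\psi$ small enough (shrinking $R$, hence the slab, and partitioning $[0,1]$ further so $s\psi$ stays inside the Darboux slab) that for every $s\in[0,1]$ the graph of $s\psi$ over $\wh W$ stays in the region where $\mu$ is transverse to the horizontal direction. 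The key geometric input here is that $\mu$ near $\Sigma\times 0$ is a contact structure whose characteristic foliation on $\Sigma\times 0$ has the Reeb direction transverse to the slices, so transversality to graphs is an open, $C^1$-stable condition; the role of $\wh V$ versus $V$ is just to leave room for the construction of $\psi$ supported appropriately.

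The main obstacle I expect is the first step: getting the family of immersion-type graphs to be genuinely \emph{linear} in the parameter $s$, rather than merely immersion-type slice by slice, while keeping everything relative to $\Op\p C$ and keeping all the auxiliary diffeomorphisms compatible with their $\infty$-jets along the border. The technical point is that the Gray--Moser normalization has to be done fiberwise over $s$ and then integrated in $s$, so one must control the $s$-dependence of the Moser vector fields uniformly; this is exactly why one is allowed to partition $[0,1]$ as finely as needed, and the lemma is only asserted "after partitioning". Once the linear normal form $\zeta_s=\Psi_s^*\mu$ with $\Psi_s(x,s+t)=(x,s\psi(x)+t)$ is in hand, the transversality clause is a routine openness argument. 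I would therefore organize the write-up as: (1) invoke Lemma~\ref{lm:partition-to-special} to get immersion type; (2) partition and Gray--Moser to get the linear-in-$s$ model on $\Sigma\times[-R,R]$, recording the equivalence of semi-contact structures; (3) choose $V$, $\wh V$, and $\psi$, partition once more, and check transversality of $\mu$ to the graphs of $s\psi$ over $\wh W$ by the Reeb-field openness argument.
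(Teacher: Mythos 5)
There is a genuine gap, and it sits exactly at the point you dismiss as routine. In your step (3) you treat $\psi$, $V$ and $\wh V$ as freely choosable: you ``choose a function $\psi$ with $\psi>0$ on a closed domain $V$'' and say the role of $\wh V\subset V$ is ``just to leave room''. But $\psi$ is not free data: up to applying an ambient contactomorphism of $(\Sigma\times[-R,R],\mu)$ and shrinking by partition, $\psi$ is the function $\psi_1$ whose graph carries the given top boundary germ of the semi-contact structure; if you replace it by a function you like, the resulting model is no longer equivalent to $\{\xi_s\}$ (its germ along $\Sigma\times 1$ is wrong). Moreover the domain $\wh V$ is not chosen by you either: it is dictated by $\mu$ as (a neighborhood of) the locus where $\mu$ is nearly tangent to the slices, i.e.\ where transversality to $C^1$-small graphs genuinely fails, and the whole content of the lemma is that the \emph{given} $\psi_1$ can be made strictly positive over this forced bad region. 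The paper achieves this by the ``Reeb trick'' your proposal never performs: split $\wh V=\wh V_+\cup\wh V_-$ by coorientation, choose a contact form whose Reeb field equals $\pm\frac{\p}{\p u}$ on $\wh V_\pm$ for small $u$, and flow for a small time $\eps$ by the contact Hamiltonian equal to $\pm 1$ on $\wh V_\pm$; since this flow moves in the $+\frac{\p}{\p u}$ direction over all of $\wh V$ and only slightly rotates tangent planes, it carries the graph of the (previously made $C^1$-small) $\psi_1$ to a graph $\wt\psi_1\geq\frac{\eps}{2}$ over $\wh V$, while transversality of $\mu$ to the graphs of $s\wt\psi_1$ over $\wh W=\Sigma\setminus\Int\wh V$ follows from the angle bound defining $\wh V$. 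Being an ambient contactomorphism, this push preserves the boundary germs, which is why the new model is still equivalent to $\{\xi_s\}$. Without this step your construction either fails to have $\psi>0$ over the tangency region (so the later saucer argument breaks down there) or changes the structure being modeled.

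Two smaller points. Your transversality justification conflates the Reeb vector field with the contact plane field: ``$R_\mu$ transverse to $\Sigma\times 0$'' is neither true in general nor the relevant condition; what is used is that over $\wh W$ the planes $\mu$ make a definite angle with the horizontal slices, so $C^1$-small graphs stay transverse — and note this requires $C^1$-smallness of $\psi$ (slope control), not just smallness of its values. Finally, the linear-in-$s$ reduction you single out as the main obstacle is comparatively benign: equivalence of shells only pins down the germs along $\Sigma\times 0$ and $\Sigma\times 1$, so after arranging $\psi_0=0$ and $\psi_1$ $C^1$-small by partitioning one may interpolate the family of graph functions to $s\psi_1$ rel endpoints; no parametric Gray--Moser linearization is needed for this step.
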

\begin{proof}
 Let us endow $\Sigma\times[-R,R]$  with the product metric.
 Using  Lemma \ref{lm:partition-to-special}  we may assume,
 by passing to a partition, that the
 semi-contact structure $\{\xi_s\}$ on the annulus $C$ is of immersion type.  
 So there is a contact structure $\mu$ on $\Sigma\times[-R,R]$ and a 
    smooth family of 
    embeddings $$\Psi_s: \wh{C}:=\Sigma \times [-\delta, \delta] \to \Sigma\times[-R,R] \quad\mbox{for $s \in [0,1]$}$$ 
    such that $\Psi_s(x, u) = (x, \psi_s(x)+u)$ for $u\in[-\delta,\delta]$, and $\Psi_s^*\mu$ is identified with $\zeta_s$. Furthermore, the partition argument also allows us  to assume that $\psi_0=0$ and that the $C^1$-norm of $\psi_1$ is arbitrary small. We will impose the appropriate bound on its $C^1$-norm further down in the proof.

    Define  a   subset  
    \begin{align*}
	  \wh V&:=\left\{x\in\Sigma: \mbox{ the angle between } \mu_{x,0} \hbox{ and }  T_{x,0}\Sigma \;\hbox {in $T_{x,0}\wh{C}$ is
	at most } \tfrac\pi4\right\}.\\
\end{align*}

Assuming $\mu$ and $\Sigma$ cooriented, we present $\wh V$ as a
  disjoint union 
$\wh V=\wh V_+\cup \wh V_-$  defined by whether  the vector field $\frac{\p}{\p u}$ defines the same or opposite coorientation of   $\mu|_\Sigma $
and $T\Sigma$.

  We can choose  a contact form $\alpha$ for $\mu$  such that its Reeb vector field $\Reeb$  satisfies $\Reeb(x,u)=\pm\frac{\p}{\p u}$ for $x\in\wh V_\pm$ and $u\leq\sigma$ for a sufficiently small $\sigma>0$.
   We will assume  $\sigma>0$ small enough to also ensure  that 
 $$\abs{\mbox{angle}(\mu_{x,u},\mu_{x,0}) } \leq \tfrac{ \pi}{16}$$
 for $x\in\Sigma$ and $|u|\leq\sigma$.
 
  Consider any smooth function $H:\Sigma\to[-1,1]$  equal to $\pm 1$ on $\wh V_{\pm}$. We further extend this function to
 $\Sigma\times[-R,R]$ as   independent of the coordinate  $u$   and then multiply by the cut-off function $\beta(u)$ equal to $1$ on $[-\frac R3,\frac R3]$ and equal to $0$ outside $[-\frac R2,\frac R2]$. We will keep the notation $H$ for the resulting function on $\Sigma\times[-R,R]$.
  
 Let $h_t:\Sigma\times[-R,R]\to\Sigma\times[-R,R]$. $t\in[0,1]$, be the contact  isotopy generated by $H$ as its contact Hamiltonian.   Choose $\eps\in(0,\frac\sigma2)$  so small that 
 $dh_\eps$ rotates any hyperplane by an angle $<\frac\pi{16}$ and 
 $||h_\eps||_{C^0}<\sigma$.
 Then   $\Gamma_\eps:=h_\eps(\Sigma\times0)$ is graphical, $\Gamma_\eps=\{u=\phi_\eps(x)\}$ for a   function $\phi_\eps:\Sigma\to\R$ which satisfies the following conditions
 \begin{itemize}
 \item $\phi_\eps|_{\wh V}=\eps$;
 \item for every $x\in\wh  W:=\Sigma\setminus\Int \wh V$ we have
${\mathrm{angle}}(T_{z}\Sigma,\mu_z)>\frac\pi8$, $z=(x,\phi_\eps(x))\in\Gamma_\eps$.  \end{itemize}
 
As it was explained above,  the function $\psi_1\colon\Sigma\to\R$  entering the definition of  the semi-contact annulus can be chosen arbitrarily $C^1$-small. Hence, by requiring it to satisfy the conditions
$|\psi_1|< \frac\eps2$ and $||\psi_1||_{C^1}<\tan\frac\pi{16}$ we can conclude that 
 $\wt\Gamma  :=h_\eps(\{u= \psi_1(x),\,x\in\Sigma\})$ is graphical, $\wt\Gamma  =\{u= \wt\psi_1 (x)\}$ for a   function $\wt\psi_1:\Sigma\to\R$ which satisfies the following conditions
\begin{itemize}
 \item $\wt\psi_1|_{\wh V}\geq\frac\eps2$;
 \item for every $x\in \wh W=\Sigma\setminus\Int \wh V$  and $s\in[0,1]$ we have
$${\mathrm{angle}}(T_{z}\wt\Sigma_s,\mu_z)>\frac\pi{16},\;\;z=(x,s\wt\psi_1(x))\in\wt\Gamma_s=  \{u= s\wt\psi_1 (x)\}.$$  \end{itemize}
Let us denote   $  V:=\{\wt\psi_1 \geq\frac\eps4\}$, so that $\wh V\subset\Int V$. 
Note that the family  of contact structures $\wt\zeta_s$  on $\Sigma\times s\subset\Sigma\times[0,1]$ induced by $\mu$ on the neighborhoods of graphs $\Gamma_s$ of functions $  s\wt\psi$ defines a semi-contact structure $\{\zeta_s\}$ which is 
equivalent to $\{\xi_s\}$. This concludes the proof of Lemma \ref{lm:Reeb-trick}.
\end{proof}

\begin{proof}[Proof of Lemma \ref{lm:annuli-to-saucers}]
According to  Lemma \ref{lm:Reeb-trick}
 we  may assume that 
  the semi-contact structure $\{\zeta_s\}$ on 
  $\Sigma\times[0,1]$  is of immersion type  with   the following properties. There exist  a contact structure $\mu$ on $\Sigma\times[-R,R]$ and a function $\psi:\Sigma\to [-\frac R2,\frac R2]$ such that
  \begin{description}
  \item{(i)} the germ of contact structure $\zeta_s$ is induced from $\mu$ on  a neighborhood of the $\Sigma\times s$ by an embedding $(x,s)\mapsto (x, s\psi(x)+t)\in\Sigma\times[-R,R] , x\in\Sigma, t\in[-\delta,\delta]$;
  \item {(ii)}
 there  are closed domains $V\subset\Sigma$ and $\wh V\subset \Int V$ such that $\psi|_V >0$,  and the contact structure $\mu$ is transverse to   graphs of functions $s\psi $ over $\wh W:=\Sigma\setminus\Int\wh V$ for all $s\in[0,1]$.
\end{description}
 
  We will keep the notation  $\psi$ for  the restriction of $\psi$ to $\wh W$.
  Note that $\psi$ can be presented as the difference
  $\psi=\psi_+-\psi_-$ of two positive functions $\psi_\pm \in C^\infty(\wh W)$such that  the graphs of the functions $s\psi_\pm$ are transverse to $\mu$. 
  
  Let $\{U_i\}_{i=1}^N$   be a  finite covering  of $W:=\Sigma\setminus\Int V$ by interiors of balls with smooth boundaries and such that $\bigcup\limits_1^N\overline{U_i}\subset \wh W$.

   Let $\{\lambda^\pm_i: \Sigma \to [0,1]\}$ be two partitions of unity on $W$ subordinate to the covering  $\{U_i\}_{i=1}^N$,   $
   \sum_{i=1}^N\lambda^\pm_i|_{W}=1$, such that
   $\Supp(\lambda^i_-)\Subset\Supp(\lambda^i_+),\, i=1,\dots, N,$  
   and $   \sum_{i=1}^N\lambda^\pm_i|_{\wh W}\leq 1$.
      
       For $0 \leq k \leq N$ define 
   $$L_k:=\sum_{i=1}^k\lambda^+_i
   $$
   noting $L_N|_{W}=1$ and $L_N|_{\wh V}=0$ shows $\wh V \subset U \subset \Sigma \setminus W$,
   where $U:= \{L_N < 1\}$.

 For $1 \leq i \leq N$ define the functions
$$\psi^\pm_i:=\psi^\pm\lambda^\pm_i: \Sigma \to \R
\quad\mbox{and}\quad
 \psi_i:=\psi^+_i-\psi^-_i: \Sigma \to \R$$
 and for $0 \leq k \leq N$ the functions
 $$
 \Psi_k^\pm:=\sum\limits_1^k\psi^\pm_i \quad\mbox{and}\quad
  \Psi_k:= \Psi_k^+-\Psi_k^-= \sum\limits_1^k\psi_i\,.
 $$
 One can further ensure that the graphs of the functions $\Psi^\pm$ are transverse to $\mu$.
Let $\Gamma(\Psi_k):=\{u=\Psi_k(x),\;x\in\Sigma\}\subset\wh C=\Sigma\times[-R,R]$ be the graph of   $\Psi_k$  and likewise $\Gamma(L_k):=\{u=L_k(x),\; x\in\Sigma\}\subset C= \Sigma\times[-1,1]$ be 
 the graph of $L_k$.   Set  $\Gamma_L:= \bigcup_{k=0}^N \Gamma(L_k)$
 and consider the map $p: \Gamma_L \to \Sigma\times[-R,R]$
 $$
 p(x,u)=(x,\psi^+(x)u -\Psi^-_k(x))\; \hbox{ for}\;(x,u)\in \Gamma(L_k).
 $$
  This map is well defined because if $(x,u)\in   \Gamma(L_i)\cap  \Gamma(L_j)$ for $0\leq i<j\leq N$ then
 $\Psi^-_i(x)=\Psi^-_j(x).$ Indeed, $L_i(x)=L_j(x)$ implies that $\psi^+_l(x)=0$ for all $i<l\leq j$, and hence $\psi^-_l(x)=0$ for   $i<l\leq j$ because $\Supp(\psi^-_l)\subset\Supp(\psi^+_l)$.
 Note that  $p({ \Gamma_L})=\bigcup\limits_0^N \Gamma(\Psi_i)$.
 The map $p$  extends to an immersion $P:\Op{ \Gamma_L}\to \Sigma\times[-R,R]$. See Figure \ref{fig: stack}.
 
 \begin{figure}\begin{center}
\includegraphics[scale=.5]{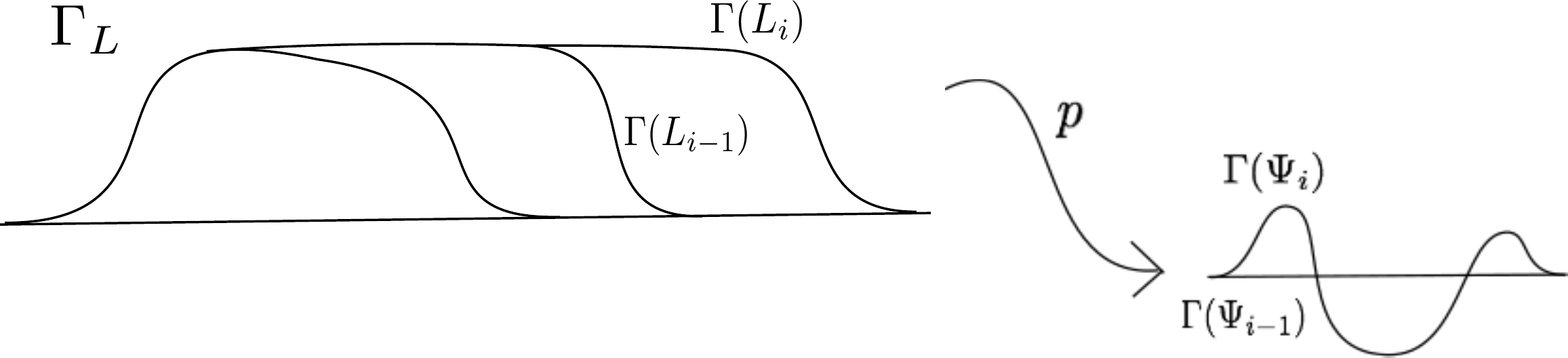}
\caption{The set $\Gamma_L$ used in the proof of the lemma. The region below a bump function is a saucer, so a partition of unity decomposes a general region into small saucers. Each saucer comes equipped with a map to $\wh C$ making it a contact shell.}\label{fig: stack}
\end{center}\end{figure}
 
 The complement $C\setminus \Gamma_L$ is a union of  the domain $\Omega:=\{(x,u);\;\Phi_N(x)\leq u\leq 1, x\in U\}$ and the interiors of saucers $B_i$ bounded by the graphs $\Gamma(L_{i-1})$ and $\Gamma(L_i)$ over the balls $\overline{U_i}$, $i=1,\dots N$.
 
 We extend the immersion $P$   to $\Omega$ as a diffeomorphism
 $\Omega\to\{(x,u);  \Psi_N(x)\leq u\leq  \psi_1(x),\;x\in U\}$ which is fiberwise linear with respect to the projection to $\Sigma$, 
 
It  remains to extend the induced contact structure $P^*\mu$ on $\Op{ \Gamma_L}\cup \Omega$ as a regular semi-contact structure on the saucers. The following lemma is obvious.
 
 \begin{lemma}\label{lm:foliation}
 Let $\Sigma$  be a  hypersurface in a $(2n+1)$-dimensional contact manifold transversal to the contact structure, and $f:D^{2n}\to \Sigma$ a smooth embedding of the unit $2n$-ball. Then there exists $\eps>0$ such that the   disc $f(D_\eps^{2n}) $
 is  regular. 
\end{lemma}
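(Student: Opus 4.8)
The plan is to reduce the assertion to a model computation in $(\R^{2n+1},\xi_\st^{2n+1})$ and then exploit that a sufficiently small embedded disc is $C^2$-close to its tangent disc, which forces its characteristic foliation into the shape of the standard round disc. First I would use the local normal form for hypersurfaces transverse to a contact structure: as recalled in Section~\ref{sec:notation}, the germ of $\xi$ along $\Sigma$ near $f(0)$ is determined by the characteristic foliation together with its transverse contact structure, and for a transverse hypersurface this foliation is, near a point, a nonsingular line field, which can be straightened to the fibres of $\pi:\Pi\to\R^{2n-1}$ with trivial holonomy and slicewise contact structure $\xi_\st^{2n-1}$. Hence the germ of the contact manifold along $\Op f(0)$ is contactomorphic to the germ of $(\R^{2n+1},\xi_\st^{2n+1})$ along $\Op 0$, carrying $\Sigma$ to $\Pi$ and $f(0)$ to $0$. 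Since ``regular'' refers only to the germ of $\xi$ along the disc, it then suffices to prove that $f(D^{2n}_\eps)\subset\Pi$ is a regular disc in $\Pi$ for $\eps$ small, i.e.\ that its characteristic foliation is diffeomorphic to that of the round disc and that its quotient by $\FF$ is a star-shaped domain.

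For the characteristic foliation I would rescale: writing $\Pi=\R^{2n-1}\times\R$ and $g_\eps(y):=\tfrac1\eps f(\eps y)$, one has $g_\eps\to df_0$ in $C^2$ as $\eps\to 0$ (the second derivatives tend to $0$). Since $df_0$ is a linear isomorphism, $df_0(D^{2n})$ is an ellipsoid, in particular a strictly convex body, so for $\eps$ small $g_\eps(D^{2n})$, hence also $f(D^{2n}_\eps)=\eps\cdot g_\eps(D^{2n})$, is strictly convex. A strictly convex body meets each fibre of $\pi$ in an interval, collapsing to a point exactly over the boundary of the shadow $\Delta_0:=\pi(f(D^{2n}_\eps))$, which is again a convex body, hence a ball. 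Thus $f(D^{2n}_\eps)$ is an interval bundle over $\Delta_0$ degenerating smoothly to points over $\partial\Delta_0$, exactly as for the round disc in $\Pi$; any two such smooth discs are diffeomorphic by a leafwise diffeomorphism (pick a diffeomorphism of the base balls and lift it, using that interval bundles over a ball are trivial and that the collapses are smooth), so the two characteristic foliations agree up to diffeomorphism. Connectedness of the fibres together with the trivial holonomy of $\FF$ along $\Pi$ also identifies the quotient $f(D^{2n}_\eps)/\FF$ with $(\Delta_0,\xi_\st^{2n-1}|_{\Delta_0})$.

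The one point requiring a genuine, if short, argument is that $\Delta_0$ is contactomorphic to a star-shaped domain: a generic eccentric ellipsoid in $(\R^{2n-1},\xi_\st)$ has boundary \emph{not} transverse to $Z=z\partial_z+\sum u_i\partial_{u_i}$ on the nose, so one cannot simply take the quotient as a model regular disc. Here I would note that, after rescaling by $\eps$, $\Delta_0$ is $C^1$-close to the fixed ellipsoid $E:=\pi(df_0(D^{2n}))$, and that every ellipsoid in $(\R^{2n-1},\xi_\st)$ is carried by a linear contactomorphism $B$ of $\xi_\st$ to one in Williamson normal form $\{a z^2+\sum_i\lambda_i u_i\le 1\}$ with $a,\lambda_i>0$ — first decoupling the $z$-variable by the linear contact shear $(z,w)\mapsto(z+\ell(w),w)$, then applying Williamson's symplectic normal form to the restriction of the defining quadratic form to the contact hyperplane. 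On the boundary of $\{a z^2+\sum\lambda_i u_i\le 1\}$ one computes $Z(a z^2+\sum\lambda_iu_i)=2az^2+\sum\lambda_iu_i=az^2+1\ge 1$, so $Z$ is transverse there with a margin that is invariant under rescaling of the ellipsoid; consequently, for $\eps$ small, $B(\Delta_0)$ — which is $C^1$-close, relative to its scale, to an appropriately sized Williamson-normal-form ellipsoid — is again transverse to $Z$, i.e.\ star-shaped. Hence $\Delta_0$ is contactomorphic to a star-shaped domain.

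Putting these together, $f(D^{2n}_\eps)$ is a regular disc in $\Pi$ for $\eps$ small, and therefore, via the germ identification of the first step, $f(D^{2n}_\eps)$ is regular in $\Sigma$. The main obstacle, as indicated, is the star-shapedness of the quotient, which forces the use of the symplectic normal form; the Darboux-type reduction, the convexity coming from $C^2$-closeness, and the triviality of the holonomy are all routine and are what the text has in mind when calling the lemma obvious.
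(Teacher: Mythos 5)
The paper offers no argument for this lemma (it is declared obvious), so the only question is whether your proof stands on its own. Your reduction to the model $(\R^{2n+1},\xi^{2n+1}_\st,\Pi)$, the $C^2$-rescaling and strict convexity argument identifying the characteristic foliation of $f(D^{2n}_\eps)$ with the parallel-chord foliation of a convex body (hence, up to a glossed but repairable point about smoothness along the equatorial tangency circle, with the round disc), and the identification of $D/\FF$ with the shadow $\Delta_0\subset(\R^{2n-1},\xi_\st)$ are all fine. The gap is exactly at the step you yourself single out as the crux: the claim that every centered ellipsoid in $(\R^{2n-1},\xi_\st)$ is carried by a \emph{linear} contactomorphism to $\{az^2+\sum_i\lambda_iu_i\le 1\}$. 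The ``linear contact shear'' $(z,w)\mapsto(z+\ell(w),w)$ is not a contactomorphism of $\xi_\st$: its pullback of $\lambda_\st=dz+\omega(w,dw)$ is $\lambda_\st+d\ell$, whose kernel at the origin is $\{\dot z=-\ell(\dot w)\}\ne\{\dot z=0\}$, so it does not even preserve the contact hyperplane at $0$. In fact a direct computation shows that the linear contactomorphisms of $(\R^{2n-1},\xi_\st)$ are exactly the maps $(z,w)\mapsto(cz,Sw)$ with $S^*\omega=c\,\omega$; such maps can never remove a $z$--$w$ cross term from the defining quadratic form, so the asserted normal form is unattainable whenever the shadow ellipsoid of $df_0(D^{2n})$ has a cross term, which happens for generic $f$.

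This is not a cosmetic problem, because the cross term genuinely destroys transversality to $Z$: for $Q=(z-kx)^2+\delta(x^2+y^2)$ in $(\R^3,\ker(dz+x\,dy-y\,dx))$ one finds $ZQ=(z-kx)(2z-kx)+\delta(x^2+y^2)$, which changes sign on $\{Q=1\}$ once $k^2>8\delta$, so $\partial\{Q\le1\}$ is not transverse to $Z$. Moreover transversality to $Z$ is invariant both under the scalar rescaling $p\mapsto\eps p$ (since $Z$ is linear) and under the contact rescaling (which is the flow of $Z$ itself), so neither the smallness of $\eps$ nor your $C^1$-closeness-after-rescaling argument restores it; your final ``margin invariant under rescaling'' estimate only applies after the (unavailable) linear normalization. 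To close the argument you would need a genuinely nonlinear ingredient: either a nonlinear contactomorphism making the shadow transverse to $Z$ (equivalently, a transverse contact vector field on $\Op\Delta_0$ whose germ is conjugate to $Z$), or a different organization of the proof that avoids having to handle an arbitrary ellipsoidal shadow — and note that adjusting the straightening chart does not help directly, since the linearization of a contactomorphism fixing the point is again only conformally symplectic in the $w$-variables. As it stands, the star-shapedness of $D/\FF$, i.e.\ the second condition in the definition of regularity, is not established.
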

 
 It follows that the covering by balls $U_i^+$ can be chosen to ensure   that the  discs $(\overline{U}^+_i, \zeta_0)$ are regular.
 If the function  $ \psi$ is  sufficiently $C^1$-small then    for each  $i=1,\dots, N$ the graphs $\Gamma^-_i \subset \Gamma(\Psi_{i-1})$ and $\Gamma^+_i \subset \Gamma(\Psi_i)$ of the functions $\Psi_{i-1}|_{\ol U_i^+}$ and $\Psi_{i}|_{\ol U_i^+}$ respectively, are regular as well.
Hence there is a  contactomorphism $g_i$ between a neighborhood    $O_i\supset\Gamma_i^-$  and a neighborhood of a disc in $\Pi=\{y_n=0\}\subset\R^{n+1}$.
Again, if $ \psi$   is  sufficiently small then  the neighborhood $O_i$ contains the disc $\Gamma_i^+$ and, moreover, $g_i(\Gamma^+_i)$ is transverse to the vector field $\frac\p{\p y_n}$, which ensures the regularity of the contact saucer $B_i$, $i=1,\dots, N$.

  Finally, it remains to observe that the required $C^1$-smallness  of the function  $ \psi$  can be achieved by passing to a partition. This concludes the proof of Lemma \ref{lm:annuli-to-saucers}. 
 \end{proof}  

\subsection{Contact structures in the complement of saucers. Parametric version} \label{saucers-param}
We prove in this section the following parametric version of Proposition
 \ref{prop:to saucers}.
  \begin{proposition}\label{prop:to-saucers-param}
 Let $ M$ be a $(2n+1)$-manifold and $A\subset M$ a closed set, so that  $M\setminus A$ is connected. Let ${}^T\xi_0$ be a fibered almost contact structure on ${}^TM$ which is genuine on $(T\times\Op A)\cup (\p T\times  M)\subset T\times M$. Then there exist a finite number of (possibly overlapping) discs $T_i\subset T$ and disjoint embedded fibered saucers ${}^{T_i}B_i\subset {}^TM$, $i=1,\dots, N$,
 such that  ${}^T\xi_0$ is homotopic rel.\ $(T\times A)\cup \p T\times M $ to a fibered almost contact structure ${}^T\xi_1$ which is is genuine on ${}^TM\setminus\bigcup B_i$ and whose restriction to each fibered saucer ${}^{T_i}B_i$ is semi-contact and regular.  
 Moreover, we can choose the discs in such a way that any non-empty intersection $T_{i_1}\cap\dots\cap T_{i_k}$, $1\leq i_1<\dots<i_k\leq L$, is again a disc with piecewise smooth boundary.
 \end{proposition}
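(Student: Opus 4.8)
The plan is to follow the non-parametric proof of Proposition~\ref{prop:to saucers} closely, upgrading each step to its fibered analogue, and then to arrange the combinatorics of the parameter discs $T_i$ so that all multiple intersections are again discs.

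\textbf{Step 1: Gromov's $h$-principle, fibered form.} First I would exhaust the argument of the proof of Proposition~\ref{prop:to saucers} in the parametric setting. Choose an embedded fibered annulus ${}^TC = T \times S^{2n}\times[0,1] \subset {}^TM \setminus A$. Applying the parametric version of Theorem~\ref{thm:Gromov-open} (Gromov's $h$-principle holds with parameters, for the relatively open pair) relative to $(T\times A)\cup(\p T\times M)$, deform ${}^T\xi_0$ to a fibered almost contact structure that is genuine away from ${}^TC$. Then, applying the parametric $h$-principle once more to the family of neighborhoods of the spheres $S^{2n}\times t$, $t\in[0,1]$, viewed as an extra parameter, one makes the fibered almost contact structure fiberwise semi-contact on ${}^TC$; that is, for each $\tau\in T$ we obtain a semi-contact structure $\{\zeta^\tau_s\}$ on $S^{2n}\times[0,1]$, varying smoothly in $\tau$, and genuine over $\Op\p T$. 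The point is that all the $h$-principle inputs used in Section~\ref{sec:to-saucers} (Lemma~\ref{lm:annuli-to-saucers}, Lemmas~\ref{lm:partition-to-special}, \ref{lm:Reeb-trick}, \ref{lm:foliation}) have straightforward parametric analogues, because they are themselves applications of $h$-principle-type techniques and of Moser-type arguments, which are all compatible with smooth dependence on a parameter $\tau\in T$ (and can be kept fixed over $\Op\p T$).

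\textbf{Step 2: Fibered saucer decomposition of ${}^TC$.} I would now run the proof of Lemma~\ref{lm:annuli-to-saucers} with parameters. By the fibered form of Lemma~\ref{lm:Reeb-trick} we may assume the fibered semi-contact structure on ${}^TC$ is of immersion type with a family of contact structures $\mu^\tau$ on $S^{2n}\times[-R,R]$, functions $\psi^\tau$, and domains $\wh V^\tau\subset V^\tau$ depending smoothly on $\tau$. Then, exactly as in the non-parametric proof, one chooses a finite covering $\{U_i\}$ of (a neighborhood of) $S^{2n}\setminus \Int V^\tau$ by balls; after a further partition in the $[0,1]$-direction one arranges the required $C^1$-smallness, and the partition-of-unity construction with the ``stack'' $\Gamma_L$ produces disjoint fibered saucers ${}^{T_i}B_i$ on whose complement the structure is genuine and whose restriction is regular fibered semi-contact. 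The covering $\{U_i\}$ and the partitions of unity can be chosen independent of $\tau$ (they live on the fixed manifold $S^{2n}$), which makes the saucers genuinely fibered over sub-discs $T_i\subset T$; the degeneration of $B_i^\tau$ as $\tau\to\p T_i$ is exactly the degeneration of $(2n+1)$-discs to $2n$-discs allowed in the definition of a fibered saucer in Section~\ref{sec:fibered-saucer}.

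\textbf{Step 3: Arranging the combinatorics of the $T_i$.} The one genuinely new ingredient, compared to the non-parametric statement, is the requirement that every non-empty intersection $T_{i_1}\cap\dots\cap T_{i_k}$ be a disc with piecewise smooth boundary. Here I would argue as follows: the sub-discs $T_i$ arise only from the bookkeeping of which covering ball $U_i$ and which sub-interval of $[0,1]$ a given saucer came from, so a priori one can simply take $T_i = T$ for all $i$ (all saucers fibered over the whole parameter disc), in which case all intersections equal $T$, a disc. The subtlety is only that to keep the fibered saucers \emph{disjoint} inside ${}^TM$ one may need to subdivide $T$; but since the saucers in the non-parametric construction are disjoint for a fixed $\tau$ and vary continuously, disjointness persists for all $\tau$ in the original disc $T$, so no subdivision is needed and one may again take $T_i = T$. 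Thus the combinatorial clause is achieved trivially, and the main work is entirely in Steps~1--2. I expect the main obstacle to be not any single step but the careful verification that the Moser/Gray arguments inside Lemmas~\ref{lm:Reeb-trick} and \ref{lm:annuli-to-saucers} --- in particular the choice of contact form with prescribed Reeb behaviour and the transversality of the graphs of $s\psi^\tau_\pm$ to $\mu^\tau$ --- can be carried out smoothly in $\tau$ while being held fixed over $\Op\p T$; this is routine but notationally heavy, and is the reason the proposition is stated with only a sketch (``Similarly, one can prove'').
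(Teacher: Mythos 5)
Your Steps 1--2 follow the same skeleton as the paper's argument (parametric Gromov $h$-principle to reduce to a fibered semi-contact structure on $T\times S^{2n}\times[0,1]$, then a parametric version of Lemma~\ref{lm:annuli-to-saucers}, which is exactly the paper's Lemma~\ref{lm:annuli-to-saucers-param}). The genuine gap is in your treatment of the parameter direction in Steps 2--3: the claim that the covering $\{U_i\}$ and the partitions of unity can be chosen independently of $\tau$, and hence that one may take $T_i=T$ for all $i$. This is not a dispensable piece of bookkeeping. In the parametric construction the locus where the fibered structure fails to be genuinely contact is a compact subset $W\subset T\times\Sigma$ which is \emph{not} a product and which is empty over $\Op\p T$, because there the structure is already contact and the homotopy must be constant on $\p T\times M$. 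The paper covers $W$ by small product balls $U_i=\Int T_i\times\Int\Delta_i$ with $T_i\Subset\Int T$, and the cut-off functions $\lambda_i^{\pm}$ are functions on $T\times\Sigma$ vanishing near $\p T$. This $\tau$-dependence is what (a) keeps the whole modification supported away from $\p T\times M$, so the homotopy is relative to $\p T\times M$ as the statement requires, and (b) makes the resulting shells honest fibered saucers in the sense of Section~\ref{sec:fibered-saucer}, with fibers degenerating to $2n$-discs over $\p T_i$. If you take $T_i=T$ with $\tau$-independent bump functions, the staircase construction alters the structure over $\p T$ (the re-immersed structure there need not coincide with the original one), so the relative condition fails; moreover the shells you produce over $\tau\in\p T$ are non-degenerate semi-contact balls rather than saucers, and the downstream parametric machinery (Proposition~\ref{prop:saucer-to-circle-param}, Lemma~\ref{lm:universal-eps}, Proposition~\ref{prop:unique-model-param}) breaks, since it relies precisely on the saucers/Hamiltonians becoming solid/positive as $\tau\to\p T_i$. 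Your diagnosis that the only obstacle to $T_i=T$ is disjointness of the saucers is therefore off target: disjointness is not the issue; relativity at $\p T$ and the degeneration structure are.

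Once this is corrected, the intersection clause is not ``trivial'' but is still easy: since the $T_i$ are small sub-discs chosen as factors of a product covering of $W$, one simply arranges (as the paper does) that all non-empty intersections $T_{i_1}\cap\dots\cap T_{i_k}$ are discs with piecewise smooth boundary, e.g.\ by taking the $T_i$ to be small cubes. A further minor omission: the proposition allows a general compact parameter manifold $T$, while your argument (like the paper's main construction) is written for $T=D^q$; the paper reduces the general case to this one by triangulating $T$ and applying the disc case inductively over the skeleta.
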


As in the non-parametric case the following lemma is the main part of the proof.
   \begin{lemma}\label{lm:annuli-to-saucers-param} Any  fibered  semi-contact structure  ${}^T\xi=\{\zeta^\tau_s\}_{s\in[0,1],\tau\in T}$  on the fibered annulus 
    ${}^TC:=T\times \Sigma\times[0,1]$ is homotopic relative $T\times \Op\p C\cup\p T\times C$ to a fibered almost contact structure ${}^T\wt \xi$ which is a genuine contact structure in the complement of   finitely many  fibered saucers ${}^{T_1}B_1,\dots, {}^{T_k}B_k\subset {}^TC$, and such that
    the restriction ${}^{T_j}\wt\xi|_{{}^{T_j}B_j}$ for each  $j=1,\dots, k$ 
     is semi-contact and regular.
     Moreover, we can choose the discs $T_i\subset T$ in such a way that any non-empty intersection $T_{i_1}\cap\dots\cap T_{i_p}$, $1\leq i_1<\dots<i_p\leq k$, is again a disc with piecewise smooth boundary.

    \end{lemma}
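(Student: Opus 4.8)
\textbf{Proof strategy for Lemma~\ref{lm:annuli-to-saucers-param}.}
The plan is to run the non-parametric argument (the proof of Lemma~\ref{lm:annuli-to-saucers}, via Lemmas~\ref{lm:partition-to-special}, \ref{lm:Reeb-trick}, and \ref{lm:foliation}) in families, treating $T = D^q$ as the parameter space and being careful about two new phenomena: (a) the constructions must be made relative to $\p T$, where ${}^T\xi$ is already genuine, and (b) the combinatorial data (the partition of $[0,1]$, the finite covering of $\Sigma\setminus\Int V$ by balls, the partitions of unity) cannot be chosen globally over $T$ but must be chosen over a finite covering of $T$ by subdiscs $T_\alpha$, with the patching arranged so that all overlaps remain discs. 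First I would invoke a parametric Darboux--Moser argument to upgrade Lemma~\ref{lm:partition-to-special}: for each $\tau_0\in T$ there is a neighborhood $T_{\tau_0}\subset T$ and an $N$ such that over $T_{\tau_0}\times\Sigma\times[a_i,a_{i+1}]$ the fibered semi-contact structure is of immersion type; by compactness finitely many such $T_\alpha$ cover $T$, and after a standard subdivision (barycentric-type) of $T$ into small discs we may assume these $T_\alpha$ and all their mutual intersections are discs with piecewise-smooth boundary. Over each such $T_\alpha$ I then run the parametric version of Lemma~\ref{lm:Reeb-trick}: the Reeb-vector-field normalization, the choice of cutoff Hamiltonian $H$, the isotopy $h_\eps$, and the resulting bound on $\|\psi_1^\tau\|_{C^1}$ all depend smoothly on $\tau$ provided $T_\alpha$ is small, and one keeps everything fixed for $\tau\in\p T$ (where the structure is already contact, so $\psi^\tau\equiv 0$ works and the sets $V^\tau,\wh V^\tau$ can be taken empty).

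Next I would carry the decomposition-into-saucers step of the proof of Lemma~\ref{lm:annuli-to-saucers} through in the family. Over a fixed small disc $T_\alpha$, the covering $\{U_i\}$ of $W=\Sigma\setminus\Int V$, the two subordinate partitions of unity $\lambda_i^\pm$, the functions $\Psi_k^\pm$, the stacked graph $\Gamma_L$ and the immersion $P$ can all be chosen to depend smoothly on $\tau\in T_\alpha$ (indeed one can take them $\tau$-independent over $T_\alpha$ after shrinking, since the contact structures $\mu^\tau$ vary in a $C^\infty$-small family). This produces, over $T_\alpha$, a fibered almost contact structure genuine away from finitely many fibered saucers ${}^{T_\alpha}B_i^\alpha$ whose restrictions are semi-contact and regular, by Lemma~\ref{lm:foliation} applied fiberwise (the $C^1$-smallness that guarantees regularity is again achieved by passing to a finer partition of $[0,1]$, uniformly over the compact $T_\alpha$). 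The saucers degenerate to $2n$-discs as $\tau\to\p T_\alpha$ exactly as in the fibered-saucer definition of Section~\ref{sec:fibered-saucer}; over $T_\alpha\cap\p T$ everything is the already-given genuine structure.

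The main obstacle is \textbf{gluing the local constructions over the overlaps} $T_\alpha\cap T_\beta$ into one global fibered almost contact structure while keeping the saucers disjoint and their base discs well-behaved. The resolution is the standard inductive gluing over the covering $\{T_\alpha\}$: order the $T_\alpha$; over $T_1$ use the construction above; assuming the structure has been built over $T_1\cup\dots\cup T_{\beta-1}$, the structure over $T_\beta\cap(T_1\cup\dots\cup T_{\beta-1})$ is already genuine away from previously-placed saucers, so one only needs to fill in over the part of $T_\beta$ not yet covered and over the ``collar'' $T_\beta\cap(T_1\cup\dots\cup T_{\beta-1})$ interpolate --- using again the parametric $h$-principle (Theorem~\ref{thm:Gromov-open}) and a relative version of the local construction --- and one places the new saucers in a region of $\Sigma\times[0,1]$ disjoint from the finitely many already placed (possible by making the $U_i$ small, since there are only finitely many saucers total). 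By the subdivision of $T$ chosen at the outset, every nonempty intersection $T_{i_1}\cap\dots\cap T_{i_p}$ of the resulting base discs is a disc with piecewise-smooth boundary, which gives the last assertion. Finally, Proposition~\ref{prop:to-saucers-param} follows from Lemma~\ref{lm:annuli-to-saucers-param} exactly as in the non-parametric case: embed a fibered annulus ${}^TC = T\times S^{2n}\times[0,1]$ in $T\times(M\setminus A)$ (using connectedness of $M\setminus A$), apply the existence and one-parametric parts of the fibered Gromov $h$-principle to make ${}^T\xi_0$ genuine off ${}^TC$ and semi-contact on ${}^TC$ rel $(T\times A)\cup(\p T\times M)$, and then invoke the lemma.
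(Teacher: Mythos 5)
Your strategy diverges from the paper's in an essential way: you localize over a covering of $T$ by small discs $T_\alpha$, run the non-parametric construction over each $T_\alpha$, and then try to glue the local decompositions by induction over the covering. The paper never glues in the parameter direction at all: after the parametric analogue of Lemma~\ref{lm:Reeb-trick} it covers the single region $W\subset T\times\Sigma$ where $\psi$ may fail to be positive by finitely many \emph{product} sets $U_i=\Int T_i\times\Int\Delta_i$ (with the $T_i\subset T$ chosen so that all intersections are discs) and takes two partitions of unity on $T\times\Sigma$ subordinate to this covering; the fibered saucers ${}^{T_i}B_i$ then arise between the graphs of consecutive partial sums $\Psi_{k-1},\Psi_k$, are disjoint by construction, and automatically have the base discs $T_i$ demanded by the statement. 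The global partition of unity over $T\times\Sigma$ is exactly the device that makes any patching over parameter overlaps unnecessary.

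The gap in your proposal is the gluing step itself. Over an overlap $T_\beta\cap(T_1\cup\dots\cup T_{\beta-1})$ the structure built at earlier stages and the one your local construction over $T_\beta$ would produce are different fibered almost contact structures (obtained from ${}^T\xi$ by different homotopies, with different saucers), and you must extend the already-fixed data over the rest of $T_\beta$ relative to this overlap. You propose to interpolate ``using the parametric $h$-principle (Theorem~\ref{thm:Gromov-open}) and a relative version of the local construction,'' but Theorem~\ref{thm:Gromov-open} is not available here: in the fiber direction the structure must stay fixed on $\Op\p C$, and the pair $(\Sigma\times[0,1],\Op\p C)$ is not relatively open --- removing this very obstruction is the reason saucers are introduced, so the appeal to the $h$-principle at this point is circular. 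What would actually be required is a version of the entire construction relative to a closed subset of the parameter space over which a saucer decomposition (not a genuine contact structure) has already been fixed; you neither formulate nor prove such a relative statement, and it does not follow routinely from the non-parametric lemma. Similarly, asserting that the new saucers can be ``placed in a region disjoint from the finitely many already placed'' overlooks that their location is dictated by where the interpolated structure fails to be contact, not chosen freely. Some substitute for the paper's partition-of-unity construction over $T\times\Sigma$ is therefore missing, and as written the induction over the $T_\alpha$ does not close.
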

     First, note that 
  Lemma \ref{lm:partition-to-special}   
  have the following parametric analog.

       \begin{lemma}\label{lm:partition-to-special-param}
 Given a  fibered semi-contact structure ${}^T\!\xi$ on $T\times\Sigma\times[0,1]$ there exists $N>0$ such that the restriction  of 
 ${}^T\xi$  to $ T\times \Sigma\times[a_i,a_{i+1}]$, $a_i:=a+i\frac{b-a}N$ is of immersion type for each $i=0,\dots, N-1$.
 \end{lemma}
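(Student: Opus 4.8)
\textbf{Proof proposal for Lemma \ref{lm:partition-to-special-param}.}

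The plan is to run the argument of Lemma \ref{lm:partition-to-special} with the parameter $\tau \in T$ carried along throughout, using that $T$ is compact. First I would fix $\eps > 0$ so that for every $(\tau, s) \in T \times [a,b]$ the contact structure $\zeta^\tau_s$ is genuinely defined on $\Sigma \times [s-\eps, s+\eps]$; such an $\eps$ exists by compactness of $T \times [a,b]$ and the definition of a fibered semi-contact structure. As in the non-parametric case I then view $\{\zeta^\tau_s\}$ as a family, now doubly indexed by $(\tau, s)$, of germs of contact structures on the fixed annulus $\Sigma \times [-\eps, \eps]$.

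The key step is a parametric Darboux--Moser argument: for each $(\tau_0, s_0) \in T \times [a,b]$ and all sufficiently small $\sigma > 0$ there is a smooth family of embeddings $\phi^{(\tau,s)}_{(\tau_0,s_0)} : \Sigma \times [-\sigma, \sigma] \to \Sigma \times [-\eps, \eps]$, defined for $(\tau, s)$ in a neighborhood of $(\tau_0, s_0)$ with $s \in [s_0, s_0 + \sigma]$, such that $\phi^{(\tau_0,s_0)}_{(\tau_0,s_0)} = \Id$ and $(\phi^{(\tau,s)}_{(\tau_0,s_0)})^*\zeta^\tau_s = \zeta^{\tau_0}_{s_0}$, where additionally the hypersurfaces $\phi^{(\tau,s)}_{(\tau_0,s_0)}(\Sigma \times 0)$ may be taken graphical in $\Sigma \times [-\eps, \eps]$. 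The Moser method produces the required isotopy by integrating a smoothly varying vector field, and smooth dependence on the auxiliary parameters $\tau$ is automatic since the construction is canonical once a contact form and a cutoff are chosen; shrinking $\sigma$ keeps the hypersurfaces graphical. This shows that near any $(\tau_0, s_0)$, the restriction of ${}^T\xi$ to a neighborhood-in-$\tau$ times $[s_0, s_0 + \sigma]$ is of immersion type.

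To finish, I would invoke compactness twice. For each fixed $\tau_0$, covering $\{\tau_0\} \times [a,b]$ by finitely many of the above neighborhoods and taking a Lebesgue number yields a $\sigma(\tau_0) > 0$ and a neighborhood $U_{\tau_0} \ni \tau_0$ so that ${}^T\xi|_{U_{\tau_0} \times \Sigma \times [s_0, s_0+\sigma(\tau_0)]}$ is of immersion type for every $s_0 \in [a, b-\sigma(\tau_0)]$. Then covering the compact $T$ by finitely many $U_{\tau_0}$ and taking $\sigma_* := \min_i \sigma(\tau_{0,i}) > 0$, any $N > \tfrac{b-a}{\sigma_*}$ gives a partition $a_i = a + i\tfrac{b-a}{N}$ with each $[a_i, a_{i+1}]$ of length $< \sigma_*$, so that ${}^T\xi|_{T \times \Sigma \times [a_i, a_{i+1}]}$ is of immersion type. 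The main obstacle is purely bookkeeping: ensuring the locally constructed immersions glue, or rather that one does not need them to glue — since ``immersion type'' is a property of the restriction to a subannulus and not extra structure, one only needs an immersion-type representative on each $T \times \Sigma \times [a_i, a_{i+1}]$, which the parametric Moser argument supplies directly without any compatibility condition between consecutive blocks. Thus no genuine difficulty beyond the routine parametric Moser estimate arises.
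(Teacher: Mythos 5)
Your overall strategy (parametric Moser plus compactness of $T\times[a,b]$) is the intended one — the paper states this lemma as the direct parametric analog of Lemma \ref{lm:partition-to-special} and gives no separate proof — but as written there is a gap at the final globalization step in the $\tau$-direction. For a \emph{fibered} semi-contact structure, ``of immersion type'' means (as it is used in the proof of Lemma \ref{lm:annuli-to-saucers-param}) that there is a single fibered contact structure ${}^T\mu=\{\mu^\tau\}_{\tau\in T}$ on $T\times\Sigma\times[-R,R]$ and a family of functions $\psi^\tau_s$, smooth in $(\tau,s)$ over \emph{all} of $T$, realizing each germ $\zeta^\tau_s$ along the graph of $\psi^\tau_s$. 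Your construction pulls back, near each $(\tau_0,s_0)$, to the fixed structure $\zeta^{\tau_0}_{s_0}$, so after covering $T$ by finitely many sets $U_{\tau_{0,j}}$ you only obtain immersion-type presentations over each $U_{\tau_{0,j}}$ separately, with mutually different ($\tau$-independent) ambient structures. Concluding that ${}^T\xi|_{T\times\Sigma\times[a_i,a_{i+1}]}$ is of immersion type requires patching these into one presentation over $T$, and that patching (gluing the ambient structures $\zeta^{\tau_{0,j}}_{s_0}$ and the embeddings on overlaps) is nontrivial and is not addressed: your closing remark only explains why no compatibility is needed between consecutive $s$-blocks, which is the easy direction.

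The repair is to not localize in $\tau$ at all. For each block take the ambient fibered structure to be $\mu^\tau:=\zeta^\tau_{a_i}$, the germ at the bottom slice extended to $\Sigma\times[a_i-\eps,a_i+\eps]$ (with $\eps$ uniform by compactness of $T\times[a,b]$), and run the Moser argument only in the $s$-variable, treating $\tau\in T$ as a smooth auxiliary parameter; since one never compares structures at different values of $\tau$, no connectedness or contractibility of the $\tau$-parameter is needed, the resulting graphical embeddings automatically depend smoothly on $\tau$ over all of $T$, and compactness is used only to get a uniform $\sigma$ (keeping the Moser flow inside the $\eps$-neighborhood and the images graphical). With that change your partition argument and the choice $N>\tfrac{b-a}{\sigma}$ go through verbatim.
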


 \begin{proof}[Proof of Lemma \ref{lm:annuli-to-saucers-param}]
Using Lemma \ref{lm:partition-to-special-param}, we can assume that the fibered
 semi-contact structure ${}^T\xi$ on the annulus 
    ${}^TC=T\times C$ is of immersion type, 
    i.e.\ there exists a fibered contact structure 
    ${}^T \mu=\{\mu^\tau_{\tau\in T}\}$ 
    on ${}^T\wh C:=T\times\Sigma\times[-R,R]$ and a family of functions  
    $\psi^\tau_s:\Sigma\to[- r,r]$, $r<R$,
    $s\in[0,1], \tau\in T$, such that the contact structure $\zeta^\tau_s$  on a neighborhood of $\Sigma^\tau _s$ is induced from 
    $\mu^\tau$ by an embedding of this neighborhood onto a neighborhood of the graph of the function $\psi^\tau_s$. 
    
     Let $T'\subset\Int T'', T''\subset \Int T$   be two slightly smaller  compact parameter spaces such that the semi-contact structure $\mu^\tau$ is contact for $\tau\in T\setminus\Int T'$, i.e.\ $\psi^\tau_s(x)>\psi^\tau_{s'}(x)$ for any $x\in\Sigma, \tau\in T\setminus\Int T'$ and $s>s'$.

Similarly to the non-parametric case (see Lemma \ref{lm:Reeb-trick}) we can  reduce to the case when
the following property holds: $\psi^\tau_0=0,\psi^\tau_s=s\psi^\tau$ and 
there exist    domains $\wh W, W \subset T\times\Sigma$, $\Int \wh W\Supset W$ such that $\psi^\tau(x)>0$ for $(x,\tau)\in
T'\times\Sigma\setminus\Int W$, and the contact structures $\mu^\tau$ transverse to the graphs $\Gamma^\tau_s $ of the functions
$s\psi^\tau$ over $V^\tau:=(\Sigma\setminus\Int W)\cap{\tau\times\Sigma}$.

 Denote by $\psi$ the function $\psi(\tau,x,s):=\psi^\tau(x,s)$ for $(\tau,x,s)\in W$, and present $\psi$   as the difference of two positive functions, $\psi=\psi^+-\psi^-$.

  Let us choose a  finite covering  of $ W$ denoted $\{U_i\}$, 
 such that  $U_i=\Int T_i\times \Int\Delta^\pm_i $,
  where $\Delta_i\subset\Sigma,\; T_i\subset T''$, are  balls with smooth boundaries, $ i=1,\dots, N$, and  $\bigcup\limits_1^N \overline{U_i}\subset \wh W$. Here   we  choose the discs $T_i\subset T$ in such a way that any non-empty intersection $T_{i_1}\cap\dots\cap T_{i_k}$, $1\leq i_1<\dots<i_k\leq L$, is again a disc with piecewise smooth boundary.  More geometric constraints on the coverings will be imposed below.
  
  Let $\lambda^\pm_i$ be two  partitions   of unity over $\wh W$ subordinated to  $U_i$ so that $\Supp(\lambda_i^-)\Subset\Supp(\lambda_i^+)$, 
     $$\sum\lambda^\pm_i|_{ W}=1,\;\hbox{ and} \; \sum\lambda^\pm_i|_{\wh W}\leq 1.$$

 Let us denote
$$\psi^\pm_i:=\psi^\pm\lambda^\pm_i,\;
 \psi_i:=\psi^+_i-\psi^-_i,\; i=1,\dots, N.$$ 
 Set $$\Psi_k^\pm:=\sum\limits_1^k\psi^\pm_i,\;\; \Psi_k:=  \Psi_k^+-\Psi_k^-=\sum\limits_1^k\psi_i,\;\Phi_k:=\sum\limits_1^k\lambda^+_i.$$
Note that $L_N|_{ W}=1$ and $L_N|_{\wh V}=0$, so $V \subset U:= \{L_N(x) < 1\} \subset T\times\Sigma \setminus   W$.

In ${}^TC=T\times\Sigma\times[0,1]$ we let $\Gamma(L_k)$ be the graph of the function $L_k$, and in ${}^T\wh C = T\times\Sigma\times[-R,R]$ we let $\Gamma(\Psi_k)$ be the graph of the function  $\Psi_k$. Set
$\Gamma_L=\bigcup\limits_0^N \Gamma(L_i)\subset {}^TC$.

Consider the map $p:{ \Gamma_L}\to T\times \Sigma\times[-R,R]$ given by the formula
 $$p(\tau,x,s)=(\tau,x,  \psi^+(\tau,x)s -\Psi^-_i(\tau,x))\; \hbox{ for}\;(\tau,x,s)\in \Gamma(L_i).$$
  This map is well defined because if $(\tau, x,s)\in   \Gamma(L_i)\cap  \Gamma(L_j)$ for $0\leq i<j\leq N$ then
 $\Psi^-_i(\tau,x)=\Psi^-_j(\tau,x).$ Indeed, $\Phi_i(\tau, x)=\Phi_j(\tau,x)$ implies that $\psi^+_l(\tau,x)=0$ for all $i<l\leq j$, and hence $\psi^-_l(\tau,x)=0$ for   $i<l\leq j$ because $\Supp(\psi^-_l)\subset\Supp(\psi^+_l)$.
 Note that  $p({ \Gamma_L})=\bigcup\limits_0^N \Gamma(\Psi_i)$.
 The map $p$  extends to an immersion $P:\Op{ \Gamma_L}\to T\times \Sigma\times[-R,R]$.

 The complement ${}^TC\setminus \Gamma_L$ is a union of the 
 domain $\Omega:=\{(x,s);\;\Phi_N(\tau,x)\leq s\leq 1, \tau\in T, x\in U\}$ and  interiors of fibered saucers ${}^{T_i}B_i$ bounded by the graphs $\Gamma(L_{i-1})$ and $\Gamma(L_i)$ over the balls $\overline{U_i}$, $i=1,\dots N$.
 
 We extend the immersion $P$   to $\Omega$ as a fiberwise linear,
 with respect to the projection to $T\times \Sigma$,
 diffeomorphism
  $$\Omega\to\{(\tau, x,u);  \Psi_N(\tau,x)\leq u\leq  \psi(\tau, x),\;(\tau,x)\in U\}.$$
 
It  remains to extend the induced contact structure $P^*\left({}^T\mu\right)$ on $\Op{ \Gamma_L}\cup \Omega$ as a fibered regular semi-contact structure to the saucers.  
 
 It follows from Lemma \ref {lm:foliation} that the covering by balls $U_i$ can be chosen to ensure   that for each $i=1,\dots, N$ and  $\tau \in T^+_i$ the  disc  $(\tau\times\Delta^+_i, \zeta^\tau_0)$ is regular.
 If the functions $ \psi_1$  and $ \psi_0$ are sufficiently $C^1$-close than    for each  $i=1,\dots, N$ the graphs $\Gamma^-_i \subset \Gamma(\Psi_{i-1})$ and $\Gamma^+_i \subset \Gamma(\Psi_i)$ of the functions $\Psi_{i-1}|_{\ol U_i^+}$ and $\Psi_{i}|_{\ol U_i^+}$, respectively, are fibered over $T_i^+$ by regular discs  as well.
Hence, there is a fibered over $T_i^+$ contactomorphism $g_i$ between a neighborhood    $O_i\supset\Gamma_i^-$  and a neighborhood of a  fibered disc in $T_i\times \Pi=\{y_n=0\}\subset\R^{n+1}$.
Again, if $ \psi$ is  sufficiently close then  the neighborhood $O_i$ contains the disc $\Gamma_i^+$ and, moreover, $g_i(\Gamma^+_i)$ is transverse to the vector field $\frac\p{\p y_n}$, which ensures the regularity of the fibered  contact saucer ${}^{T_i}B_i$, $i=1,\dots, N$.

  Finally, it remains to observe that the required smallness of the function  $ \psi$    can   be   achieved by passing to a partition.   \end{proof}  
  \medskip
 
  \begin{proof}[Proof of Proposition \ref{prop:to-saucers-param}] First assume $T = D^q$. Choose an embedded annulus $C=S^{2n}\times[0,1]\subset M\setminus A$ and first use   the $q$-parametric part of Gromov's $h$-principle \ref{thm:Gromov-open} to deform ${}^T\xi_0$ rel.\ $(T\times A)\cup(\p T\times M)$ to a fibered almost contact structure which is genuine fibered contact structure on $T\times M\setminus C$. Next, with use of the $(q+1)$-parametric part of Theorem \ref{thm:Gromov-open}   applied to the family of neighborhoods of spheres $\tau\times S^{2n}\times t, \tau\in T, t\in[0,1]$  we make the fibered almost contact structure semi-contact on ${}^TC=T\times C$. Finally we use Lemma \ref{lm:annuli-to-saucers-param} to complete the proof. For general $T$ we  triangulate it and  inductively over skeleta apply the previous proof to each simplex.
 \end{proof}


\section{Reduction to a universal model}\label{sec:universal}
We prove in this section Proposition \ref{prop:unique-model}.
  
  \subsection{Equivariant coverings}\label{sec:equi-cov}
The key step in the proof of Proposition \ref{prop:unique-model} is the following  
 \begin{proposition}\label{prop:circular-to-universal-saucers}
 For a fixed dimension, there is a finite list of saucers $\{(B_p,\zeta_p)\}$ for $p=1,\dots, L,$
with the following property.
 For any circle model contact shell $(B_K, \eta_K)$ defined by a
   time-independent contact Hamiltonian $K\colon\Delta\to\R$,
    there exist finitely many  disjoint balls $B_i\subset B_K$ for $i=1,\dots, q$
     so that the contact shell $(B_K, \eta_K)$ is homotopic relative to $\Op\p B_K$ to an almost contact structure 
     $\xi$ that is genuinely contact on $B_K \setminus \bigcup_{i=1}^{q} B_i$
     and each contact shell  $\xi|_{B_i}$ is equivalent to one of the saucers $(B_p,\zeta_p)$ for $p=1,\dots, L$.  
     \end{proposition}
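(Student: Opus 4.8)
\textbf{Proof strategy for Proposition~\ref{prop:circular-to-universal-saucers}.}
The plan is to run the reduction pipeline of Sections~\ref{sec:holes}--\ref{sec:universal} ``in reverse'': we already know from Proposition~\ref{prop:saucer-to-circle} that a regular semi-contact saucer dominates a given circle model, and here we want the opposite direction --- to exhibit the circle model $(B_K,\eta_K)$ as a contact structure with finitely many holes, each hole being one of a universal finite list of saucers. The starting point should be a \emph{thickening trick}: view $(B_K,\eta_K)$ as sitting inside a slightly larger almost contact manifold by attaching a collar, apply the non-parametric reduction Proposition~\ref{prop:to saucers} to the collar together with a neighborhood of $\partial B_K$ to realize $\eta_K$, up to homotopy rel $\Op\partial B_K$, as a genuine contact structure away from finitely many regular semi-contact saucers $B_i\subset B_K$. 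At this stage the saucers $B_i$ are regular but their germs $\zeta_p=\xi|_{B_i}$ are determined by arbitrary functions $\phi_i$ on regular $2n$-discs $D_i\subset\Pi$, so the list is not yet finite.

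The second step, which is the heart of the argument, is to cut down the (a priori infinite) collection of possible saucer germs to a finite \emph{universal} list. Here I would use the equivariant covering technique advertised in the plan of the paper, together with the ``disorderability'' Propositions~\ref{prop:disorder} and~\ref{prop:shallow}: a regular saucer $\sigma_\phi$ dominates $\sigma_{\phi'}$ whenever $\phi'\le\phi|_{D'}$ and $\phi>0$ on $\Int D\setminus D'$, and by Lemma~\ref{l:PhiToTildePhi} and Proposition~\ref{prop:disorder} one can conjugate $\phi$ on the region where it is positive to a \emph{standard} model, absorbing all the apparent freedom of $\phi|_{\{\phi\ge 0\}}$. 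Combining this with Lemma~\ref{lm:foliation} (which gives regularity of small subdiscs) and a compactness/finite-subcover argument over the space of regular discs with bounded geometry, each saucer hole $B_i$ can be subdivided into finitely many smaller saucers each of which is equivalent to a member of a fixed finite list $\{(B_p,\zeta_p)\}_{p=1}^L$ depending only on the dimension $2n+1$. The mechanism is: pull back $\eta_K$ by an equivariant covering of the relevant parameter space so that the standardization can be performed simultaneously over all fibers, use Proposition~\ref{prop:shallow} to ignore the irrelevant negative part of the contact Hamiltonians, and descend.

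The third step is bookkeeping: since the homotopies produced by Proposition~\ref{prop:to saucers} and by the conjugation lemmas are all relative to neighborhoods of the relevant boundaries, they concatenate to a single homotopy of $(B_K,\eta_K)$ rel $\Op\partial B_K$ to a structure $\xi$ genuine on $B_K\setminus\bigcup_{i=1}^q B_i$ with each $\xi|_{B_i}$ equivalent to one of the $(B_p,\zeta_p)$; re-indexing the subdivided saucers yields the final statement. I expect the \textbf{main obstacle} to be the finiteness of the universal list: one must argue that a single finite collection of saucer germs works for \emph{all} time-independent Hamiltonians $K$ simultaneously, which is exactly where the non-constructive equivariant-covering argument and the disorderability of $F_+(\Delta)$ under $\cD_0(\Delta)$ are essential, and where the proof genuinely uses the structure developed in Section~\ref{sec:domin}. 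A secondary subtlety is ensuring that the subdivision of a regular saucer into small saucers keeps all pieces regular, which relies on the $C^1$-smallness/partition arguments in the proof of Lemma~\ref{lm:annuli-to-saucers}.
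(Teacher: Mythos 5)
Your overall slogan (equivariant coverings plus the disorderability results of Section~\ref{ssec:disorder}) is the right one, but the two mechanisms you actually propose for obtaining a \emph{finite universal} list of saucers do not work, and this is precisely the heart of the proposition. First, Proposition~\ref{prop:disorder} and Lemma~\ref{l:PhiToTildePhi} cannot ``absorb all the apparent freedom of $\phi|_{\{\phi\ge 0\}}$'': disorderability only lets you push a nonnegative Hamiltonian \emph{above} a given one by conjugation, i.e.\ it makes the \emph{depth of the negative part} irrelevant (this is exactly the content of Proposition~\ref{prop:shallow}); the positive part is exactly what cannot be standardized by conjugation in dimension $>3$ --- the paper itself points out after Lemma~\ref{lm:3-domin} that minimal elements up to conjugation are not expected to exist there, and if your standardization of the positive part were possible one would essentially recover such minimality. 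Second, the ``compactness/finite-subcover argument over the space of regular discs with bounded geometry'' has no real content: equivalence classes of saucer germs do not form a compact (let alone finite) family, and subdividing the saucers produced by Proposition~\ref{prop:to saucers} into smaller regular saucers just reproduces the original problem with uncontrolled germs. Re-applying Proposition~\ref{prop:to saucers} inside $B_K$ (your first step) is likewise not part of the argument and does not move you toward finiteness.

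What the paper actually does is construct the finite list explicitly from a lattice action, with no recourse to Proposition~\ref{prop:to saucers}. One fixes once and for all a sufficiently small regular disc $Q\subset\Pi$, the finite set $S(Q)=\{g\in\Theta:\,g(Q)\cap Q\ne\varnothing\}$ of cardinality $m$, and a single function $\phi=\phi_+-\phi_-$ supported in $Q$ whose negative part is deeper than $(m+1)\max\phi$. Given $(B_K,\eta_K)$, Proposition~\ref{prop:shallow} first replaces $K$ by a dominated $\wt K$ agreeing with $K$ where $K>0$ and bounded below by $-\mu/N$; then Lemma~\ref{lm:positive-decomposition} produces, for $N$ large, a finite set $\Lambda\subset\wh\Theta_N$ of lattice translates such that $\Phi=\tfrac{\mu}{N}+\sum_{g\in\Lambda}\phi_{g,N}<\wt K$, so $\eta_\Phi$ is dominated by $\eta_K$. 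Adding the translates one at a time gives a telescoping decomposition of $\eta_\Phi$ starting from the solid shell $\eta_{\mu/N}$, in which consecutive shells differ by a regular saucer sitting over a translate $g_k(Q)$; conjugating back by $g_k^{-1}$, the germ of that saucer depends only on \emph{which subset of $S(Q)$ contributes}, so there are at most $L=2^m$ equivalence classes (Lemma~\ref{lm:finite-contact}). It is this properly-discontinuous-action count, not any compactness of a space of discs nor a conjugation-standardization of positive parts, that yields the universal finite list; your proposal is missing exactly this construction.
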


\begin{remark}  {\rm
  The proof of  Proposition  \ref{prop:circular-to-universal-saucers} follows roughly the same scheme as the proof of Proposition 
   \ref{prop:to saucers}
  but uses the idea of equivariant coverings in a crucial way. The basic idea can be seen in the following trivial observation about real functions. Consider the piecewise constant function $\phi: \R \to \R$ which is equal to $1$ on $[0,1) \cup [2,3)$, equal to $-3$ on $[1,2)$, and $0$ elsewhere. Let the group $\Z$ act on $\R$ by translation: $j \in \Z$ being identified with the map $x \mapsto x + j$. Then the function $\sum\limits_{j=1}^k \phi \circ j$ is equal to $1$ on $[0,1) \cup [k+2, k+3)$ and it is strictly negative on $[1,k+2)$.
  
The key point of this example is two-fold: firstly, that a function which is negative on an arbitrarily large portion of its support can be written as a sum of functions which are negative on a small subset of their support. And secondly, that in fact these functions can be taken to be translations of a single function by a group action.}
\end{remark}
 
  Consider $\R^{2n+1}$ with the contact structure $\xi_\st$ given by the form
  $$\alpha=dz+\sum\limits_1^{n-1}(x_idy_i-y_idx_i)-y_ndx_n = dz + \sum\limits_1^{n-1}u_id\varphi_i - y_ndx_n.$$
  Denote $\Pi=\{y_n=0\}$.
  In the group of contactomorphisms $\Diff(\R^{2n+1},\xi_\st)$ consider the $2n$-dimensional lattice  $\Theta$ generated by the following transformations:
  \begin{itemize}
  \item translations  
  $$T_z:  (x, y,z)\mapsto (x , y,z+1)$$ $$T_{x_n}: (x_1,\dots, x_n , y ,z)\mapsto (x_1,\dots, x_n+\frac12, y, z),$$
    \item sheers
 $$S_{y_j}:(x , y_1,\dots, y_j,\dots, y_{n},z,t)\mapsto (x , y_1,\dots, y_j+1,\dots, y_{n},z+x_j),\; j=1,\dots, n-1$$
 $$S_{x_j}:(x_1,\dots, x_j,\dots, x_n , y ,z)\mapsto (x_1,\dots, x_j+1,\dots, x_n , y,   z-y_j),\; j=1,\dots, n-1.$$
 \end{itemize}

 Notice that $\Theta$ preserves $\Pi$, we have 
 $S_{y_j}S_{x_j}S_{y_j}^{-1}S_{x_j}^{-1} = T_z^2$, and all other transformations commute.
 Hence every element of $\Theta$ may be written as
 $$
 	S_{x_1}^{k_1} \cdots S_{x_{n-1}}^{k_{n-1}}
	S_{y_1}^{l_1} \cdots S_{y_{n-1}}^{l_{n-1}}
	T_{x_n}^{k_n}T_{z}^{l_n}
 $$
 and from this it follows that $\Theta$ acts properly discontinuously on $\Pi$, that is
for any compact set $Q \subseteq \Pi$, the set
 \begin{equation}\label{eq:SQ}
S(Q):=\{g\in\Theta: g(Q)\cap Q\neq\varnothing\}\subset\Theta
\end{equation}
 is finite.  
 
For any positive number $N$, let $C_N$ be the scaling contactomorphism
 $$(x,y,z)\mapsto (Nx,Ny,N^2z).$$ 
 Let $\Theta_N = C^{-1}_N\Theta C_N$, that is $\Theta_N$ is the group generated by translations $T_{j,N}:=C_N^{-1}\circ T_j\circ C_N, T_{z,N}:=C_N^{-1}\circ T_z\circ C_N$ and sheers
 $S_{j,N}:=C_N^{-1}\circ S_j\circ C_N$. Say that a compact set $Q$ \emph{generates a $\Theta_N$-equivariant cover of $\Pi$} if $\Theta_N \cdot \Int(Q) = \Pi$.  Below we will always assume that $N$ is a positive integer, in this case the element $T_{x_n} = T_{x_n, N}^N$ belongs to $\Theta_N$. We denote  by $\Upsilon$ the normal subgroup in $\Theta_N$ generated by $T_{x_n}^2$, and by $\wh\Theta_N$ the quotient group $\Theta_N/\Upsilon$.
 We call a compact set $Q\subset\Pi$ {\em sufficiently small} if $T_{x_n}^2(Q)\cap Q=\varnothing$.
 
Notice that the quotient of $\R^{2n+1}$ by the contactomorphism $T^2_{x_n}$ is the contact manifold 
$$\left(\R^{2n-1} \times T^*S^1, \ker(dz-\sum\limits_1^{n-1}(x_idy_i-y_idx_i)+vdt)\right)$$
where $v = -y_n$ is identified with the fiber coordinate of $T^*S^1$ and the base coordinate $t \in \R/\Z$ is given by the quotient by translation  $T^2_{x_n}$. Denote this quotient by $\pi:\R^{2n+1} \to \R^{2n-1} \times T^*S^1$.  The group $\wh\Theta_N$ can be viewed as a subgroup of the group of contactomorphisms of $\R^{2n-1} \times T^*S^1$ preserving
$\wh \Pi=\pi(\Pi)\simeq\R^{2n-1}\times \R/\Z$.
Any compactly supported function $\Phi :\Pi \to \R$ defines a function $\sum\limits_{h \in \Upsilon} \Phi \circ h^{-1}$ which is 1-periodic in the $x_n$-variable, and therefore defines a function $\wh\Phi: \wh\Pi \to \R$.

 \begin{remark}\label{rm:generator}{\rm
 \begin{description}
 \item{a)} If  $Q$  generates a  $\Theta$-equivariant covering of $\Pi$   then $Q_N:=C_N^{-1}(Q)$ generates a  $\Theta_N$-equivariant covering. For a sufficiently large $N$ the set $Q_N$ is sufficiently small.
 \item{b)} Suppose that $a>\frac12$. Then the parallelepiped $$P=\{|z|\leq a, \;|x_j|, |y_j|\leq a,\ 1\leq j\leq  n-1,\; 0<x_n\leq a,\;  y_n=0\}\subset\Pi$$ generates  a  
 $\Theta$-equivariant covering of $\Pi$. If $a<1$ then $P$ is sufficiently small. In particular, there are sufficiently small sets generating equivariant coverings.
 \item{c)} If $Q'\subset Q\subset \Pi$ are two compact sets, and $Q'$  generates a  $\Theta_N$-equivariant covering of $\Pi$, then so does $Q$. 
   \end{description}
 }
  \end{remark}

  Let  us {\bf fix for the rest of the paper}  a regular sufficiently small disc $Q\subset\Pi$  and a smaller disc $Q^\prime\subset \Int Q$ which generates a $\Theta$-equivariant covering of $\Pi$.  We Denote by $m$ the cardinality $|S(Q)|$ of the set $S(Q)$.
  
 We also fix  two non-negative    $C^\infty$-functions $\phi_+,\phi_-:\Pi \to \R$   which are  supported in $Q$ which satisfy the following conditions:
  \begin{enumerate}
 \item $\phi_+|_{\Int Q}>0, \phi_-|_{ Q^\prime}>0$, and $\phi_-|_{\Op(\p Q)} = 0$;
 \item  $\max(\phi|_{Q^\prime}) < -(m+1)\mu $, where $\phi:=\phi_+-\phi_-$,\; $\mu:=\max(\phi)$ and  $m$ is  the cardinality of the set $S(Q)$ defined in \eqref{eq:SQ};
 \item   denote $\phi^s=\phi_+-s\phi_-$, $s\in[0,1]$, (so that $\phi^s\geq\phi^1=\phi$). For any finite subset $F\subset \Theta$ denote
 $$\Phi^s_F:=\mu+\sum\limits_{g\in F} \phi^s\circ g^{-1}|_Q,\;  
   \; s\in[0,1].$$
 Then the graph  $y_n= \Phi^{s}_F(q),\; q\in Q,$ with the induced contact structure is regular.
\end{enumerate}

\begin{remark}{\rm
  In condition (iii) elements $g\in F$ with $g(Q) \cap Q=\varnothing$ are irrelevant, so it suffices to verify (iii) only for    subsets $F$ of the  finite set $ S(Q)$. Hence the condition can always be satisfied  by taking $\phi_+$ and $\phi_-$ sufficiently small (e.g.\ replacing the pair $(\phi_+,\phi_-)$ which satisfy (i) and (ii) by $(\eps\phi_+,\eps\phi_-)$ for a sufficiently small $\eps>0$.)  }
\end{remark}
 
Let us linearly order in any way all the elements of $\Theta$:
$g_1,g_2,\dots $ and order accordingly $\wh\Theta_N$, we fix this ordering during the paper. Define functions $\Pi \to \R$ by the formulas
$$
	\Phi_k:= \mu+ \sum\limits_{j=1}^k\phi\circ g_j^{-1}  \quad\mbox{and}\quad
	\Psi_k= \mu+ \sum\limits_{j=1}^k\phi_+\circ g_j^{-1} 
$$
for $k=0,1\dots,  $ and note $\Phi_0=\Psi_0\equiv \mu$. 
   
Let 
$${}^{\Phi}\Gamma^-_k :=\{y_n=  \Phi_{k-1}(x_1,\ldots,x_n,y_1,\ldots, y_{n-1},z);\; (x_1,\ldots,x_n,y_1,\ldots, y_{n-1},z) \in g_k(Q)\}$$ be
 the graph of $ \Phi_{k-1}$ over the set $g_k(Q)$, 
and  similarly denote by ${}^\Phi\Gamma^+_k$ the graph of $\ \Phi_k$ over $g_k(Q)$. Denote by ${}^\Psi\Gamma^+_k$ the graph of $ \Psi_k$ over $g_k(Q)$ and by ${}^\Psi\Gamma^-_k$ the graph of $ \Psi_{k-1}$
over $g_k(Q)$. Define $B_k$ to be the saucer
\begin{align*}
B_k: &= 
 \{ \Psi_{k-1}(x_1,\ldots,x_n,y_1,\ldots, y_{n-1},z)\leq y_n\leq  \Psi_{k}(x_1,\ldots,x_n,y_1,\ldots, y_{n-1},z),\\
 & (x_1,\ldots,x_n,y_1,\ldots, y_{n-1},z) \in g_k(Q)\}
 \end{align*}
 bounded by ${}^\Psi\Gamma_k^-$ and ${}^\Psi\Gamma_k^+$ .
Similar to the proof of Proposition \ref{prop:to saucers} we observe that there is an immersion $\Op\p B_k\to\R^{2n+1}$ which maps ${}^\Psi\Gamma_k^\pm\to {}^\Phi\Gamma_k^\pm$ diffeomorphically. The induced contact structure $\zeta_k$ with its canonical  semi-contact extension to $B_k$ defines a shell structure on the saucer $B_k$.
 
More generally for $s \in [0,1]$ set 
$$\Phi^{s}_k:=\mu+ \sum_{j=1}^k\phi^u\circ g_j^{-1} $$ so that
$\Phi^1_k=\Phi_k$ and $\Phi^0_k=\Psi_k$.
Define the regular contact saucer  $(B_k, \zeta_k^s)$ , $k=1,\dots,$ as induced by an immersion of $\Op B_k\to \R^{2n+1}$ that
 maps $\p B_k = {}^\Psi\Gamma_k^+ \cup {}^\Psi\Gamma_k^-$ 
 diffeomorphically onto the graphs of  $ \Phi^{s}_k$ and $ \Phi^{s}_{k-1}$  over $ g_k(Q)$.
 Regularity is ensured by the above condition (iii).
 
 Therefore, the above construction builds
$1$-parameter families for $s \in [0,1]$ of regular contact saucers $(B_k, \zeta_k^s)$ for $k=0,1, \dots  $.
However  as the next lemma shows, up to equivalence the number of these is always bounded by $ L=2^m$, where $m:=\abs{S(Q)}$ is the cardinality of the set 
  from \eqref{eq:SQ}.

\begin{lemma}\label{lm:finite-contact}
 Up to equivalence the above construction builds at most $L=2^m$, $m= \abs{S(Q)}$,
 one-parametric families of regular contact saucers
 $(B_k,\zeta^{u}_k)$. 
\end{lemma}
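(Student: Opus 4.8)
\textbf{Proof proposal for Lemma~\ref{lm:finite-contact}.}

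The plan is to show that, up to equivalence, a regular contact saucer $(B_k, \zeta_k^u)$ depends only on a certain finite combinatorial datum attached to the index $k$, namely on the ``shadow'' of the translate $g_k(Q)$ among the finitely many nearby translates. First I would observe that the semi-contact structure $\zeta_k^u$ is built from an immersion of $\Op\p B_k$ onto a neighborhood of the graphs of $\Phi^s_{k-1}$ and $\Phi^s_k$ over $g_k(Q)$, and that on $g_k(Q)$ these two functions differ from the single function $\mu + \phi^u\circ g_k^{-1}$ by the \emph{common} term $\sum_{j<k,\ g_j\neq g_k}\phi^u\circ g_j^{-1}$. Conjugating by the contactomorphism $g_k^{-1}\in\Theta$ (which preserves $\xi_\st$, $\Pi$, and $Q$), the saucer $(B_k,\zeta_k^u)$ is equivalent to the saucer built over $Q$ itself from the functions
$$
	\mu + \sum_{g\in F_k}\phi^u\circ g^{-1}\Big|_Q \quad\text{and}\quad
	\mu + \phi^u\circ \mathrm{id}^{-1}\big|_Q + \sum_{g\in F_k}\phi^u\circ g^{-1}\Big|_Q,
$$
where $F_k := \{\,g_k^{-1}g_j : j<k,\ g_j(Q)\cap g_k(Q)\neq\varnothing\,\}$ (the elements $g$ with $g(Q)\cap Q=\varnothing$ contribute nothing on $Q$ since $\phi^u$ is supported in $Q$). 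Thus the equivalence class of $(B_k,\zeta_k^u)$ depends only on the subset $F_k\subset S(Q)\setminus\{\mathrm{id}\}$.

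Next I would count: $F_k$ is a subset of $S(Q)$, and $S(Q)$ has cardinality $m$; even being slightly wasteful and allowing $F_k$ to range over all subsets of $S(Q)$, there are at most $2^m =: L$ possibilities. Hence among the saucers $(B_k,\zeta_k^u)$, $k=0,1,\dots$, there are at most $L$ distinct equivalence classes, and since the whole construction is carried out simultaneously for the $1$-parameter $u\in[0,1]$, we get at most $L$ distinct one-parameter families $\{(B_k,\zeta_k^u)\}_{u\in[0,1]}$ up to (parametrized) equivalence. This is exactly the assertion; the list $\{(B_p,\zeta_p^u)\}_{p=1}^L$ promised in Proposition~\ref{prop:circular-to-universal-saucers} is obtained by taking one representative from each class (and $k=0$, where $F_0=\varnothing$, gives the genuine-contact case $\Phi^u_0\equiv\mu>0$, which may be discarded or absorbed).

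The one point requiring care — and the main obstacle — is making precise the sense in which ``the equivalence class depends only on $F_k$.'' I would argue as follows: two indices $k,k'$ with $F_k = F_{k'}$ give, after the conjugations above, two saucers over $Q$ defined by functions that differ only by \emph{relabeling} which elements of $\Theta$ index the summands — but since all summands are evaluated on $Q$ and the map $g\mapsto \phi^u\circ g^{-1}|_Q$ only sees $g\in S(Q)$, equal subsets $F_k=F_{k'}$ of $S(Q)$ literally produce the \emph{same} pair of functions on $Q$, hence the identical semi-contact saucer. (Strictly, one must note that two different linear orderings of $\Theta$ could in principle produce different $F_k$ for the ``same geometric position''; but the statement only claims an upper bound $2^m$ on the number of classes for a \emph{fixed} ordering, and the bound ``number of subsets of $S(Q)$'' holds for any ordering.) Regularity of each $(B_k,\zeta_k^u)$ is already guaranteed by condition~(iii) in the choice of $\phi_\pm$, so no extra work is needed there. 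I would also remark that the functions $\Phi^u_k$ need not be positive — that is the whole point, and it is controlled by condition~(ii), $\max(\phi|_{Q'}) < -(m+1)\mu$ — but this plays no role in the finiteness count itself; it will be used later when these saucers are reassembled to fill the circle model $(B_K,\eta_K)$.
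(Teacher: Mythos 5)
Your proposal is correct and follows essentially the same route as the paper: conjugate the $k$-th saucer by $g_k$ back over $Q$, observe that $\Phi^s_k\circ g_k|_Q=\mu+\sum_{j\le k}\phi^s\circ(g_j^{-1}g_k)|_Q$ so only the finitely many elements of $S(Q)$ contribute, and bound the number of resulting pairs of functions (hence equivalence classes of saucers) by the number $2^m$ of subsets of $S(Q)$, with regularity supplied by condition (iii) on $\phi_\pm$. The only differences are cosmetic (your bookkeeping via the set $F_k$ and the remark about fixed orderings), so nothing further is needed.
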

\begin{proof}
By the contactomorphism $g_k^{-1}$ we know $\zeta^{s}_k$ is equivalent to a saucer  whose boundary contact germ is  defined by the two graphs over $Q$: 
$$
\mbox{$\{y_n = (\Phi^{s}_{k-1} \circ g_k)|_Q\}$ and $\{y_n = (\Phi^{u}_{k} \circ g_k)|_Q\}$.}
$$
However $\Phi^{s}_k|_Q \circ g_k = \mu+\sum\limits_1^k(\phi^s\circ (g_j^{-1}g_k))|_Q$ and
the number of different sums of this type is bounded above the number $L=2^{m }$ of finite subsets of the set $S(Q)$.
 
 \end{proof}


Given   a positive  $N$  and  an element $g\in \Theta_N$ we denote
 $\phi_{g,N}:=\frac1N\phi\circ C_N\circ g^{-1}$.
 Notice that by the contactomorphism $C_N^{-1}$ the regular semi-contact  saucer which is defined over the domain $Q$ by the  functions $\Phi^s_{k-1}$ and $\Phi^s_{k}$ is equivalent to the saucer  over the domain $ C_N^{-1}(Q)$ defined by the functions $\Phi^s_{k-1,N}:=\frac\mu N+ \sum\limits_{j=1}^{k-1}\phi^u_{g_j,N}$ and  $\Phi^s_{k,N}:=\frac\mu N+ \sum\limits_{j=1}^{k}\phi^s_{g_j,N}$, where
 $\phi^s_{g,N}:=\frac1N\phi^s\circ C_N\circ g^{-1}$.
 
Consider the function $\wh{\phi_{g,N}}: \wt\Pi \to \R$ We note that $\wh{\phi_{g,N}}=\wh{\phi_{g',N}}$ if $g,g'$ are in the same conjugacy class from $\wh\Theta_N=\Theta_N/\Upsilon$, so that in the notation for $\wh{\phi_{g,N}}$ we can use $g\in\wh \Theta_N$.

 \begin{lemma}\label{lm:positive-decomposition}
With the above choices of $Q,Q',\phi_+,\phi_-$,  for  any bounded open domains $U^\prime$ and $U\Supset U^\prime$ in $\R^{2n-1}$ and any $C^\infty$-function $K\colon
U\to\R$  which is positive on $(U\setminus\overline U^\prime)$, there exist
$N>0$ and a finite subset $\Lambda \subset\wh\Theta_N$ 
     such that
     \begin{itemize}
     \item  $ U^\prime \times S^1 \Subset  \bigcup \limits_{g\in \Lambda} g(\Int \pi(Q'_N))\subset   \bigcup \limits_{g\in \Lambda} g(\Int \pi(Q_N)) \Subset  U \times S^1;$ 
 \item  $  \sum\limits_{g\in  \Lambda}\wh{\phi_{ g,N}}  < \;
  \begin{cases}
  -\frac{2\mu} N& \hbox{on}\; U'\times S^1;\cr
  K - \frac\mu N& \hbox{on}\; (U\setminus U')\times S^1.
  \end{cases}
   $
   \end{itemize}
 \end{lemma}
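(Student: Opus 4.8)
\textbf{Proof proposal for Lemma~\ref{lm:positive-decomposition}.}

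The plan is to combine the equivariant-covering mechanism with the elementary ``negative sum'' observation from the remark preceding Proposition~\ref{prop:circular-to-universal-saucers}. First I would fix the already-chosen data $Q, Q', \phi_+, \phi_-$ and work directly on $\Pi$, postponing the passage to the quotient $\wh\Pi$ until the very end. Recall $Q' \subset \Int Q$ generates a $\Theta$-equivariant covering of $\Pi$, so by Remark~\ref{rm:generator}(a) the rescaled set $Q'_N = C_N^{-1}(Q')$ generates a $\Theta_N$-equivariant covering, and for $N$ large it is sufficiently small. Since $U' \Subset U$ and $U' \times S^1$ is compact (viewing $S^1$ as the $x_n$-circle after quotienting by $T_{x_n}^2$, or equivalently working $T_{x_n}$-periodically upstairs), for $N$ large enough there is a \emph{finite} subset $\Lambda_0 \subset \Theta_N$ such that $U' \times \R \Subset \bigcup_{g \in \Lambda_0} g(\Int Q'_N)$ while simultaneously $\bigcup_{g \in \Lambda_0} g(\Int Q_N) \Subset U \times \R$; this uses that the mesh of the $\Theta_N$-translates of $Q_N$ shrinks like $1/N$, so one can fatten $\Lambda_0$ to cover the closure of $U'\times\R$ yet keep all translates of $Q_N$ inside $U \times \R$, possible because $K > 0$ on $U \setminus \ol{U'}$ gives room near the boundary. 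By Remark~\ref{rm:generator}(c) enlarging $\Lambda_0$ only helps the covering property, so I may take $\Lambda_0$ to be $\Theta_N$-invariant enough that its image $\Lambda \subset \wh\Theta_N$ under the quotient $\Theta_N \to \wh\Theta_N$ satisfies the first bullet.

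The second bullet is where the sign conditions (i) and (ii) on $\phi_+, \phi_-$ do the work. On $U' \times S^1$, every point lies in the interior of $g(Q'_N)$ for \emph{at least one} $g \in \Lambda$, and on $g(Q'_N)$ the summand $\phi_{g,N} = \tfrac1N \phi \circ C_N \circ g^{-1}$ takes a value $< -\tfrac{(m+1)\mu}{N}$ by condition (ii) (the scaling by $1/N$ is uniform). Every \emph{other} summand $\phi_{g',N}$ is bounded above by $\tfrac{\mu}{N}$, and by sufficient smallness of $Q_N$ together with properly discontinuity, at a given point at most $m = |S(Q)|$ of the translates $g'(Q_N)$ overlap, so the total contribution of the other terms is $\leq \tfrac{m\mu}{N}$. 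Hence $\sum_{g \in \Lambda} \phi_{g,N} < -\tfrac{(m+1)\mu}{N} + \tfrac{m\mu}{N} = -\tfrac{\mu}{N}$ on $U' \times S^1$; by shrinking (i.e.\ replacing $(\phi_+,\phi_-)$ by $(\eps\phi_+, \eps\phi_-)$ as in the remark, or just taking $N$ a bit larger) one sharpens this to $< -\tfrac{2\mu}{N}$. On $(U \setminus U') \times S^1$ one only needs the crude bound $\sum_{g \in \Lambda} \phi_{g,N} \leq \tfrac{m\mu}{N}$, together with $K > 0$ bounded below away from $U'$: for $N$ large, $\tfrac{m\mu}{N} < K - \tfrac{\mu}{N}$ pointwise on that region (again using compactness of $\ol{U\setminus U'}$ minus a neighbourhood of $U'$, and near $\partial U'$ using that $\phi_-$ vanishes on $\Op \partial Q$ so the sum is already controlled). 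Passing to $\wh\Pi$ via $\wh{\phi_{g,N}} = \sum_{h \in \Upsilon} \phi_{g,N} \circ h^{-1}$ only collects finitely many $\Upsilon$-translates, and since $Q_N$ is sufficiently small ($T_{x_n}^2(Q_N) \cap Q_N = \varnothing$) these translates have disjoint supports over each fundamental domain, so the estimates descend verbatim to $\wh\phi$.

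The main obstacle I anticipate is the bookkeeping in the second bullet near the transition region $\partial U'$: there one simultaneously needs the covering to have ``turned off'' the strongly-negative contribution (so a point may see only $\phi_+$-type tails of translates centred in $U'$, which are non-negative) yet still needs $\sum \wh\phi_{g,N} < K - \tfrac{\mu}{N}$, and $K$ could be small there. This is handled by choosing $\Lambda$ so that translates $g(Q_N)$ meeting $(U\setminus U')\times S^1$ but ``coming from'' $U'$ only contribute via $\phi_-$-supported-in-$Q'_N$ tails — but $\phi_-$ vanishes near $\partial Q$, so outside $\bigcup g(Q'_N)$ the relevant summands are exactly $\phi_+ \geq 0$ of size $\leq \tfrac{m\mu}{N}$, reconciling with $K - \tfrac{\mu}{N}$ for large $N$. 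All of this is uniform once $N$ is chosen large relative to $\min_{U \setminus U' \text{ away from } \partial U'} K$, $\mu$, $m$, and the geometry of $U', U$, so the lemma follows.
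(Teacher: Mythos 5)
There is a genuine gap, and it sits exactly where you flagged your ``main obstacle'': the fringe region near $\p U'$. The paper's proof does not attack this region head-on; instead it first fixes $\eps>0$ such that $P:=\{K>\eps\}\cap(U\setminus U')$ \emph{disconnects} $U'$ from $\p U$, then \emph{enlarges} $U'$ to be the interior of the union of all components of $U\setminus P$ disjoint from $\p U$ (observing that enlarging $U'$ only strengthens the conclusion), and finally chooses the covering so that $\bigcup_{g\in\Lambda} g(\wh Q_N)$ is confined to $(U'\cup P)\times S^1$. After that, the only points of $(U\setminus U')\times S^1$ where the sum is nonzero lie in $P$, where $K>\eps$ uniformly, and the single condition $N>(m+1)\mu\,\eps^{-1}$ closes the estimate. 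Your version keeps the original $U'$ and hopes that on the fringe (points lying in some $g(Q_N)$ but in no $g(\Int Q'_N)$) the bound $\sum\wh{\phi_{g,N}}\leq \tfrac{m\mu}{N}$ can be beaten by $K-\tfrac{\mu}{N}$ ``for $N$ large''. This fails: $K$ is only assumed positive on $U\setminus\overline{U'}$, so it may tend to $0$ as one approaches $\p U'$ from outside, while the fringe of any covering of $U'\times S^1$ with mesh of order $1/N$ necessarily hugs $\p U'$. The infimum of $K$ over the fringe therefore depends on the already-chosen $N$ and $\Lambda$, and no a priori choice of $N$ guarantees $K>(m+1)\mu/N$ there — the quantifiers are in the wrong order. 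Observing that the offending summands are $\phi_+$-tails and hence nonnegative does not help, since the needed inequality is an upper bound against a possibly tiny $K$. The missing idea is precisely the sublevel-set/connectivity trick: absorb the entire low-$K$ region adjacent to $U'$ into an enlarged $U'$, so that translates spill over only into $\{K>\eps\}$ with $\eps$ fixed \emph{before} $N$.

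Two smaller points. First, your estimate on $U'\times S^1$ gives $-\tfrac{(m+1)\mu}{N}+\tfrac{m\mu}{N}=-\tfrac{\mu}{N}$ and you propose to upgrade it to $-\tfrac{2\mu}{N}$ by ``taking $N$ a bit larger''; this cannot work, since both quantities scale as $1/N$. The correct count (as in the paper) is that at most $m$ translates meet a given point \emph{including} the one carrying the negative term, so the positive contributions total at most $\tfrac{(m-1)\mu}{N}$, yielding $<-\tfrac{2\mu}{N}$ directly. Second, rescaling $(\phi_+,\phi_-)$ to $(\eps\phi_+,\eps\phi_-)$ is not available inside this lemma: the data $Q,Q',\phi_\pm$ were fixed once and for all beforehand, and the lemma must hold for that fixed choice.
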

  
 \begin{proof}
 Suppose $K\colon U \to \R$ is given. Since $K$ is positive on $U \setminus U'$, we may fix some $\eps > 0$ with the property that the set $P := \{(x,y,z) \in U \setminus U': K(x,y,z) > \eps\}$ disconnects $U'$ from $\p U$. Notice that the conclusion of the lemma only becomes stronger if we enlarge the set $U' \Subset U$. With this in mind we redefine $U'$ to be the interior of the union of all components of $U \setminus P$ which are disjoint from $\p U$.

Set $\wh Q_N:= \pi(Q_N),\;\wh Q'_N:= \pi(Q'_N)$. For a sufficiently large $N$ there exists a finite set $  \Lambda\subset\wh\Theta$ such that $(U' \cup P) \times S^1\Supset \bigcup \limits_{g\in \Lambda}g(\wh Q_N) \supset \bigcup \limits_{g\in \Lambda}g(\wh Q^\prime_N) \Supset U' \times S^1$.  Furthermore, suppose that
  \begin{equation}\label{eq:largeN}
  N> (m+1)\mu\eps^{-1}   .
  \end{equation}

 Then, using  \eqref{eq:largeN} we get on $P \times S^1$
 $$  \sum\limits_{g\in \Lambda}\phi_{g,N} <   \frac{m \mu}N  = \frac{(m+1) \mu}N -\frac\mu N  < \eps-\frac\mu N <K-\frac\mu N.$$
On the other hand, on $U'\times S^1$ we have

$$
  \sum\limits_{g\in\Lambda}\phi_{g,N} <-\frac{(m+1)\mu}N + \frac{(m-1)\mu}N  < - \frac{2\mu} N.
$$
  Indeed this holds, for given $(x,y,z) \in g(Q'_N)$  because
  according to inequality (ii) a single negative term $\wh{\phi_{g,N}}(x,y,z)$ is  larger in absolute value by at least $\frac{2\mu}N$ than the sum of all   positive terms  (the denominator $N$ appears because of the scaling factor of the function in the definition of  $\phi_{g,N}$).
\end{proof}

  \begin{proof}[Proof of Proposition \ref{prop:circular-to-universal-saucers}]
  Let $U=\Int \Delta$ and $U'\Subset U$ be a star-shaped subset such that $K|_{U\setminus U'}>0$. Apply Lemma \ref{lm:positive-decomposition}, providing   an integer $N>0$, and a finite set $\Lambda\subset\wh\Theta_N$.
 Then the corresponding  function 
$$\Phi=\Phi^{S^1,N}=\frac\mu N+\sum\limits_{g\in \Lambda}\phi_{g,N} :\Delta\times S^1\to\R$$ satisfies
 $\Phi (w,t)< K(v)$ for $w\in \Delta\setminus U'$ and $\Phi|_{U'\times S^1}<-\frac\mu N$.   According to Proposition  \ref{prop:shallow} there exists a contact Hamiltonian $\wt K$ so that $\eta_{\wt K}$ is dominated by $\eta_K$,
 where $\wt K|_{\Delta\setminus U'}=K|_{\Delta\setminus U'}$  and $\wt K|_{U'}>-\frac\mu N$. Therefore $\Phi(w,t)<\wt K(w)$ for all $(w,t) \in \Delta \times S^1$.
 The function $\Phi$ is equal $\frac\mu N>0$ near $\p \Delta\times S^1$ and hence defines a circular shell model $\eta_{\Phi}$ which is dominated by $\eta_{\wt K}$. Hence it sufficient to prove the required extension result for $\eta_{\Phi}$.  We order $\Lambda$ using the chosen ordering of $\wh\Theta$ and  define functions 
 $$\Phi_k= \frac\mu N+\sum\limits_{j=1}^k \phi_{g_j,N}:\Delta\times S^1\to\R,\; k=0,\dots,|\Lambda|,$$ where $|\Lambda|$ is the cardinality of $\Lambda$.
 We have $\Phi_0=\frac\mu N$ and $\Phi_{|\Lambda|}=\Phi$. The  shells $\eta_{\Phi_k}$ and 
 $\eta_{\Phi_{k-1}}$ differ by one of the  regular saucers $(B_p,\zeta_p)$, from the finite list provided by Lemma \ref{lm:finite-contact}, while the    shell $\eta_{\Phi_0}$ is solid, since $\Phi_0>0$ everywhere.   
 \end{proof}

 Now we are ready to prove Proposition \ref{prop:unique-model}.
\begin{proof}[Proof of Proposition \ref{prop:unique-model}]
 Proposition  \ref{prop:to saucers} allows us to   assume that $\xi$ is contact outside of a finite collection of disjoint saucers  $\{B_i\}_{1\leq i\leq N}$, so that the restriction  $\xi|_{B_i}$ for each $i=1,\dots, N$, is a regular semi-contact saucer.  Using Proposition \ref{prop:saucer-to-circle} we replace saucers by
  circular model shells  defined  by time-independent contact Hamiltonians.
  Applying
Proposition \ref{prop:circular-to-universal-saucers}
  we   can further reduce to the case of a contact structure 
  in the complement of saucers from the finite list  $(B_p,\zeta_p)$, $p=1,\dots, L$.  Using again  Proposition \ref{prop:saucer-to-circle} we replace saucers by
  circular model shells $\left(B_{K_p},\eta_{K_p}\right)$ defined  by time-independent contact Hamiltonians.  We may then choose any special Hamiltonian $K_\univ$ satisfying $K_\univ(x) < \min_p K_p(x)$.
  \end{proof}

 \subsection{The standardization of the holes in the parametric case}\label{sec:standard-param}
 In this section we prove Proposition \ref{prop:unique-model-param}.
 
Given a special Hamiltonian $K:\Delta_\cyl\to\R$ we recall  the notation from Section~\ref{sec:univ-holes}:
 
 $$K^{(s)}: = sK + (1-s)E, \; s\in[0,1], \;\hbox{where  }\; E(u,z):=K(u,1),$$
%

\begin{lemma}\label{lm:universal-eps}
There exist a special Hamiltonian $K_\univ:\Delta_\cyl\to\R$ and a non-increasing function   $\theta:[0,1]\to[0,1]$ with  
  $\theta(0)=0$ and $\theta(1)=1$,  which depend  only on the choice of
$Q,Q',\phi_+,\phi_- $,  and such that for each $p=1\dots, 2^m$
 there exists     a family of subordination
maps $$\eta_{K^{(\theta(s))}_{\univ}}\to\eta^{s}_p:=(B_p,\zeta^{s}_p), \;  s\in[0,1], $$ 
  which are solid for $s=0$.
 \end{lemma}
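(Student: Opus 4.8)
\textbf{Proof plan for Lemma \ref{lm:universal-eps}.}
The goal is to absorb the finite family of regular contact saucers $(B_p,\zeta_p^s)$ produced by the equivariant covering construction into a single one-parameter family of circle model shells $\eta_{K^{(\theta(s))}_\univ}$ by means of subordination maps. First I would apply Proposition~\ref{prop:saucer-to-circle} and Proposition~\ref{prop:saucer-to-circle-param} to each of the finitely many one-parameter families $(B_p,\zeta_p^s)$, $p=1,\dots,2^m$, to obtain, for each $p$, a family of time-independent contact Hamiltonians $K_p^s:\Delta\to\R$ (on a common star-shaped domain $\Delta$, after applying Lemma~\ref{lm:changing-B} to unify domains) such that $(B_p,\zeta_p^s)$ dominates $(B_{K_p^s},\eta_{K_p^s})$, with $K_p^0>0$ so that the shell is solid at $s=0$. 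Since everything here depends only on the fixed data $Q,Q',\phi_+,\phi_-$ — the saucers $(B_p,\zeta_p^s)$ are built once and for all from this data — the Hamiltonians $K_p^s$ likewise depend only on that data.

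Next I would compare the families $K_p^s$ with the reference family $K^{(s)}$. The key structural feature is that $K^{(s)}=sK+(1-s)E$ interpolates from the everywhere-positive Hamiltonian $E=K(\cdot,1)$ at $s=0$ to the special Hamiltonian $K$ at $s=1$, and $E\geq K^{(s)}\geq K$ pointwise for all $s$. I would choose $K_\univ$ to be a special Hamiltonian sufficiently small so that $K_\univ<\min_p\min_s K_p^s$ on the region where the relevant $K_p^s$ are non-positive, using Proposition~\ref{prop:shallow} (and its parametric version Proposition~\ref{prop:shallow-param}) to arrange that the ``deep negative'' part of $K_\univ$ is irrelevant: only $K_\univ|_{\{K_\univ\geq 0\}}$ matters for domination, and near $\partial\Delta_\cyl$ we keep $K_\univ$ positive and below all the $K_p^s$. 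Since $K_\univ$ is special, $K_\univ^{(s)}:=sK_\univ+(1-s)E_\univ$ interpolates from positive to $K_\univ$, and I would define the non-increasing reparametrization $\theta:[0,1]\to[0,1]$ (with $\theta(0)=0$, $\theta(1)=1$) precisely so that, fiberwise in $s$, one has $(K_\univ^{(\theta(s))},\Delta_\cyl)\leq(K_p^s,\Delta)$ up to conjugation — i.e.\ there is a contact embedding $\Psi_s^p:\Delta_\cyl\to\Delta$ with $((\Psi_s^p)_*K_\univ^{(\theta(s))},\Psi_s^p(\Delta_\cyl))\leq(K_p^s,\Delta)$ in the sense of Section~\ref{sec:PartialOrder}. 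The point of the reparametrization $\theta$ is that the reference family degenerates to a positive (solid) Hamiltonian at $s=0$ on the $\eta_p^s$ side, but the circle model's own natural parameter $s$ for $K_\univ^{(s)}$ may reach its positive end at a different rate; $\theta$ synchronizes these so that $\theta(0)=0$ forces $K_\univ^{(0)}=E_\univ>0$, giving a solid shell, while $\theta(1)=1$ recovers the genuine $K_\univ$.

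Then I would assemble the subordination maps: by Lemma~\ref{l:HamiltonianShellsAnnulus} and Lemma~\ref{l:PhiToTildePhi}, the inequality $(K_\univ^{(\theta(s))},\Delta_\cyl)\leq(K_p^s,\Delta)$ up to conjugation yields a family of subordination maps $\eta_{K_\univ^{(\theta(s))}}\to\eta_{K_p^s}$, which compose with the domination $\eta_{K_p^s}\prec(B_p,\zeta_p^s)$ to give the desired $\eta_{K^{(\theta(s))}_\univ}\to\eta_p^s$; for $s=0$ the source is solid because $K_\univ^{(0)}=E_\univ>0$, and the composition with the solid domination from $K_p^0>0$ makes the subordination solid. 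Finally I would invoke Proposition~\ref{prop:shallow-param} to guarantee the whole construction is continuous in $s$ and that the subordination maps can be taken to be the identity (hence solid) on the closed subset $s=0$. The main obstacle I anticipate is the uniform choice of $K_\univ$ and $\theta$: one needs a single special Hamiltonian $K_\univ$ and a single reparametrization $\theta$ that work simultaneously for all $2^m$ families and all $s\in[0,1]$, which requires carefully tracking how small $K_\univ$ must be (controlled by $\min_{p,s}K_p^s$ on the negativity locus, itself determined by $Q,Q',\phi_+,\phi_-$) and verifying that the partial-order comparison survives after the conjugations supplied by Proposition~\ref{prop:shallow}; the parametric continuity of the conjugating contactomorphisms $\Psi_s^p$, especially as $s\to 0$ where the negative region of $K_p^s$ shrinks, is the delicate technical point and is exactly what Proposition~\ref{prop:shallow-param} is designed to handle.
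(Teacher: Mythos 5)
Your overall skeleton is the paper's: reduce the finite list of saucer families $(B_p,\zeta_p^s)$ to circle models $\eta_{K_p^s}$ with time\-/independent Hamiltonians via Proposition~\ref{prop:saucer-to-circle-param}, normalize the domain to $\Delta_\cyl$ with Lemma~\ref{lm:changing-B}, choose a special $K_\univ$ below this finite family (Example~\ref{ex:special}), reparametrize by a cutoff $\theta$, and produce subordinations from Lemma~\ref{l:HamiltonianShellsAnnulus}, solid at $s=0$. But there is a genuine gap at the pivotal step: you only secure positivity of $K_p^s$ at the single parameter $s=0$, and you propose to deal with small $s>0$ (``where the negative region of $K_p^s$ shrinks'') by conjugation together with Propositions~\ref{prop:shallow} and \ref{prop:shallow-param}. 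Neither tool can do this job. Any continuous $\theta$ with $\theta(0)=0$ forces $\theta(s)$ to be near $0$ for $s$ near $0$, so $K_\univ^{(\theta(s))}$ is then everywhere positive (close to $K_\univ^{(0)}=E>0$); push-forward by a contactomorphism multiplies a Hamiltonian by a positive conformal factor, so a positive Hamiltonian can never be brought below a target that is negative somewhere, and Proposition~\ref{prop:shallow} explicitly requires the smaller Hamiltonian to be $\leq 0$ on the inner star-shaped domain, which fails in exactly this regime (Proposition~\ref{prop:shallow-param} likewise modifies a given family and supplies no conjugating contactomorphisms continuous in $s$). Hence if the $K_p^s$ really were somewhere negative for every $s>0$, your scheme would break down; the existence of a $\theta$ making the comparison hold ``up to conjugation'' cannot simply be decreed -- it is the content of the lemma.

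The missing ingredient is the paper's opening observation: the saucers $(B_p,\zeta_p^s)$ are solid for \emph{all} $s$ in an interval $[0,\delta]$ (for small $s$ the function $\phi^s=\phi_+-s\phi_-$ is still positive on $\Int Q$), so Proposition~\ref{prop:saucer-to-circle-param} can be arranged to give $K_p^s>0$ for all $s\leq \delta'$, and by compactness $K_p^s\geq c>0$ there. One then chooses $K_\univ$ special with $K_\univ<\min_{p,s}K_p^s$ everywhere \emph{and}, in addition, with $K_\univ(\cdot,1)$ so small that $E<c$ -- note this second condition is not implied by the first, since $E$ must be compared with the values of $K_p^s$ away from the slice $z=1$. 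Taking $\theta$ equal to $0$ near $s=0$ and equal to $1$ beyond some $\delta''<\delta'$, with its transition contained in $(0,\delta')$, gives $K_\univ^{(\theta(s))}\leq E<K_p^s$ on the transition region (using $K_\univ\leq E$) and $K_\univ^{(\theta(s))}=K_\univ<K_p^s$ afterwards; Lemma~\ref{l:HamiltonianShellsAnnulus} alone then yields the continuous family of subordination inclusions -- no conjugation, hence none of the continuity issues you flag -- and at $s=0$ both shells are genuine, so the subordination is solid. Your plan has the right shape, but it omits this neighborhood-of-$s=0$ positivity and substitutes machinery that cannot replace it.
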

\begin{proof}    We note that there exists $\delta>0$ such that the regular saucer $(B_p,\zeta^{s}_p)$ is solid for $s\in[0,\delta]$, i.e.\ the contact structure on its boundary is extended inside as a genuine contact structure.
Proposition \ref{prop:saucer-to-circle-param} implies that   the family of    saucers  $(B_p,\zeta^{s}_p)$ dominates a family of  circular models $\eta_{\wt K^{s}_p}$, where $\wt K^{s}_p>0$ for  $s\in[0,\delta']$, $p=1,\dots, 2^m$, for some $\delta'<\delta$. 
 We also  note that Lemma \ref{lm:changing-B} allows us to assume that  the domain $\Delta$ in the definition of the Hamiltonians $\wt K_p^{s}$ coincides with    $\Delta_\cyl$.  Choose as $K_\univ$ any  special contact Hamiltonian which satisfies 
 $$K_\univ<\mathop{\min}\limits_{s\in[0,1],p=1,\dots, L=2^m}\wt K^s_p.$$ (See Example \ref{ex:special}.) There exists $\delta''\in(0,\delta')$ such that $\wt K^s_p>K^{(0)}_\univ$ for all $s\in[0,1],\; p=1,\dots, 2^m$.
 Choose a non-decreasing function $\theta:[0,1]\to[0,1]$ such that
 $\theta(s)=0$ for $s\in[0,\tfrac{\delta'}2]$ and $\theta(s)=1$ for $s\in[\delta'',1]$. Then $K_\univ^{(\theta(s))}<\wt K^s_p$ for all $s\in[0,1], p=1,\dots, 2^n$. Hence, by Lemma  \ref{l:HamiltonianShellsAnnulus} one can arrange the inclusion maps
 $\eta_{K_\univ^{(\theta(s))}}\to \eta^s_p$ be subordinations.
   \end{proof} 
 
 \begin{remark}
{\rm It is not clear that any Hamiltonian $K_\univ$ satisfying Proposition \ref{prop:unique-model} also satisfies Lemma \ref{lm:universal-eps}, or conversely. But once we know that there are two Hamiltonians separately satisfying Proposition \ref{prop:unique-model} and Lemma \ref{lm:universal-eps} we can simply choose $K_\univ$ to be less than both of them, and this Hamiltonian will suffice for both.}
\end{remark}
 
 Let  $T=D^q\subset\R^q$ be the unit disc. Choose any decreasing $C^\infty$-function $\theta:[0,1]\to [0,1]$ which is equal to $1$ on $[0,\frac13]$ and to $0$ on $[\frac23,1]$.

  \begin{proposition}\label{prop:circular-to-universal-param}
  There is a universal finite  list of families saucers $(B^{s}_p,\zeta^s_p)$, $p=1,\dots, L$, $s\in[0,1]$, where $L$ depends only on dimension $n$, with the following property. 
 Let  
   $K^\tau:\Delta\to\R$, $\tau\in T$,  be a family of 
   time-independent contact Hamiltonians parameterized by the unit disc  $T=D^q\subset\R^q$, 
   and such that $K^\tau(x)>0$ for $(\tau,x)\in \p(T\times\Delta)$.    Let $({}^TB=T\times B,{}^T\eta)$ be the fibered circular shell defined by this family.
   Denote by   ${}^T\eta_p$ the shell corresponding to the family of saucers $\eta^{(\theta(||\tau||))}_p$.
    Then  there exist finitely many balls $B_i\subset B $, $i=1,\dots, N$, with piecewise smooth boundary, so that the fibered  contact shell ${}^T\eta $ 
    (viewed as a fibered almost contact structure on ${}^TB $) is homotopic rel.\ $\Op \p (T\times B) $ 
    to a fibered almost contact structure ${}^T\xi$ which is genuinely 
    contact on $T\times(B \setminus \bigcup B_i)$, and  such that each fibered contact shell  ${}^T\xi|_{B_i}$  is equivalent to  one of the fibered saucer shells ${}^T\eta_p$, $p=1,\dots, L$.
     \end{proposition}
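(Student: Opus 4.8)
\textbf{Proof plan for Proposition~\ref{prop:circular-to-universal-param}.}
The plan is to run the argument of Proposition~\ref{prop:circular-to-universal-saucers} (together with its two combinatorial ingredients, Lemmas~\ref{lm:finite-contact} and \ref{lm:positive-decomposition}) in the fibered setting over $T = D^q$, using the parametric results from Section~\ref{sec:holes} at the points where the non-parametric argument invoked their non-parametric analogues. The fixed data $Q, Q', \phi_+, \phi_-$, the lattice $\Theta$, its rescaled copies $\Theta_N$ and quotients $\wh\Theta_N$, the ordering of $\wh\Theta$, and the finite family of saucers $(B_p, \zeta_p^s)$ produced by Lemma~\ref{lm:finite-contact} are all \emph{independent of parameters}, so the universal list of families $(B_p^s, \zeta_p^s)$ and the number $L = 2^m$ is the same as before; only the choice of the integer $N$ and the finite subset $\Lambda \subset \wh\Theta_N$ will now depend on $\tau$, but I will arrange them to be locally constant.

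First I would fix, for each $\tau \in T$, the open domains $U' \Subset U = \Int\Delta$ with $K^\tau|_{U \setminus U'} > 0$; since $K^\tau > 0$ on $\partial(T\times\Delta)$ and $T$ is compact, a single pair $U' \Subset U$ works for all $\tau$, and $\inf_\tau \min_{U\setminus U'} K^\tau > 0$. Applying Lemma~\ref{lm:positive-decomposition} to the minimum $K_{\min} := \min_\tau K^\tau$ (a continuous, positive-on-$U\setminus U'$ function — or a smooth function slightly below it) produces one integer $N$ and one finite set $\Lambda \subset \wh\Theta_N$ that work \emph{uniformly}, i.e.\ $\sum_{g\in\Lambda}\wh{\phi_{g,N}} < K^\tau - \tfrac\mu N$ on $(U\setminus U')\times S^1$ and $< -\tfrac{2\mu}{N}$ on $U'\times S^1$, for every $\tau$. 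This gives a fixed function $\Phi = \tfrac\mu N + \sum_{g\in\Lambda}\phi_{g,N}$ with $\Phi < K^\tau$ pointwise for all $\tau$, hence by Proposition~\ref{prop:shallow-param} (the parametric version of Proposition~\ref{prop:shallow}) the fibered circular shell ${}^T\eta$ dominates the \emph{constant-in-}$\tau$ fibered circular shell ${}^T\eta_\Phi$, and it suffices to fill ${}^T\eta_\Phi$ up to the allowed saucer holes. Wait — here one must be slightly careful: the right object to dominate is the $\tau$-dependent shell $\eta_{K^{(\theta(\|\tau\|))}}$-type family, matching the statement's ${}^T\eta_p$; I would handle the $\theta(\|\tau\|)$ rescaling exactly as in Lemma~\ref{lm:universal-eps}, by interpolating the decomposition $\Phi$ with its "fully positive" analogue $E_\Phi$ along the function $\theta(\|\tau\|)$, so that near $\partial T$ the shell is already solid. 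Then, ordering $\Lambda$ via the fixed ordering of $\wh\Theta$ and forming the partial sums $\Phi_k = \tfrac\mu N + \sum_{j=1}^k \phi_{g_j,N}$ (now fibered, but with $\tau$-independent summands except for the $\theta$-interpolation near the boundary), the consecutive shells $\eta_{\Phi_k}$ and $\eta_{\Phi_{k-1}}$ differ by one of the fibered regular saucers ${}^T\eta_p$ from the universal list, while $\eta_{\Phi_0}$ is solid since $\Phi_0 = \tfrac\mu N > 0$; peeling these off one at a time, and invoking Proposition~\ref{prop:saucer-to-circle-param} to pass back and forth between saucers and circular models, completes the homotopy rel $\Op\partial(T\times B)$.

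The main obstacle I anticipate is \emph{not} the combinatorics — which is genuinely parameter-free once $N$ and $\Lambda$ are fixed uniformly — but the bookkeeping needed to make the domination and the regular-saucer identifications \emph{fibered and rel.\ the boundary of} $T$ simultaneously. Concretely: (a) Proposition~\ref{prop:shallow-param} must be applied so that the subordination maps are the identity for $\tau$ in a neighborhood of $\partial T$ (where $K^\tau > 0$ and everything is already solid), which is exactly the content of its third bullet, so I must check that the uniform decomposition $\Phi$ can be interpolated to $K^\tau$ itself near $\partial T$ via the $\theta$-cutoff without destroying positivity; and (b) the regular saucers $(B_p, \zeta_p^s)$ carry a $1$-parameter family in $s$ precisely so that the fibered saucer ${}^T\eta_p := \eta_p^{(\theta(\|\tau\|))}$ degenerates to a solid shell as $\tau \to \partial T$ — I must verify the immersions $\Op B_k \to \R^{2n+1}$ defining them depend smoothly on $\tau$ through $\theta(\|\tau\|)$ and that regularity (condition (iii) on $\phi_\pm$) persists, which it does since condition (iii) was imposed for \emph{all} finite $F \subset S(Q)$ and all $s \in [0,1]$, hence a fortiori for the interpolated functions $\Phi_k^{\theta(\|\tau\|)}$. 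Once these two points are in place the proof is a direct fibered transcription of the proof of Proposition~\ref{prop:circular-to-universal-saucers}.
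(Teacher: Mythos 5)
Your plan follows the paper's own proof essentially step for step: a uniform $N$ and $\Lambda$ obtained by applying Lemma~\ref{lm:positive-decomposition} to a lower bound for the whole family, Proposition~\ref{prop:shallow-param} (with its identity-near-$\p T$ clause) to replace $K^\tau$ by a shallow family lying above the fixed decomposition, the interpolation $\phi_+-\theta(\|\tau\|)\,\phi_-$ so that near $\p T$ only positive bumps are added and every intermediate shell is solid, and then peeling off the universal saucers of Lemma~\ref{lm:finite-contact}. The only slips are cosmetic: ``$\Phi<K^\tau$ pointwise'' is not literally true on $U'$ (which is precisely why \ref{prop:shallow-param} must be invoked, as you then do), and your anticipated check (a) is resolved in the paper simply by enlarging $N$ so that the all-positive sum $\Psi$ lies below $K^\tau$ for $\tau$ near $\p T$, while the appeal to Proposition~\ref{prop:saucer-to-circle-param} at the end is unnecessary since the conclusion is already stated in terms of saucer holes.
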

     
   \begin{proof}   
First, we can choose $\ol K^s: \Delta \to \R$, $s \in [0,1]$, so that $\ol K^{||\tau||} \leq K^{ \tau }$ everywhere, $\ol K^1 > 0$ and $\ol K^s|_{\p \Delta} >0$ for all $s\in[0,1]$. It, therefore, suffices to prove the proposition for the family $\ol K^{||\tau||}$. We can also assume that $\ol K^0(x) \leq \ol K^s(x)$ for any $x \in \Delta$, $s \in [0,1]$.

   Let $U=\Int \Delta$ and $U'\Subset U$ be a star-shaped subset such that $\ol K^{||\tau||}|_{U\setminus U'}>0$ for all $\tau\in T$. We also choose $\delta>0$ so that $\ol K^s > 0$ for all $s \in [1-\delta, 1]$. Apply Lemma \ref{lm:positive-decomposition} to $\ol K^0$, providing an integer $N>0$, and a finite set $\Lambda=\{g_1,\dots, g_k\}\subset\wh \Theta_N$, so that the function 
   $$\Phi=\Phi^{S^1,N}=\frac\mu N+ \sum\limits_{g\in \Lambda}\phi_{g,N} :\Delta\times S^1\to\R$$ satisfies
 $\Phi (w,t)< \ol K^s(w)$ for $w\in \Delta\setminus U'$  and $\Phi|_{U'\times S^1}<-\frac\mu N$. Choosing $N$ large enough we  can also arrange that  $\mathop{\min}\limits_{||\tau||\geq1-\delta} \ol K^\tau>
 \Psi =\Psi^{S^1,N}=\frac\mu N+ \sum\limits_{g\in 
 \Lambda}\phi_{+,g,N}=\frac\mu N+ \frac1N\sum\limits_{g\in 
 \Lambda}\left(\phi_{+}\circ C_N\circ g^{-1}\right)$.
  
According to Proposition  \ref{prop:shallow-param} there exists  a family of  functions $\wt K^s $, $s\in[0,1]$,
such that
	 \begin{itemize}
	 \item $\wt K^s=\ol K^s$ on 
	 $\Delta \setminus  U'$,  $\wt{K}^s > -\frac\mu N$,  $s\in[0,1]$;
	 \item $\wt K^s= \ol K^s$ for $s\in[1-\delta,1]$;
\item there exists a family of subordination maps   $h^s: \eta_{\ol K^s}\to \eta_{\wt{K^s}}$
which are identity maps 	for $s\in[1-\delta,1]$.
\end{itemize} 

  In particular, $\Phi(w,t)<\wt K^s(w)$ for all $w\in \Delta, t\in S^1, s\in[0,1]$. 
 
 Recall the notation $\phi^s= \phi_+-s\phi_-,\; \phi^s_{g_j,N}=\frac1N\phi^s\circ C_N\circ g_i^{-1} $ from Section \ref{sec:equi-cov}.
  Choose a diffeomorphism $f:[0,1]\to[0,1]$ such $f(1-\frac\delta2)=\frac23, f(1-\delta)=\frac13$. Then the function $\wt\theta:=\theta\circ f$ satisfies  $\wt\theta|_{[0,1-\delta]} = 1$  and    $\wt\theta|_{[1-\frac\delta2,1]} = 0$.
  
  We define the families of  functions $\Phi^s_i:= \frac\mu N+ 
  \sum\limits_{j=1}^i \left( \phi^{\wt{\theta}(s)}_{g_j,N}\right):\Delta\times S^1\to\R$, 
  so we have $\Phi^s_0=\frac1N$, $\Phi^s_{|\Lambda|}=\Phi$ for $s\leq 1-\delta$, and $\Phi^1_{|\Lambda|}=\Psi$. Here ${|\Lambda|}$ is the cardinality of $\Lambda$.

 The function  $\Phi^s:=\Phi^s_k$ for each $s\in[0,1]$ is equal $\frac1N>0$ near $  \p \Delta\times S^1$, and 
it satisfies the inequality $\Phi^s<\wt K^s$. Indeed, for $s\in[0,1-\delta]$ we have  $\Phi^s=\Phi<\wt K^s$, and for $s\in[1-\delta,1]$ we have
$\Phi^s<\Psi<\ol K^u=\wt K^s$.
Therefore, the  family of circular shell models $\eta_{\Phi^{||\tau||}}$  is dominated by $\eta_{\wt K^{||\tau||}}$, and hence it is sufficient to prove the required extension result for the family $\eta_{\Phi^{||\tau||}}$.  

The family of   model shells $\eta_{\Phi^{||\tau||}_i}$ and 
 $\eta_{\Phi^{||\tau||}_{i-1}}, \tau\in T$, differ by one of  the regular saucer families    $(B_p,\zeta^{\wt\theta(||\tau||)}_p)$, $p=1,\dots, L=2^m$, from the finite list provided by Lemma \ref{lm:finite-contact}.   The   shell $\eta_{\Phi^{||\tau||}_0}$ is solid  for all $\tau\in T$, since $\Phi^{||\tau||}_0>0$ everywhere. Similarly the saucers  $(B_p,\zeta^{||\tau||}_p)$
 for $\tau\in\Op\p T$ are solid   for $\tau\in\Op\p T$, because we have  $\Phi^{||\tau||}_j\geq\Phi^{||\tau||}_{j-1}$ for all $j=1,\dots, |\Lambda|$.  
But the fibered saucer  corresponding to the family  $(B_p,\zeta^{\wt\theta(||\tau||)}_p)$ is equivalent to  ${}^T\eta_p$, $p=1,\dots, L$.
    \end{proof}

   
   \medskip
  
   \begin{proof}[Proof of Proposition \ref{prop:unique-model-param}]
 Proposition  \ref{prop:to-saucers-param}  allows us to   assume that ${}^T\xi_0$ is fibered  contact outside of a finite collection of disjoint    $\{B_i\}_{1\leq i\leq N}$, so that the restriction  for each $i=1,\dots, N$, $\xi_0|_{B_i}$ is a fibered  regular semi-contact saucer. Applying
Proposition \ref{prop:circular-to-universal-param} we further reduce the holes to a finite list of fibered saucers ${}^T\eta_p, p=1,\dots, L=2^m$. Proposition \ref{prop:saucer-to-circle-param} allows us to replace  each saucer ${}^T\eta_p$, $p=1,\dots, L,$ by a fibered circular model shell  ${}^T\eta_{K_p}$ defined by  a family of
  time-independent  contact Hamiltonian $K_p^\tau, \tau\in T$.
But then, using     Lemma \ref{lm:universal-eps} we conclude that each  circular model shell  ${}^T\eta_{K_p}$ dominates      the fibered    circular model   ${}^T\eta_{K_{\univ}}:=\{\eta_{K^{ (\theta(||\tau||)}_{\univ}})\}_{\tau\in T}$. 
\end{proof}


\section{Leafwise contact structures}\label{sec:foliations}
\label{sec:foliations-proofs}

 Theorem \ref{thm:existence-fol} follows from  Theorem \ref{thm:classif-fol} because any leafwise almost contact structure is homotopic to a structure from $\cont_\ot(\FF;  h_1,\dots, h_n))$  for an appropriate choice of embeddings $h_1,\dots, h_N$. Hence, it is sufficient to prove Theorem  \ref{thm:classif-fol}.
 
 We begin with the following lemma, which we already  used in Section \ref{sec:proof-main} in the  proof  of Theorem \ref{thm:main-existence-T}.
  \begin{lemma}\label{lm:choosing-paths} 
    Let $U$ be a   connected manifold of dimension $m>1$,  
     $T  $   a compact contractible set,  $T_1,\dots, T_k\subset T$ its  compact    subsets  such that
     \begin{description}
     \item{$(\star)$}   any      intersection 
      $T_{i_{1}}\cap \dots \cap T_{i_{p}}$ for $1\leq i_1<\dots<i_p\leq k$
      is  either empty or contractible. 
      \end{description}
      Let $B$ be a closed $m$-dimensional ball with a given point $p \in \p B$,
         $S_j:T_j\times B\to T_j\times U$, 
        $S_j(\tau,x)=(\tau,s_j(\tau,x))$, and $S_\pm:T\times B\to T\times U$, $S_\pm(\tau,x)=(\tau,s_\pm(\tau,x))$, be pairwise disjoint fiberwise smooth embeddings.
         Then there exists a fiberwise embedding $S:T\times[-1,1]\to T\times U$  such that
     \begin{enumerate}
    \item  $S(\tau,\pm 1)=S_\pm(\tau,p)$, $\tau\in T$;
      \item $S (T\times[-1,1])\cap\bigcup\limits_1^kS_j(T_j\times B)=\varnothing$;
      \item $S (T\times(-1,1))\cap\left(S_-(T\times B)\cup S_+(T\times B)\right)=\varnothing$.
\end{enumerate}
      \end{lemma}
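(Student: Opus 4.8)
\textbf{Proof strategy for Lemma \ref{lm:choosing-paths}.}

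The plan is to build the fibered path $S$ by a downward induction on the dimension of strata of a suitable stratification of the parameter space $T$ determined by the sets $T_j$, using at each stage the hypothesis $m>1$ to guarantee that the complement of the images of the $S_j$'s and of $S_\pm$ in a fiber is connected and in fact $\pi_1$-surjective enough to extend partial families of paths. First I would fix, for each $\tau$, the finite point $p\in\p B$ and consider the two marked points $s_+(\tau,p)$ and $s_-(\tau,p)$ in the fiber $\{\tau\}\times U$; the goal is a fibered embedding of the interval joining them that avoids everything else. Since $U$ has dimension $m\ge 2$, for a \emph{fixed} $\tau$ the complement $U\setminus\big(\bigcup_j s_j(\tau,B)\cup s_\pm(\tau,B)\big)$ (minus the two endpoints which lie on the boundary of $s_\pm(\tau,B)$) is connected, so a single embedded arc exists; transversality and general position in dimension $\ge 2$ also let us make it avoid a codimension $\ge 2$ obstruction set swept out by finitely many lower-dimensional families. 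The real content is making this choice continuously — indeed smoothly and embeddedly — in the parameter $\tau$, while the combinatorial type of which $T_j$ contain $\tau$ jumps.

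The key steps, in order, are: (1) Using hypothesis $(\star)$, choose a triangulation (or CW/handle decomposition) of $T$ adapted to the closed sets $T_1,\dots,T_k$, so that on each closed cell the subset of indices $\{j: T_j\supset \text{cell}\}$ is constant and each relevant intersection $\bigcap_{i\in C}T_i$ is a contractible subcomplex; this is possible because all the $T_{i_1}\cap\dots\cap T_{i_p}$ are empty or contractible (so they behave like subcomplexes up to homotopy, and after subdivision can be taken to be genuine subcomplexes). (2) Induct on the skeleta of this triangulation: over a vertex pick any embedded transverse arc as above; assuming $S$ has been constructed over the $(\ell-1)$-skeleton, extend over each $\ell$-cell $\sigma$. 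The extension problem over $\sigma\cong D^\ell$ with prescribed boundary data is an extension-of-sections problem for the fibration over $\sigma$ whose fiber over $\tau$ is the space $\mathcal P_\tau$ of embedded arcs in $\{\tau\}\times U$ from $s_-(\tau,p)$ to $s_+(\tau,p)$ avoiding the forbidden region; because $m>1$ this fiber is nonempty and connected, and more precisely the space of such embedded arcs is $(\ell)$-connected once $U\setminus(\text{forbidden set})$ has no small-dimensional homotopy — here we only need it to be connected and simply-connected-up-to-the-relevant-degree, which follows from $\dim U\ge 2$ together with the fact that the forbidden set in each fiber has codimension $\ge 1$ and we may first shrink the $S_j(T_j\times B)$ to be disjoint tubular neighborhoods of codimension-$0$ submanifolds, whose complement in $U$ (a connected manifold of dim $\ge 2$) stays connected. (3) Transversality: throughout, perturb so that conditions (ii) and (iii) hold not just set-theoretically but with $S(T\times[-1,1])$ disjoint from $\bigcup S_j(T_j\times B)$ and meeting $S_\pm(T\times B)$ only at the endpoint sections $\tau\mapsto S_\pm(\tau,\pm1)$; a fibered general position argument using $\dim U\ge 2$ makes the relevant intersections empty for dimension reasons. (4) Finally, reparametrize the unit interval so the resulting family is a smooth fiberwise embedding $S:T\times[-1,1]\to T\times U$; apply Gromov's parametric $h$-principle for transverse curves (as cited in the excerpt) if one additionally wants $S$ to be transverse to the leafwise contact structure — but that refinement is invoked separately in the application, so here we only need the topological embedding statement.

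The main obstacle is step (2): organizing the induction so that the extension over each cell is genuinely possible. The subtlety is that the "fiber" $\mathcal P_\tau$ of embedded arcs changes discontinuously in nature as $\tau$ crosses the boundaries between the $T_j$ (a forbidden ball $S_j(T_j\times B)$ appears or disappears), so one does not literally have a fibration. The fix is to work over each closed cell of the adapted triangulation where the index set is constant, hence the forbidden region \emph{does} vary continuously, and to use $(\star)$ to ensure these cells and their overlaps are contractible so that previously-constructed boundary data extends; the contractibility of the intersections $\bigcap_{i\in C}T_i$ is exactly what lets the local extension problems be solved compatibly. A secondary technical point is verifying the needed connectivity of the arc-space $\mathcal P_\tau$: since $\ell$ can be as large as $q=\dim T$, one needs that the space of embedded arcs from $a$ to $b$ in $U\setminus(\text{codim}\ge 1 \text{ forbidden set})$ is $q$-connected, which would in general require $\dim U$ large; the way around this is that we are not trying to realize an arbitrary homotopy class of \emph{map} $\sigma\to\mathcal P$ but extending a map already defined on $\p\sigma$ that came from the inductive construction over a \emph{contractible} base piece — so effectively we only ever extend null-homotopic boundary data, which needs only connectedness of $\mathcal P_\tau$, and that is exactly what $m>1$ gives.
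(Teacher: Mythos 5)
Your approach (triangulate $T$ adapted to the $T_j$, then extend a fiberwise arc cell by cell) is genuinely different from the paper's, and it has a gap precisely at the step you flag as the main obstacle. Extending your section over an $\ell$-cell $\sigma$ requires the boundary data, a map $\partial\sigma\cong S^{\ell-1}$ into the space $\mathcal P$ of admissible embedded arcs (fibered over $\sigma$), to be null-homotopic; the obstruction lives in $\pi_{\ell-1}$ of that arc space, and $\ell$ can be as large as $\dim T$. The hypothesis $m>1$ gives you only that each fiberwise complement is connected, hence that \emph{some} arc exists in each fiber; it does not give connectedness of the space of embedded arcs (arcs in different homotopy classes rel endpoints are not isotopic when $\pi_1$ of the complement is nontrivial --- already your $1$-cell extensions can fail), and it certainly does not give the $(\ell-1)$-connectedness needed for higher cells, since the arc space maps to a path space whose homotopy is governed by $\pi_*(U\setminus\mbox{obstacles})$. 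Your proposed fix --- that the boundary data is ``effectively null-homotopic'' because it was constructed over a contractible union of lower cells --- is not justified: contractibility of the base piece over which a section was built does not imply that its restriction to $\partial\sigma$ is contractible \emph{in the arc space}. Nothing in your cell-by-cell scheme forces the inductively chosen arcs to be coherent enough for the obstructions to vanish, so as written the induction over skeleta cannot be completed.

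For contrast, the paper never confronts this obstruction problem: it inducts on the number $k$ of obstacle families. Contractibility of $T$ (and, via $(\star)$, of $T_k$ and of the $T_i\cap T_k$) is used to deform whole families of embeddings to be \emph{constant in} $\tau$, so that in the base case $k=0$ one only needs a single embedded arc in a connected manifold, with no parametric choice at all. In the inductive step one first applies the hypothesis over $\wh T=T_k$ inside $U\setminus\wh s_+(B)$ to join $S_-|_{T_k}$ to $S_k$ by a fibered arc avoiding the remaining obstacles, then performs a fiberwise boundary connected sum producing a new source family $\wt S_-$ that engulfs both $S_-$ and $S_k(T_k\times B)$, and finally applies the hypothesis again with only $k-1$ obstacle families. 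If you want to salvage your scheme, you would need either a proof of the required (parametric) high connectivity of the relevant arc spaces --- which is false at this level of generality --- or a global coherence mechanism such as the trivialization-over-contractible-pieces and absorption-by-connected-sum arguments that the paper uses.
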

     
    \begin{proof}

     We will  prove the statement by induction in $k$.
     When $k=0$ the statement follows from the fact that  the space of  maps of the contractible set $T$ into the space of pairs of  disjoint embeddings 
     of $B$ into $U$ is connected, and hence by a fiberwise isotopy we can assume that the embeddings $s_\pm(\tau,\cdot):B\to U$ are independent of $\tau$, i.e. $s_\pm(\tau,x)=\wt s_\pm(x)$ for all $(\tau,x)\in T\times B$.
     Then to construct the required embedding it is sufficient to connect the points $\wt s_\pm(p)$ by an embedded arc in $U$ which does not intersect the ball $\wt s_\pm(B)$ in its interior points.
       
    Suppose that  the statement is already proven for    $k=j$ (and any $U$). Suppose first that  one of the  $k=j+1$ sets $T_1,\dots, T_k$, say $T_k$, coincides with $T$. 
    By a fiberwise isotopy we can make the embedding  $s_k(\tau,\cdot):B\to U$ independent of $\tau$, i.e. $s_k(\tau,x)=\wt s_k(x)$ for all $(\tau,x)\in T\times B$. Therefore, the statement reduces to the case of $k-1=j$ sets
    $T_1,\dots, T_j$ and their embeddings into $\wt U=U\setminus\wt s_k(B)$, which is connected as well.
    
   Consider now    the general case. By an argument as above, we can assume that the embeddings $s_+(\tau, \cdot)$ are independent of $\tau$,
   i.e. $s_+(\tau,x)=\wh s_+(x)$ for all $(\tau,x)\in T\times B$. Denote $\wh U:= U\setminus \wh s_+(B)$.
    Suppose that $T_k$ is a proper subset of $T$.  
    Set $\wh T:= T_k$, $\wh T_i:= T_i\cap T_k$,   $\wh S_i:= S_i|_{\wt T_i\times B}$, $i=1,\dots, k-1$, $\wh S_-:= S_-|_{T_k\times B}$, $\wh S_+:= S_k|_{T_k\times B}$. Note that the sets $\wh T_i$, $i=1,\dots, k-1$, and $\wh T$ satisfy the condition $(\star)$.
    
    Considering $\wh S_i$ as embeddings into $\wh T_i\times\wh U$, and $\wh S_\pm$ as embeddings into $\wh T\times\wh U$ we can apply the inductional hypothesis to construct a fiberwise embedding 
    $\wh S:\wh T\times [-1,1]\to \wh T\times\wh U$ such that
    \begin{description}
       \item{--}  $\wh S(\tau,\pm 1)=S_\pm(\tau,\wh p)$, $\tau\in \wh T$, where $\wh p\in \p B$, is a point different from $p$;
      \item{--} $\wh S (\wh T\times[-1,1])\cap\bigcup\limits_1^{k-1}\wh S_j(\wh T_j\times B)=\varnothing$;
      \item{--} $\wh S (\wh T\times(-1,1))\cap\left(\wh S_-(\wh T\times B)\cup \wh S_+(\wh T\times B)\right)=\varnothing$.
\end{description}

   Using the embedding $\wh S$ we can make a fiberwise connected sum of the   embeddings $\wh S_\pm$ to construct a  fiberwise embedding $\wt S_-:T\times B\to T\times \wh U$ with the following properties:
   \begin{itemize} 
   \item $\wt S_-(T\times B)\cap \bigcup\limits_1^{k-1}S_j(T_j\times B)=\varnothing$;
\item $\wt S_-(T\times B)\supset S_-(T\times B)\cup S_k(T_k\times B)$;
\item the embeddings $\wt S_-$ and $S_-$ coincides   near  $T\times p\in T\times \p B$.
   \end{itemize}
   
Hence, by applying again  the inductional hypothesis to the  embeddings $\wt S_-, S$ and $S_j, j=1,\dots k-1$, we can construct a fiberwise  embedding
$S:T \times [-1,1]\to T\times U$ with the required properties.

        \end{proof}

\begin{proof}[Proof of Theorem  \ref{thm:classif-fol}]
  
Let $T$ be an $m$-ball. We need to prove that any  map  $$ (T,\p T)\to (\cont_\ot(\FF;  h_1,\dots, h_N)), \Cont_\ot(\FF; h_1,\dots, h_N)) $$ is homotopic rel. $\p T$ to a map into
$\Cont_\ot(\FF; h_1,\dots, h_N)$.
In other words, let $\xi_\tau\in\cont_\ot(\FF;  h_1,\dots, h_n)$, $\tau\in T$, be a family of leafwise almost contact structures which are genuine leafwise  contact structures for $\tau\in\p T$.
We will construct a homotopy rel. $\p T$ to a family of  genuine leafwise contact structures $\wt\xi_\tau, \tau\in T$. 

Consider a foliation $\wh\FF$ on $T\times V$ with leaves $\tau\times L$ where $\tau\in T$ and  $L$ a leaf of $\FF$.
Let  $\wh h_j:T\times T_j\times B\to T\times V$ be  the embeddings, given by
$$\wh h_j(\tau, \tau',x)=(\tau,h_j(\tau',x), (\tau,\tau',x)\in T\times T_j\times B, j=1,\dots, N.$$
The family $\xi_\tau,\tau\in T$, can be viewed as a leafwise almost contact structure $\Xi$ from $\cont(\wh \FF;\wh h_1,\dots,\wh h_N)$ which is genuine  on leaves $\tau\times L$ for $ \tau\in\p T$.
Moreover, we can assume that $\Xi$ is a  genuine leafwise contact structure on a neighborhood  $U\supset \p T\times V$ and neighborhoods  $U_j\supset \wh h_j(T\times T_j\times B)$, $j=1,\dots, N$.

There exists a triangulation $\TT$ of $T\times V$ with the following properties:
\begin{itemize}
\item  there are  compact {\em subcomplexes} $\wh U ,  \wh U_j$, $j=1,\dots, N$ of the triangulation $\TT$ such that $\p T\times V\subset\wh U\subset U,\;  \wh h_j(T\times T_j\times B)\subset\wh U_j\subset U_j$, $j=1,\dots, N$;
\item the  restriction $\TT_0$ of the triangulation  $\TT$ to $T\times V\setminus \Int(\wh U\cup\bigcup\limits_1^N\wh U_j)$  is  transverse to  the foliation $\wh\FF$;
\item  For every top-dimensional simplex $\sigma$ of $\TT_0$   there exists  a submersion  $\pi_\sigma:\Int\sigma\to B^{q+m}$   which is a fibration over an open $(q+m)$-ball  with the ball fibers, and    such  that  the pre-images $\pi_\sigma^{-1}(s), s\in  B^{q+m},$ are intersections of the  leaves of $\wt\FF$ with  $\Int\sigma$.\end{itemize}

  Applying   Gromov's parametric $h$-principle  for contact structures on open manifolds (see \cite{Gro69} and Theorem \ref{thm:Gromov-open}) inductively over skeleta of the triangulation we can deform $\Xi$, keeping it fixed on $\wh U\cup\bigcup\limits_1^N\wh U_j$ to make it leafwise genuinely contact in a neighborhood of the codimension $1$ skeleton of the triangulation 
  $\TT_0$.  
  
 Our next goal is to  further deform $\Xi$ on each top-dimensional  simplex $\sigma$ of the triangulation $\TT_0$, keeping it fixed on $\Op\p\sigma$,   to make in leafwise genuine contact structure on $\sigma$.    Let us choose one of such simplices.
  There exists  a compact subset $\overline\sigma\subset\Int\sigma$ such that the leafwise almost contact structure $\Xi$ is genuine on
   $\Op(\sigma\setminus\Int\overline\sigma)$ and 
   $\pi_\sigma|_{\overline\sigma}$ is a fibration over a closed  $(m+q)$-ball $X $ with fibers diffeomorphic to a closed $(2n+1)$-ball.

 Hence, $\Xi|_{\overline\sigma}$ can be viewed as a fibered  over $X$ almost contact structure on $\overline\sigma$, and applying Proposition \ref{prop:to-saucers-param}  we can further deform $\Xi$  keeping it fixed on  $\Op\p\sigma$,    to make it   genuine away from a finite number of  disjoint domains $Z_i$  fibered over    $X_i\subset X$ with piecewise smooth boundary, $i=1,\dots, K$. These domains are not necessarily disjoint but could be
chosen    arbitrary small and in such a way that all non-empty intersections $X_{i_1}\cap\dots\cap X_{i_k}$, $1\leq i_1<\dots<i_k\leq K$, are again balls with piecewise smooth boundaries. Let $Y$,  $Y\subset\overline\sigma$, and $Y_i,\; Y_i\subset Z_i,$ be subfibrations of the fibrations $\overline\sigma\to X$ and $Z_i\to X_i , \; i=1,\dots, K$, formed by boundaries of the corresponding ball-fibers.

Next, we use Lemma \ref{lm:choosing-paths}  to construct for each $X_j$ a fiberwise embedding 
$S_j:X_j\times[0,1]\to\overline  Z_j\setminus \bigcup\limits_{i\neq j}Z_i$ with $S_j(\tau,0)\in   Y_j$ and 
 $S_j(\tau,1)\in   Y$. 
 Recall that  by assumption  every point  $(\tau,x)\in   Y$ 
 can be connected to  a  point on the boundary of one of the 
 overtwisted balls 
 $B_{i,\tau,\tau'}:=h_i(\tau\times\times\tau' B_i), \; i=1,\dots, N,\;\tau\in T,$ by an embedded path  in the corresponding leaf.
  This path  can be chosen  inside an arbitrarily small  neighborhood  of the  codimension $1$ skeleton of the triangulation $\TT_0$.
   Hence,  if the sets $X_j$ are chosen sufficiently small  we can extend each of the  embeddings $S_j$ to a leafwise   embedding $\wt S_j:X_j\times[0,2]\to V$  
    such that 
 \begin{itemize}
 \item  $S_j(\tau,0)\in   Y_j$;
 \item   $S_j(\tau,2)\in h_i(T_i\times\p B) $ for some $i=i(\sigma,j)$.
 \end{itemize}
Moreover,   using  Proposition \ref{prop:neighb-ot} to increase the number of embeddings $h_i$ we can additionally arrange that
 the map $(\sigma,j)\mapsto i(\sigma,j)$  is injective.
    Then,  successively applying  Theorem \ref{thm:main-existence-T} to neighborhoods of
  $Z_j\cup S_j(X_j\times[0,2])\cup \bigcup\limits_{\tau\in X_j}h_{i(\sigma,j)}(S(\tau,2)\times B)$  for all top-dimensional simplices $\sigma$ of the triangulation we deform $\Xi$ to make it leafwise genuinely contact on these neighborhoods.
   \end{proof}



 \section{The overtwisted contact structures. Discussion}\label{sec:discussion}
 Recall the definition of an overtwisted contact structure from Section \ref{sec:ot-disc}:
 a contact structure $\xi$ on a manifold $M$ is called {\em overtwisted} if there is a contact embedding $(D_\ot,\xi_\ot)\to (M,\xi)$, see Section 
\ref{sec:ot-disc}. We note that the disc $D_\ot$ is only {\em piecewise smooth}.  We do not know if it is possible to characterize   overtwisted structures in dimension $>3$ by existence of a {\em smooth} overtwisted disc.  

In the $3$-dimensional case a contact structure which is overtwisted in our sense is also overtwisted in the  sense of \cite{Eli89}. This should be clear from the picture of the characteristic foliation on the disc $D_\ot$, see Figure~\ref{f:Disk}. 
The converse is also true. This can be seen directly by finding a copy of $(D_\ot,\xi_\ot)$ in a neighborhood of the traditional overtwisted disc, or indirectly, from the classification theorem from \cite{Eli89}.
Indeed,   one can first find a contact structure on the ball with standard boundary which contains $(D_\ot,\xi_\ot)$ and which is  in the standard almost contact class. Then, implanting this ball in an overtwisted contact manifold does not change the isotopy class of this structure.

The overtwisting property can be characterized in many other equivalent ways. We believe that the best characterization is yet to be found. Note, however, that {\em all  definitions  for which Theorem \ref{thm:main} holds are equivalent}. We describe below some possible  variations of the definition of the overtwisting property.

\subsubsection*{Changing $\Delta_\cyl$}
The definition of an overtwisted disc depends on a choice of the special contact Hamiltonian $K_\eps:\Delta_\cyl\to \R$, where $\Delta_\cyl=D\times[-1,1]$, where $D$ is the unit ball in $\R^{2n-2}$. 
Suppose $\wt D\subset\R^{2n-2}$ be any other star-shaped domain   with a piecewise smooth boundary. Denote
$$\wt\Delta_\cyl:= \{(x,z)\in\R^{2n-2}\times\R;\; x\in\wt D, |z|\leq 1\},\;\;
 \Delta_\cyl^-:=\Delta_\cyl\cap\{z\leq 0\} $$
 $$\hbox{and}\;\; \wt\Delta_\cyl^-:=\wt\Delta_\cyl\cap\{z\leq 0\}.$$ 

Let $C_+ $ be the space of continuous piecewise smooth functions
$\wt\Delta_\cyl^-\to \R$ which are positive on $\Op \p\wt\Delta_\cyl\cap\wt\Delta_\cyl^-$.

Given  two functions $K_\pm\in C_+ $ such that $K_-<K_+$
we denote
\begin{align*}
	U_{K_-,K_+} &= \{(x,v,t): K_-(x,t)\leq  v \leq  K_+(x,t), z(x)\leq 0\} \\&\subset (\wt\Delta_\cyl ^-\times T^*S^1, \ker(\lambda_{\st} + v\,dt)) \quad
	\mbox{and} \\
	\Sigma_{K_+} &= \{(x,v,t): 0 \leq v \leq K_+(x, t)\,,\,\, x \in \p \Delta, z(x)\leq 0\} \\&
	\subset (\Delta \times \R^2, \ker(\lambda_{\st} + v\,dt)).
\end{align*}
Gluing these pieces together via  the natural identification between their common parts  
we define  $\wh U_{K_-,K_+} :=U_{K_-,K_+}  \cup  \Sigma_{K_+}$.

\begin{lemma}\label{lm:base-change}
For any $K_+\in C_+(\wt\Delta)$ there exists  $K_-\in C_+(\wt\Delta)$, such that $K_-<K_+$ such that  $\wh U_{K_-,K_+}$ is overtwisted.
\end{lemma}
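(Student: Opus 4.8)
\textbf{Proof proposal for Lemma~\ref{lm:base-change}.}

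The plan is to reduce the statement to the already-established filling results of Section~\ref{sec:tripling}, specifically Proposition~\ref{p:ConnectSumDisc}, by realizing $\wh U_{K_-,K_+}$ as the overtwisted disc $D_{K}$ sitting inside a circle model contact shell $(B_K,\eta_K)$ for a cleverly chosen special Hamiltonian $K$. First I would observe that $\wh U_{K_-,K_+}$, with the contact germ it inherits, is built out of exactly the same two kinds of pieces as the contact germ $(\tilde\Sigma_K,\tilde\eta_K)$ from Lemma~\ref{l:EasyCircleGerm}: a graphical piece over $\{z\le 0\}$ living in $\wt\Delta^-_\cyl\times T^*S^1$, and a side piece over $\p\wt\Delta_\cyl$ living in $\wt\Delta^-_\cyl\times\R^2$, glued along their common boundary. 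The only discrepancy is the presence of the lower graph $\{v=K_-(x,t)\}$ instead of the floor $\{v=0\}$; but applying the fiberwise translation $v\mapsto v-K_-(x,t)$ (which is a contactomorphism of $\ker(\lambda_\st+vdt)$ only up to the correction term $dK_-\wedge dt$, so more carefully one uses the contactomorphism type of the region as in the proof of Lemma~\ref{l:EasyIntervalGerm}), one sees that up to contactomorphism $\wh U_{K_-,K_+}$ only depends on the ``width'' function $K_+-K_-$ over $\{z\le 0\}$ and on $K_+$ near $\p\wt\Delta_\cyl$. In fact this is precisely the piece $D_{\wt K}\subset(\p B_{\wt K},\eta_{\wt K})$ of a circle model over the cylinder $\wt\Delta_\cyl$, where $\wt K$ is a spherically-adjusted Hamiltonian whose restriction to $\{z\le z_D\}$ records $K_+-K_-$, provided we choose $K_-$ so that this width function is a special function $k$ of $u$ near the relevant slice.

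Second, I would invoke Lemma~\ref{lm:changing-B}: since $\wt\Delta_\cyl$ and $\Delta_\cyl$ are both star-shaped domains, the circle model $(B_{\wt K},\eta_{\wt K})$ over $\wt\Delta_\cyl$ and the corresponding model over $\Delta_\cyl$ dominate each other, so there is no loss in working with $\Delta_\cyl$. Now the content of the lemma is exactly that one can pick $K_-<K_+$ so that the resulting Hamiltonian $\wt K:\Delta_\cyl\to\R$ is \emph{special} in the sense of Definition~\ref{def:SpecialHam} \emph{and} satisfies $\wt K<K_\univ$ for the universal Hamiltonian of Proposition~\ref{prop:unique-model}. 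The first requirement is arranged by choosing $K_+-K_-$ over $\{z\le 0\}$ to be given by a special function $k(u)$ (positive near $\p\wt D$, negative near $u=0$) with the monotonicity in $z$ demanded by conditions (i)--(v); here one uses the freedom to take $K_-$ as negative as one likes, which forces the width to be as large and as negative as needed. The second requirement, $\wt K<K_\univ$, is then arranged by the scaling trick of Example~\ref{ex:special}: after fixing a valid special profile, replace it by a small rescaling so that it lies below $K_\univ$; this is possible because any special Hamiltonian admits such a rescaled family staying special, cf.\ Lemma~\ref{lem:Scale}.

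With $\wt K$ special and $\wt K<K_\univ$ in hand, $D_{\wt K}$ is by Definition~\ref{def:OTD} an overtwisted disc, and $\wh U_{K_-,K_+}$ is contactomorphic to a neighborhood of it, hence contains $(D_\ot,\eta_\ot)$ as a contact submanifold (in the sense of germs), which is the assertion. The step I expect to be the main obstacle is the first one: making the identification of the glued germ $\wh U_{K_-,K_+}$ with $D_{\wt K}$ fully precise, i.e.\ checking that the translation/straightening of the lower boundary $\{v=K_-\}$ to a floor is realized by an honest contactomorphism of germs and tracking how it modifies the side piece over $\p\wt\Delta_\cyl$ — this is the same kind of Moser-type bookkeeping as in Lemmas~\ref{l:EasyCircleGerm} and~\ref{l:EasyIntervalGerm}, and one must be careful that the resulting width function still has the spherical symmetry (or can be dominated by a spherically symmetric one, via Lemma~\ref{l:HamiltonianShellsAnnulus}) required to invoke Definition~\ref{def:SpecialHam}. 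Everything after that identification is a direct appeal to the constructions already set up in Sections~\ref{sec:ot-disc} and~\ref{sec:conjugation}.
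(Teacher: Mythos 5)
Your proof breaks at the very first step, and the issue is not the ``Moser-type bookkeeping'' you flag but something more fundamental: the region $\wh U_{K_-,K_+}$ is \emph{not} determined up to contactomorphism by the width $K_+-K_-$ (plus $K_+$ near $\p\wt\Delta_\cyl$). The fiberwise shift $v\mapsto v-K_-(x,t)$ pulls $\lambda_\st+v\,dt$ back to $\lambda_\st+v\,dt - K_-\,dt$, which is not a positive multiple of $\lambda_\st+v\,dt$, and no Gray/Moser argument repairs this: Lemmas~\ref{l:EasyCircleGerm} and~\ref{l:EasyIntervalGerm} only reparametrize the $v$-coordinate \emph{inside the abstract shell} via the family $\rho$ (a shift by the constant $C$ near the top face), they never normalize a lower graph $\{v=K_-\}$ to the floor $\{v=0\}$ inside a genuine contact manifold. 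Concretely, already for constant Hamiltonians the claim fails: the characteristic foliation of $\{v=0\}\subset(\wt\Delta^-_\cyl\times T^*S^1,\ker(\lambda_\st+v\,dt))$ consists of the closed $t$-circles, while that of $\{v=c\}$, $c\neq 0$, is spanned by $\p_t-c\,\p_z$ and has no closed leaves over the half-cylinder; so $\{0\le v\le\eps\}$ and $\{c\le v\le c+\eps\}$ have equal width but are not contactomorphic. This dependence of the germ on the Hamiltonian itself (not just on differences) is exactly why the paper develops the conjugation/partial-order machinery of Sections~\ref{sec:conjugation}--\ref{sec:universal}, and why Remark~\ref{rem:various-ot-discs} warns that the germs $\eta_K$ for different special $K$ need not be contactomorphic. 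Your second step has a related gap even granting the first: Lemma~\ref{lm:changing-B} only yields \emph{domination of shells} over different star-shaped bases, it does not identify the disc piece $D_{\wt K}\subset\p B_{\wt K}$ over $\wt\Delta_\cyl$ with (or show that it contains) an official overtwisted disc modeled on $\Delta_\cyl$ with special Hamiltonian $<K_\univ$; producing such a containment is essentially the content of the lemma you are trying to prove.

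The paper argues in the opposite direction: rather than normalizing $\wh U_{K_-,K_+}$ to a standard model, it embeds a standard overtwisted disc into it. One first constructs a contact embedding $\Phi:\Delta^-_\cyl\to\wt\Delta^-_\cyl$ as a composition $\wt Z^{\wt C}\circ Z^{-C}$, where $Z$ is the standard contracting contact field of the star-shaped structure (Hamiltonian $z$) and $\wt Z$ is its cut-off by a function supported in $\Int\wt\Delta_\cyl$; both fields are tangent to $\{z=0\}$, so $\Phi$ preserves the top face and squeezes the lateral boundary $\p\Delta_\cyl\cap\Delta^-_\cyl$ into the collar $U$ of $\p\wt\Delta_\cyl$ where $K_+>0$. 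Since a special Hamiltonian $K<K_\univ$ may be taken as negative as one likes in the interior (Example~\ref{ex:special}), one can arrange $\Phi_*K<K_+$, and then simply choose $K_-\in C_+$ with $K_-<\Phi_*K$. Via the lift of $\Phi$ (as in Lemma~\ref{l:PhiToTildePhi}) the official overtwisted disc $D_K$ then sits inside $\wh U_{K_-,K_+}$, which is the assertion. So the quantifier structure is ``fix the model disc first, then build the embedding and choose $K_-$ beneath it,'' not ``massage $\wh U$ into a model.''
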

\begin{proof}
 
Choosing a representative $\eta$ of the contact germ along $\Sigma_{K_+}$ let $U$ be a neighborhood of $\p \wt\Delta_\cyl $ such that $K|_{U\cap\wt\Delta_\cyl^-}>0$ and the contact structure $\eta$ is defined on $\{(x,v,t); x\in U\cap\{z\geq 0\}, v\leq K_+(x)\}$.
There is a contact embedding
$\Phi:  \Delta_\cyl^-:= \wt\Delta_\cyl^-$ such that
$\Phi(\p\Delta_\cyl\cap \Delta_\cyl^-)\subset U$ and $\Phi(\Delta_\cyl^-\cap\{z=0\})\subset
\{z=0\}$.
Indeed,   the contact vector field $Z=L+z\frac{\p}{\p z}$, $L=\sum\limits_1^{n-1}\limits u_i\frac{\p}{\p u_i}$ is given by the contact Hamiltonian $z$
with respect to the standard contact form $\lambda_\st=dz+\sum\limits_1^{n-1}u_id\phi_i$.
Consider a cut-off function  $\sigma:\R^{2n-1}\to\R_+$ which is equal to $1$
on $\wt\Delta_\cyl\setminus U$ and supported in $\Int\wt\Delta_\cyl$, and let $\wt Z$ denote the contact vector field defined by the contact Hamiltonian $K:=z\sigma$.
Let us observe that $\wt Z$ is tangent to the hyperplane  $\{z=0\}$ because $K$ vanishes on this hyperplane. Let $Z^t$ and $\wt Z^t$ be the contact flows generated by $Z$ and $\wt Z$.
Then  the formula
$\Phi:= \wt Z^{\wt C}\circ Z^{-C}$ is the required  contact embedding
for appropriately chosen positive constants $C,\wt C$.

For   an appropriate choice of a  special Hamiltonian $K<K_\univ$ we have $\Phi_*K< K_+$.
On the other hand, there exists $K_-\in C_+$ such that $\Phi_*K>K_-$.
Hence, the overtwisted disc $D_\ot=D_K$ embeds into an arbitrarily small neighborhood of  $\wh U_{K_-,K_+}$, i.e.  $\wh U_{K_-,K_+}$ is overtwisted.
\end{proof}
\subsubsection*{Wrinkles and overtwisting}

Consider  the standard contact   $(\R^{2n+1},\xi_\st=\{dz+\sum\limits_1^{n-1} u_id\varphi_i -y_ndx_n=0\})$.
Let $B$ denote the unit ball in $\R^{2n+1}$ and 
  $w:B\to\R^{2n+1}$  be the {\it standard wrinkle} (see \cite{EliMi97}), i.e.\ a map given by the formula
$$(v, y_n,)\mapsto(v, y_n^3-3\alpha(r)y_n),\; v\in\R^{2n},$$
where $r:=||v||^2$, and
 $\alpha:[0,1]\to\R$  is a $C^{\infty}$-function which is positive on $(\frac14,\frac34)$, negative on $(\frac34, 1]$, constant   near $0$,  has a negative derivative at $\frac34$, and satisfies the inequality
$\alpha(r)\leq 1-r^2$. Denote $W:\{y_n^2\leq\alpha(r)\}$.

Note that Corollary \ref{cor:isocontact} allows us to construct a contact  embedding of  $(U\setminus W,(w^*\zeta)|_{U\setminus W})$  into  any overtwisted contact manifold of the same dimension. 
One can  also show, though we do not know a simple proof of this fact, that $(U\setminus W,(w^*\zeta)|_{U\setminus W})$ contains an overtwisted disc. Hence, {\em a contact structure $\xi$ on a manifold $M$   is overtwisted if and only  
if there exists a neighborhood $U\supset W$ in $B$, and  a contact embedding $(U\setminus W,(w^*\zeta)|_{U\setminus W})\to (M,\xi)$. }

  \subsubsection*{Stabilization of overtwisted contact manifolds}
Given a contact manifold $(Y,\xi)$ with a fixed contact form $\lambda$ its {\em $k$-stabilization}
is the contact manifold $Y_k^\stab:=Y\times\R^{2k}$ endowed with the contact structure $
\xi_k^\stab:=\{\lambda+\sum\limits_1^k v_id\phi_i=0\}$. It is straightforward to check  that up to a canonical contactomorphism the contact
manifold $(Y_k^\stab,\xi_k^\stab)$ is independent of the choice of the contact form $\lambda$. 

After the first version of the current paper was posted on arXiv, 
R.~Casals and F.~Presas observed that the  $k$-stabilization for any $k\geq 0$ preserves the overtwisting property, i.e.
\begin{theorem}[\cite{CMP}]
\label{thm:stab}
The $k$-stabilization  $(Y_k^\stab,\xi_k^\stab)$
of  any overtwisted contact manifold  $(Y,\xi)$  is overtwisted for any $k\geq 0$.
\end{theorem}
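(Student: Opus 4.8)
\textbf{Proof strategy for Theorem~\ref{thm:stab}.}
The plan is to reduce the $k$-stabilization statement to $k=1$, since the general case follows by iteration, and then to exhibit an overtwisted disc inside $(Y_1^{\stab}, \xi_1^{\stab}) = (Y \times \R^2, \ker(\lambda + v\,d\phi))$. The key observation is that the circle model contact shells $(B_K, \eta_K)$ of Section~\ref{sec:ContactHamShell} are \emph{already built out of a $\Delta \times \R^2$ factor}: recall $B_{K,C} \subset \R^{2n-1}\times\R^2$ with the contact form $\lambda_{\st}^{2n-1} + \rho\,dt$, where $(r,t)$ are polar coordinates on the $\R^2$-factor and $v = r^2$. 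So if $(D_\ot, \eta_\ot) = (D_K, \eta_K) \subset (\partial B_K, \eta_K)$ is an overtwisted disc in $Y$ defined by a special Hamiltonian $K: \Delta_\cyl \to \R$ with $K < K_\univ$, the natural thing to do is to build a new special Hamiltonian $\hat K$ on $\Delta_\cyl \times \R^2 \subset \R^{2n+1}$ (or rather on an appropriate star-shaped truncation) whose associated circle model lives in the stabilized manifold, and to check it is dominated by $K_\univ$ for the stabilized dimension.

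More precisely, I would first use Gromov's $h$-principle together with Corollary~\ref{cor:isocontact}: since the stabilized overtwisted disc has a standard neighborhood, it suffices to find \emph{some} contact ball inside $(Y_1^\stab,\xi_1^\stab)$ whose boundary carries an overtwisted disc for the dimension $2n+3$. Concretely, embed the contact Hamiltonian shell region $(B_K, \eta_K) \hookrightarrow (Y,\xi)$ used to witness overtwistedness of $Y$, take its product with a small disc $D^2_\epsilon \subset \R^2$, and observe that $(B_K \times D^2_\epsilon, \xi_1^\stab)$ is contactomorphic to a region which, after a contactomorphism of $(\R^{2n+1}, \xi_\st)$ straightening the $\Delta_\cyl \times D^2_\epsilon$ factor into a star-shaped domain $\Delta' \subset \R^{2n+1}$, is itself a circle model $(B_{K'}, \eta_{K'})$ for a spherically-symmetric time-independent Hamiltonian $K': \Delta'_\cyl \to \R$. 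Here one uses the transverse-scaling and twist contactomorphisms of Section~\ref{sec:OvertwistDisc} (Examples~\ref{ex:Scaledomain} and \ref{ex:domain}) to arrange the domain; the resulting $K'$ inherits a ``special'' structure from $K$ because the negativity region of $K$ in the $z$-direction survives, and the extra $\R^2$-directions only contribute positively near the new boundary.

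The main obstacle is verifying that $K' < K'_\univ$, i.e. that the new Hamiltonian is dominated by the universal model in dimension $2n+3$, since $K'_\univ$ is non-constructive. I would handle this exactly as in the Remark following Proposition~\ref{prop:unique-model}: it is enough to produce $K'$ that is \emph{small enough} in the partial order of Section~\ref{sec:PartialOrder} up to conjugation, and then apply Proposition~\ref{prop:shallow} (the ``only the positive part matters up to conjugation'' principle) to push $K'$ below any prescribed positive threshold on the interior of $\Delta'$ while keeping it equal to a fixed positive function near $\partial \Delta'$. Since by Example~\ref{ex:special} one can choose a one-parameter family $K^\eps$ with $K^\eps \to $ very negative, and $K'_\univ$ can be taken to be such a $K^\eps$ for small $\eps$, it suffices to check that after conjugation $K'$ satisfies the hypotheses of Proposition~\ref{prop:shallow} with respect to a sufficiently small sub-star-shaped domain $\tilde\Delta' \subset \Delta'$ on which $K' \le 0$ — this is where the condition \eqref{e:specialCon} on the special function $k$ for $K$ gets transported to the stabilized setting. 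Once domination $(B_{K'}, \eta_{K'}) \preceq (B_{K'_\univ}, \eta_{K'_\univ})$ is established, Definition~\ref{def:OTD} gives that $(D_{K'}, \eta_{K'})$ is a genuine overtwisted disc in $(Y_1^\stab, \xi_1^\stab)$, and iterating in $k$ completes the proof.

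\medskip

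I expect the routine-but-fiddly part to be the explicit bookkeeping of the contactomorphism straightening $\Delta_\cyl \times D^2_\epsilon$ into a star-shaped domain compatible with the $v = r^2$ convention, and confirming the three boundary conditions (i)--(iii) and the special-function conditions (iv)--(v) of Definition~\ref{def:SpecialHam} for $K'$; none of these present a conceptual difficulty given the machinery of Sections~\ref{sec:conjugation} and \ref{sec:tripling}, but they must be done with care because the ``special'' condition is what makes Proposition~\ref{p:ConnectSumDisc} (and hence the whole theory) apply. An alternative, perhaps cleaner, route avoiding any new Hamiltonian bookkeeping: argue purely via Corollary~\ref{cor:isocontact} by showing that the formal (almost contact) data of an overtwisted disc in dimension $2n+1$, crossed with $\R^2$ and formally stabilized, formally embeds into $(Y_1^\stab,\xi_1^\stab)$ — which is immediate since almost contact structures on balls are flexible — and then invoking that $(Y_1^\stab, \xi_1^\stab)$ is overtwisted \emph{iff} it admits such a genuine embedding; but this is circular unless one has already produced one overtwisted disc, so the Hamiltonian construction above seems unavoidable as the base case.
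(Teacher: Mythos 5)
The first thing to note is that the paper does not prove Theorem~\ref{thm:stab} at all: it is imported from \cite{CMP} (stated as ``to appear'') and only used, e.g.\ to derive Corollary~\ref{cor:stab2}. So there is no internal proof to compare your proposal against, and it has to be judged on its own merits. Judged that way, it has a genuine gap at its central step. You propose to ``embed the contact Hamiltonian shell region $(B_K,\eta_K)\hookrightarrow (Y,\xi)$'' and then straighten $B_K\times D^2_\eps$ into a circle model $(B_{K'},\eta_{K'})$ in the stabilization. But overtwistedness of $(Y,\xi)$ only provides a contact embedding of the \emph{germ} $(D_K,\eta_K)$ along the $2n$-disc $D_K\subset\p B_K$; the solid shell $(B_K,\eta_K)$ is merely an almost contact object (it is genuine if and only if $K>0$, and a special $K$ is somewhere negative), so $B_K\times D^2_\eps$ is not a contact subdomain of $(Y_1^\stab,\xi_1^\stab)$ and cannot be carried around by contactomorphisms. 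If you replace $B_K$ by what is actually available, namely a thin one-sided collar of $D_\ot$ such as the ball $\B$ of \eqref{e:ModelOtBall}, the would-be germ $(D_{K'},\eta_{K'})$ no longer fits inside $\Op D_\ot\times\R^2$: the portion of $\p B_{K'}$ lying over the new boundary face of the enlarged base (where the new action coordinate is maximal) contains points with the old coordinate $v$ ranging over all of $[0,K'+C]$ above interior points of $\Delta_\cyl$, and those points sit in the interior of the old shell, exactly where no contact structure is given. Equivalently, by Lemma~\ref{l:EasyCircleGerm} the germ of an overtwisted disc is abstractly glued from pieces living in $\Delta\times T^*S^1$ and $\Delta\times\R^2$ and does not embed in the naive product coordinates once $K'$ is somewhere negative --- and the negativity of $K'$ is the one feature you must keep. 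Note also that the naive extension $K'\approx K$ violates condition (i) of Definition~\ref{def:SpecialHam} (positivity on the entire new boundary), and curing this near the new face while keeping the germ inside the available thin region is precisely the nontrivial content of the theorem.

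The repairs you suggest do not close this gap. Proposition~\ref{prop:shallow} and the conjugation results of Section~\ref{sec:conjugation} give \emph{domination between abstract contact shells}, i.e.\ they compare holes up to homotopy of almost contact structures; they never produce a genuine contact embedding of $(D_{K'},\eta_{K'})$ into $(Y_1^\stab,\xi_1^\stab)$, which is what Definition~\ref{def:OTD} demands. Likewise Corollary~\ref{cor:isocontact} cannot be invoked to ``find some contact ball whose boundary carries an overtwisted disc,'' since that corollary presupposes the target is already overtwisted --- the circularity you acknowledge for your alternative route affects your main route as well. So the base case $k=1$ is not established by the sketch; some genuinely new input, which is exactly what \cite{CMP} supplies, is required.
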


 In particular,  this implies that an overtwisted contact manifold of dimension $2n+1$ can be equivalently defined as a  contact manifold containing the   $(2n-2)$-stabilization of a neighborhood of the standard $3$-dimensional overtwisted disc.

 Note that  Theorem \ref{thm:stab} also implies
  \begin{corollary}\label{cor:stab2}
For  any overtwisted contact manifold $(M,\xi=\{\lambda=0\})$ the contact  manifold
 $(M\times T^*S^1, \{\lambda+vdt=0\}$ is  overtwisted. Moreover,
 $M\times T_+^*S^1:=(M\times T^*S^1)\cap\{v>0\}$ is overtwisted as well.
 \end{corollary}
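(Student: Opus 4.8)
\textbf{Plan of proof for Corollary \ref{cor:stab2}.}

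The plan is to deduce both assertions from Theorem \ref{thm:stab} in the case $k=1$, using only the elementary comparison of contact models and Corollary \ref{cor:isocontact}. First I would recall that the $1$-stabilization of $(M,\xi = \{\lambda = 0\})$ is by definition $(M\times\R^2,\ \{\lambda + v\,d\varphi = 0\})$ where $(v,\varphi) = (r^2,\varphi)$ are polar coordinates on $\R^2$; by Theorem \ref{thm:stab} this is overtwisted. The manifold $M\times T^*S^1$ with contact form $\lambda + v\,dt$, where now $(v,t)$ are the canonical coordinates on $T^*S^1 = \R\times(\R/\Z)$, contains the open subset $M\times T^*_+S^1 = (M\times T^*S^1)\cap\{v>0\}$, and it suffices to prove that $M\times T^*_+S^1$ is overtwisted, since overtwistedness passes to any contact manifold containing an overtwisted one.

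The key observation is that the map $(x,v,t)\mapsto (x, v, 2\pi t)$, reading $(v, 2\pi t)$ as polar-type coordinates $(r^2,\varphi)$ with $r = \sqrt v>0$, gives a contact embedding of a full neighborhood of the zero section of the stabilized model into $M\times T^*_+S^1$: concretely, the region $\{0 < v < \epsilon\}$ inside $(M\times\R^2, \{\lambda + v\,d\varphi=0\})$ is the complement of the zero section in a tubular neighborhood, and the same annular region sits inside $M\times T^*_+S^1$ cut out by $\{0<v<\epsilon\}$ with identical contact form $\lambda + v\,dt$ after rescaling $t$ by $2\pi$. So $M\times T^*_+S^1$ contains $(M\times\R^2 \setminus M\times\{0\},\, \{\lambda + v\,d\varphi=0\})$, an open subset of the $1$-stabilization. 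It remains to check that this punctured stabilization is itself overtwisted. For this I would use that the $1$-stabilization is overtwisted by Theorem \ref{thm:stab}, hence contains an overtwisted disc $(D_\ot,\eta_\ot)$; by Proposition \ref{prop:neighb-ot} (or directly, since overtwisted discs are piecewise-smooth and compact and the zero section has codimension $2 \geq 1$) the disc can be displaced off the zero section $M\times\{0\}$ by an ambient isotopy of $M\times\R^2$, so a copy of $(D_\ot,\eta_\ot)$ already lies in the punctured stabilization. Therefore $M\times\R^2\setminus M\times\{0\}$ is overtwisted, and hence so is $M\times T^*_+S^1$ and a fortiori $M\times T^*S^1$.

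The main obstacle, and the only point requiring care, is the displacement step: one must be sure that an overtwisted disc inside $M\times\R^2$ with the stabilized contact structure can be pushed away from the zero section within a contact isotopy, or at least that \emph{some} overtwisted disc avoids it. An alternative that sidesteps any isotopy argument is to invoke Corollary \ref{cor:isocontact} directly: a small ball with \emph{any} contact structure embeds in any overtwisted contact manifold, so pick an overtwisted disc $(D_\ot,\eta_\ot)$, realize it inside a ball $B$ with standard boundary carrying the standard almost contact class, note $B$ embeds into the punctured stabilization formally (the zero section has positive codimension so the formal data extend over $B$ there), and apply Corollary \ref{cor:isocontact} to upgrade this to a genuine contact embedding $B\hookrightarrow M\times\R^2\setminus M\times\{0\}$ — provided the target is already known to be overtwisted, which circularly is what we want. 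So the cleanest route is the displacement argument; I expect it to go through routinely because $\dim M \geq 3$ forces $\dim(M\times\R^2)\geq 5$ and the zero section is a closed submanifold of codimension $2$, which is disjoinable from any compact piecewise-smooth $2n$-disc by a generic perturbation that can be realized by a contact isotopy via the standard Weinstein-neighborhood argument. Once the disc is displaced, the conclusion is immediate.
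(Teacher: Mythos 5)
Your first reduction is the same as the paper's: identify $M\times T^*_+S^1$ with the complement of the zero section in the $1$-stabilization $\bigl(M\times\R^2,\{\lambda+x\,dy-y\,dx=0\}\bigr)$, and reduce everything to showing that this punctured stabilization is overtwisted. But the step you flag as "the only point requiring care" is in fact a genuine gap, and the argument you sketch for it fails on dimension grounds. If $\dim M=2n+1$, an overtwisted disc in $M\times\R^2$ (dimension $2n+3$) is a $(2n+2)$-dimensional, i.e.\ codimension-one, piecewise smooth disc — not a $2n$-disc as in your last paragraph — while the zero section $M\times\{0\}$ has codimension $2$. Since $(2n+2)+(2n+1)\geq 2n+3$ (indeed even $2n+(2n+1)\geq 2n+3$ for $n\geq 1$), general position cannot make the disc disjoint from the zero section: generically they meet along a set of dimension $2n$. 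So "displace the disc off $M\times\{0\}$ by a generic perturbation realized by a contact isotopy" is not available, and your alternative via Corollary \ref{cor:isocontact} is, as you note yourself, circular. Nothing in Theorem \ref{thm:stab} as stated tells you where the overtwisted disc sits relative to the zero section, so some additional idea is needed.

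The paper supplies that idea with an explicit contact embedding of the \emph{full} stabilized piece into the punctured one: the map $(w,x,y)\mapsto(\Reeb^{-2Ry}(w),\,x+2R,\,y)$, where $\Reeb^t$ is the Reeb flow of $\lambda$, preserves $\lambda+x\,dy-y\,dx$ (the Reeb twist by $-2Ry$ exactly compensates the extra term $2R\,dy$ created by the translation) and sends $M\times D^2_R$ into $M\times\{x>R\}\subset M\times(\R^2\setminus 0)$. Since an overtwisted disc is compact, Theorem \ref{thm:stab} gives one inside $M\times D^2_R$ for $R$ large, and pushing it through this embedding places it in the punctured stabilization, hence in $M\times T^*_+S^1$. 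Your write-up is missing precisely this translation-plus-Reeb-flow trick (or some substitute for it); with it inserted in place of the displacement step, the rest of your outline goes through.
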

 \begin{proof}     $(M\times T_+^*S^1, \lambda+vdt)$ is contactomorphic to
 $(M\times \R^2\setminus 0, \lambda+xdy-ydx$.  On the other hand,
 there exists a contact embedding $(M\times D^2_R,\lambda +xdy-ydx)\to (M\times \R^2\setminus 0, \lambda+xdy-ydx)$. It can be defined, for instance, by the formula $(w,x,y)\mapsto (\Reeb^{-2Ry}(w), x+2R,y)$, where $\Reeb^t$ is the Reeb flow of the contact form $\lambda$.
 But according to Theorem \ref{thm:stab}
 the product  $(M\times D^2_R, \{\lambda+xdy-ydx=0\})$ is overtwisted if the radius of the $2$-disc $D^2_R$ is sufficiently large, and the claim follows.
 \end{proof}

 \subsubsection*{Overtwisting and (non)-orderability}
In \cite{EliPolt00} there was introduced a relation $\leq$ on the universal cover
$\wt{\Cont}(Y,\xi)$ of the identity component of the group of contactomorphisms of $(Y,\xi)$. Namely, $f\leq g$  for $f,g\in  \wt{\Cont}(Y,\xi)$  if
there is a path in $\wt{\Cont}(Y,\xi)$ connecting $f$ to $g$ which is generated by a non-negative contact Hamiltonian.
This relation is either trivial  (e.g. in the case of the standard contact sphere of dimension $>1$, see \cite{EKP06}, and in this case the contact manifold $(Y,\xi)$ is called {\em non-orderable}, or it is a genuine partial order, e.g. in the case of $\R P^{2n-1}$ (see \cite{Giv}) or the unit cotangent bundle 
$UT^*(M)$ of a closed manifold $M$, see \cite{EKP06,  ChNem10}.
It is an open longstanding question whether a $3$-dimensional  closed overtwisted contact manifold  is orderable or not (see \cite{{CaPrSa10}, CaPr14} for partial results  in this direction). 

We define below   a possibly weaker notion of the above order  relation, for which all contact manifolds which  are  known to be orderable in the sense of $\leq$ are orderable as well, while overtwisted contact manifolds are unorderable.

Let us recall a property   of the relation
$\leq$ from \cite{EliPolt00}.
Given a contact manifold $(Y,\xi)$ with a fixed contact for $\lambda$ consider the contact manifold $(Y\times T^*S^1, \{\lambda+vdt=0\})$, where $S^1=\R/\Z$.    Let  $f\in\wt\Cont(Y,\xi)$  be generated by a time dependent contact Hamiltonian
$K_t:Y\to\R$ which can be assumed $1$-periodic in $t$.
We consider  the domain $V^+(f)=\{v+K_t(x)\geq 0, x\in Y\}\subset Y\times T^*S^1.$ 
If $f\leq g$ then there exists a contact diffeotopy 
  \begin{equation}\label{eq:order2}
  h_t:Y\times T^*S^1\to Y\times T^*S^1,\quad \hbox{such that} \quad h_0=\Id \quad\hbox{ and}\quad  h_1(V^+(f))\subset V^+(g).
  \end{equation}
 
 However it is not known whether the converse is true.
 Thus, it
 seems  natural  to introduce a weaker relation: {\em we say that $f\lessapprox g$ if there exists an isotopy $h_t$ as in \eqref{eq:order2}. }
If $f\leq g$ then $f\lessapprox g$ but we do not know whether these relations are equivalent or not. However, as we already stated above,  in all known to us cases of contact orderability in the sense of  $\leq $
 one can also prove {\em weak orderability} in the sense of $\lessapprox $. On the other hand,
 
 \begin{theorem}\label{thm:non-orderability}
 Any closed overtwisted contact manifold is not weakly orderable.
 \end{theorem}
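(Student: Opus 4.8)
\textbf{Proof proposal for Theorem \ref{thm:non-orderability}.}

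The plan is to reduce weak non-orderability to the existence of a contact embedding of an overtwisted piece into $M \times T^*_+S^1$, and then quote the machinery already developed. Concretely, weak orderability of $(M,\xi)$ in the sense of $\lessapprox$ is equivalent to the statement that for every nontrivial non-negative contact Hamiltonian $K_t$ on $M$ generating $f \in \wt\Cont(M,\xi)$, there is \emph{no} contact isotopy of $M \times T^*S^1$ carrying $V^+(f)$ strictly inside itself — i.e.\ the contact manifold $M \times T^*S^1$ is ``incompressible'' in a suitable sense. So to prove weak \emph{non}-orderability it suffices to exhibit a single pair $f \lessapprox g$ with $f \not= g$ in the appropriate homotopical sense; equivalently, to produce a contactomorphism of $M \times T^*S^1$, contact-isotopic to the identity, that strictly shrinks a domain of the form $V^+(\mathrm{const}) = M \times \{v \geq -c\}$ into a proper subdomain. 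First I would recall from Corollary~\ref{cor:stab2} that when $(M,\xi)$ is overtwisted, the contact manifold $M \times T^*_+S^1 = (M \times T^*S^1) \cap \{v > 0\}$ is overtwisted, and moreover it is contactomorphic to $M \times (\R^2 \setminus 0)$ with its stabilized contact structure.

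The key step is then the following: since $M \times T^*_+S^1$ is overtwisted and open, Corollary~\ref{cor:isocontact} applies. Any open contact ball, and more relevantly any shifted copy $M \times \{v \geq v_0\}$ viewed up to the obvious formal data, admits a contact embedding into $M \times T^*_+S^1$ that is \emph{not} the inclusion but is formally isotopic to it; the Reeb flow of $\lambda + v\,dt$ provides a smooth displacement in the $t$-direction while the overtwistedness kills the formal obstruction to realizing a shrinking embedding by a genuine contactomorphism. More precisely, I would run the argument already used in the proof of Corollary~\ref{cor:stab2}: the formula $(w, x, y) \mapsto (\fR^{-2Ry}(w), x + 2R, y)$ gives a contact embedding of a large disc bundle $M \times D^2_R$ into $M \times (\R^2 \setminus 0)$, and for $R$ large $M \times D^2_R$ is overtwisted. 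Translating this back to the $T^*S^1$ picture, one obtains a contactomorphism $h_1$ of $M \times T^*S^1$, contact-isotopic to the identity through the explicitly written path, with $h_1(V^+(f)) \subsetneq V^+(g)$ for suitable constants, where $f$ is generated by a negative constant Hamiltonian (or, after sign conventions, by a nontrivial non-negative one relative to $g$). This exhibits $f \lessapprox g$ with the pair being non-equal, hence the relation $\lessapprox$ is not antisymmetric, i.e.\ $(M,\xi)$ is not weakly orderable.

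The main obstacle, and the place where Theorem~\ref{thm:main}/Corollary~\ref{cor:isocontact} does the real work, is verifying that the \emph{formal} displaceability of $V^+(f)$ inside $V^+(g)$ — which holds trivially for dimension reasons once one stabilizes, since $\R^2 \setminus 0$ retracts and the relevant bundle embeddings are formally isotopic — can be upgraded to a genuine contact isotopy. This is exactly the content of the isocontact embedding corollary: given a smooth embedding $f\colon N \to M'$ of an open contact manifold covered by a contact bundle monomorphism homotopic to $df$, with $M'$ overtwisted, $f$ is isotopic to a contact embedding. I would apply this with $N$ a slightly shrunk copy of the domain $V^+(f)$ (made open and with overtwisted complement by first discarding the $\{v \leq 0\}$ part, legitimate by Corollary~\ref{cor:stab2}) and $M' = M \times T^*_+S^1$. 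The only genuinely delicate point is bookkeeping: one must arrange that the formal homotopy of bundle monomorphisms is compatible with the contact structure near the boundary of the shrunk domain and that the resulting contact isotopy of the domain extends to an ambient contact isotopy of all of $M \times T^*S^1$ fixed near $\{v = -\infty\}$; this is handled by the same neighborhood-theorem and extension arguments used throughout Section~\ref{sec:proof-main}, in particular in the proof of Corollary~\ref{cor:isocontact}. Once this is in place, antisymmetry of $\lessapprox$ fails and the theorem follows.
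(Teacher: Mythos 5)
Your overall route is the paper's: use Corollary~\ref{cor:stab2} to see that $M\times T^*_+S^1$, and hence any domain $V^+(K)$ with $K>0$, is overtwisted; then use Corollary~\ref{cor:isocontact} to upgrade the obvious smooth/formal squeezing of the larger domain $V^+(2K)$ into $V^+(K)$ to a genuine contact embedding isotopic to the inclusion; extend to an ambient contact diffeotopy of $M\times T^*S^1$; conclude that $\lessapprox$ is trivial. Two points in your write-up need repair, though neither requires a new idea.

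First, the reduction and the final inference are misstated: exhibiting one pair with $f\lessapprox g$ and $f\neq g$ proves nothing, since any orderable manifold has many such pairs (e.g.\ $\Id\lessapprox$ a Reeb time-one map); what destroys weak orderability is the failure of antisymmetry, so you must also record the reverse relation. In the paper this is the explicit last line of the proof: since $K\leq 2K$ one trivially has $g_{K}\lessapprox g_{2K}$ (the identity isotopy works because $V^+(K)\subset V^+(2K)$), while the isocontact-embedding argument supplies the nontrivial direction $g_{2K}\lessapprox g_{K}$; together these make the relation trivial. Your ``shrink $V^+(c)$ into a proper subdomain of the form $V^+(c')$'' formulation implicitly contains this, but the sentence ``this exhibits $f\lessapprox g$ with the pair non-equal, hence $\lessapprox$ is not antisymmetric'' is a non sequitur as written. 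Second, the middle paragraph overreaches: the explicit map $(w,x,y)\mapsto(\Reeb^{-2Ry}(w),x+2R,y)$ from the proof of Corollary~\ref{cor:stab2} is only an embedding of the overtwisted $M\times D^2_R$ into $M\times(\R^2\setminus 0)$, used to establish that $M\times T^*_+S^1$ is overtwisted; it is not an ambient contactomorphism of $M\times T^*S^1$ and does not by itself carry $V^+(f)$ into $V^+(g)$. The actual squeezing must come, exactly as your final paragraph correctly says, from Corollary~\ref{cor:isocontact} applied to (a slightly shrunk, open copy of) $V^+(2K)$ mapped into the overtwisted $V^+(K)$, followed by extension of the resulting isotopy to a global diffeotopy --- which is precisely the paper's argument.
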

 \begin{proof} According to Corollary \ref{cor:stab2} for a sufficiently large contact Hamiltonian $K>0$ the domain $V^+(K)$ is overtwisted. Hence, Corollary \ref{cor:isocontact} from   allows us to construct an isotopy of $V^+({2K})$ into $V^+(K)$ inside $Y\times T^*S^1$. This isotopy extends to a global diffeotopy, and hence   $g_{2K}\lessapprox g_K$, where $g_H$  denotes a time 1 map of the contact Hamiltonian $H$.
 On the other hand,  we clearly have $g_{K}\lessapprox g_{2K}$, i.e. the order $\lessapprox $ is trivial.
 
 \end{proof}

\subsubsection*{Classification of overtwisted contact structures on spheres}
 We will finish the paper by discussing the classification of overtwisted contact structures on $S^{2n+1}$ explicitly. Almost contact structures on the sphere $S^{2n+1}$ are classified by the homotopy group $\pi_{2n+1}(SO(2n+2)/U(n+1))$.
 The following lemma  computes  this group.\footnote{We thank Soren Galatius for providing this reference.}
 \begin{lemma}[Bruno Harris, \cite{Harris}]
 \label{lm:comput-sphere}  
 $$\pi_{2n+1}( SO(2n+2)/U(n+1))=\begin{cases}
 \Z/n!\Z,& n=4k;\cr
\Z,& n=4k+1;\cr
\Z/\frac{n!}2\Z,& n=4k+2.\cr
 \Z\oplus\Z/2\Z, & n=4k+3;\cr
 \end{cases}
 $$
  \end{lemma}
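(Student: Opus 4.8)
The statement is a homotopy computation due to Harris, and the plan is simply to reproduce (the shape of) the argument in \cite{Harris}; we indicate the main steps. Write $m=n+1$ and use the principal bundle $U(m)\to SO(2m)\to SO(2m)/U(m)$, whose long exact homotopy sequence contains
$$\pi_{2m-1}(U(m))\xrightarrow{\ r_*\ }\pi_{2m-1}(SO(2m))\longrightarrow\pi_{2m-1}\big(SO(2m)/U(m)\big)\longrightarrow\pi_{2m-2}(U(m)).$$
Both $2m-2=2n$ and $2m-1=2n+1$ lie in the stable range $j\le 2m-1$ of $U(m)$, so Bott periodicity gives $\pi_{2m-2}(U(m))=\pi_{2n}(U)=0$ and $\pi_{2m-1}(U(m))=\pi_{2n+1}(U)=\Z$, whence
$$\pi_{2n+1}\big(SO(2n+2)/U(n+1)\big)\;\cong\;\operatorname{coker}\!\big(r_*:\Z\longrightarrow\pi_{2n+1}(SO(2n+2))\big),$$
with $r_*$ induced by realification $U(n+1)\hookrightarrow SO(2n+2)$. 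So the problem reduces to (i) computing the metastable group $\pi_{2n+1}(SO(2n+2))$, and (ii) locating the realified generator inside it.

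For (i), the degree $2n+1$ is one beyond the edge of the stable range of $SO(2n+2)$, so one stabilises once via $SO(2n+2)\to SO(2n+3)\to S^{2n+2}$, obtaining
$$\pi_{2n+2}(S^{2n+2})=\Z\xrightarrow{\ \partial\ }\pi_{2n+1}(SO(2n+2))\longrightarrow\pi_{2n+1}(SO)\longrightarrow 0,$$
where $\partial(\iota_{2n+2})$ classifies $TS^{2n+2}$ and has infinite order since $\chi(S^{2n+2})=2\neq 0$; together with $\pi_{2n+1}(SO)=\Z/2,\ \Z,\ 0,\ \Z$ for $2n+1\equiv 1,3,5,7\pmod 8$ respectively, and a short analysis of the $2$-torsion contributed by $\pi_{2n+2}(S^{2n+1})=\Z/2$ through $SO(2n+1)\to SO(2n+2)\to S^{2n+1}$, this determines $\pi_{2n+1}(SO(2n+2))$ up to extension as a free summand $\Z\langle e\rangle$ (generated by $\partial(\iota_{2n+2})$) together with $\pi_{2n+1}(SO)$. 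For (ii) one needs the stable behaviour of realification $r:\pi_{2n+1}(U)\to\pi_{2n+1}(SO)$ --- an isomorphism when $2n+1\equiv 3\pmod 8$, multiplication by $2$ when $2n+1\equiv 7\pmod 8$, surjective onto $\Z/2$ when $2n+1\equiv 1\pmod 8$, zero when $2n+1\equiv 5\pmod 8$ --- and the Euler class of $r_*(\mathrm{gen})$. The latter is where $n!$ enters: the generator of $\pi_{2n+1}(U(n+1))=\Z$ classifies the rank-$(n{+}1)$ complex bundle over $S^{2n+2}$ generating $\widetilde K(S^{2n+2})$, whose top Chern class $c_{n+1}\in H^{2n+2}(S^{2n+2};\Z)\cong\Z$ is $\pm n!$ (Chern-character integrality: $\operatorname{ch}_{n+1}=\tfrac{(-1)^n}{n!}c_{n+1}$ must be a generator), and since $e(TS^{2n+2})=2$, the $e$-component of $r_*(\mathrm{gen})$ is $\pm n!/2$.

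Assembling these into $\operatorname{coker}(r_*)$ and sorting by $n\bmod 4$ gives the four cases: for $n\equiv 0$ the target is $\Z\langle e\rangle\oplus\Z/2$ with the generator mapping to $(n!/2)\,e$ plus the nonzero element of $\Z/2$, giving $\Z/n!$; for $n\equiv 1$ it is $\Z\langle e\rangle\oplus\Z$ with stable component primitive, giving $\Z$; for $n\equiv 2$ it is $\Z\langle e\rangle$ with the generator mapping to $(n!/2)\,e$, giving $\Z/(n!/2)$; and for $n\equiv 3$ it is $\Z\langle e\rangle\oplus\Z$ with stable component $2$, where a residual $2$-primary extension argument produces the extra $\Z/2$, giving $\Z\oplus\Z/2$. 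The step I expect to be the genuine obstacle is exactly this last bookkeeping --- pinning down $\pi_{2n+1}(SO(2n+2))$ precisely (including the $2$-primary extension) and the full class of $r_*(\mathrm{gen})$ in it, not merely its stabilisation and its Euler class --- and this is precisely what Harris's computation in \cite{Harris} supplies; the remaining steps are the routine assembly sketched above.
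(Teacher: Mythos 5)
The paper offers no argument for this lemma at all: it is quoted verbatim from Harris, with \cite{Harris} as the proof. Your proposal, by contrast, sketches the actual computation, and its skeleton is the standard and correct one: the exact sequence of $U(n+1)\to SO(2n+2)\to SO(2n+2)/U(n+1)$ together with $\pi_{2n}(U(n+1))=0$, $\pi_{2n+1}(U(n+1))=\Z$ (stable range plus Bott) reduces the lemma to $\operatorname{coker}\bigl(r_*:\Z\to\pi_{2n+1}(SO(2n+2))\bigr)$; the boundary class $\partial(\iota_{2n+2})$ is infinite cyclic with Euler number $2$; and the $n!$ enters exactly as you say, via $c_{n+1}=\pm n!\cdot(\text{generator})$ for the generator of $\widetilde K(S^{2n+2})$, since the Euler class of the realified generator is its top Chern class. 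With these inputs your argument is in fact complete for $n\equiv 2\pmod 4$ (where $\pi_{2n+1}(SO)=0$, so the target is $\Z\langle\partial\iota\rangle$ and the cokernel is visibly $\Z/(n!/2)$) and for $n\equiv 1\pmod 4$ (where the stable component of $r_*$ is primitive and the cokernel is $\Z$ regardless of the Euler component), and your stable realification facts ($r$ iso for $2n+1\equiv 3$, times $2$ for $2n+1\equiv 7$, onto $\Z/2$ for $2n+1\equiv 1\pmod 8$) are correct.

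The remaining two cases, however, are not proved but only asserted: for $n\equiv 0\pmod 4$ you need both that the extension $0\to\Z\to\pi_{2n+1}(SO(2n+2))\to\Z/2\to 0$ splits and that the $\Z/2$-component of $r_*(\mathrm{gen})$ is nonzero (note that if the extension were non-split, i.e.\ the group were $\Z$ with $\partial\iota$ twice a generator, one would still get $\Z/n!$, so the answer does not by itself settle the structure); and for $n\equiv 3\pmod 4$ you need the parity of the $\partial\iota$-coefficient of $r_*(\mathrm{gen})$, not just its Euler number $\pm n!$ and its stabilization $2$ — this is the "residual $2$-primary" bookkeeping you flag and outsource to Harris. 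So as a self-contained proof the proposal has a genuine gap precisely at that $2$-primary analysis; as a proof-by-reference it is at least as informative as the paper, which defers the entire statement to \cite{Harris}, and it usefully isolates which unstable facts Harris's computation must supply.
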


Thus,  Corollary \ref{cor:main}   implies that
  on spheres $S^{8k+1}$, $k>0$, there are exactly  $(4k)!$  different  overtwisted contact structures, on spheres $S^{8k+5}$, $k\geq 0$, there are $\frac{(4k+2)!}2$ different   overtwisted contact structures, while on all other spheres there are infinitely many. In  particular, there is a unique overtwisted contact structure on $S^5$. 
  
  It is interesting to note that
  $S^5$ has  infinitely many  
 {\it tight}, i.e.\ non-overtwisted contact structures. Besides  the standard contact structure,  these  are   examples  given  by  Brieskorn spheres (see \cite{Ust}).
 The full classification of tight contact structures on  any manifold of dimension $>3$  is an open problem.


\begin{thebibliography}{99}

 \bibitem{AlHo09} P. Albers and H. Hofer, {\em On the Weinstein conjecture in higher dimensions} 	Comm. Math. Helvet., {\bf  84}(2009),  429--436.
\bibitem{Bennequin} D. Bennequin, {\em Entrelacements et \'equations de Pfaff}, Third Schnepfenried geometry conference, Vol. 1 (Schnepfenried, 1982), 87--161, Ast\'erisque, 107-108, Soc. Math. France, Paris, 1983. 
\bibitem{Chern66} S.S.~Chern, {\em The geometry of G-structure}, Bull. Amer. Math. Soc, {\bf 72}(1966),  167--219.
\bibitem{CMP} R. Casals, E. Murphy
and F. Presas,
{\em A few remarks about overtwisted contact structures}, to appear.
\bibitem{Bourg02} F. Bourgeois, {\em  Odd dimensional tori are contact manifolds}, Int. Math. Res. Not. 2002, no. 30, 1571--1574. 
\bibitem{CaPrSa10} R. Casals, F. Presas and S. Sandon, {\em On the non-existence of small positive loops of contactomorphisms on overtwisted contact manifolds},  2010, arXiv:1403.0350.
\bibitem{CaPaPr13} R. Casals, D. Pancholi and F. Presas, {\em  Almost contact 5-folds are contact}, 2012, arXiv:1203.2166. 
\bibitem{CaPr14} R. Casals, F. Presas, {\em On the strong orderability of overtwisted 3-folds}, 2014, arXiv:1408.2077.
\bibitem{ChNem10} V. Chernov and S. Nemirovski, {\em Non-negative Legendrian isotopy in $ST^*M$},  Geom. and Topol. {\bf 14}(2010), 611--626.
\bibitem{CoGiHo} V. Colin, E. Giroux and K. Honda, {\em On the coarse classification of tight contact structures}, Proc. Symp. Pure Math., {\bf 71}(2003), 109--120.
\bibitem{CPP14} R. Casals, A.Del Pino, F. Presas, {\em $H$-principle for $4$-dimensional contact foliations}, arXiv:1406.7238.
 
\bibitem{Eli89} Y. Eliashberg, {\em  Classification of overtwisted contact structures on 3-manifolds}, Invent. Math. {\bf 98} (1989), no. 3, 623--637.
 \bibitem{Eli90}  Y. Eliashberg, {\em Topological characterization of Stein manifolds of dimension $>2$},  International Journal of
Math. 1,  {\bf 1}(1990), 29--46.
\bibitem{Eli92} Y. Eliashberg, {\em Contact 3-manifolds twenty years since J. MartinetÕs work}, Ann. Inst. Fourier, Grenoble, {\bf 42}(1992),   165--192.
\bibitem{EliMi97} Y. Eliashberg and N. Mishachev, {\em  Wrinkling of smooth mappings and its applications. I}, Invent. Math. {\bf 130} (1997), no. 2, 345--369.
\bibitem{EliPolt00} Y. Eliashberg and L. Polterovich, {\em Partially ordered groups and geometry of contact transformations}, Geom. Funct. Anal., {\bf 10}(2000), 1448--1476.
\bibitem{EKP06} Y. Eliashberg, S. S. Kim, L. Polterovich, {\em Geometry of contact transformations and domains: orderability versus squeezing},   Geom. Topol. {\bf 10}(2006),  1635--1747.  
\bibitem{Etn13} J. Etnyre, {\em  Contact structures on 5-manifolds}, 2012, arXiv:1210.5208.
\bibitem{Ge91} H. Geiges, {\em Contact structures on 1-connected 5-manifolds}, Mathematica {\bf 38}(1991), 303--311.
\bibitem{Ge97} H. Geiges, {\em Applications of contact surgery}, Topology {\bf 36}(1997), 1193--1220.
\bibitem{Gei08}
H.~Geiges.
\newblock {\em An introduction to contact topology}, volume 109 of {\em
  Cambridge Studies in Advanced Mathematics}.
\newblock Cambridge University Press, Cambridge, 2008.


\bibitem{GeTh98} H. Geiges and C.B. Thomas, {\em Contact topology and the structure of 5-manifolds with $\pi_1=\Z_2$}, Ann. Inst.
Fourier (Grenoble) {\bf 48}(1998), 1167--1188.
\bibitem{GeTh01} H. Geiges, C. B. Thomas, {\em Contact structures, equivariant spin bordism, and periodic fundamental
groups}, Math. Ann., {\bf 320}(2001), 685--708.
\bibitem{Gir02G} E.~Giroux. G{\'e}om{\'e}trie de contact: de la dimension trois vers les dimensions sup{\'e}rieures. 
In {\em Proceedings of the {I}nternational {C}ongress of 
{M}athematicians, {V}ol. {II} ({B}eijing, 2002)}, pages 405--414, Beijing, 2002. Higher Ed. Press.
\bibitem{Gi00} E. Giroux, {\em Structures de contact en dimension trois et bifurcations des feuilletages de surfaces}, Invent. Math. {\bf 141}(2000), 615--689.
\bibitem{Giv} A. Givental, {\em  Nonlinear generalization of the Maslov index}, in ``Theory of singularities and its applicationsÓ, Adv. Soviet Math. 1, Amer. Math. Soc., Providence, RI (1990),  71--103.
\bibitem{Gray59} J.~Gray, {\em Some global properties of contact structures},
Ann.~of Math.~(2) {\bf 69}(1959), 421--450.
\bibitem{Gro69} M.L. Gromov,  {\em Stable mappings of foliations into manifolds}, Izv. Akad. Nauk SSSR, {\bf 33}(1969), 707--734; Math. USSR--Izvestija   {\bf 3}(1969), 671--694
\bibitem{Gro72} M.L. Gromov,
{\em Smoothing and inversion of differential operators},
Math. USSR Sbornik, {\bf 17}(1972), 381--435.
 \bibitem{Gr-PDR} M. Gromov, \emph{Partial Differential Relations}, Springer-Verlag, 1986.
\bibitem{Harris} B. Harris, {\em Some calculations of homotopy groups of symmetric spaces},
Trans. of the AMS, {\bf 106}(1963), 174--184.
\bibitem{Ho00} K. Honda, {\em On the classification of tight contact structures I}, Geom. and Topol., {\bf 4}(2000), 309--368.
\bibitem{Ho00-2} K. Honda, {\em On the classification of tight contact structures II}, J. Diff. Geom., {\bf 55}(2000), 83Ð143.
  \bibitem{Lutz} R. Lutz, {\em  Structures de contact sur les fibr\'ee principaux en cercles de dimension trois}, Ann. Inst. Fourier (Grenoble) {\bf 27}(1977), no. 3, ix, 1--15. 
  \bibitem{Lutz-torus} R. Lutz, {\em Sur la gŽomŽtrie des structures de contact invariantes}, Ann. Inst. Fourier {\bf 29}(1979), 283--306.
 \bibitem{Martinet} J. Martinet, {\em  Formes de contact sur les vari\'et\'es de dimension 3}, Proceedings of Liverpool Singularities Symposium, II (1969/1970), pp. 142--163. Lecture Notes in Math., Vol. 209, Springer, Berlin, 1971. 
 \bibitem{MuNiPlaSti12} E. Murphy, K. Niederkr\"uŸger, O. Plamenevskaya, A. I. Stipsicz, {\em Loose Legendrians and the plastikstufe}, 
Geom. Topol. {\bf 17} (2013), no. 3, 1791--1814.
 \bibitem{Ni06} K. Niederkr\"uger, {\em The plastikstuffe Ð a generalization of overtwisted disc to higher dimensions},  Algebr.
Geom. Topol. {\bf 6}(2006), 2473 -- 2508.
\bibitem{Sandon} S.~Sandon, {\em An integer valued bi-invariant metric on the group of contactomorphisms of $\R^{2n} \times S^1$},  J. Topol. Anal. 
{\bf 2}(2010), 327--339.
\bibitem{Ust} I. Ustilovsky,  {\em Infinitely many contact structures on
 $S^{4m + 1}$,} Int Math Res Notices (1999),  781--791.
 
     \end{thebibliography}
  \end{document}